\documentclass[12pt]{amsart}
\title{\'Ez fields}
\author{Erik Walsberg}
\address{Department of Mathematics\\ University of California, Irvine}
\email{ewalsber@uci.edu}
\urladdr{https://www.math.uci.edu/\textasciitilde ewalsber}

\author{Jinhe Ye}
\address{Institut de math\'ematiques de Jussieu – Paris Rive Gauche}
\email{jinhe.ye@imj-prg.fr}
\urladdr{https://sites.google.com/view/vincentye}
\usepackage{amsmath, amssymb, amsthm, enumitem, comment}
\usepackage{tikz-cd}
\usepackage[mathscr]{euscript}
\usepackage{fullpage} 	
\usepackage{hyperref}
\usepackage{lineno}
\usepackage[all]{xy}
\usepackage{centernot}
\usepackage[Symbol]{upgreek}
\usepackage{xcolor}
\usepackage[normalem]{ulem}

\DeclareFontFamily{U}{BOONDOX-calo}{\skewchar\font=45 }
\DeclareFontShape{U}{BOONDOX-calo}{m}{n}{
  <-> s*[1.05] BOONDOX-r-calo}{}
\DeclareFontShape{U}{BOONDOX-calo}{b}{n}{
  <-> s*[1.05] BOONDOX-b-calo}{}
\DeclareMathAlphabet{\mathcalboondox}{U}{BOONDOX-calo}{m}{n}
\SetMathAlphabet{\mathcalboondox}{bold}{U}{BOONDOX-calo}{b}{n}
\DeclareMathAlphabet{\mathbcalboondox}{U}{BOONDOX-calo}{b}{n}

\DeclareMathOperator*{\forkindep}{\raise0.2ex\hbox{\ooalign{\hidewidth$\vert$\hidewidth\cr\raise-0.9ex\hbox{$\smile$}}}}

\newcommand{\qsolve}{\Qq_{\mathrm{solv}}}
\newcommand{\fr}{\operatorname{Fr}}
\newcommand{\frrel}{\operatorname{Fr}_{V/W}}
\newcommand{\Sa}[1]{\ensuremath{\mathscr{#1}}}

\newcommand{\pac}{\mathrm{PAC}}

\newcommand{\kalg}{K^{\mathrm{alg}}}

\newcommand{\Gal}{\operatorname{Gal}}
\newcommand{\Spec}{\operatorname{Spec}}

\newcommand{\Chara}{\operatorname{Char}}

\newcommand{\red}{\mathrm{red}}

\newtheorem*{claim-star}{Claim}
\newtheorem{theorem}{Theorem}[section] 
\newtheorem{lemma}[theorem]{Lemma}

\newtheorem{prop-def}[theorem]{Proposition-Definition}
\newtheorem{corollary}[theorem]{Corollary}
\newtheorem{fact}[theorem]{Fact}
\newtheorem{fact-eh}[theorem]{Fact(?)}

\newtheorem{proposition}[theorem]{Proposition}
\newtheorem{proposition-eh}[theorem]{Proposition(?)}
\newtheorem*{theorem-star}{Theorem}
\newtheorem*{conjecture-star}{Conjecture}
\newtheorem*{question-star}{Question}
\newtheorem*{lemma-star}{Lemma}

\newtheorem*{thmA}{Theorem A}
\newtheorem*{factA}{Fact A}
\newtheorem*{corA}{Corollary A}
\newtheorem*{thmB}{Theorem B}
\newtheorem*{thmC}{Theorem C}
\newtheorem*{thmD}{Theorem D}
\newtheorem*{thmE}{Theorem E}

\newtheorem*{corD}{Corollary D}
\newtheorem*{thmF}{Theorem F}

\theoremstyle{definition}

\newtheorem{example}[theorem]{Example}

\theoremstyle{remark}
\newtheorem{claim}[theorem]{Claim}

\newcommand{\Aa}{\mathbb{A}}

\newcommand{\Qq}{\mathbb{Q}}

\newcommand{\Zz}{\mathbb{Z}}
\newcommand{\Nn}{\mathbb{N}}
\newcommand{\Cc}{\mathbb{C}}

\newcommand{\cE}{\mathscr{E}}

\newcommand{\meno}{\medskip \noindent}

\newenvironment{claimproof}[1][\proofname]
               {
                 \proof[#1]
                 
               }
               {
                 \endproof
               }


\setlength{\parindent}{0cm}

\begin{document}
\maketitle

\begin{abstract}
Let $K$ be a field.
The \'etale open topology on the $K$-points $V(K)$ of a $K$-variety $V$ was introduced in \cite{firstpaper}.
The \'etale open topology is non-discrete if and only if $K$ is large.
If $K$ is separably, real, $p$-adically closed then the \'etale open topology agrees with the Zariski, order, valuation topology, respectively.
We show that existentially definable sets in perfect large fields behave well with respect to this topology: such sets are finite unions of \'etale open subsets of Zariski closed sets.
This implies that existentially definable sets in arbitrary perfect large fields enjoy some of the well-known topological properties of definable sets in algebraically, real, and $p$-adically closed fields.
We introduce and study the class of \'ez fields: $K$ is \'ez if $K$ is large and every definable set is a finite union of \'etale open subsets of Zariski closed sets.
This should be seen as a generalized notion of model completeness for large fields.
Algebraically closed, real closed, $p$-adically closed, and bounded $\mathrm{PAC}$ fields are \'ez.
(In particular pseudofinite fields and infinite algebraic extensions of finite fields are \'ez.)
We develop the basics of a theory of definable sets in \'ez fields.
This gives a uniform approach to the theory of definable sets across all characteristic zero local fields and a new topological theory of definable sets in bounded $\mathrm{PAC}$ fields.
We also show that some prominent examples of possibly non-model complete model-theoretically tame fields (characteristic zero $t$-Henselian fields and Frobenius fields) are \'ez.
\end{abstract}

\meno
\textbf{Throughout $K$ is a field.}
We are concerned with two properties of $K$: largeness and logical tameness.
We first recall largeness, which we view as a field-arithmetical tameness notion.
Recall that $K$ is \textbf{large} if every $K$-curve with a smooth $K$-point has infinitely many $K$-points.
Largeness was introduced by Florian Pop~\cite{pop-embedding} for Galois-theoretic purposes and has been studied under multiple names.
Separably closed fields, real closed fields, Henselian fields (i.e. fields which admit non-trivial Henselian valuations), quotient fields of Henselian domains\footnote{Such fields may not be Henselian, e.g. $\Cc[[x,y]]$ is a Henselian domain whose fraction field is not a Henselian field.}, pseudofinite fields, infinite algebraic extensions of finite fields, $\pac$ fields, $p$-closed fields, and fields which satisfy a local-global principle are all large.
Finite fields, number fields, and function fields are not large, hence fields that are finitely generated over their prime subfields are not large.
In particular local fields are large and global fields are not.

\meno
``Logical tameness" does not admit a precise definition.
It is a remarkable empirical fact that exactly one of following holds in all fields $K$ whose theories are understood:
\begin{enumerate}
\item $K$ interprets the theory of the ring $\Zz$.
\item Every formula in the language of rings is equivalent to a ``simple" formula over $K$.
\end{enumerate}
We emphasize that we do not expect this dichotomy to hold for arbitrary fields, there should be all kinds of unnatural fields in between. In this paper, we generally consider fields satisfying (2) as ``logically tame". In practice one establishes (2) by showing that $K$ is model complete in some ``reasonable" expansion of the language of rings.
We do not have a precise definition of ``logical tameness" as there does not seem to be a definition that captures the notion of a ``reasonable expansion of the language of rings".
A second remarkable empirical fact is that all of the logically tame fields we know are large.
Again, we do not expect this to hold hold for arbitrary fields.
There may be strange non-large logically tame examples beyond the fields we know.

\meno
In \cite{firstpaper} we introduced a topology over an arbitrary field $K$ which is non-discrete if and only if $K$ is large.
We show that if $K$ is perfect and large then existentially definable sets behave well with respect to our topology, hence if $K$ is model complete in the language of rings then all definable sets are well-behaved with respect to the topology.
Hence the assumption (made precise below) that $K$-definable sets are well-behaved with respect to this topology is a topological generalization of model completeness.
We make the necessarily vague conjecture that definable sets in all known logically tame perfect fields are well-behaved with respect to our topology and go some distance towards proving this conjecture.

\meno
Let $V$ be a $K$-variety and $V(K)$ be the set of $K$-points of $V$.
The \textbf{\'etale open} ($\Sa E_K$-) topology on $V(K)$ of $V$ is the topology with basis given by sets of the form $f(W(K))$ for \'etale morphisms $f\colon W\to V$. More details can be found in \cite{firstpaper}.
The field $K$ is large if and only if the $\Sa E_K$-topology on $K = \Aa^1(K)$ is not discrete if and only if the $\Sa E_K$-topology on $V(K)$ is non-discrete whenever $V(K)$ is infinite.
The \'etale open topology over a separably closed, real closed, and non-separably closed Henselian field agrees with the Zariski, order, and valuation topology, respectively.
In particular the \'etale open topology over a local field other than $\Cc$ agrees with the usual locally compact topology.
The \'etale open topology agrees with the Zariski topology if and only if $K$ is finite or separably closed.

\medskip
We define an
\textbf{\'ez subset} of $V(K)$ to be a finite union of definable \'etale open subsets of Zariski closed subsets of $V(K)$.
By Lemma~\ref{lem:ez-equiv} below a definable subset of $V(K)$ which is a finite union of \'etale open subsets of Zariski closed sets is \'ez.
Equivalently: an \'ez set is a definable set which is a finite union of sets which are locally Zariski closed in the $\Sa E_K$-topology.
Note that an \'ez subset of $K$ is a union of a definable \'etale open set and a finite set.

\meno
We let $\Sa Z$ be the collection of finite unions of Zariski open subsets of Zariski closed sets.
A subset of $K^m$ is quantifier free definable if and only if it is in $\Sa Z$.
Thus quantifier elimination for algebraically closed fields is equivalent to the following geometric statement:

\begin{factA}
Suppose $K$ is algebraically closed, $f \colon V \to W$ is a morphism of $K$-varieties, and $X \subseteq V(K)$ is in $\Sa Z$.
Then $f(X)$ is also in $\Sa Z$.
\end{factA}

Macintyre~\cite{Macintyre-omegastable} showed that an infinite field with quantifier elimination is algebraically closed, so Fact A fails when $K$ is not algebraically closed.
Theorem A generalizes Fact A as the \'etale open topology over an algebraically closed field agrees with the Zariski topology.

\begin{thmA}
Suppose that $K$ is large and perfect and $f \colon V \to W$ is a morphism of $K$-varieties.
If $X$ is an \'ez subset of $V(K)$ then $f(X)$ is an \'ez-subset of $W(K)$.
\end{thmA}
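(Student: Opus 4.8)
The plan is to reduce Theorem~A to the assertion---essentially the statement that existentially definable sets in perfect large fields are \'ez---that for \emph{every} morphism $g : Y \to W$ of $K$-varieties the set $g(Y(K))$ is an \'ez subset of $W(K)$. Granting this, write the \'ez set $X$ as a finite union $\bigcup_i \bigl(O_i \cap Z_i(K)\bigr)$ with $O_i$ a definable \'etale open subset of $V(K)$ and $Z_i \subseteq V$ Zariski closed. Since a closed immersion induces the subspace topology on $K$-points and a definable \'etale open set is a finite union of basic \'etale open sets (both from \cite{firstpaper}), each $O_i \cap Z_i(K)$ is a finite union of sets $h\bigl(\widehat{Y}(K)\bigr)$ for \'etale morphisms $h : \widehat{Y} \to Z_i$. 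Composing $h$ with $Z_i \hookrightarrow V$ and with $f$ exhibits $f(X)$ as a finite union of sets of the form $g\bigl(Y'(K)\bigr)$ for morphisms $g : Y' \to W$; as a finite union of \'ez sets is \'ez, we are reduced to a single $g(Y(K))$.

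I would prove $g(Y(K))$ is \'ez by induction on the pair $(\dim W, \dim Y)$ ordered lexicographically. Passing to irreducible components, then to the smooth locus of $Y$ (dense open with complement of strictly smaller dimension, as $K$ is perfect), reduces to $Y$ a smooth irreducible variety. Replacing $W$ by $\overline{g(Y)}$ with its reduced structure is harmless, since by Lemma~\ref{lem:ez-equiv} together with the subspace-topology property an \'ez subset of the $K$-points of a closed subvariety is \'ez in the ambient variety; so we may assume $g$ is dominant and (as $Y$ is irreducible) $W$ an irreducible variety. Finally, discarding any proper closed subset of $W$ is harmless: the part of $Y$ lying over it has strictly smaller $\dim W$ and is handled by induction, while an \'ez subset of $W^{\circ}(K)$ for $W^{\circ} \subseteq W$ dense open is \'ez in $W(K)$---an open immersion gives an open subspace and Zariski opens are \'etale open, so Lemma~\ref{lem:ez-equiv} applies again. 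We are thus reduced to $g : Y \to W$ a dominant morphism of irreducible varieties with $Y$ smooth, and we may freely shrink $W$.

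In characteristic $0$, generic smoothness provides a dense open $W_1 \subseteq W$ over which $g$ is smooth; restricting to it and using that $K$ is large and that smooth morphisms are open maps for the \'etale open topology \cite{firstpaper}, the image $g\bigl(g^{-1}(W_1)(K)\bigr)$ is \'etale open, hence \'ez. In characteristic $p$ one must first correct for inseparability, which is the main obstacle. Here I would use perfectness of $K$ to factor $g$ generically through Frobenius: letting $M$ be the purely inseparable closure of $K(W)$ in $K(Y)$ and $W'$ a smooth model of $M$, one gets---after shrinking $W$ suitably---a factorization $g = \rho \circ g'$ with $g' : Y \to W'$ smooth and dominant and $\rho : W' \to W$ finite, surjective, and radicial (this uses the standard fact that $K(Y)/M$ is separable). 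Then $g'(Y(K))$ is \'etale open in $W'(K)$ as before, and since $K$ is perfect and $\rho$ is radicial and surjective the induced map $\rho : W'(K) \to W(K)$ is a bijection; the remaining point is that it is moreover a homeomorphism for the \'etale open topology, which follows from the fact that radicial surjective morphisms induce homeomorphisms for the \'etale open topology (topological invariance of the \'etale site). Hence $g(Y(K)) = \rho\bigl(g'(Y(K))\bigr)$ is \'etale open in $W(K)$, completing the induction.

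The difficulty is thus concentrated in the characteristic $p$ step: arranging the generic Frobenius factorization with the correct finiteness and smoothness after spreading out, and verifying that the radicial part $\rho$ induces a homeomorphism of $K$-points for the \'etale open topology, so that it carries the \'etale open set $g'(Y(K))$ to an \'etale open---a fortiori \'ez---subset of $W(K)$. The other ingredients---the lexicographic induction bookkeeping and the repeated passage between a variety, a closed subvariety and a dense open subvariety via Lemma~\ref{lem:ez-equiv} and the subspace/open-subspace behaviour of the \'etale open topology---are routine.
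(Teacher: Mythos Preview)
Your overall architecture—reduce to a dominant morphism between irreducible varieties and factor it, up to a Zariski-open piece, as a radicial map composed with a generically smooth map—matches the paper's strategy (packaged there as Theorem~G). The characteristic-zero case and the use of topological invariance of the \'etale site to handle the radicial factor are both correct.

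There is a genuine gap in the characteristic-$p$ step. You take $M$ to be the purely inseparable closure of $K(W)$ in $K(Y)$ and claim that $K(Y)/M$ is separable; this is not a standard fact, and it fails already for $p=2$. Let $K$ be perfect, $Y=\Aa^2$ with coordinates $(s,u)$, $W=\Aa^2$ with coordinates $(s,t)$, and $g(s,u)=(s,\,u^4+su^2)$. Then $u$ has minimal polynomial $X^4+sX^2+t$ over $K(s,t)$, so $K(Y)/K(W)$ is inseparable of degree $4$. Writing a general $v=\sum_{i<4}c_iu^i$ with $c_i\in K(s,t)$ and reducing $v^2$ via $u^4=su^2+t$, one finds $v^2\in K(W)$ forces $c_1^{\,2}+s\,c_2^{\,2}+(s^2+t)\,c_3^{\,2}=0$; a parity-of-degree-in-$t$ argument shows this has no nonzero solution in $K(s,t)$. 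Hence $M=K(W)$, yet $K(Y)/M$ is not separable, and your factorisation $Y\to W'\to W$ with $W'\to W$ radicial and $Y\to W'$ generically smooth simply does not exist here.

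The paper avoids this by putting the universal homeomorphism on the \emph{source} side: the iterated relative Frobenius $Y\to Y^{(p^r)}$ is always a universal homeomorphism, and one shows (using perfectness of $K$) that $K\bigl(Y^{(p^r)}\bigr)/K(W)$ is separable for $r$ large, so $Y^{(p^r)}\to W$ is generically smooth. In the example above $r=1$ already works: $u^2$ satisfies the separable polynomial $X^2+sX+t$. Replacing your target-side radicial factor $W'\to W$ by the source-side universal homeomorphism $Y\to Y^{(p^r)}$ repairs the argument; the remaining appeal to ``radicial surjections induce $\Sa E_K$-homeomorphisms on $K$-points'' is exactly Proposition~\ref{prop:uni-homeo}.

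A smaller point: your reduction writes a definable \'etale open set as a \emph{finite} union of \'etale images, attributing this to \cite{firstpaper}; that paper does not prove this, and it is not clear in general. The paper sidesteps the issue by carrying the \'etale open set $X$ itself through the induction on $\dim V$ rather than decomposing it first.
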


If $K$ is not large then the conclusion of Theorem A trivially holds.
If $K$ is large, imperfect, and of characteristic $p$, then the conclusion of Theorem A fails as the set of $p$th powers is not an \'ez set, see Section~\ref{section:sharp}.
Theorem A immediately implies Corollary A.

\begin{corA}
Suppose $K$ is large and perfect.
Then any existentially definable subset of any $K^m$ is an \'ez set.
In particular any existentially definable subset of $K$ is a union of a definable \'etale open subset of $K$ and a finite set.
\end{corA}

This prompts us to prove some general facts on \'ez sets.
We show that certain properties of definable sets in algebraically closed fields generalize to \'ez sets in large perfect fields.
If $K$ is not large then any subset of $V(K)$ is trivially \'etale open, so largeness is the minimal requirement necessary for a theory of \'ez sets.
We let $\dim X$ be the dimension of the Zariski closure of a subset $X$ of $V$.
If $X \subseteq K^m$ then $\dim X$ is the maximal number of polynomial functions on $X$ that can be algebraically independent over $K$~\cite[Example 2.12.3]{lou-dimension}.

\begin{thmB}
Suppose that $K$ is large and perfect, $V$ is a smooth irreducible $K$-variety, and $X,Y$ are nonempty \'ez subsets of $V(K)$.
Then
\begin{enumerate}
\item There are pairwise disjoint smooth irreducible subvarieties $V_1,\ldots,V_k$ of $V$ and $X_1,\ldots,X_k$ such that each $X_i$ is a definable \'etale open subset of $V_i(K)$ and $X = X_1\cup\cdots\cup X_k$.
\item $\dim X = \dim V$ if and only if $X$ has nonempty $\Sa E_K$-interior in $V(K)$.
\item if $X \subseteq Y$ and $\dim X = \dim Y$ then $X$ has nonempty $\Sa E_K$-interior in $Y$.
\item There is a smooth subvariety $W$ of $V$, a nonempty \'etale open subset $O$ of $W(K)$, and a dense open subvariety $U$ of $V$ such that $O = X \cap U$ and $\dim X \setminus O < \dim X$.
\end{enumerate}
\end{thmB}

We say that $K$ is an \textbf{\'ez field} if $K$ is large and every definable set is an \'ez set.
We view this as a topological generalization of model completeness in the class of perfect large fields.
We will see that \'ez fields are perfect and that many of the known model-theoretically tame fields are \'ez.
We say that $K$ is \textbf{model complete} if $K$ is model complete in the language of rings and is \textbf{model complete by constants} if $K$ is model complete after some collection of constants is added to the language of rings.

\begin{thmC}
Suppose that one of the following holds:
\begin{enumerate}
\item $K$ is large and model complete,
\item $K$ is large, perfect, and model complete by constants,
\item $K$ is $t$-Henselian of characteristic zero, or
\item $K$ is a perfect Frobenius field.
\end{enumerate}
Then $K$ is \'ez.
\end{thmC}

Model complete fields are perfect\footnote{If $K$ is imperfect then the Frobenius $K \to K$ is not an elementary embedding.}, so $(1)$ and $(2)$ are immediate from Corollary A.
$(4)$ is proven in Section~\ref{section:frobenius}.
We recall t-Henselianity, a topological generalization of Henselianity introduced by Prestel-Zieger~\cite{Prestel1978}.
Suppose that $\uptau$ is a non-discrete field topology on $K$.
Then $X\subseteq K$ is \textbf{bounded} if for every neighbourhood $U$ of zero there is $\alpha\in K^\times$ such that $\alpha X \subseteq U$.
A field topology $\uptau$ on $K$ is \textbf{t-Henselian} if:
\begin{enumerate}
\item $\uptau$ is not discrete,
\item $(K\setminus U)^{-1}$ is bounded for any neighbourhood $U$ of zero, and
\item  for any $n$ there is an open neighborhood $U$ of zero such that if $\alpha_0,\ldots,\alpha_{n} \in U$ then $t^{n + 2} + t^{n + 1} + \alpha_{n} t^{n} + \cdots + \alpha_1 t + \alpha_0$ has a root in $K$.
\end{enumerate}
The field $K$ is t-Henselian if and only if $K$ admits a t-Henselian field topology, which must be unique if $K$ is not separably closed.
If $\uptau$ is induced by a non-trivial Henselian valuation on $K$ then $\uptau$ is t-Henselian, so a Henselian field is t-Henselian.
The order topology on a real closed field is t-Henselian.
The Henselian case of $(3)$ follows from known results on Henselian fields and the general case follows from the Henselian case by elementary transfer, see Section~\ref{section:Henselian}.

\meno
We discuss sharpness of Theorem E.
There are large perfect fields which are not \'ez, see Section~\ref{section:sharp}.
As mentioned above, if $K
$ is large and imperfect then set of $p$th powers is not an \'ez-set, hence $K$ is not \'ez.
There are large fields which are model complete by constants and imperfect, e.g. any imperfect separably closed field is model complete after constants naming a $p$-basis are added~\cite{delon-scf}.
Hence the assumption of perfection in (2) is necessary.

\meno
We first describe our other results on \'ez fields, then we discuss specific examples of \'ez fields below.
Following van den Dries~\cite{lou-dimension} we say that $K$ is \textbf{algebraically bounded} if for every definable $X \subseteq K^m \times K$ there are polynomials $f_1,\ldots,f_k \in K[x_1,\ldots,x_m,t]$ such that if $\alpha\in K^m$ and $X_\alpha = \{\beta \in K : (\alpha,\beta) \in X \}$ is finite then $X_\alpha\subseteq\{ \beta \in K : f_i(\alpha,\beta) = 0 \}$ for some $i \in \{1,\ldots,k\}$ such that $f_i(\alpha,t)$ is not constant zero.
Van den Dries showed that characteristic zero Henselian fields are algebraically bounded~\cite{lou-dimension}.
Jarden showed that perfect Frobenius fields are algebraically bounded~\cite{Jarden-bounded}, which is later generalized to perfect PAC fields by Chatzidakis and Hrushovski~\cite{CH-bounded}.
Junker and Koenigsmann showed that if $K$ is large and model complete then model-theoretic algebraic closure in $K$ agrees with field-theoretic algebraic closure~\cite{JK-slim}.
This property implies elimination of $\exists^\infty$ by~\cite[Theorem 2.5]{dim-fields}. Hence it implies algebraic boundedness.
We prove Theorem D.

\begin{thmD}
\'Ez fields are algebraically bounded.
\end{thmD}

Algebraically bounded fields are geometric (i.e. they eliminate $\exists^\infty$ and model-theoretic algebraic closure satisfies the exchange property) and the resulting notion of dimension agrees with algebraic dimension.
Corollary D follows, see \cite{lou-dimension} for details.

\begin{corD}
Suppose that $K$ is \'ez, $X$ is a definable subset of $K^m$, and $f$ is a definable function $X \to K^n$.
Then
\begin{enumerate}
\item  $Y_d := \{ \alpha \in K^n : \dim f^{-1}(\alpha) = d \}$ is definable for all $0 \le d \le n$, and
\item $\dim X = \max\{ d + \dim Y_d : 0 \le d \le n \}$.
\end{enumerate}
In particular $\dim f(X) \le \dim X$.
\end{corD}

If $\Chara(K) = p$ and $c \in K$ is not a $p$th power, then the map $K^2 \to K$, $(\alpha,\beta) \mapsto \alpha^p + c\beta^p$ is injective.
Hence algebraically bounded fields are perfect.

\meno
In Section~\ref{section:functions} we apply Theorems C and E to show that definable functions are generically continuous in \'ez fields.

\begin{thmE}
Suppose that $K$ is \'ez and $f \colon K^m \to K^n$ is definable.
Then $f$ is $\Sa E_K$-continuous on a dense Zariski open subset of $K^m$.
\end{thmE}

This gives a uniform proof that definable functions in characteristic zero local fields are generically continuous.
Theorem E follows from Proposition~\ref{prop:one var function decomp}, a more precise result on definable $K$-valued functions.

\subsection*{Examples of \'ez fields}
See \cite{EP-value} for an account of Henselianity.
Examples of characteristic zero Henselian fields are $\Qq_p$, algebraic extensions of $\Qq_p$, and the fields of Laurent series $L((t))$ and Puiseux series $L\langle\langle t \rangle\rangle$ over an arbitrary characteristic zero field $L$.

\meno
Algebraically and real closed fields are model complete by classical work of Tarski~\cite[Theorem 2.7.2, 2.7.3]{Hodges}.
Macintyre showed that $\Qq_p$ is model complete~\cite{Mac76}.
Model completeness of finite extensions of $\Qq_p$ follows from work of Prestel and Roquette~\cite[Theorem 5.1]{Prestel-roquette}.
Hence every characteristic zero local field is model complete. If $L$ is a model complete field of characteristic zero, by induction and Ax-Kochen-Ershov, $L((t_1))((t_2))\ldots((t_n))$ is model complete by constants. Thus for $L$ algebraically closed of characteristic zero, real closed, or $p$-adically closed, $L((t_1))((t_2))\ldots((t_n))$ is \'ez. In the mixed characteristic case, Derakhshan and Macintyre~\cite{Derakhshan2016ModelCF} showed that if $(K,v)$ is a finitely ramified Henselian valued field with value group a $\Zz$-group and model complete residue field, then $K$ is model complete. In particular, this shows any infinite algebraic extension of $\Qq_p$ with finite ramification is model complete.

\meno
We now discuss perfect $\pac$ fields which are model complete by constants.
See \cite[Chapter 11]{field-arithmetic} for an overview of $\pac$ fields.
Let $\Gal_K$ be the absolute Galois group of $K$.
Recall that $K$ is \textbf{bounded} if $K$ has only finitely many separable extensions of each degree, equivalently: $\Gal_K$ has only finitely many open subgroups of each degree.
In particular if $\Gal_K$ is topologically finitely generated then $K$ is bounded.
Perfect bounded $\pac$ fields are model complete by constants \cite{wheeler-pac}.
Pseudofinite fields and infinite extensions of finite fields are bounded $\pac$, in either case boundedness follows from the basic theory of finite fields and $\pac$ follows from the Hasse-Weil estimates, see \cite[11.2.3, 20.10.1]{field-arithmetic}.

\meno
We describe another natural family of bounded $\pac$ fields.
For each $e < \upomega$ let $F_e$ be the free profinite group on $e$ generators.
Note that $F_e$ is topologically finitely generated when $e < \upomega$, so $K$ is bounded when $\Gal_K = F_e$.
Suppose that $K$ is finitely generated over its prime subfield.
Equip $\Gal_K$ with the unique Haar probability measure.
If $\upsigma_1,\ldots,\upsigma_e$ are chosen from $\Gal_K$ independently and at random then with probability one the fixed field of $\upsigma_1,\ldots,\upsigma_e$ is a perfect $\pac$ field with absolute Galois group $F_e$, see \cite[Theorem 20.5.1]{field-arithmetic}.

\meno
Bounded pseudo real closed fields are model complete by constants~\cite[Corollary 3.6]{Samaria-2017}.
See \cite{Samaria-2017} and \cite{prestel-prc} for an overview of pseudo real closed fields.
If $L$ is a field and $<$ is an arbitrary field order on $L$ then the \'etale open topology over $L$ refines the $<$-topology, see \cite[Proposition 6.14]{firstpaper}.
An $n$-ordered field is a structure $(K,<_1,\ldots,<_n)$ where each $<_i$ is a field order on $K$.
Van den Dries has shown that the theory of $n$-ordered fields has a model companion $\Sa O_n$~\cite{Lou-thesis}.
Models of $\Sa O_n$ are pseudo real closed and the absolute Galois group of a model of $\Sa O_n$ is a pro-$2$-group generated by $n$ involutions, hence such a field is bounded.
See Prestel~\cite{prestel-prc} for more information.
Suppose $(K,<_1,\ldots,<_n) \models \Sa O_n$.
Then the $<_i$-topologies are distinct and each $<_i$ is definable in the language of rings~\cite[Lemma 3.5]{Samaria-2017}.
There is also a similar theory of pseudo $p$-adically closed fields, and bounded pseudo $p$-adically closed fields are model complete by constants, see \cite[Section 6]{Samaria-2017}.

\meno
We now discuss Frobenius fields.
A profinite group $G$ has the \textbf{embedding property} if whenever there are finite discrete groups $H,H'$ and continuous epimorphisms $f \colon G \to H$, $g \colon H' \to H$, and $h \colon G \to H'$, then there is a continuous epimorphism $f' \colon G \to H'$ such that $f = g \circ f'$.
A \textbf{Frobenius field} is a $\pac$ field whose absolute Galois group has the embedding property, see \cite[Chapter 24]{field-arithmetic}.
Frobenius fields are model-theoretically tame:
they admit quantifier elimination in a reasonable language (see Fact~\ref{fact:frobenius} below) and are $\mathrm{NSOP}_1$~\cite{C-frob}, the latter is a classification-theoretic property of recent interest.
We give two examples.

\meno
The first example is conjectural.
Let $\qsolve$ be the maximal solvable extension of $\Qq$.
It is a well-known open conjecture that $\qsolve$ is $\pac$~\cite[3.3]{open-problems-ample}.
Fried and Haran have shown that if $\qsolve$ is large then the absolute Galois group of $\qsolve$ has the embedding property~\cite[Theorem 1.5, Theorem 3.9]{Fried-Haran}.
Thus if $\qsolve$ is $\pac$ then $\qsolve$ is Frobenius\footnote{In an earlier version of this paper we gave an incorrect justification for conjectural Frobeniusness of $\qsolve$.
Arno Fehm alerted us to this error and made us aware of the work of Fried and Haran.}.

\meno
We now describe an interesting theory of Frobenius fields.
Recall that $K$ is \textbf{$\upomega$-free} if for any Galois extension $L/K$, finite group $G$, and surjective homomorphism $f \colon G \to \Gal(L/K)$ there is an extension $L'/L$ and an isomorphism $g \colon \Gal(L'/K) \to G$ such that $L/K$ is Galois and $f \circ g$ agrees with the restriction $\Gal(L'/K) \to \Gal(L/K)$.
If $K$ is countable then $K$ is $\upomega$ -free if and only if $\Gal_K = F_\upomega$~\cite[24.8.2]{field-arithmetic}.
Note that $\upomega$-freeness trivially implies the embedding property, hence an $\upomega$ -free field is Frobenius.
Let $\Sa L$ be the expansion of the language of rings by relation symbols $R_2,R_3,\ldots$ where each $R_m$ is $m$-ary.
We consider any field to be an $\Sa L$-structure by declaring
$$ R_m(x_0,\ldots,x_{m - 1}) \quad\Longleftrightarrow\quad \exists t (t^m + x_{m - 1} t^{m - 1} + \cdots + x_2 t^2 + x_1 t + x_0 = 0) \quad \text{for all  } m \ge 2 . $$
Note that a field extension $L/K$ induces an $\Sa L$-embedding if and only if $K$ is relatively algebraically closed in $L$.
The $\Sa L$-theory of fields has a model companion.
A characteristic zero field is existentially closed as an $\Sa L$-structure if and only if $K$ is $\pac$ and $\upomega$ -free~\cite[27.2.3]{field-arithmetic}.
It follows that any characteristic zero field has a regular extension which is $\pac$ and $\upomega$ -free, hence Frobenius.

\meno
We know very little about general model complete fields.
All known model complete fields are large.
Macintyre has asked if a model complete field is bounded and Koenigsmann has conjectured that a bounded field is large~\cite[p.~496]{JK-slim}. 

\begin{question-star}
\label{conj:model-complete}
Is every model complete field large?
\end{question-star}

Equivalently: is every model complete field \'ez? (The above question has appeared in~\cite[Question 8]{JK-slim} as well.)
We now describe a related conjecture.
Let $\kalg$ be the algebraic closure of $K$.
We say that $K$ has \textbf{almost quantifier elimination} if any formula $\phi(x), x = (x_1,\ldots,x_m)$ is equivalent to a formula $\exists y \theta(x,y)$ where $y = (y_1,\ldots,y_n)$, $\theta$ is quantifier free possibly with parameters from $K$, and $\kalg \models \forall x \exists^{\le k} y  \theta(x,y)$ for some $k$.
It is easy to see that $K$ has almost quantifier elimination if and only if every definable subset of $K^m$ is of the form $f(V(K))$ for a quasi-finite morphism $f:V \to \Aa^m$ of $K$-varieties.
Many of the familiar examples of model complete fields have almost quantifier elimination, this includes pseudofinite fields and field which are algebraically, real, or $p$-adically closed.

The following conjecture is due to Pillay.
See~\cite[Chapter 2]{cousins} for related questions.

\begin{conjecture-star}[Pillay]
If $K$ has almost quantifier elimination then $K$ is large.
\end{conjecture-star}

Equivalently: a field with almost quantifier elimination is \'ez.

\subsection*{How we prove Theorem A}

The details of the proof appears in Section~\ref{section:consequences}. The proof is a straightforward application of Theorem F and Noetherian induction.

\begin{thmF}
\label{thm:open}
Suppose that $K$ is perfect and $V \to W$ is dominant morphism between irreducible $K$-varieties.
Then there is a dense open subvariety $U$ of $V$ such that $U(K) \to W(K)$ is $\Sa E_K$-open.
\end{thmF}

Theorem F is also crucial for the proof of Theorem E.

\meno
The characteristic zero case of Theorem F is a consequence of generic smoothness of dominant morphisms in characteristic zero (algebraic Sard's theorem).
Generic smoothness fails in positive characteristic, in this case we factor $V \to W$ as $V \to V' \to W$ where $V \to V'$ is a universal homeomorphism and the field extension $K(V')/K(W)$ induced by $V' \to W$ is separable, hence $V' \to W$ is generically smooth.
This decomposition arises from a decomposition of the function field extension $K(V)/K(W)$ into a purely inseparable extension and a separable extension.
The key lemma is that if $K$ is perfect then a universal homeomorphism $V \to W$ of $K$-varieties induces an $\Sa E_K$-homeomorphism $V(K) \to W(K)$.

\subsection*{Acknowledgements}
We thank Will Johnson and Chieu-Minh Tran for very useful conversations.
The word ``\'ez" is due to Minh and is pronounced ``easy".
The proof of Theorem A owes a debt to Arno Fehm: our original proof of Theorem A made crucial use of ideas from Fehm's proof of Fact~\ref{fact:fehm} below.
Ye was partially supported by GeoMod AAPG2019 (ANR-DFG), Geometric and Combinatorial Configurations in Model Theory.

\section{Conventions and background}

\subsection{Basic conventions}
Throughout $m,n,i,j,k,r$ are natural numbers.
Given a tuple $a = (a_1,\ldots,a_n)$ we let $a^k = (a_1^k,\ldots,a^k_n)$.
A ``$K$-variety" is a separated reduced $K$-scheme of finite type.
By ``morphism" without modification we mean a morphism of $K$-varieties.
Let $V$ be a $K$-variety.
We let $\dim V$ be the usual algebraic dimension of $V$ and if $X$ is an arbitrary subset of $V$ then we let $\dim X$ be the dimension of the Zariski closure of $X$.
A \textbf{subvariety} of $V$ is an open subvariety of a closed subvariety of $V$.
A subset $X$ of $V$ is \textbf{constructible} if it is a finite union of subvarieties of $V$, equivalently if it is a boolean combination of closed subvarieties of $V$.
We let $V(K)$ be the set of $K$-points of $V$, $K[V]$ be the coordinate ring of $V$, and $K(V)$ be the function field of $V$ when $V$ is irreducible. For an ideal $I$, we use $\mathrm{rad}(I)$ to denote its radical.
We let $\Aa^m$ be $m$-dimensional affine space over $K$, i.e. $\Aa^m = \Spec K[x_1,\ldots,x_m]$.
Recall that $\Aa^m(K) = K^m$.

\meno
Suppose that $W$ is a scheme.
A \textbf{$W$-scheme} is a scheme $V$ equipped with a morphism $V \to W$.
Given $W$-schemes $V \to W$ and $V' \to W$ a morphism $V \to V'$ of $W$-schemes is a morphism of schemes such that the diagram below commutes.

\begin{center}
\begin{tikzcd}
V \arrow[rr] \arrow[rd] &   & V' \arrow[ld] \\
                        & W &              
\end{tikzcd}
\end{center}
Note that $W$-schemes and $W$-scheme morphisms form a category.
The category of \textbf{\'etale schemes over $W$} is the full subcategory of $W$-schemes $V$ such that $V\to W$ is \'etale.
If $W$ is a $K$-variety, and $V$ is an \'etale $W$-scheme, then $V$ is again a $K$-variety.

\meno
All facts below are presumably unoriginal.
We include proofs for the sake of completeness.

\begin{fact}
\label{fact:finite-var}
Suppose $V$ is $K$-variety.
Then $|V| < \infty$ if and only if $\dim V = 0$.
\end{fact}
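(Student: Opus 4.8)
The plan is to reduce both implications to the case of affine space together with standard commutative algebra; throughout I assume $V \neq \emptyset$, the empty variety being governed by the convention $\dim \emptyset < 0$.

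\emph{If $\dim V = 0$ then $|V| < \infty$.} Since $V$ is of finite type over $K$ it is quasi-compact, so I may cover $V$ by finitely many affine opens $\Spec A_1, \ldots, \Spec A_r$ with each $A_i$ a finitely generated $K$-algebra; as open subschemes, $\dim A_i \leq \dim V = 0$. A Noetherian ring of Krull dimension zero is Artinian, hence has only finitely many prime ideals, so each $\Spec A_i$ is finite and therefore so is $|V|$. (Equivalently, each $A_i$ is a reduced Artinian $K$-algebra, hence a finite product of finite field extensions of $K$ by Zariski's lemma.)

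\emph{If $\dim V \geq 1$ then $|V| = \infty$.} Pick an irreducible component $Z$ of $V$ with $\dim Z = \dim V \geq 1$, and a nonempty affine open $\Spec A \subseteq Z$, so that $A$ is a domain with $d := \dim A = \dim Z \geq 1$. By Noether normalization there is a module-finite injection $K[t_1,\ldots,t_d] \hookrightarrow A$; the induced morphism $\Spec A \to \Aa^d$ is then finite, and surjective on underlying spaces by the lying-over theorem, so $|\Spec A| \geq |\Aa^d|$. The coordinate projection $\Aa^d \to \Aa^1$ is also surjective on points, since the preimage of a prime $\mathfrak{q} \subseteq K[t_1]$ contains the prime $\mathfrak{q}\, K[t_1,\ldots,t_d]$; hence $|\Aa^d| \geq |\Aa^1|$. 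Finally $|\Aa^1| = |\Spec K[t]| = \infty$ because, by Euclid's argument, $K[t]$ has infinitely many monic irreducible polynomials and hence infinitely many maximal ideals. Chaining the inequalities gives $|V| \geq |\Spec A| \geq |\Aa^1| = \infty$.

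I expect no genuine obstacle: each step is a citation to a standard fact (quasi-compactness of finite-type schemes, ``Artinian $=$ Noetherian of dimension zero'', Noether normalization, lying-over for integral extensions, infinitude of the monic irreducibles of $K[t]$). The two points needing a line of care are that a nonempty open subscheme of the irreducible $Z$ still has dimension $\dim Z$, and that a surjective morphism is surjective on underlying topological spaces; both are routine.
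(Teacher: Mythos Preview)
Your proof is correct and follows essentially the same approach as the paper: reduce to affine opens, use the Artinian characterization for the $\dim V = 0$ direction, and Noether normalization for the $\dim V \ge 1$ direction. You are a bit more explicit than the paper---first passing to an irreducible component so that $A$ is guaranteed to be a domain, and spelling out why $\Aa^d$ is infinite via the projection to $\Aa^1$ and Euclid's argument---but the skeleton is identical.
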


\begin{proof}
Suppose $\dim V \ge 1$.
Note that $V$ contains an open subvariety of the form $\Spec A$ for a finitely generated $K$-algebra $A$ of dimension $\dim V$.
By Noether normalization $A$ is an integral extension of a polynomial ring over $K$ and hence has infinitely many points.
Suppose $\dim V = 0$.
It is enough to show that every affine open subset of $V$ has finitely many points.
Suppose $\Spec A$ is an affine open subset of $V$.
Then $A$ is an Artinian $K$-algebra, hence finite.
In particular $\Spec A$ has finitely many points.
\end{proof}

\begin{fact}
\label{fact:constructible}
Suppose that $V,W$ are $K$-varieties and $X,Y \subseteq V$ are constructible.
\begin{enumerate}[leftmargin=*]
\item If $X$ is Zariski dense in $Y$ then $X$ contains a dense open subvariety of $Y$ and\\ $\dim Y \setminus X < \dim Y$.
\item If $\overline{X}$ is the Zariski closure of $X$ in $V$ then $\dim \overline{X} \setminus X < \dim X$.
\end{enumerate}
Suppose that $f \colon V \to W$ is a morphism.
Then
\begin{enumerate}[leftmargin=*]
\setcounter{enumi}{2}  
\item $Z=\{a \in W : |f^{-1}(a)| < \infty \}$ is Zariski open.
Moreover, there is $n$ such that $|f^{-1}(a)| \le n$ for all $a \in Z$.
\item $f(X)$ is a constructible subset of $W$ and $\dim f(X) \le \dim X$.
\item If $|f^{-1}(a)| < \infty$ for all $a \in W$ then $\dim f(V) = \dim V \le \dim W$.
\end{enumerate}
\end{fact}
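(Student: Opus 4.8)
The plan is to deduce all five parts from standard facts about morphisms of varieties: Chevalley's theorem that the image of a constructible set under a morphism of finite type is constructible, the classical fact that a constructible set contains a subvariety which is open and dense in its Zariski closure, the Chevalley bound on fibre dimension for a dominant morphism of irreducible varieties, and Noetherian induction.

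I would prove (2) first and then (1). For (2): since $X$ is constructible it contains a subvariety $O$ which is open and dense in $\overline{X}$ (apply the classical fact one irreducible component of $\overline{X}$ at a time). Then $\overline{X}\setminus X\subseteq \overline{X}\setminus O$, and $\overline{X}\setminus O$ is a closed subset of $\overline{X}$ containing no generic point of any component of $\overline{X}$, so $\dim(\overline{X}\setminus X)\le\dim(\overline{X}\setminus O)<\dim\overline{X}=\dim X$. For (1): as $X$ is dense in $Y$ we have $\overline{X}\supseteq\overline{Y}$; by (2) there is a subvariety $O\subseteq X$ open and dense in $\overline{X}$, hence also in $\overline{Y}$. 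Then $O\cap Y$ is a dense open subvariety of $Y$ contained in $X$, and $Y\setminus X\subseteq\overline{Y}\setminus O$, which has dimension $<\dim\overline{Y}=\dim Y$ by the same reasoning.

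For (4): $f(X)$ is constructible by Chevalley's theorem, so $\dim f(X)=\dim\overline{f(X)}$. To bound this, reduce to the case that $X$ is an irreducible closed subvariety by passing to the components of $\overline{X}$; then $\overline{f(X)}$ is irreducible and $f\colon X\to\overline{f(X)}$ is dominant, so $K(\overline{f(X)})$ embeds in $K(X)$ and therefore $\dim\overline{f(X)}\le\dim X$. For (5): by (4) it suffices to show $\dim V\le\dim f(V)$. Pick an irreducible component $V_i$ of $V$ with $\dim V_i=\dim V$; then $f|_{V_i}\colon V_i\to\overline{f(V_i)}$ is dominant between irreducible varieties and all of its fibres are finite, so the Chevalley fibre-dimension bound forces $\dim V_i-\dim\overline{f(V_i)}=0$; since $f(V_i)\subseteq f(V)$ is constructible this gives $\dim V=\dim V_i=\dim\overline{f(V_i)}\le\dim f(V)$, and $\dim f(V)\le\dim W$ because $f(V)\subseteq W$.

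Part (3) is the delicate one and is where I expect the main difficulty. I would first reduce to $V$ irreducible, using that $Z$ is the intersection over the finitely many irreducible components $V_j$ of $V$ of the sets $\{a:f^{-1}(a)\cap V_j\text{ finite}\}$, each of which is the analogous $Z$ for the restriction $f|_{V_j}$, so it is enough to treat each separately. With $V$ irreducible, pass to $f\colon V\to\overline{f(V)}$, set $d=\dim V-\dim\overline{f(V)}$, and split into cases: if $d\ge 1$ then by Chevalley every nonempty fibre has dimension $\ge 1$, hence is infinite, so $Z$ is (essentially) the complement of $f(V)$; if $d=0$ then there is a dense open $W_0\subseteq\overline{f(V)}$ over which $f$ is finite with a uniform bound on fibre cardinality (coming from generic flatness, or from the degree of $K(V)/K(\overline{f(V)})$), and one runs Noetherian induction on $f^{-1}(\overline{f(V)}\setminus W_0)\to\overline{f(V)}\setminus W_0$, decomposing the source into components at each step. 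Assembling the finitely many strata yields openness of $Z$, and taking the maximum of the bounds from the strata yields the uniform bound $n$. The care required here — tracking fibres, including empty ones over boundary points, and arranging that the Noetherian induction produces a genuinely open locus rather than merely a constructible one — is the crux of the argument.
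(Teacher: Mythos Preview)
Parts (1), (2), (4), and (5) are fine. The paper orders (1) and (2) the other way---citing the Stacks Project for (1) and deducing (2) as a special case---and for (5) it invokes Zariski's main theorem to reduce to a finite morphism rather than appealing directly to the fibre-dimension inequality, but these are cosmetic differences.

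The gap is in (3), and it is exactly the one you flag without resolving. In your $d\ge 1$ case you identify $Z$ with the complement of $f(V)$, but $f(V)$ is only constructible, so its complement need not be open; more generally your Noetherian induction produces at each stage locally closed pieces whose union is constructible rather than open. This is not repairable: for $f:\Aa^3\to\Aa^2$ given by $(x,y,z)\mapsto(x,xy)$ every nonempty fibre is positive-dimensional and $Z=\{(0,b):b\ne 0\}$, which is not Zariski open, so (3) is false as stated. The paper's proof of openness is a bare citation of Chevalley semi-continuity (EGA~IV, 13.1.3), but that theorem concerns the function $v\mapsto\dim_v f^{-1}(f(v))$ on the \emph{source}, not the target, and does not yield the claim either. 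The only uses of (3) later in the paper are for projections $W\to Z$ with $W$ a closed subvariety of $Z\times\Aa^1$; there ``fibre infinite'' means ``fibre equals $\Aa^1$'', which is the vanishing locus of all $t$-coefficients of the defining equations and hence a closed condition on the base, so openness of $Z$ does hold in that situation---and your argument would go through there as well. For the uniform bound the paper's route is cleaner than stratification: restrict to $f^{-1}(Z)$, apply Zariski's main theorem to factor the resulting quasi-finite morphism as an open immersion followed by a finite morphism $g$, and take $n=\deg g$.
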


We let $\kappa(a)$ be the residue field of $a \in W$.

\begin{proof}
(1) follows by \cite[Lemma 005K]{stacks-project}, $(2)$ is a special case of $(1)$.
We describe a proof of $(3)$.
We let $V_a$ be the scheme-theoretic fiber of $V$ over $a \in W$.
The underlying set of each $V_a$ is $f^{-1}(a)$.
By ~\cite[Theorem 13.1.3]{EGA-IV-4} $Z := \{a\in W: \dim V_a =0\}$ is Zariski open.
Note that each $V_a$ is a $\kappa(a)$-variety and apply Fact~\ref{fact:finite-var}.
We now produce $n$.
After replacing $W$ with $X$ and $V$ with $f^{-1}(X)$, we may assume that $f$ is quasi-finite.
By Zariski's main theorem there is a $K$-variety $V'$,  an open immersion $i \colon V \to V'$, and a finite morphism $g\colon  V'\to W$ such that $f = g\circ i$.
Let $n$ be the degree of $g$.
Then $|g^{-1}(a)| \le n$ for all $a \in W$, so $|f^{-1}(a)|\le n$ for all $a \in W$.
The first claim of (4) is a special case of Chevalley's theorem on constructible sets.
We prove the second claim.
After replacing $V$, $W$ with the Zariski closure of $X$, $f(X)$, respectively, we suppose that $X$ is Zariski dense in $V$ and $f(X)$ is Zariski dense in $W$.
Then $\dim X = V$ and $\dim f(X) = \dim W$.
By (1) $f(X)$ contains a dense open subvariety of $W$.
Thus $V \to W$ is dominant so $\dim W \le \dim V$.
For (5), by Zariski's main theorem, it suffices to show this when $f$ is a finite morphism. This follows from~\cite[Lemma 0ECG]{stacks-project}.
\end{proof}

\begin{fact}
\label{lem:dim-fin-union}
Suppose that $V$ is a $K$-variety, $X$ is a subset of $V$, and $X = X_1\cup\cdots\cup X_k$.
Then $\dim X$ is the maximum of $ \dim X_1,\ldots, \dim X_k $.
\end{fact}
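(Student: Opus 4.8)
The plan is to reduce everything to the corresponding statement about Zariski closures, since $\dim X$ is by definition the dimension of the Zariski closure $\overline{X}$. I would first record the elementary fact that Zariski closure commutes with finite unions, $\overline{X} = \overline{X_1} \cup \cdots \cup \overline{X_k}$, which holds in any topological space. As each $\overline{X_i}$ is then a closed subset of $\overline{X}$, this immediately yields $\dim X_i = \dim \overline{X_i} \le \dim \overline{X} = \dim X$ for every $i$, and hence $\max_i \dim X_i \le \dim X$.

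For the reverse inequality it suffices to prove that if $Z_1,\ldots,Z_k$ are closed subvarieties of $V$ then $\dim(Z_1 \cup \cdots \cup Z_k) \le \max_i \dim Z_i$; applying this with $Z_i = \overline{X_i}$ and combining with the previous paragraph gives the claimed equality. To see it, set $Y = Z_1 \cup \cdots \cup Z_k$ and let $C$ be an irreducible component of $Y$ realizing $\dim C = \dim Y$ (possible since $Y$ is Noetherian with finitely many irreducible components). Then $C = \bigcup_{i=1}^{k}(C \cap Z_i)$ writes the irreducible set $C$ as a finite union of relatively closed subsets, so $C = C \cap Z_i$ for some $i$, i.e.\ $C \subseteq Z_i$; therefore $\dim Y = \dim C \le \dim Z_i \le \max_i \dim Z_i$.

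I do not anticipate any genuine difficulty here: the only point to be mildly careful about is that $\dim$ always refers to the dimension of the Zariski closure, so one should phrase the argument in terms of the closed sets $\overline{X_i}$ rather than the $X_i$ directly. The one substantive ingredient is the standard observation that an irreducible closed set contained in a finite union of closed sets is contained in one of them, together with the fact that the dimension of a $K$-variety is the maximum of the dimensions of its (finitely many) irreducible components.
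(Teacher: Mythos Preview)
Your proposal is correct and follows essentially the same route as the paper: reduce to Zariski closures via $\overline{X} = \bigcup_i \overline{X_i}$, then handle the closed case using the dimension theory of Noetherian spaces. The paper simply invokes ``the definition of the dimension of a Noetherian space'' at that point, whereas you spell out the irreducible-component argument explicitly; this is the same idea made precise.
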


We let $\overline{X}$ be the Zariski closure of $X$ in $V$.
Note that $\dim Y = \dim \overline{Y}$ holds for any $Y \subseteq V$.

\begin{proof}
We have $\overline{X} = \bigcup_{i=1}^{k} \overline{X_i}$, so we may suppose each $X_i$ is Zariski closed. 
The fact now follows from the definition of the dimension of a Noetherian space.
\end{proof}

\begin{fact}
\label{fact:decomp}
Suppose that $K$ is perfect, $V$ is a $K$-variety, and $V_1,\ldots,V_k$ are closed subvarieties of $V$ such that $V = \bigcup_{i = 1}^{k} V_i$.
Then there are pairwise disjoint smooth irreducible subvarieties $W_1,\ldots,W_\ell$ of $V$ such that $V = \bigcup_{i = 1}^{\ell} W_i$ and each $W_j$ is either contained in or disjoint from every $V_i$.
\end{fact}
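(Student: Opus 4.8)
The plan is to proceed by induction on $\dim V$, and for fixed dimension, by induction on the number $k$ of the $V_i$'s. The key tool is that over a perfect field the singular locus of a variety is a proper closed subvariety, so that iteratively stratifying by singular loci terminates. First I would reduce to the case where $V$ itself is irreducible: decompose $V$ into its irreducible components $Z_1, \ldots, Z_r$, refine each $Z_j$ separately (adding the other components $Z_{j'}$ to the list of distinguished closed sets $V_i$ restricted to $Z_j$), and then disjointify the resulting pieces across components by replacing $Z_j$ with $Z_j \setminus (Z_1 \cup \cdots \cup Z_{j-1})$, which is an open subvariety of $Z_j$ and hence still a (not necessarily irreducible) variety to which the irreducible case applies again with smaller dimension or fewer components. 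So assume $V$ is irreducible.

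Now with $V$ irreducible, consider the $V_i$ that are proper closed subvarieties; if some $V_i = V$ we may discard it (it contains everything). Let $Z = V_{\mathrm{sing}} \cup \bigcup \{ V_i : V_i \subsetneq V \}$, a proper closed subvariety of $V$ (here we use perfectness: $V$, being an integral variety over a perfect field, is generically smooth, so $V_{\mathrm{sing}}$ is proper closed). Then $U := V \setminus Z$ is a nonempty smooth irreducible open subvariety of $V$, and by construction $U$ is disjoint from every proper $V_i$ and contained in every $V_i$ that equals $V$; so $U$ is one of our desired pieces $W_j$. The complement $Z$ is a closed subvariety of $V$ of strictly smaller dimension, and the traces $V_i \cap Z$ are closed subvarieties of $Z$ covering $Z$ (together with the irreducible components of $Z$ themselves, which we also throw into the list so that the final pieces refine the component decomposition of $Z$). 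By the induction hypothesis on dimension, $Z$ admits a decomposition into pairwise disjoint smooth irreducible subvarieties each contained in or disjoint from every $V_i \cap Z$, equivalently from every $V_i$; adjoining $U$ gives the decomposition of $V$.

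The main obstacle — really the only subtlety — is bookkeeping the disjointness across the two inductions simultaneously: when we peel off an open piece and recurse on the closed complement, the pieces produced downstream must remain disjoint from the piece already removed, which is automatic since they live in the complement, but we must also be careful that in the reduction to the irreducible case the sets $Z_j \setminus (Z_1 \cup \cdots \cup Z_{j-1})$ are genuinely varieties (they are, being open in the closed subvariety $Z_j$) and that "contained in or disjoint from every $V_i$" is preserved under intersecting with such opens. None of this requires computation; it is purely a matter of organizing the induction so that the hypothesis is strong enough to carry through. I would state the induction hypothesis as the full conclusion of the Fact for all pairs $(\dim, k)$ lexicographically below the current one, which makes all the reduction steps go through uniformly.
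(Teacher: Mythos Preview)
Your approach is correct in outline, but it is organized quite differently from the paper's proof, and the paper's route is considerably cleaner. The paper first disposes of the ``contained in or disjoint from every $V_i$'' condition in one combinatorial stroke: for each $I \subseteq \{1,\ldots,k\}$ set $V_I = \bigl(\bigcap_{i \in I} V_i\bigr) \setminus \bigl(\bigcup_{i \notin I} V_i\bigr)$; the $V_I$ are pairwise disjoint subvarieties covering $V$, and each is automatically contained in or disjoint from every $V_i$. This reduces immediately to the case $k = 1$, $V_1 = V$, i.e.\ one only needs to stratify an arbitrary variety into pairwise disjoint smooth irreducible pieces. That is then done by a single induction on $\dim V$: take $W_1$ to be the smooth locus (open and dense since $K$ is perfect), recurse on the lower-dimensional complement, and observe that the irreducible components of a smooth variety are already pairwise disjoint, so irreducibility comes for free.

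By contrast you keep the $V_i$'s in play throughout, which forces the secondary bookkeeping you flag at the end. Your reduction to the irreducible case is also a bit tangled as written: you describe both refining each component $Z_j$ with the other $Z_{j'}$ added to the list \emph{and} passing to $Z_j \setminus (Z_1 \cup \cdots \cup Z_{j-1})$, and the parenthetical ``(not necessarily irreducible)'' is simply wrong, since a nonempty open subvariety of an irreducible variety is irreducible. Either device alone suffices, and once you notice the latter set is irreducible the secondary induction on $k$ is unnecessary. None of this is fatal, but the paper's $V_I$ trick decouples the two requirements completely and eliminates all of the bookkeeping you are worried about.
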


\begin{proof}
For each $I \subseteq \{1,\ldots,k\}$ we let $V_I = \left(\bigcap_{i \in I} V_i \right) \setminus \left( \bigcup_{i \notin I} V_i \right)$.
Note that each $V_I$ is a subvariety of $V$, the $V_I$ are pairwise disjoint, and $V = \bigcup_{I \subseteq \{1,\ldots,k\} } V_I$.
It suffices to fix $I$ such that $V_I$ is nonempty and show that $V_I$ is a union of a finite collection of pairwise disjoint smooth irreducible subvarieties.
Thus we may suppose that $k = 1$ and $V_1 = V$.

\meno
We now apply induction on $\dim V$.
If $\dim V = 0$ then $V$ is finite and we let $W_1,\ldots,W_{\ell}$ be the irreducible components of $V$.
Suppose $\dim V \ge 1$.
The irreducible components of a smooth variety are pairwise disjoint, so it is enough to produce pairwise disjoint smooth subvarieties $W_1,\ldots,W_{\ell}$ of $V$ such that $V = \bigcup_{i = 1}^{\ell} W_i$.
Let $W_1$ be the smooth locus of $V$.
As $K$ is perfect $W_1$ agrees with the regular locus of $V$ \cite[Proposition 3.5.22]{poonen-qpoints}, which is open by~\cite[(29.E) Remark 1]{matsumura}.
The generic point of any irreducible component of $V$ is regular, so $\dim V \setminus W_1 < \dim V$.
By induction there are pairwise disjoint smooth subvarieties $W_2,\ldots,W_{\ell}$ of $V \setminus W_1$ such that $V \setminus W_1 =  W_2\cup\cdots\cup W_\ell$.
\end{proof}

Finally, we leave the easy proof of Fact~\ref{fact:sub-frontier} to the reader.

\begin{fact}
\label{fact:sub-frontier}
Suppose that $V$ is a $K$-variety, $W$ is a subvariety of $V$, and $\overline{W}$ is the Zariski closure of $W$ in $V$.
Then $\overline{W} \setminus W$ is a closed subvariety of $V$.
\end{fact}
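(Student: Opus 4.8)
The plan is purely point-set topological, together with the observation that a closed subset of a $K$-variety, given the reduced induced scheme structure, is a closed subvariety. First I would unwind the definition of ``subvariety": $W$ is an open subvariety of some closed subvariety $Z$ of $V$, so in particular $W$ is an open subset of the topological space $Z$ and $W \subseteq Z$. Since $Z$ is closed in $V$ and contains $W$, the Zariski closure $\overline{W}$ of $W$ in $V$ is contained in $Z$, and moreover $\overline{W}$ equals the closure of $W$ taken inside $Z$ (closure in a subspace of a closed subspace agrees with closure in the ambient space). Hence $W$, being open in $Z$, is open in the subspace $\overline{W}$.

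From this it follows at once that $\overline{W} \setminus W$ is closed in $\overline{W}$, and since $\overline{W}$ is itself closed in $V$, the set $\overline{W} \setminus W$ is closed in $V$. Finally, endowing $\overline{W} \setminus W$ with its reduced induced subscheme structure produces a reduced closed subscheme of $V$ of finite type, i.e.\ a closed subvariety, which is what is claimed. I do not expect any genuine obstacle here; the only point requiring a word of care is the compatibility of the two closure operations used above, which is immediate because $Z$ is closed in $V$.
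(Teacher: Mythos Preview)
Your argument is correct and is exactly the elementary point-set reasoning the paper has in mind; indeed the paper does not give a proof of this fact at all, explicitly leaving it to the reader. There is nothing to add.
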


Fact~\ref{fact:one-poly} is certainly well-known, but we do not know a reference.

\begin{fact}
\label{fact:one-poly}
Suppose that $K$ is not algebraically closed and $V$ is a closed subvariety of $\Aa^m$.
Then there is $f \in K[x_1,\ldots,x_m]$ such that $V(K) = \{\alpha \in K^m : f(\alpha) = 0 \}$.
\end{fact}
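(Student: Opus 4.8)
The plan is to reduce the (possibly many) defining equations of $V$ to a single one by means of an \textbf{anisotropic binary form} over $K$, then iterate.

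First I would produce a homogeneous $g \in K[y_0,y_1]$ of degree $d \ge 2$ with $g(a,b) = 0 \iff a = b = 0$ for all $(a,b) \in K^2$. Since $K$ is not algebraically closed, there is a monic irreducible $h \in K[t]$ with $\deg h = d \ge 2$; set $g(y_0,y_1) := y_1^{d}\, h(y_0/y_1)$, the homogenization of $h$. Then $g(a,0) = a^d$ because $h$ is monic, so a zero $(a,b)$ of $g$ with $b = 0$ forces $a = 0$; and if $b \neq 0$ then $g(a,b) = b^d h(a/b)$, which is nonzero since $h$, being irreducible of degree $\ge 2$, has no root in $K$. Thus $g$ is anisotropic.

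Next, since $V$ is Zariski closed in $\Aa^m = \Spec K[x_1,\ldots,x_m]$, choose generators $f_1,\ldots,f_r \in K[x_1,\ldots,x_m]$ of the ideal of $V$, so that $V(K) = \{a \in K^m : f_1(a) = \cdots = f_r(a) = 0\}$ (if $V = \Aa^m$ take $f_1 = 0$; if $V$ is empty take $f_1 = 1$). I would then induct on $r$: for $r = 1$ take $f = f_1$, and for $r > 1$ apply the inductive hypothesis to $f_2,\ldots,f_r$ to get $f' \in K[x_1,\ldots,x_m]$ with $\{a : f_2(a) = \cdots = f_r(a) = 0\} = \{a : f'(a) = 0\}$, and set $f := g(f_1,f')$. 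By the anisotropy of $g$, for $a \in K^m$ we have $f(a) = 0$ iff $f_1(a) = 0$ and $f'(a) = 0$, i.e.\ iff $f_1(a) = \cdots = f_r(a) = 0$, so $V(K) = \{a \in K^m : f(a) = 0\}$.

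I do not expect a genuine obstacle here; the whole argument is elementary. The only step requiring any care is the verification that $g$ has no nontrivial $K$-zero, which is exactly where the hypothesis that $K$ is not algebraically closed enters (via the existence of an irreducible polynomial of degree $\ge 2$, hence root-free, over $K$).
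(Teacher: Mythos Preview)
Your proof is correct and follows essentially the same approach as the paper: both take finitely many generators of the ideal of $V$, homogenize an irreducible polynomial of degree $\ge 2$ over $K$ to obtain an anisotropic binary form, and use that form to combine two equations into one (iterated to reduce to a single equation). Your write-up is slightly more careful in verifying anisotropy, but the argument is the same.
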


Given $f \in K[x_1,\ldots,x_m]$ we let $Z(f)$ be $\{ \alpha \in K^m : f(\alpha) = 0\}$.

\begin{proof}
As $K[x_1,...,x_m]$ is Noetherian there are $g_1,\ldots,g_n \in K[x_1,\ldots,x_m]$ such that we have $V=\Spec K[x_1,\ldots,x_m]/(g_1,...,g_n)$.
Then $V(K) = \bigcap_{i = 1}^{n} Z(g_i)$.
Therefore it is enough to fix $g,h \in K[x_1,\ldots,x_m]$ and produce $f \in K[x_1,\ldots,x_m]$ such that $Z(f) = Z(g) \cap Z(h)$.
Let $p \in K[t]$ be an irreducible polynomial of degree $\ge 2$ and $q(t,t')$ be the homogenization of $p(t)$.
If $q(\alpha,\beta) = 0$ for some $\alpha,\beta \in K$, then $\alpha = 0 = \beta$.
Take $f=q(g,h)$. 
\end{proof}

\subsection{The relative Frobenius}
\label{section:relative-frob}
We recall backgroud on the relative Frobenius.
Our reference is SGA $5$ \cite[Expose XV]{SGA5}.
\textbf{We suppose that $\Chara(K) = p > 0$ and $V \to W$ is a dominant morphism of irreducible $K$-varieties}.
We first prove an elementary field-theoretic lemma to be applied to the function field of $V$.

\begin{lemma}
\label{lem:decompose}
Suppose that $K$ is perfect, $K(s_1,\ldots,s_m,t_1,\ldots,t_n)$ is a finitely generated extension of $K$, and $s = (s_1,\ldots,s_m), t = (t_1,\ldots,t_n)$.
Then $K(s,t^{p^r})/K(s)$ is separable when $r \ge 1$ is sufficiently large. 
\end{lemma}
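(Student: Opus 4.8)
The plan is to deduce the lemma from a single-variable statement by induction on $n$. The statement is: for any field $F$ of characteristic $p$ and any element $a$ in an extension of $F$, the extension $F(a^{p^r})/F$ is separable once $r$ is sufficiently large. Granting this, I apply it $n$ times in succession. First take $F = K(s)$ and $a = t_1$ to obtain $r_1$ with $E_1 := K(s,t_1^{p^{r_1}})$ separable over $K(s)$; then take $F = E_1$ and $a = t_2$ to obtain $r_2$ with $E_2 := E_1(t_2^{p^{r_2}})$ separable over $E_1$; continuing, one gets $r_1,\ldots,r_n$ and a tower $K(s) \subseteq E_1 \subseteq \ldots \subseteq E_n = K(s, t_1^{p^{r_1}}, \ldots, t_n^{p^{r_n}})$ in which each step is separable. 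Since separability is transitive in towers, $E_n/K(s)$ is separable. Setting $R = \max_i r_i$, each $t_i^{p^R} = (t_i^{p^{r_i}})^{p^{R - r_i}}$ lies in $E_n$, so $K(s,t^{p^R})$ is an intermediate field of the separable extension $E_n/K(s)$; as a sub-extension of a separable extension is separable, $K(s,t^{p^R})/K(s)$ is separable, and the same holds for every $r \ge R$ because $K(s,t^{p^r}) \subseteq K(s,t^{p^R})$.

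To prove the single-variable statement I would distinguish two cases. If $a$ is transcendental over $F$, then $a^{p^r}$ is transcendental over $F$ for every $r$ --- otherwise $a$ would be algebraic over $F(a^{p^r})$, hence over $F$ --- so $F(a^{p^r})/F$ is purely transcendental and therefore separable, and any $r$ works. If $a$ is algebraic over $F$, let $F_s$ be the separable closure of $F$ inside the finite extension $F(a)$; then $F(a)/F_s$ is purely inseparable, so $a^{p^e} \in F_s$ for some $e$, and for every $r \ge e$ we have $F(a^{p^r}) \subseteq F_s$, which is separable over $F$.

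The lemma is not hard once organized this way; the only points requiring care are bookkeeping (different $t_i$ may force different $r_i$, reconciled at the end by passing to the maximum and using that sub-extensions of separable extensions are separable) and correctly invoking the classical facts about separability of not-necessarily-algebraic extensions --- transitivity in towers, stability under passage to sub-extensions, and separability of purely transcendental extensions --- all of which may be found in Lang's \emph{Algebra} or in the Stacks project. I note that neither perfectness of $K$ nor finite generation of $K(s,t)/K$ is actually needed for this statement.
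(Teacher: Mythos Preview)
Your proof is correct and takes a genuinely different route from the paper's. The paper argues globally: it chooses a transcendence basis to get a tower $K(s) \subseteq L_0 \subseteq L_1 \subseteq K(s,t)$ with $L_0/K(s)$ purely transcendental, $L_1/L_0$ separable algebraic, and $K(s,t)/L_1$ purely inseparable; then each $t_i^{p^{r_i}}$ lands in $L_1$, and taking $r = \max r_i$ gives $K(s,t^{p^r}) \subseteq L_1$, which is separable over $K(s)$. You instead proceed variable by variable, proving a single-generator lemma and building a tower $E_1 \subseteq \cdots \subseteq E_n$ in which each step is separable. Both arguments terminate the same way (take the maximum of the exponents, then use that a sub-extension of a separable extension is separable), and both rely on the same standard package of facts about general separability. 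The paper's approach is a bit slicker in that one decomposition handles all variables at once; yours is more elementary and makes no appeal to the existence of a transcendence basis. Your remark that perfectness of $K$ is not used is correct, and indeed the paper's own proof does not invoke it either.
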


\begin{proof}
Let $K(s) \subseteq L_0 \subseteq L_1 \subseteq K(s,t)$ be field extensions such that $L_0/K(s)$ is purely transcendental, $K(s,t)/L_0$ is algebraic, $L_1/L_0$ is separable, and $K(s,t)/L_1$ is purely inseparable.
Then for each $i \in \{1,\ldots,n\}$ there is $r_i$ such that $t^{p^{r_i}}_i \in L_1$.
Let $r = \max\{r_1,\ldots,r_n\}$.
Then $t^{p^r}_i \in L_1$ for all $i$, so $K(s,t^{p^r})$ is contained in $L_1$.
Thus $K(s,t^{p^r})/K(s)$ is separable.
\end{proof}

For a $K$-variety $X$, we let $\fr_X \colon X \to X$ be the absolute Frobenius morphism. This morphism is the identity on the underlying topological space of $X$ and raises every section to the $p$th power.
If $X = \Spec A$ is affine then $\fr_X$ is dual to the Frobenius $A \to A$.
The absolute Frobenius is a morphism of $K$-varieties if and only if $K$ is the field with $p$ elements.
We let $V^{(p)} \to W$ be the pullback of $V \to W$ via $\fr_W$.
Let $\uppi \colon V^{(p)} \to V$ be the projection, so the following diagram is a pullback square.
\[ \xymatrix{
V^{(p)} \ar[r]^{\uppi} \ar[d] & V \ar[d] \\ W
\ar[r]^{\fr_W} & W}\]
We let $\frrel \colon V \to V^{(p)}$ be the \textbf{relative Frobenius of $V$ over $W$}.
This is the morphism induced by the universal property of the pullback square above.
In particular the diagram below commutes.
\[\xymatrix
   { & V^{(p)}  \ar[dr]^{\uppi} & \\
     V \ar[ur]^{\frrel} \ar[rr]_{\fr_V} & & V
   }
\]
The relative Frobenius is a morphism of $W$-schemes, so $V \to W$ factors as
$$V \xrightarrow{\frrel} V^{(p)} \to W.$$
Fact~\ref{fact:homeo} is \cite[\href{https://stacks.math.columbia.edu/tag/0CCB}{Lemma 0CCB}]{stacks-project}.

\begin{fact}
\label{fact:homeo}
$\frrel$ is a homeomorphism $V \to V^{(p)}$.
\end{fact}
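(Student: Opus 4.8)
The plan is to reduce Fact~\ref{fact:homeo} to two standard facts: the absolute Frobenius of an $\Ff_p$-scheme is a universal homeomorphism, and universal homeomorphisms are stable under base change. The whole argument is organized around the commuting triangle $\pi \circ \frrel = \fr_V$ displayed above.

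First I would recall why the absolute Frobenius $\fr_X$ of any $\Ff_p$-scheme $X$ is a universal homeomorphism. By construction it is the identity on the underlying topological space, so it is surjective; and since every local section $a$ of the structure sheaf satisfies $T^p - a^p = (T - a)^p$, the morphism is integral and universally injective. Together these give a universal homeomorphism. Applying this to $W$, the absolute Frobenius $\fr_W : W \to W$ is a universal homeomorphism. Since $\pi : V^{(p)} \to V$ is, by construction, the base change of $\fr_W$ along the structure morphism $V \to W$, and universal homeomorphisms are preserved by base change, $\pi$ is a universal homeomorphism; in particular $\pi$ is bijective on points and $|\pi|$ is a homeomorphism.

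Finally I would read off the conclusion from $\pi \circ \frrel = \fr_V$. On underlying topological spaces $|\fr_V|$ is the identity of $|V|$ and $|\pi|$ is a bijection, so $|\frrel|$ is a bijection and in fact $|\frrel| = |\pi|^{-1}$ at the level of sets. The map $\frrel$ is continuous, being a morphism of schemes, and its inverse is $|\pi|$, which is also continuous; hence $|\frrel|$ is a homeomorphism, namely the inverse of the homeomorphism $|\pi|$. Note that this argument uses neither perfectness of $K$ nor dominance of $V \to W$: the relative Frobenius is always a universal homeomorphism. The only points needing care are the two cited ingredients -- that the absolute Frobenius is a universal homeomorphism and that base change preserves this property -- and both are entirely standard, so I do not anticipate any real obstacle; indeed for the purely topological statement it already suffices to note that $\pi$ is a base change of the homeomorphism $\fr_W$ and then extract $|\frrel| = |\pi|^{-1}$ from $\pi \circ \frrel = \fr_V$.
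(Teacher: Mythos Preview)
Your argument is correct. The paper does not actually prove Fact~\ref{fact:homeo}; it simply cites \cite[Lemma 0CCB]{stacks-project}, which states that the relative Frobenius is a universal homeomorphism. Your proposal supplies a short direct proof of the (weaker) topological statement by factoring $\fr_V = \pi \circ \frrel$, observing that $\pi$ is a base change of the universal homeomorphism $\fr_W$, and reading off $|\frrel| = |\pi|^{-1}$. One small remark: your final sentence asserts that the relative Frobenius is always a \emph{universal} homeomorphism, but the argument as written only establishes that it is a homeomorphism; to get the universal statement you would need to note that the triangle $\pi \circ \frrel = \fr_V$ is compatible with arbitrary base change over $W$ and rerun the same reasoning, or invoke the criterion (integral, universally injective, surjective) directly. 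For the purposes of Fact~\ref{fact:homeo} as stated, however, what you wrote suffices.
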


Given a $W$-scheme $Y \to W$ and a $W$-scheme morphism $f \colon Y \to V$ we let $f^{(p)} \colon Y^{(p)} \to V^{(p)}$ be the morphism given by base-changing along $\fr_W$.

\meno
We explain the situation in the affine case.
Suppose that $W = \Spec A$ and $V = \Spec B$ for $K$-algebras $A,B$.
Then $V^{(p)} = \Spec B\! \otimes_A \!A$ where the map $A \to A$ is the Frobenius and $\frrel \colon V \to V^{(p)}$ is dual to the map $B \!\otimes_A\! A \to B$ given by $b \otimes a \mapsto b^p a$.

\meno
We also require the $r$-fold iterates of the relative Frobenius.
For all $r \ge 1$ we define $V^{(p^{r+1})}$ to be $(V^{(p^r)})^{(p)}$ and let $\fr^{(r)}_{V/W} \colon V \to V^{(p^r)}$ be given by 
$$\fr^{(r + 1)}_{V/W} = \fr_{V^{(p^r)}/W} \circ \fr^{(r)}_{V/W}.$$
Then $\fr^{(r)}_{V/W}$ is the $r$th iterate of the relative Frobenius.
Furthermore $V \to W$ factors as
$$ V \xrightarrow{\fr^{(r)}_{V/W}} V^{(p^r)} \to W $$
for each $r \ge 1$.
By Fact~\ref{fact:homeo} and induction each $\fr^{(r)}_{V/W}$ is dominant. However, $V^{(p^r)}$ is not reduced in general, see Example~\ref{eg:non-red}. For our purpose, a slightly modified version of the above will be needed to stay in the realm of varieties. We equip $V^{(p^r)}$ with the canonical reduced structure given by the closed immersion $V^{(p^r)}_{\mathrm{red}}\to V^{(p^r)}$ defined as in~\cite[Definition 01J4]{stacks-project}. Since $V$ is reduced, $\fr^{(r)}_{V/W}$ factors through the above as
$$ V \xrightarrow{\fr^{(r),\mathrm{\red}}_{V/W}} V^{(p^r)}_{\mathrm{red}}\to V^{(p^r)} $$
for each $r\geq 1$~\cite[Lemma 0356]{stacks-project}. We call $\fr^{(r),\mathrm{\red}}_{V/W}$ the ($r$-th iterate of) reduced relative Frobenius. In particular, $V^{(p^r)}_{\mathrm{red}}$ is an irreducible variety, so the field extension $K(V)/K(W)$ decomposes into $K(V)/K(V^{(p^r)}_{\mathrm{red}})$ and $K(V^{(p^r)}_{\mathrm{red}})/K(W)$ for all $r \ge 1$.

Fact~\ref{fact:frob-affine} follows by the comments on the affine case above and induction.

\begin{fact}
\label{fact:frob-affine}
If $V$  and $W$ are affine then $V^{(p^r)}$ and $V^{(p^r)}_{\mathrm{red}}$ are affine for all $r \ge 1$.
\end{fact}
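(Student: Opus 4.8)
The plan is to prove this by induction on $r$, bootstrapping from the explicit description of the relative Frobenius in the affine case recalled above. The key observation is that that description applies not just to $V$ but to any affine $W$-scheme: if $W = \Spec A$ and $Y \to W$ is a $W$-scheme with $Y = \Spec C$ affine (so that $C$ is an $A$-algebra), then $Y^{(p)}$, being the pullback of $Y \to W$ along $\fr_W = \Spec(\text{Frobenius of } A)$, is $\Spec(C \otimes_A A)$ with the tensor product formed using the Frobenius $A \to A$; in particular $Y^{(p)}$ is again affine.

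For the base case $r = 1$, write $V = \Spec B$ and apply the above to get $V^{(p)} = \Spec(B \otimes_A A)$, which is affine. For the inductive step, suppose $V^{(p^r)}$ is affine. By construction $V^{(p^r)}$ is a $W$-scheme: the iterated relative Frobenius factors $V \to W$ through $V^{(p^r)}$, so there is a structure morphism $V^{(p^r)} \to W$. Since $V^{(p^{r+1})}$ is by definition $(V^{(p^r)})^{(p)}$, applying the affine description to the $W$-scheme $V^{(p^r)}$ in place of $Y$ shows $V^{(p^{r+1})}$ is affine. This completes the induction.

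I do not expect any genuine obstacle: the only point to be careful about is that at each stage one is twisting a legitimate affine $W$-scheme, which is immediate since the relative Frobenius and its iterates are $W$-scheme morphisms by construction. (Irreducibility of $V^{(p^r)}$, which follows from Fact~\ref{fact:homeo} and induction as noted above, plays no role in the affineness statement but confirms that $V^{(p^r)}$ is an irreducible $K$-variety at every step.)
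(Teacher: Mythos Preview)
Your proposal is correct and matches the paper's approach exactly: the paper simply states that Fact~\ref{fact:frob-affine} ``follows by the comments on the affine case above and induction,'' which is precisely your argument --- use the explicit description $V^{(p)} = \Spec(B \otimes_A A)$ for the base case, then observe this applies to any affine $W$-scheme to carry through the induction via $V^{(p^{r+1})} = (V^{(p^r)})^{(p)}$.
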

\meno
We now make some further remarks on the affine case.
As $V \to W$ is dominant the dual $K$-algebra morphism $K[W] \to K[V]$ is injective, so we consider $K[W]$ to be a subring of $K[V]$.
Let $s = (s_1,\ldots,s_m)$ and $t = (t_1,\ldots,t_n)$ be such that $K[W] = K[s]$ and $K[V] = K[s,t]$.
Let $K[s,y]$ be the polynomial ring over $K[s]$ in the variables $y= (y_1,\ldots,y_m)$.
Let $\uprho \colon K[s,y] \to K[s,t]$ be the $K[s]$-algebra morphism given by $\uprho(y_i) = t_i$ for each $i$, $I$ be the kernel of $\uprho$, and identify $K[s,t]$ with $K[s,y]/I$.

\meno
Given $j = (j_1,\ldots,j_n) \in \Nn^n$ we let $y^j = (y^{j_1}_1,\ldots,y^{j_n}_n)$.
For any $f \in K[s,y]$ we have $f = f_1 y^{j_1} + \cdots + f_k y^{j_k}$
for some $f_1,\ldots,f_k \in K[s]$, and $j_1,\ldots,j_k \in \Nn^n$.
We then let $f^{(p)} = f^p_1 y^{j_1} + \cdots + f^p_k y^{j_k}$ and let $I^{(p)}$ be the ideal generated by $f^{(p)}$, $f$ ranging over $I$.

Then $V^{(p)} = \Spec K[s,y]/I^{(p)}$ and $V^{(p)}_{\mathrm{red}}=\Spec K[s,y]/\mathrm{rad}(I^{(p)})$, where the canonical closed immersion is the dual of the natural quotient map.

\meno

Let $\uptau \colon K[s,y] \to K[s,t]$ be the $K[s]$-algebra morphism given by $\uptau(a) = a$ for all $a \in K[s]$ and $\uptau(y_i) = t^p_i$ for each $i$. Then $\mathrm{rad}(I^{(p)})$ is the kernel of $\uptau$.
Therefore $\uptau$ factors as 
$$K[s,y] \to K[s, y]/\mathrm{rad}(I^{(p)}) \xrightarrow[]{\upsigma} K[s,t]$$ for some injective $K[s]$-algebra morphism $\upsigma$. The reduced relative Frobenius $\frrel^{\mathrm{red}}$ is dual to $\upsigma$.
The image of $\uptau$ is $K[s,t^p]$, so $\upsigma$ gives a $K[s]$-algebra isomorphism $K[s, y]/\mathrm{rad}(I^{(p)})=K[V^{(p)}_\mathrm{red}] \to K[s,t^p]$.
Fact~\ref{fact:iterate} follows by induction.

\begin{fact}
\label{fact:iterate}
Let $V$ and $W$ be affine and $s = (s_1,\ldots,s_m)$, $t = (s_1,\ldots,t_n)$ be such that $K[W] = K[s]$ and $K[V] = K[s,t]$.
For each $r \ge 1$ there is a $K[s]$-algebra isomorphism $K[V^{(p^r)}_{\mathrm{red}}] \to K[s,t^{p^r}]$ and a $K(s)$-algebra isomorphism $K(V^{(p^r)}_{\mathrm{red}}) \to K(s,t^{p^r})$ for each $r \ge 1$.
\end{fact}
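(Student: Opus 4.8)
The plan is a straightforward induction on $r$. The base case $r = 1$ is precisely what was established in the paragraph preceding the statement: the map $\sigma$ exhibits a $K[s]$-algebra isomorphism $K[V^{(p)}] \to K[s,t^p]$, and moreover $\frrel : V \to V^{(p)}$ is dual to the inclusion $K[s,t^p] \hookrightarrow K[s,t] = K[V]$, so that $K[V^{(p)}]$ is identified with the subring $K[s,t^p]$ of $K[V]$. Since $V^{(p)}$ is an irreducible affine $K$-variety (affine by Fact~\ref{fact:frob-affine}, irreducible since $\frrel$ is a homeomorphism by Fact~\ref{fact:homeo}), passing to fraction fields gives the corresponding $K(s)$-algebra isomorphism $K(V^{(p)}) \to K(s,t^p)$.

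For the inductive step, suppose $r \ge 1$ and that we have identified $K[V^{(p^r)}]$ with the subring $K[s,t^{p^r}]$ of $K[V]$, with the iterated relative Frobenius $\fr^{(r)}_{V/W} : V \to V^{(p^r)}$ dual to that inclusion. The morphism $V^{(p^r)} \to W$ is again a dominant morphism of irreducible affine $K$-varieties (dominance of $\fr^{(r)}_{V/W}$ and of $V \to W$ was noted above; affineness and irreducibility as before), and with respect to the generators $u := (t_1^{p^r},\ldots,t_n^{p^r})$ of $K[V^{(p^r)}]$ over $K[W] = K[s]$ the entire affine-case setup of this subsection applies verbatim to $V^{(p^r)} \to W$ in place of $V \to W$. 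Applying the base case to this situation, and using $(V^{(p^r)})^{(p)} = V^{(p^{r+1})}$ together with $(t_i^{p^r})^p = t_i^{p^{r+1}}$, we obtain that $K[V^{(p^{r+1})}]$ is identified with the $K[s]$-subalgebra of $K[s,t^{p^r}]$ generated by $t_1^{p^{r+1}},\ldots,t_n^{p^{r+1}}$, namely $K[s,t^{p^{r+1}}]$. Passing to fraction fields yields the $K(s)$-algebra isomorphism $K(V^{(p^{r+1})}) \to K(s,t^{p^{r+1}})$, completing the induction.

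I do not anticipate a real obstacle; the only point requiring attention is checking that the constructions of this subsection — the ideal $I^{(p)}$, the map $\tau$, its factorization through $\sigma$ — depend only on a dominant morphism of irreducible affine $K$-varieties together with a choice of algebra generators for the source over the target, so that they may legitimately be reinstantiated with $V^{(p^r)} \to W$ playing the role of $V \to W$. This is clear from the definitions, but it is worth stating explicitly, as is the remark that each $V^{(p^r)}$ is irreducible (so that $K(V^{(p^r)})$ is defined and equals $\Frac(K[V^{(p^r)}])$).
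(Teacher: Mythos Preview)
Your proposal is correct and takes exactly the same approach as the paper, which simply states that Fact~\ref{fact:iterate} ``follows by induction'' from the preceding affine description of $\frrel$. You have merely spelled out the induction in full, including the routine verifications (affineness, irreducibility, reinstantiation of the construction for $V^{(p^r)} \to W$) that the paper leaves implicit.
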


We give an example to further illustrate the picture before continuing.

\begin{example}\label{eg:non-red}
Let $V=\Spec K[X,Y]/(Y-X^p)$ and $W=\Spec K[Y]$ where $V\to W$ is the projection map, which is the dual of the inclusion $K[Y]\to K[X,Y]/(Y-X^p)$. Then $V^{(p)}$ is $\Spec K[X,Y]/(Y^p-X^p)$, which is a non-reduced scheme. $\fr_{V/W}:V\to V^{(p)}$ is the dual of the map 
\[
 K[X,Y]/(Y^p-X^p)\to K[X,Y]/(Y-X^p),
 Y\mapsto Y, X\mapsto X^p.
\]
$V^{(p)}_{\mathrm{red}}$ is $\Spec K[X,Y]/(Y-X)$ and the canonical closed immersion $V^{(p)}_{\mathrm{red}}\to V^{(p)}$ is the dual of 
\[K[X,Y]/(Y^p-X^p)\to K[X,Y]/(Y-X), X\mapsto X, Y\mapsto Y.\] $\fr^{\mathrm{red}}_{V/W}:V\to V^{(p)}_{\mathrm{red}}$ is the dual of the map 
\[K[X,Y]/(Y-X)\to K[X,Y]/(Y-X^p),  Y\mapsto Y, X\mapsto X^p.\]
\end{example}

\begin{lemma}
\label{lem:function-field-frob}
$K(V^{(p^r)}_\mathrm{red})/K(W)$ is separable when $r \ge 1$ is sufficiently large.
\end{lemma}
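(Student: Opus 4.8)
The plan is to reduce to the affine case and then combine Fact~\ref{fact:iterate} with Lemma~\ref{lem:decompose}. First I would pass to affine open subvarieties: choose an affine open $W_0\subseteq W$ and an affine open $V_0$ contained in the preimage of $W_0$. Since $V\to W$ is dominant and $V,W$ are irreducible, $V_0$ contains the generic point of $V$ and $W_0$ contains the generic point of $W$, so $V_0\to W_0$ is again a dominant morphism of irreducible affine $K$-varieties, and $K(V_0)=K(V)$, $K(W_0)=K(W)$. The one point that needs a short argument is that this does not change the relative Frobenius twists either, i.e. $K(V_0^{(p^r)})=K(V^{(p^r)})$ for every $r$. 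This holds because the absolute Frobenius is compatible with open immersions, so the operation $(-)^{(p)}$ carries the open immersion $V_0\hookrightarrow V$ of $W$-schemes to an open immersion $V_0^{(p)}\hookrightarrow V^{(p)}$ (open immersions are stable under the base change along $\fr_W$ defining the twist); iterating gives a nonempty open immersion $V_0^{(p^r)}\hookrightarrow V^{(p^r)}$. Since $V^{(p^r)}$ is irreducible --- it is homeomorphic to $V$ by iterating Fact~\ref{fact:homeo} --- a nonempty open subvariety has the same function field, which gives the identification.

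Thus I may assume $V=\Spec K[s,t]$ and $W=\Spec K[s]$ with $K[W]=K[s]\subseteq K[V]=K[s,t]$, exactly the setup preceding Fact~\ref{fact:iterate}. That fact then supplies, for each $r\ge 1$, a $K(s)$-algebra isomorphism $K(V^{(p^r)})\to K(s,t^{p^r})$, which, as $K(W)=K(s)$, is an isomorphism of field extensions of $K(W)$. Now $K(s,t)=K(V)$ is a finitely generated extension of $K$, so Lemma~\ref{lem:decompose} applies and tells us that $K(s,t^{p^r})/K(s)$ is separable once $r$ is sufficiently large. Transporting separability across the isomorphism yields that $K(V^{(p^r)})/K(W)$ is separable for all sufficiently large $r$, which is the claim.

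The only mildly delicate step is the compatibility of the relative Frobenius with restriction to an affine open, namely the identification $K(V_0^{(p^r)})=K(V^{(p^r)})$; once this is recorded, the rest is a direct appeal to Fact~\ref{fact:iterate} and Lemma~\ref{lem:decompose}, so I do not anticipate a genuine obstacle.
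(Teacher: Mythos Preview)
Your argument is correct and essentially identical to the paper's: both pass to affine opens, use stability of open immersions under base change along $\fr_W$ to obtain $V_0^{(p^r)}\hookrightarrow V^{(p^r)}$ and hence identify function fields, and then invoke Fact~\ref{fact:iterate} together with Lemma~\ref{lem:decompose}. The one point you leave implicit---that the relative Frobenius of $V_0$ over $W_0$ agrees with that over $W$, needed when you replace $W$ by $W_0$ before applying Fact~\ref{fact:iterate}---is handled in the paper by observing via Fact~\ref{fact:homeo} that the image of $O^{(p^r)}\to W$ already lands in the chosen affine open of $W$, which amounts to the same thing.
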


\begin{proof}
The case when $V$ and $W$ are affine follows from Fact~\ref{fact:iterate} and Lemma~\ref{lem:decompose}.
We now reduce the general case to the case when $V$ and $W$ are affine.
Suppose that $U$ is a dense affine open subvariety of $W$ and $O$ is a dense affine open subvariety of $V$ contained in the preimage of $U$.
We have $K(U) = K(W)$, $K(O) = K(V)$, and we identify the extension $K(V)/K(W)$ with $K(O)/K(U)$.
Let $h \colon O \to V$ be the inclusion.
Then $h^{(p)} \colon O^{(p)} \hookrightarrow V^{(p)}$ is an open immersion as open immersions are closed under base change. 
By induction there is an open immersion $O^{(p^r)} \to V^{(p^r)}$ for each $r \ge 1$. By the fact $X_\mathrm{red}\to X$ is functorial, 
we may consider $O^{(p^r)}_{\mathrm{red}}$ to be an open subvariety of $V^{(p^r)}_{\mathrm{red}}$ and identify  $K(V^{(p^r)}_{\mathrm{red}})$ with $K(O^{(p^r)}_{\mathrm{red}})$.
By Fact~\ref{fact:frob-affine} each $O^{(p^r)}_{\mathrm{red}}$ is affine.
The morphism $O \to W$ factors as 
$$O \xrightarrow[]{\fr^{(r),\mathrm{red}}_{O/W}} O^{(p^r)}_{\mathrm{red}} \to W.$$
Thus by Fact~\ref{fact:homeo} the image of $O^{(p^r)}_{\mathrm{red}} \to W$ is contained in $U$.
Therefore the extension $K(V)/K(V^{(p^r)}_{\mathrm{red}})$, $K(V^{(p^r)}_{\mathrm{red}})/K(W)$ can be identified with $K(O)/K(O^{(p^r)}_{\mathrm{red}})$, $K(O^{(p^r)}_{\mathrm{red}})/K(U)$, respectively.
After replacing $V$ with $O$ and $W$ with $U$ we can suppose that both $V$ and $W$ are affine $K$-varieties.
\end{proof}

\subsection{The \'etale open topology}
\label{section:general}
Let $V$ be a $K$-variety.
An \textbf{\'etale image} in $V(K)$ is the image of $X(K) \to V(K)$ for some \'etale morphism $X \to V$ of $K$-varieties.
It is shown in \cite[Theorem A]{firstpaper} that \'etale images in $V(K)$ form a basis for a topology on $V(K)$ refining the Zariski topology which we refer to as the \textbf{\'etale open ($\cE_K$-)topology}.
Fact~\ref{fact:large-potato} is proven in \cite[Theorem C]{firstpaper}.

\begin{fact}
\label{fact:large-potato}
The following are equivalent:
\begin{enumerate}
\item $K$ is large,
\item the \'etale open topology on $K = \Aa^1(K)$ is not discrete,
\item the \'etale open topology on $V(K)$ is non-discrete whenever $V(K)$ is infinite.
\end{enumerate}
\end{fact}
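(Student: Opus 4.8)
The plan is to establish the cycle $(1)\Rightarrow(3)\Rightarrow(2)\Rightarrow(1)$. We may assume $K$ is infinite, since a finite field is not large and the $\Sa E_K$-topology on a finite set, refining the discrete Zariski topology, is discrete. The only tool used beyond the definition of the $\Sa E_K$-topology is functoriality: for any morphism $f:V\to W$ and any $\Sa E_K$-open $O\subseteq W(K)$, the preimage $f^{-1}(O)$ is $\Sa E_K$-open in $V(K)$ --- indeed if $O$ is the image of $Y(K)\to W(K)$ for an \'etale $Y\to W$, then $f^{-1}(O)$ is the image of $(V\times_W Y)(K)\to V(K)$ and $V\times_W Y\to V$ is \'etale. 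In particular, if $C$ is a subvariety of $V$ and $\{a\}$ is $\Sa E_K$-open in $V(K)$ for some $a\in C(K)$, then $\{a\}$ is $\Sa E_K$-open in $C(K)$.

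The crux is the following curve case: \emph{if $K$ is large and $C$ is a smooth irreducible $K$-curve with $C(K)\neq\emptyset$, then no singleton of $C(K)$ is $\Sa E_K$-open.} Suppose $\{a\}$ is $\Sa E_K$-open with $a\in C(K)$; then there are an \'etale $g:X\to C$ and $b\in X(K)$ with $g(b)=a$ and $g(X(K))=\{a\}$. Since $g$ is \'etale and $C$ is smooth of dimension one, $X$ is a smooth $K$-variety of dimension one, so its irreducible components are pairwise disjoint open subvarieties; let $X_0$ be the one containing $b$. Then $X_0\to C$ is \'etale, $X_0$ is a smooth irreducible $K$-curve, and $b\in X_0(K)$, so $X_0(K)$ is infinite by largeness. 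But $X_0(K)\subseteq g^{-1}(a)$, and the fiber $g^{-1}(a)$ is (the point set of) a $K$-variety of dimension zero since $g$ is quasi-finite, hence finite by Fact~\ref{fact:finite-var} --- a contradiction.

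For $(1)\Rightarrow(3)$, let $K$ be large and $V(K)$ infinite; we find $a\in V(K)$ with $\{a\}$ not $\Sa E_K$-open. Each irreducible component $W$ of the Zariski closure $\overline{V(K)}$ satisfies $W=\overline{V(K)\cap W}$ (as $\overline{V(K)}=\bigcup_W\overline{V(K)\cap W}$ and $W$ is irreducible), and some such $W$ is positive-dimensional (since $\overline{V(K)}$ is infinite, by Fact~\ref{fact:finite-var}), so $V(K)\cap W$ is dense in the infinite variety $W$ and is therefore infinite. Replacing $V$ by this $W$ --- legitimate, by functoriality along the closed immersion $W\hookrightarrow V$ --- we may assume $V$ is irreducible and reduced with $V(K)$ Zariski dense, say $\dim V=d\ge 1$. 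Then $V$ has a smooth $K$-point: if $K$ is perfect the smooth locus equals the dense regular locus, which meets the dense set $V(K)$; if $K$ is imperfect and $V$ had no smooth point then $K(V)/K$ would be inseparable, so $K(V)$ would contain $c^{1/p}$ for some $c\in K\setminus K^p$, forcing every point of $V(K)$ into the proper closed pole locus of $c^{1/p}$, contradicting density. Fix a smooth $a\in V(K)$. A smooth variety admits \'etale coordinates Zariski-locally, so choose an open $U\ni a$ and an \'etale $\psi:U\to\Aa^d$ with $\psi(a)=0$, and let $C$ be the irreducible component through $a$ of $\psi^{-1}(\Aa^1\times\{0\}^{d-1})$. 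Then $C$ is a smooth irreducible $K$-curve, a subvariety of $V$, with $a\in C(K)$, so by the curve case $\{a\}$ is not $\Sa E_K$-open in $C(K)$, hence not in $V(K)$.

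The implication $(3)\Rightarrow(2)$ is immediate, applying $(3)$ to $V=\Aa^1$ since $K$ is infinite. For $(2)\Rightarrow(1)$ we argue contrapositively: if $K$ is not large there is (by a standard reformulation) a smooth irreducible affine $K$-curve $C$ with $C(K)$ nonempty and finite; fix $a\in C(K)$. Shrinking to an open $C'\ni a$ we get an \'etale $t:C'\to\Aa^1$ with $t(a)=0$; then $t(C'(K))$ is a finite subset of $K$ containing $0$, and deleting from $C'$ the finitely many points (quasi-finiteness of $t$) lying over the other values gives an open $C''\ni a$ with $t:C''\to\Aa^1$ \'etale and $t(C''(K))=\{0\}$. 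Thus $\{0\}$ is an \'etale image, so $\{0\}$ is $\Sa E_K$-open in $K$; since $x\mapsto x+c$ is an automorphism of $\Aa^1$, every singleton of $K$ is $\Sa E_K$-open and the $\Sa E_K$-topology on $K$ is discrete, negating $(2)$. The main obstacle is the reduction in $(1)\Rightarrow(3)$: passing from ``$V(K)$ is infinite'' to an actual smooth irreducible curve \emph{inside} $V$ carrying a $K$-point requires both the component reduction above and the characteristic-sensitive existence of a smooth $K$-point; the curve case itself is then a short quasi-finiteness argument, and the remaining implications are bookkeeping with the basis of \'etale images.
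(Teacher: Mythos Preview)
The paper does not prove this fact but cites \cite{firstpaper}, so there is no in-paper proof to compare against; your self-contained argument is welcome and its architecture --- the curve case via quasi-finiteness of \'etale maps, the reduction in $(1)\Rightarrow(3)$ to a smooth irreducible curve through a smooth $K$-point, and the manufacture in $\neg(1)\Rightarrow\neg(2)$ of an \'etale image $\{0\}$ --- is sound. There is, however, a genuine gap in the imperfect branch of $(1)\Rightarrow(3)$: you claim that if $K(V)/K$ is inseparable then $K(V)$ contains $c^{1/p}$ for some $c\in K\setminus K^p$, but this fails once $[K:K^p]>p$. For $K=\Ff_p(s,t)$ and $F=K(x,y)$ with $y^p=sx^p+t$ one has $\dim_F\Omega_{F/K}=2>1=\trdg(F/K)$, so $F/K$ is inseparable, yet $F^p=K^p(x^p,sx^p+t)$ satisfies $K\cap F^p=K^p$: if $c\in(K\cap F^p)\setminus K^p$ then $[K^p(x^p,c):K^p(x^p)]=p=[F^p:K^p(x^p)]$ forces $F^p=K^p(c)(x^p)$, whence $s,t\in K^p(c)$, impossible since $[K^p(c):K^p]=p<p^2=[K:K^p]$. (This $V$ has $V(K)=\emptyset$, so it refutes your intermediate claim, not your conclusion.)

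The conclusion of that step is correct and the repair is short. On an affine open $\Spec A\subseteq V$, Zariski density of $V(K)$ gives an embedding $A\hookrightarrow\prod_{a\in V(K)}K$. If $\alpha=\sum_i f_i\otimes c_i^{1/p}\in A\otimes_K K^{1/p}$ is nilpotent, with the $c_i^{1/p}$ linearly independent over $K$, then for each $a\in V(K)$ the image of $\alpha$ in $K^{1/p}$ is nilpotent and hence zero, so each $f_i(a)=0$; density yields $f_i=0$ and thus $\alpha=0$. Therefore $A$ is geometrically reduced, $K(V)/K$ is separable, the smooth locus of $V$ is nonempty and dense, and it meets the dense set $V(K)$ --- giving the smooth $K$-point you need. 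With this correction the rest of your proof goes through unchanged.
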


Fact~\ref{fact:first-paper-system} is also proven in \cite{firstpaper}. (1)-(4) follows from~\cite[Proposition 5.5]{firstpaper}, (5) is~\cite[Proposition 7.24]{firstpaper} and (6) is~\cite[Proposition 4.6]{firstpaper}.

\begin{fact}
\label{fact:first-paper-system}
Suppose that $V \to W$ is a morphism between $K$-varieties.
Equip $V(K)$ and $W(K)$ with their \'etale open topologies and let  $V(K) \to W(K)$ be the induced map.
Then:
\begin{enumerate}
\item $V(K) \to W(K)$ is continuous,
\item if $V \to W$ is a (scheme-theoretic) closed immersion then $V(K) \to W(K)$ is a\newline (topological) closed embedding,
\item if $V \to W$ is a (scheme-theoretic) open immersion then $V(K) \to W(K)$ is a\newline (topological) open embedding,
\item if $V \to W$ is \'etale then $V(K) \to W(K)$ is open,
\item the projection $V(K) \times W(K) \to V(K)$ is open when $V(K) \times W(K) = (V \times W)(K)$ is also equipped with the \'etale open topology,
\item the \'etale open topology on $V(K) \times W(K)$ refines the product of the \'etale open topologies on $V(K)$ and $W(K)$.
\end{enumerate}
\end{fact}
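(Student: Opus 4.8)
The plan is to derive all six assertions from two stability properties of \'etale morphisms --- closure under base change and closure under composition --- together with the elementary fact that $K$-points commute with fibered products, so that $(V \times_W Y)(K) = V(K) \times_{W(K)} Y(K)$ for all $K$-varieties. The mechanism common to most of the statements is this: if $Y \to W$ is \'etale with \'etale image $O \subseteq W(K)$ and $g : V \to W$ is any morphism, then $V \times_W Y \to V$ is \'etale and, by the fibered-product identity, its \'etale image is precisely $g_K^{-1}(O)$. Since such $O$ form a basis for the $\Sa E_K$-topology on $W(K)$, this already establishes $(1)$.

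For $(4)$: if $V \to W$ is \'etale and $O = \operatorname{image}(X(K) \to V(K))$ with $X \to V$ \'etale, then $X \to V \to W$ is \'etale and the image of $O$ under $V(K) \to W(K)$ equals $\operatorname{image}(X(K) \to W(K))$, a basic $\Sa E_K$-open; as images commute with unions, $V(K) \to W(K)$ is open. Open immersions are \'etale, so $(3)$ follows from $(1)$ and $(4)$ once one recalls that a continuous injective open map is a homeomorphism onto its image and that the image $V(K)$ is Zariski-open, hence $\Sa E_K$-open, in $W(K)$. For $(6)$: if $X \to V$ and $Y \to W$ are \'etale then $X \times Y \to V \times W$ is \'etale with \'etale image $\operatorname{image}(X(K) \to V(K)) \times \operatorname{image}(Y(K) \to W(K))$, so every product of two basic opens is $\Sa E_K$-open in $(V \times W)(K)$, which is exactly the refinement asserted in $(6)$. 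For $(5)$, take a basic open $O = \operatorname{image}(X(K) \to (V \times W)(K))$ with $X \to V \times W$ \'etale and a point $v$ in the projection of $O$, witnessed by some $x \in X(K)$ lying over $(v,w)$; base-changing $X \to V \times W$ along the section $V \xrightarrow{\ \sim\ } V \times \{w\} \hookrightarrow V \times W$ determined by $w \in W(K)$ produces a scheme \'etale over $V$ which carries a $K$-point over $v$ and whose \'etale image is contained in the projection of $O$. Hence that projection is a union of basic opens, so it is open.

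The real content is $(2)$, the only statement not formally forced by base change and composition, since closed immersions are not \'etale. Injectivity of $V(K) \to W(K)$ is clear, continuity is $(1)$, and the image $V(K)$ is Zariski-closed, hence $\Sa E_K$-closed, in $W(K)$; what must be shown is that every basic $\Sa E_K$-open $O = \operatorname{image}(X(K) \to V(K))$ of $V(K)$ is the intersection with $V(K)$ of an $\Sa E_K$-open of $W(K)$. Reducing to $W = \Spec A$ affine --- legitimate since, by the open-immersion case $(3)$, the $\Sa E_K$-topology is local on $W$ and closed embeddings glue over an open cover of the target --- and writing $V = \Spec A/I$, I would invoke the local structure theorem for \'etale morphisms to cover $X$ by standard-\'etale pieces $X_j \cong \Spec\big((A/I)[t]/(\bar f_j)\big)[1/\bar g_j]$ with $\bar f_j$ monic in $t$ and $\bar f_j'$ invertible. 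Lifting $\bar f_j$ to a monic $f_j \in A[t]$ and $\bar g_j$ to some $g_j \in A[t]$ and inverting $g_j f_j'$ gives a standard-\'etale $\tilde X_j \to W$ whose base change to $V$ recovers $X_j$ (inverting the already-invertible $\bar f_j'$ changes nothing). Then $\bigcup_j \operatorname{image}(\tilde X_j(K) \to W(K))$ is $\Sa E_K$-open in $W(K)$, and by the fibered-product identity once more its intersection with $V(K)$ equals $\bigcup_j \operatorname{image}(X_j(K) \to V(K)) = O$. This passage --- from the local structure of an \'etale morphism over a closed subscheme to an honest \'etale morphism over the ambient scheme --- is the one genuinely non-formal ingredient, and is where I expect the bookkeeping to be heaviest.
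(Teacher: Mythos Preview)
Your argument is correct. Note, however, that the paper does not prove this statement at all: it records it as a Fact and cites \cite{firstpaper}, where the \'etale open topology was introduced and these properties established. So there is no in-text proof to compare against.

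Your proof is essentially the natural one and is almost certainly the same as (or a close variant of) what appears in \cite{firstpaper}: items $(1)$, $(4)$, $(5)$, $(6)$ are forced by stability of \'etale morphisms under base change and composition together with $(V \times_W Y)(K) = V(K) \times_{W(K)} Y(K)$, exactly as you say; $(3)$ then follows because open immersions are \'etale. Your treatment of $(2)$ via the local structure theorem for \'etale morphisms --- lifting a standard \'etale presentation over $A/I$ to one over $A$ by lifting the monic $\bar f_j$ and the localizing element $\bar g_j$, then inverting $g_j f_j'$ to force \'etaleness upstairs --- is the right idea and is correctly executed. The observation that inverting the already-invertible $\bar f_j'$ leaves the base change unchanged is exactly the point that makes the lift work. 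The reduction to affine $W$ using $(3)$ is legitimate and non-circular, since you derived $(3)$ from $(1)$ and $(4)$ only.

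One small cosmetic remark: in the reduction for $(2)$ you should also note that $X$, being a $K$-variety, is quasi-compact, so finitely many standard-\'etale charts $X_j$ suffice; this keeps the union $\bigcup_j \tilde X_j$ a $K$-variety. You implicitly use this but do not say it.
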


Recall that an \'ez subset of $V(K)$ is a finite union of definable \'etale open subsets of Zariski closed subsets of $V(K)$.
For this definition to make sense we need to define the \'etale open topology on a Zariski closed subset of $V(K)$.
If $Z \subseteq V(K)$ is Zariski closed then there is a closed subvariety $W$ of $V$ such that $Z  = W(K)$, so we define the \'etale open topology on $Z$ to agree with the \'etale open topology $W(K)$.
Proposition~\ref{prop:well-define} ensures that this does not depend on choice of $W$.
Proposition~\ref{prop:well-define} follows immediately from the second and third items of Fact~\ref{fact:first-paper-system} and will be used implicitly below at many points.

\begin{proposition}
\label{prop:well-define}
Suppose that $W$ is a subvariety of $V$.
Then the \'etale open topology on $W(K)$ agrees with the subspace topology on $W(K)$ induced by the \'etale open topology on $V(K)$.
If $W'$ is another subvariety of $V$ with $W'(K) = W(K)$ then the \'etale open topology on $W(K)$ agrees with the \'etale open topology on $W'(K)$.
\end{proposition}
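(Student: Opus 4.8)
The plan is to reduce everything to the two embedding statements, items $(2)$ and $(3)$, of Fact~\ref{fact:first-paper-system}. Recall that by our conventions a subvariety $W$ of $V$ is an open subvariety of a closed subvariety of $V$. So first I would fix a closed subvariety $Z$ of $V$ such that $W$ is an open subvariety of $Z$, and observe that the inclusion $W \hookrightarrow V$ factors as $W \hookrightarrow Z \hookrightarrow V$, where the first arrow is a (scheme-theoretic) open immersion and the second is a (scheme-theoretic) closed immersion, and that the induced maps on $K$-points are the set-theoretic inclusions $W(K) \subseteq Z(K) \subseteq V(K)$.

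Next I would apply item $(2)$ of Fact~\ref{fact:first-paper-system} to the closed immersion $Z \hookrightarrow V$: the induced map $Z(K) \to V(K)$ is a topological closed embedding, so the $\Sa E_K$-topology on $Z(K)$ coincides with the subspace topology it inherits from the $\Sa E_K$-topology on $V(K)$. Then I would apply item $(3)$ to the open immersion $W \hookrightarrow Z$: the induced map $W(K) \to Z(K)$ is a topological open embedding, so the $\Sa E_K$-topology on $W(K)$ coincides with the subspace topology inherited from $Z(K)$. Since the subspace topology is transitive, the $\Sa E_K$-topology on $W(K)$ is exactly the subspace topology inherited from $(V(K),\Sa E_K)$, which is the first assertion. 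For the second assertion, suppose $W'$ is another subvariety of $V$ with $W'(K) = W(K)$ as subsets of $V(K)$; applying the first assertion to $W$ and to $W'$ shows that both $\Sa E_K$-topologies equal the subspace topology induced from $(V(K),\Sa E_K)$ on the common underlying set $W(K) = W'(K)$, hence they coincide.

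I do not expect a genuine obstacle here: the statement is essentially a formal bookkeeping consequence of Fact~\ref{fact:first-paper-system}. The only point requiring a line of care is the existence of the factorization $W \hookrightarrow Z \hookrightarrow V$ with the stated scheme-theoretic properties inducing the set-theoretic inclusions on $K$-points, which follows directly from the definition of ``subvariety'' together with the fact that a reduced closed subscheme is determined by its underlying closed subset; given this, transitivity of the subspace topology does the rest.
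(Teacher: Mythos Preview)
Your proposal is correct and is exactly the argument the paper has in mind: the paper simply states that Proposition~\ref{prop:well-define} ``follows immediately from the second and third items of Fact~\ref{fact:first-paper-system}'' without writing out details, and you have supplied precisely those details (factor the inclusion as an open immersion followed by a closed immersion, apply items $(2)$ and $(3)$, use transitivity of subspace topologies, then deduce the second assertion from the first).
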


Pop has shown that if $K$ is large and $V$ is a smooth irreducible $K$-variety with $V(K) \ne \emptyset$ then $V(K)$ is Zariski dense in $V$~\cite{pop-embedding}.
Fact~\ref{fact:with-anand} generalizes this, it is \cite[Lemma 2.6]{with-anand}.

\begin{fact}
\label{fact:with-anand}
Suppose that $K$ is large and $V$ is a smooth irreducible $K$-variety.
Then any nonempty \'etale open subset of $V(K)$ is Zariski dense in $V$.
\end{fact}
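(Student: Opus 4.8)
The plan is to reduce to a basic \'etale open set and then invoke the density theorem of Pop stated just above. First I would let $O$ be a nonempty \'etale open subset of $V(K)$; since \'etale images form a basis for the \'etale open topology, $O$ contains a nonempty set of the form $f(X(K))$ for some \'etale morphism $f : X \to V$, and it suffices to show that such a set is Zariski dense in $V$.

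Next I would fix $a \in X(K)$. Since $V$ is smooth over $K$ and $f$ is \'etale, $X$ is smooth over $K$; and the irreducible components of a smooth variety are pairwise disjoint (as used in the proof of Fact~\ref{fact:decomp}), so the irreducible component $X_0$ of $X$ containing $a$ is open and closed in $X$. Hence $X_0$ is a smooth irreducible $K$-variety with $a \in X_0(K)$, and $f$ restricts to an \'etale morphism $f_0 : X_0 \to V$. Because $f_0$ is \'etale it is open, so $U := f_0(X_0)$ is a nonempty open subvariety of $V$, which is Zariski dense in $V$ since $V$ is irreducible.

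Then I would apply Pop's theorem to $X_0$ to get that $X_0(K)$ is Zariski dense in $X_0$. For any morphism, the preimage of the Zariski closure of the image of a set is a closed set containing that set, so $f_0(\overline{X_0(K)}) \subseteq \overline{f_0(X_0(K))}$; thus $U = f_0(X_0) \subseteq \overline{f_0(X_0(K))}$, and since $U$ is dense in $V$ this forces $f_0(X_0(K))$ to be Zariski dense in $V$. As $f_0(X_0(K)) \subseteq f(X(K)) \subseteq O$, I conclude that $O$ is Zariski dense in $V$.

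The only substantive ingredient is Pop's density theorem; the remainder is formal (\'etale over smooth is smooth, \'etale morphisms are open, components of a smooth variety are disjoint). The one step requiring a little care is extracting a smooth \emph{irreducible} source variety with a $K$-point, which is exactly what disjointness of the components of a smooth variety provides; beyond that I do not expect any real obstacle.
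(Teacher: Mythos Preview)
Your proof is correct. The paper does not actually prove this fact; it simply cites \cite[Lemma~2.6]{with-anand}, so there is no in-paper argument to compare against. Your approach is the natural one: reduce to a basic \'etale image, pass to an irreducible component of the smooth source containing a $K$-point (using that smooth varieties have disjoint irreducible components, so components are open), apply Pop's theorem there, and push forward using continuity and openness of \'etale morphisms. The closure-and-preimage argument in your last step is the standard way to show that continuous images of dense sets are dense in the image, and since \'etale morphisms are open, $f_0(X_0)$ is a nonempty open, hence dense, subset of the irreducible $V$. Nothing is missing.
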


Finally~\ref{fact:hd} is also proven in \cite[Theorem B and C]{firstpaper}.

\begin{fact}
\label{fact:hd}\hspace{.1cm}
\begin{enumerate}[leftmargin=*]
\item If $K$ is separably closed then the $\Sa E_K$-topology on $V(K)$ agrees with the Zariski topology.
\item If $K$ is t-Henselian and not separably closed then the \'etale open topology on $V(K)$ agrees with the t-Henselian topology. 
In particular if $K$ is real closed then the \'etale open topology on $V(K)$ agrees with the order topology and if $K$ is Henselian and not separably closed then the \'etale open topology on $V(K)$ agrees with the valuation topology.
\item If $K$ is not separably closed then the \'etale open topology on $V(K)$ is Hausdorff when $V$ is quasi-projective.
\end{enumerate}
\end{fact}

Implicit in the second statement are the well-known facts that a real closed field admits a unique field order and any two non-trivial Henselian valuations on a non-separably closed field induce the same topology.

\section{Universal homeomorphisms and Galois actions}
\label{section:uni-homeo}
We prove some results on universal homeomorphisms between $K$-varieties. 
We also discuss the action of the automorphism group of $K$.
In this section, and this section only, we work with scheme morphisms between $K$-varieties which are not morphism of $K$-varietiess. 
A morphism $V \to W$ of schemes is a \textbf{universal homeomorphism} if for every $W$-scheme $X$, the morphism $V \times_W X \to X$ produced from $V \to W$ by base change is a homeomorphism, see \cite[\S 2.4.2]{EGAIV-2}.
It is clear from this definition that the collection of universal homeomorphisms is closed under compositions and base change.
In characteristic zero a universal homeomorphism is an isomorphism.
See \cite[Lemma 04DF, Theorem 04DZ]{stacks-project} for Fact~\ref{fact:uni-homeo}.
As before we let $\kappa(\alpha)$ be the residue field of point $\alpha$ on a scheme.

\begin{fact}
\label{fact:uni-homeo}
Let $V,W$ be schemes and $f\colon V \to W$ be a universal homeomorphism.
Then:
\begin{enumerate}
\item $f$ is integral, universally injective, and universally surjective.
\item If $f(\alpha) = \beta$ then the induced field extension $\kappa(\alpha)/\kappa(\beta)$ is purely inseparable.
\item  The functor $X \mapsto X_V =X\times_W V$ is an equivalence of categories between the category of \'etale schemes over $W$ and the category of \'etale schemes over $V$.
\end{enumerate}
\end{fact}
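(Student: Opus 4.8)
The plan is to follow the standard ``topological invariance of the small étale site'' argument; since this fact is quoted from \cite{SGA5} and \cite{stacks-project}, I only sketch it.

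For part (1), universal injectivity and universal surjectivity are immediate: by hypothesis every base change of $f$ is a homeomorphism, hence bijective. For integrality I would use the characterization that a morphism is integral if and only if it is affine and universally closed. Universal closedness is clear, since homeomorphisms are closed and this property already holds for every base change of $f$. Affineness is the one genuinely nontrivial point: I would reduce to the case $W = \Spec R$ affine and then cite that a universal homeomorphism onto an affine scheme is affine (which itself rests on Zariski's main theorem after a Noetherian approximation). Part (2) then follows quickly: base changing along $\Spec \kappa(b) \to W$ shows that the fibre $V_b := V \times_W \Spec \kappa(b)$ is a one-point scheme whose reduction is $\Spec \kappa(a)$; by part (1) the morphism $f$ is integral, so $\kappa(a)/\kappa(b)$ is algebraic, and if it had a nontrivial separable subextension then a separable element would produce at least two points in $V \times_W \Spec \overline{\kappa(b)}$ lying over the unique point of $\Spec \overline{\kappa(b)}$, contradicting that this base change is a homeomorphism.

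For part (3), the functor $X \mapsto X_V$ does land in étale $V$-schemes, since étale morphisms are stable under base change, and moreover $X_V \to X$ is a homeomorphism as a base change of $f$. For full faithfulness I would pass to graphs: a $W$-morphism $Y \to Y'$ between étale $W$-schemes is the same datum as a clopen subscheme $\Gamma$ of $Y \times_W Y'$ whose projection to $Y$ is an isomorphism (clopen because $Y' \to W$ is étale, hence unramified and separated; an isomorphism onto $Y$ because the first projection restricted to $\Gamma$ is a section). Clopen subschemes of a scheme are detected by the topology, so they correspond bijectively under the homeomorphism $(Y \times_W Y')_V \to Y \times_W Y'$; and the condition ``the projection to $Y$ is an isomorphism'' is preserved, because an étale morphism that is universally injective is an open immersion, universal injectivity can be checked after base change along $f$ (all the relevant fibre products base change to homeomorphic spaces), and a surjective open immersion is an isomorphism. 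Tracing these equivalences gives $\Hom_W(Y,Y') \xrightarrow{\sim} \Hom_V(Y_V, Y'_V)$.

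Essential surjectivity is the main obstacle. Given an étale morphism $U \to V$, I would first use full faithfulness to glue, reducing to the affine case $W = \Spec R$, $V = \Spec S$, $U = \Spec B$, where $R \to S$ is integral, universally injective, and surjective and $B$ is étale over $S$; then, writing $S$ as a filtered union of finite $R$-subalgebras that are again universal homeomorphisms over $R$ and using that $B$ is finitely presented, one reduces to $R \to S$ finite. The remaining work is an induction on the number of algebra generators of $S$ over $R$, isolating the two ``radicial'' key cases: a nilpotent thickening $S = R/\mathfrak{n}$ with $\mathfrak{n}$ nilpotent, where $B$ lifts uniquely by the infinitesimal rigidity of étale morphisms, and (in characteristic $p$) adjunction of a $p$-th root $S = R[x]/(x^p - a)$, where $B$ is lifted by a direct Hensel-type computation. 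I expect this bookkeeping, rather than any single hard idea, to be the bulk of the argument, which is why citing \cite{SGA5, stacks-project} is the pragmatic choice.
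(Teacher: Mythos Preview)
The paper does not prove Fact~\ref{fact:uni-homeo} at all; it simply records the statement and cites the Stacks Project (tags 04DF and 04DZ) for the proof. Your sketch is a faithful outline of the standard argument those references contain---and you explicitly acknowledge as much---so there is nothing substantive to compare.
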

 
Lemma~\ref{lem:uni-homeo} is well-known, we include a proof for the sake of completeness.

\begin{lemma}
\label{lem:uni-homeo}
Suppose $K$ is perfect, $V$ and $W$ are $K$-varieties, and $f \colon V \to W$ is a $K$-variety morphism and a universal homeomorphism.
The induced map $V(K) \to W(K)$ is a bijection.
\end{lemma}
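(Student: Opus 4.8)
The plan is to reduce the claim to an analysis of the scheme-theoretic fibres of $f$ over the $K$-points of $W$. Fix $y \in W(K)$, viewed as a morphism $\Spec K \to W$, and form the fibre $V_y := V \times_W \Spec K$, a $K$-scheme. The universal property of the fibred product identifies $\{x \in V(K) : f(x) = y\}$ naturally with $V_y(K)$, so it suffices to show that $V_y(K)$ is a singleton for every $y \in W(K)$; this yields surjectivity (each fibre is nonempty) and injectivity (each fibre has at most one point) at the same time.

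The second step is to pin down $V_y$. Since universal homeomorphisms are stable under base change, $V_y \to \Spec K$ is again a universal homeomorphism; in particular its source is homeomorphic to the one-point space $\Spec K$. By Fact~\ref{fact:uni-homeo}(1) this morphism is integral, hence affine, so $V_y = \Spec R$, and since $V$ is of finite type over $K$ the ring $R$ is a finite (hence Artinian) $K$-algebra whose spectrum is a single point; thus $R$ is local. Writing $\mm$ for its maximal ideal, Fact~\ref{fact:uni-homeo}(2) says that the residue field $R/\mm$ is purely inseparable over $K$, and it is here, and only here, that perfectness of $K$ enters: we get $R/\mm = K$.

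Finally I would observe that a $K$-algebra homomorphism $R \to K$ has prime kernel, which must be the unique prime $\mm$ of the Artinian local ring $R$, so it factors through the quotient $R \to R/\mm = K$ and therefore coincides with it; conversely that quotient is itself a $K$-algebra homomorphism. Hence $V_y(K)$, being the set of $K$-algebra homomorphisms $R \to K$, is a singleton, and the lemma follows. I do not expect a serious obstacle: the real content is the rigidity of $V_y$ forced by its being a finite-type universal homeomorphism over a perfect field, and the only care required is the formal bookkeeping in the fibre identification and the stability of universal homeomorphisms under base change. That perfectness cannot be dropped is witnessed, in characteristic $p$, by $\Spec K(a^{1/p}) \to \Spec K$ with $a \notin K^p$: a universal homeomorphism for which $W(K) \neq \emptyset = V(K)$.
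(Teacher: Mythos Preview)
Your proof is correct and rests on the same key step as the paper's: the residue field at the unique point over $y$ is purely inseparable over $K$, hence equals $K$ by perfectness. The paper packages this slightly more directly---it gets injectivity for free from $f$ being a homeomorphism on underlying points, then for surjectivity picks the unique preimage $a \in V$ of $b \in W(K)$ and observes $\kappa(a)/K$ is algebraic (integrality) and purely inseparable, so $a \in V(K)$---whereas you form the scheme-theoretic fibre $V_y = \Spec R$ and count $K$-algebra maps $R \to K$, obtaining both directions at once; the difference is organizational rather than mathematical.
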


\begin{proof}
Note that $f$ is bijective as $f$ is a homeomorphism.
Therefore $V(K) \to W(K)$ is injective.
We show that $V(K) \to W(K)$ is surjective.
Fix $\beta \in W(K)$.
As $f$ is surjective there is $\alpha \in V$ such that $f(\alpha) = \beta$.
Let $\kappa(\alpha)/\kappa(\beta)$ be the induced field extension and note that $\kappa(\beta) = K$.
By Fact~\ref{fact:uni-homeo}.1 $f$ is integral, hence $\kappa(\alpha)/K$ is algebraic.
By Fact~\ref{fact:uni-homeo}.2 $\kappa(\alpha)/K$ is purely inseparable, so $\kappa(\alpha) = K$ as $K$ is perfect.
Therefore $\alpha \in V(K)$.
\end{proof}

\begin{proposition}
\label{prop:uni-homeo}
Suppose that $K$ is perfect, $V$ and $W$ are $K$-varieties, and $f \colon V \to W$ is a morphism of $K$-varieties and a universal homeomorphism.
Then the map $V(K) \to W(K)$ induced by $f$ is an $\Sa E_K$-homeomorphism.
\end{proposition}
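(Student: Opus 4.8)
The plan is to reduce the statement to the assertion that $f : V(K) \to W(K)$ is $\Sa E_K$-open, and then to obtain openness directly from the equivalence of \'etale categories over $V$ and $W$ provided by Fact~\ref{fact:uni-homeo}(3). First I would note that $V(K) \to W(K)$ is already a bijection by Lemma~\ref{lem:uni-homeo} and is $\Sa E_K$-continuous by Fact~\ref{fact:first-paper-system}(1). Hence it suffices to show $f$ is $\Sa E_K$-open, and since \'etale images form a basis for the $\Sa E_K$-topology on $V(K)$, it is enough to prove that $f(A)$ is $\Sa E_K$-open in $W(K)$ whenever $A$ is an \'etale image in $V(K)$.

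So I would fix an \'etale morphism $h : Y \to V$ of $K$-varieties and aim to show that the image of $Y(K) \to V(K)$ has $\Sa E_K$-open image under $f$. Here I invoke Fact~\ref{fact:uni-homeo}(3): the base-change functor $X \mapsto X \times_W V$ is an equivalence from the category of \'etale $W$-schemes to the category of \'etale $V$-schemes, so I may choose an \'etale morphism $g : X \to W$ together with an isomorphism $Y \cong X \times_W V$ of $V$-schemes. Since this isomorphism lies over $V$, the image of $Y(K) \to V(K)$ coincides with the image of the projection $(X \times_W V)(K) \to V(K)$.

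The key computation is that the functor of $K$-points turns the defining pullback square of $X \times_W V$ into a pullback square of sets, so $(X \times_W V)(K) = X(K) \times_{W(K)} V(K)$, and the image of its projection to $V(K)$ is exactly $\{\, v \in V(K) : f(v) \in g(X(K)) \,\} = f^{-1}(g(X(K)))$. As $f$ is a bijection, applying $f$ recovers $g(X(K))$, which is an \'etale image in $W(K)$ and hence $\Sa E_K$-open. This proves $f$ is open, and therefore an $\Sa E_K$-homeomorphism. (Symmetrically, for any \'etale $g : X \to W$ the variety $X \times_W V \to V$ is \'etale with image $f^{-1}(g(X(K)))$ in $V(K)$, so $f^{-1}$ also carries the basic \'etale images of $W(K)$ to basic \'etale images of $V(K)$; one can present the proof this way if one prefers not to cite continuity of $f$.)

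The argument is essentially bookkeeping, so I do not anticipate a genuine obstacle; the only point requiring care is aligning the purely scheme-theoretic equivalence of Fact~\ref{fact:uni-homeo}(3) with the topology. Concretely, one must observe that this equivalence is realised by the very base-change operation $-\times_W V$ that produces the ``pulled-back'' \'etale images, that an isomorphism of $V$-schemes does not change the image in $V(K)$, and that $(-)(K)$ commutes with fibre products. Note finally that perfectness of $K$ is used only through Lemma~\ref{lem:uni-homeo}; Fact~\ref{fact:uni-homeo}(3) holds for arbitrary universal homeomorphisms of schemes.
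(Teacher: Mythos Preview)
Your proof is correct and follows essentially the same route as the paper's: both reduce to openness via Lemma~\ref{lem:uni-homeo} and Fact~\ref{fact:first-paper-system}(1), then use the equivalence of \'etale categories in Fact~\ref{fact:uni-homeo}(3) to identify the image of a basic \'etale open under $f$ with an \'etale image in $W(K)$. The only cosmetic difference is that the paper records the resulting pullback square on $K$-points and notes that both horizontal maps are bijections, whereas you compute the set-theoretic fibre product directly; the content is identical.
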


\begin{proof}
By Lemma~\ref{lem:uni-homeo} $V(K) \to W(K)$ is a bijection.
By Fact~\ref{fact:first-paper-system}.1 $V(K) \to W(K)$ is $\Sa E_K$-continuous.
We show that $V(K) \to W(K)$ is $\Sa E_K$-open.
Let $X$ be a $K$-variety and $g\colon X\to V$ be \'etale.
It is enough to show that $f(g(X(K))$ is \'etale open.
By Fact~\ref{fact:uni-homeo}.3 there is an \'etale morphism $h\colon Y\to W$ such that $g\colon X\to V$ is the base change of $h$ along $f$. Taking $K$-points, we have the following pullback square.
 \begin{equation*}
      \xymatrix{ X(K) \ar[r]^{f_h} \ar[d]^{g} & Y(K) \ar[d]^{h} \\ V(K) \ar[r]^{f} & W(K)}
  \end{equation*}
Note that both $f$ and $f_h$ are bijections. Hence  $f(g(X(K))=h(Y(K))$, which is \'etale open.
\end{proof}
Next we look at Galois actions.
Let $\upsigma\colon K \to K$ be an automorphism, we also use $\upsigma$ to denote the map $\upsigma\colon K^n\to K^n$ $(c_1,...,c_n)\mapsto (\upsigma(c_1),...,\upsigma(c_n))$.
We have the following:

\begin{proposition}\label{prop:galois-action}
$\upsigma\colon K^n\to K^n$ as defined above is a homeomorphism with respect to $\cE_K$.
\end{proposition}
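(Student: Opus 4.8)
The plan is to use a twisting (conjugation) construction. Although $\sigma\colon K^n\to K^n$ is not induced by a $K$-variety morphism, it is induced by base change along the scheme isomorphism $s:=\Spec(\sigma)\colon \Spec K\to \Spec K$, and base change preserves étaleness; this is exactly the kind of argument the framework of Section~\ref{section:uni-homeo} (scheme morphisms between $K$-varieties that are not $K$-variety morphisms) is designed for. I would first reduce to showing that $\sigma\colon K^n\to K^n$ is $\cE_K$-open \emph{for every} automorphism $\sigma$ of $K$: applying this also to $\sigma^{-1}$ shows that $\sigma$ is $\cE_K$-continuous, and a continuous open bijection is a homeomorphism. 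Since the images of $X(K)\to\Aa^n(K)$ over étale $K$-morphisms $X\to\Aa^n$ form a basis for $\cE_K$ on $K^n$, it suffices to prove that $\sigma(g(X(K)))$ is $\cE_K$-open for each such $g$.

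Given an étale $g\colon X\to\Aa^n$, I would base change the diagram $X\xrightarrow{g}\Aa^n\to\Spec K$ along $s$, obtaining $X^{(\sigma)}\xrightarrow{g^{(\sigma)}}(\Aa^n)^{(\sigma)}\to\Spec K$ with the second projections as structure maps to $\Spec K$. Since $s$ is a scheme isomorphism, $X^{(\sigma)}$ is again a $K$-variety, the projection $(\Aa^n)^{(\sigma)}\to\Aa^n$ is a scheme isomorphism, and $g^{(\sigma)}$, being the base change of $g$ along that projection, is étale. Because $\Aa^n$ is defined over the prime field, which $\sigma$ fixes pointwise, there is a canonical $K$-variety isomorphism $\gamma\colon\Aa^n\xrightarrow{\sim}(\Aa^n)^{(\sigma)}$. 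Hence $g':=\gamma^{-1}\circ g^{(\sigma)}\colon X^{(\sigma)}\to\Aa^n$ is étale, so $g'(X^{(\sigma)}(K))$ is a basic $\cE_K$-open subset of $K^n$.

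It then remains to identify $g'(X^{(\sigma)}(K))$ with $\sigma(g(X(K)))$. Base change along $s$ provides, naturally in the $K$-variety $Y$, a bijection $\beta_Y\colon Y(K)\to Y^{(\sigma)}(K)$: a $K$-point of $Y^{(\sigma)}$ is the same as a morphism $v\colon\Spec K\to Y$ whose composite with the structure morphism equals $s$, and $\beta_Y(w)=w\circ s$ under this identification; naturality for $K$-morphisms gives $g^{(\sigma)}\circ\beta_X=\beta_{\Aa^n}\circ g$ on $K$-points. A direct computation in coordinates shows that $\gamma^{-1}\circ\beta_{\Aa^n}\colon K^n\to K^n$ is precisely the coordinatewise map $\sigma$ (both send $a=(a_1,\dots,a_n)$ to the $K$-algebra homomorphism $K[x_1,\dots,x_n]\to K$ with $x_i\mapsto\sigma(a_i)$, i.e.\ to $\sigma(a)$). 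Therefore $g'\circ\beta_X=\gamma^{-1}\circ g^{(\sigma)}\circ\beta_X=\gamma^{-1}\circ\beta_{\Aa^n}\circ g=\sigma\circ g$, and since $\beta_X$ is surjective we conclude $g'(X^{(\sigma)}(K))=\sigma(g(X(K)))$, which is what we wanted.

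The main obstacle is essentially bookkeeping: because $\sigma$ is not a $K$-morphism, none of Fact~\ref{fact:first-paper-system} applies directly, so one must carefully set up the twisting functor $Y\mapsto Y^{(\sigma)}$ within the category of $K$-varieties with (not necessarily $K$-linear) scheme morphisms, check that it preserves étale morphisms and $K$-points, and pin down the identity $\gamma^{-1}\circ\beta_{\Aa^n}=\sigma$. Once these are in place the argument is formal. One minor point to watch: depending on conventions the final computation may produce $\sigma$ rather than $\sigma^{-1}$; since we range over all automorphisms of $K$ this does not affect the conclusion.
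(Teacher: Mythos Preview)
Your proof is correct and takes essentially the same approach as the paper: both twist an \'etale morphism by the automorphism and observe that the resulting \'etale image equals $\sigma$ applied to the original one. The paper works directly with the scheme automorphism $\sigma^*\colon \Aa^n\to\Aa^n$ dual to $x_i\mapsto x_i$, $c\mapsto\sigma(c)$ and pulls back $e\colon U\to\Aa^n$ along $\sigma^*$, whereas you base change along $\Spec(\sigma)\colon\Spec K\to\Spec K$ and then untwist $(\Aa^n)^{(\sigma)}$ via $\gamma$; these are the same construction, and your version simply tracks the bookkeeping (the identifications $\gamma$ and $\beta_Y$) more explicitly than the paper does.
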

\begin{proof}
The map $\upsigma\colon K^n\to K^n$ can be seen as the induced by the dual of the following isomorphism of rings (abusing notation, it is still denoted by $\upsigma$):
\[
\upsigma:K[x_1,...,x_n]\to K[x_1,...,x_n] : x_i\mapsto x_i \text{  }c \mapsto \upsigma(c) \text { for }c \in K
\]
We use $\upsigma^*$ to denote the induced scheme morphism $\Aa^n \to \Aa^n$.
Note that $\upsigma^*$ is invertible.
It therefore suffices to show that $\upsigma \colon K^n \to K^n$ is $\Sa E_K$-open.
Let $e\colon  U\to \mathbb{A}_K^n$ be an \'etale morphism of $K$-varieties.
We have $e^\upsigma\colon U^\upsigma \to \mathbb{A}_K^n$ such that the following is a pullback diagram:
\[
\begin{tikzcd}
U^\upsigma \arrow[dd, "e^\upsigma"] \arrow[rr, " "] &  & U \arrow[dd, "e"] \\
                                   &  &                     \\
\mathbb{A}_K^n\arrow[rr, "\upsigma^*"]                  &  & \mathbb{A}^n_K                 
\end{tikzcd}
\]
Note that ${\upsigma^{*}}(e^\upsigma(U^\upsigma(K))=e(U(K))$ by construction. This finishes the proof.
\end{proof}

\begin{corollary}
\label{cor:aut}
Suppose that $\phi : K \to K$ is an automorphism.
Then $\phi$ is an $\Sa E_K$-homeomorphism.
In particular if $K$ is perfect and $\Chara(K) = p > 0$ then the Frobenius map $K \to K$ given by $a \mapsto a^p$ is an $\Sa E_K$-homeomorphism.
\end{corollary}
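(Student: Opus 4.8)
The plan is to reduce both assertions to Proposition~\ref{prop:galois-action}. The first assertion --- that an arbitrary field automorphism $\phi : K \to K$ is an $\Sa E_K$-homeomorphism --- is exactly the case $n = 1$ of Proposition~\ref{prop:galois-action} (stated there with the letter $\sigma$ in place of $\phi$). So for that part I would simply invoke that proposition; there is nothing further to check.

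For the ``in particular'' clause, suppose $\Chara(K) = p > 0$ and $K$ is perfect. I would first observe that the Frobenius map $a \mapsto a^p$ is a field automorphism of $K$: it is a ring endomorphism because $(a+b)^p = a^p + b^p$ and $(ab)^p = a^p b^p$ hold in characteristic $p$; it is injective because every ring homomorphism out of a field is injective; and it is surjective precisely by the definition of perfectness. Having identified the Frobenius as an automorphism of $K$, the first paragraph applies and gives that it is an $\Sa E_K$-homeomorphism.

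I do not anticipate any genuine obstacle here: the substantive content lives entirely in Proposition~\ref{prop:galois-action}, whose proof already sets up the pullback square needed to transport \'etale images along the (non-$K$-linear) scheme automorphism $\sigma^{*}$ of $\Aa^n$. If one wanted a self-contained argument one could simply rerun that pullback diagram with $n = 1$ and with $\sigma$ taken to be the Frobenius, but citing the proposition is the economical route.
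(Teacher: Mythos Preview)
Your proposal is correct and matches the paper's approach: the paper states Corollary~\ref{cor:aut} immediately after Proposition~\ref{prop:galois-action} without a separate proof, treating the first claim as the $n=1$ case of that proposition and the Frobenius claim as the obvious specialization once perfectness guarantees bijectivity. Your write-up makes explicit exactly the two observations the paper leaves implicit.
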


It should also be noted that both Proposition~\ref{prop:galois-action} and Corollary~\ref{cor:aut} are more or less obvious as the \'etale open topology is defined in an automorphism-invariant manner.

\begin{corollary}
\label{cor:closed-subfield}
Suppose that $K$ is not separably closed and $\phi \colon  K \to K$ is an automorphism of $K$.
Then the fixed field of $\phi$ is an $\Sa E_K$-closed subset of $K$.
If $A$ is a collection of automorphisms of $K$ then the fixed field of $A$ is an $\Sa E_K$-closed subset of $K$.
\end{corollary}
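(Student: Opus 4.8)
The plan is to realize the fixed field of an automorphism as an equalizer of two $\Sa E_K$-continuous self-maps of $K$ and to exploit that $(K,\Sa E_K)$ is Hausdorff. First I would invoke Fact~\ref{fact:hd}: since $K$ is not separably closed and $\Aa^1$ is quasi-projective, the \'etale open topology on $K = \Aa^1(K)$ is Hausdorff. Consequently the diagonal $\Delta = \{(a,a) : a \in K\}$ is closed in the product of two copies of $(K,\Sa E_K)$.

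Next, for a single automorphism $\phi : K \to K$, I would observe that $\phi$ is $\Sa E_K$-continuous by Corollary~\ref{cor:aut} (indeed an $\Sa E_K$-homeomorphism) and that the identity is trivially continuous, so the map $K \to K \times K$ given by $a \mapsto (\phi(a),a)$ is continuous for the product topology on the target. The fixed field of $\phi$ is exactly the preimage of $\Delta$ under this map, hence $\Sa E_K$-closed in $K$. For a collection $A$ of automorphisms, the fixed field of $A$ is the intersection over $\phi \in A$ of the fixed fields of the individual $\phi$, so it is an intersection of $\Sa E_K$-closed sets and therefore $\Sa E_K$-closed.

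The one point that needs care — and essentially the only thing resembling an obstacle — is that the argument should be run with the \emph{abstract} product of two copies of $(K,\Sa E_K)$ rather than with the \'etale open topology on $K^2 = \Aa^2(K)$: by Fact~\ref{fact:first-paper-system}.6 the latter may be strictly finer than the former, and the graph map $a \mapsto (\phi(a),a)$ need not be continuous into it, whereas continuity into the product is immediate from continuity of each coordinate. I would also note that the hypothesis that $K$ is not separably closed enters precisely through Hausdorffness; without it the \'etale open topology is the Zariski topology and the conclusion genuinely fails, since a proper infinite fixed subfield is Zariski dense and hence not Zariski closed.
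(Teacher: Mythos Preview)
Your proof is correct and follows essentially the same approach as the paper. The paper simply invokes ``the elementary fact that if $T$ is a Hausdorff topological space and $f : T \to T$ is continuous then the set of fixed points of $f$ is closed'' together with Corollary~\ref{cor:aut} and Fact~\ref{fact:hd}; you have spelled out the standard diagonal proof of that elementary fact, including the (correct) observation that one should use the abstract product topology rather than the \'etale open topology on $\Aa^2(K)$.
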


The second claim of Corollary~\ref{cor:closed-subfield} follows directly from the first.
The first follows from Corollary~\ref{cor:aut}, Fact~\ref{fact:hd}, and the elementary fact that if $T$ is a Hausdorff topological space and $f \colon  T \to T$ is continuous then the set of fixed points of $f$ is closed.
Corollary~\ref{cor:closed-subfield} fails when $K$ is separably closed, as any infinite proper subfield of $K$ is dense and co-dense in the Zariski topology on $K$.

\meno
Suppose that $L/K$ is a field extension and $V$ is a $K$-variety. Since one can naturally identify $V(L)$ with $V_L(L)$, we wish to equip the set $V(L)$ of $L$-points of $V$ with the $\Sa E_L$-topology. And for an intermediate field, we identify $V(F)\subseteq V(L)$ via the canonical embedding.
A slight technical issue arises as $V_L$ might be a non-reduced $L$-scheme, and hence not an $L$-variety, when $L/K$ is inseparable.
In \cite{firstpaper} this issue was handled by working with the slightly broader class of separated finite type $L$-schemes.
However, at present we only need the case when $L/K$ is separable, and in this case $V_L$ is an $L$-variety~\cite[030U]{stacks-project}. 

\meno
Corollary~\ref{cor:galois-action} follows by relativizing the proof of Proposition~\ref{prop:galois-action} to $\upsigma\in \mathrm{Aut}(L/K)$.

\begin{corollary}
\label{cor:galois-action}\quad
\begin{enumerate}
\item Suppose that $L$ is a field.
If $\phi \colon  L \to L$ is an automorphism with fixed field $K$ and $V$ is a $K$-variety, then the map $V(L) \to V(L)$ induced by $\phi$ is an $\Sa E_L$-homeomorphism.
\item If $G$ is a subgroup of the automorphism group of $L$ with fixed field $K$ and $V$ is a $K$-variety, then the action of $G$ on $V(L)$ is an action by $\Sa E_L$-homeomorphisms.
\end{enumerate}
\end{corollary}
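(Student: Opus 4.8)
The plan is to run the proof of Proposition~\ref{prop:galois-action} with the $L$-variety $V_L:=V\times_K\Spec L$ in place of $\Aa^n_L$ and $\phi$ in place of $\sigma$. First note that $L/K$ is separable (immediate when $L/K$ is algebraic: if $a^{p^n}\in K$ then $(\phi(a)-a)^{p^n}=\phi(a^{p^n})-a^{p^n}=0$, so $\phi(a)=a$; in general this is part of the framework set up just before the corollary), hence $V_L$ is an $L$-variety and $V(L)=V_L(L)$ carries a well-defined $\Sa E_L$-topology. Since $\phi$ fixes $K$ pointwise, $\Spec\phi:\Spec L\to\Spec L$ is a $\Spec K$-morphism, so base change of $V$ along it gives a scheme automorphism $\phi^*:=\mathrm{id}_V\times_K\Spec\phi:V_L\to V_L$, invertible with inverse $(\phi^{-1})^*$; note that $\phi^*$ is not an $L$-scheme morphism, as it covers $\Spec\phi$ on the base. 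The map $V(L)\to V(L)$ induced by $\phi$ — which on an affine $V\subseteq\Aa^n_K$ is the coordinatewise map $(c_1,\dots,c_n)\mapsto(\phi(c_1),\dots,\phi(c_n))$ — is, under the identification $V(L)=V_L(L)$, obtained from $\phi^*$ by also composing with $\Spec\phi^{\pm1}$ on the source $\Spec L$; this is the map we must show is an $\Sa E_L$-homeomorphism.

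Since $\phi^*$ is invertible and $\phi^{-1}$ is an automorphism of the same kind, it suffices to show the induced map and its inverse are $\Sa E_L$-continuous. Let $e:U\to V_L$ be étale with $U$ an $L$-variety, so $e(U(L))$ is a basic $\Sa E_L$-open subset of $V_L(L)$. Pull $e$ back along $\phi^*$ to get an étale morphism $e^\phi:U^\phi\to V_L$ sitting in a pullback square over $V_L\xrightarrow{\phi^*}V_L$; being étale over $V_L$, the scheme $U^\phi$ is again an $L$-variety. Relativizing the last line of the proof of Proposition~\ref{prop:galois-action}, one checks that $e^\phi(U^\phi(L))$ is exactly the $\phi^{-1}$-translate of $e(U(L))$: an $L$-point of $U^\phi$ is a pair $(a,b)$ with $a:\Spec L\to U$, $b\in V_L(L)$, and $e\circ a=\phi^*\circ b$; composing $a$ with $\Spec\phi^{-1}$ yields an honest $L$-point of $U$, and $e^\phi(a,b)=b$ is then the $\phi^{-1}$-image of $e(a\circ\Spec\phi^{-1})\in e(U(L))$, and conversely every element of the $\phi^{-1}$-translate of $e(U(L))$ arises this way. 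Hence the $\phi$-action pulls basic étale opens back to basic étale opens, and running the same argument with $\phi^{-1}$ does too, so the $\phi$-action is an $\Sa E_L$-homeomorphism; this proves (1). (Alternatively one can apply Proposition~\ref{prop:galois-action} with the field $L$ to get that $\phi:L^n\to L^n$ is an $\Sa E_L$-homeomorphism, embed an affine chart of $V$ as a closed subvariety of $\Aa^n_K$ — the coordinatewise $\phi$ preserves $V_L(L)\subseteq L^n$ because the defining equations lie in $K=L^\phi$ — so by Proposition~\ref{prop:well-define} $\phi$ restricts to an $\Sa E_L$-homeomorphism of the chart, and then glue over an affine cover of $V$, which the $\phi$-action preserves chart by chart.)

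Part (2) is then immediate: each $g\in G$ fixes $K$, so by (1) it acts on $V(L)$ as an $\Sa E_L$-homeomorphism, and therefore $G$ acts on $V(L)$ by $\Sa E_L$-homeomorphisms. I expect the only genuine subtlety — exactly as in Proposition~\ref{prop:galois-action} — to be the bookkeeping forced by the fact that $\phi^*$ is not an $L$-scheme morphism: one must consistently interpret ``the $L$-points of $\phi^*$'' by also applying $\Spec\phi$ on the source, and verify that this matches the naive coordinatewise action, which is what legitimizes the ``by construction'' step and works precisely because $V$ is defined over the fixed field $K$. Passing from affine $V$ to arbitrary $V$ by gluing charts is routine.
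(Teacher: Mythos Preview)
Your proposal is correct and follows the paper's approach exactly: the paper's entire proof is the single sentence ``Corollary~\ref{cor:galois-action} follows by relativizing the proof of Proposition~\ref{prop:galois-action} to $\sigma\in \mathrm{Aut}(L/K)$,'' and you have carried out precisely this relativization in detail, together with a clean alternative via affine charts and Proposition~\ref{prop:well-define}. One small remark: your deduction of (2) from (1) uses only that each $g\in G$ fixes $K$, not that $K$ is the full fixed field of $g$ --- but your proof of (1) already only uses this weaker hypothesis, so the deduction is fine.
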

\begin{proof}
(2) follows from (1). For (1), for (embedded) affine $V$, this follows from Proposition~\ref{prop:galois-action} and the fact that $V$ is invariant under $\phi$. The general case follows from gluing.
\end{proof}
\begin{corollary}
\label{cor:galois-action-1}
Suppose that $L/K$ is a Galois field extension, $L$ is not separably closed, and $V$ is a $K$-variety.
If $K \subseteq F \subseteq L$ is a subfield then $V(F)$ is an $\Sa E_L$-closed subset of $V(L)$.
\end{corollary}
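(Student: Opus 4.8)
The plan is to realize $V(F)$ as the fixed-point set of the subgroup $H = \Gal(L/F)$ of $G = \Aut(L/K)$ acting on $V(L)$, and then apply the standard fact that the fixed-point set of a continuous self-map of a Hausdorff space is closed, with Hausdorffness supplied by Fact~\ref{fact:hd}. Since $L/K$ is Galois, $K$ is the fixed field of $G$, so Corollary~\ref{cor:galois-action}(2) applies and gives that $G$, hence its subgroup $H$, acts on $V(L)$ by $\Sa E_L$-homeomorphisms; and, applying Galois theory to the Galois extension $L/F$, the field $F$ is precisely the fixed field of $H$. The identification $V(F) = \{x \in V(L) : \sigma x = x \text{ for all } \sigma \in H\}$ is then immediate on affine pieces, where $V(L)$ sits inside some $L^n$ and the $H$-action is coordinatewise.

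The one wrinkle is that Fact~\ref{fact:hd} only yields Hausdorffness of $V(L)$ for quasi-projective $V$, whereas the statement concerns an arbitrary $K$-variety. I would deal with this by localizing: fix an affine open cover $V = \bigcup_j V_j$. Base-changing to $L$ (legitimate since $L/K$, being Galois, is separable) and applying Fact~\ref{fact:first-paper-system}(3), each $V_j(L)$ is $\Sa E_L$-open in $V(L)$, and $V(L) = \bigcup_j V_j(L)$. Because the $G$-action on $V(L)$ does not move the underlying scheme point of an $L$-point (it precomposes with an automorphism of the one-point scheme $\Spec L$), each $V_j(L)$ is $H$-invariant, and $V(F) \cap V_j(L) = V_j(F)$. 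A subset of a space that is closed in each member of an open cover is closed, so it suffices to show each $V_j(F)$ is $\Sa E_L$-closed in $V_j(L)$.

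For an affine piece $V_j \subseteq \Aa^n$: $V_{j,L}$ is affine, hence quasi-projective, so Fact~\ref{fact:hd} (using that $L$ is not separably closed) makes $V_j(L)$ Hausdorff in the $\Sa E_L$-topology. Then $V_j(F) = V_j(L) \cap F^n = \bigcap_{\sigma \in H} \{x \in V_j(L) : \sigma x = x\}$ since $F = L^H$; each set in this intersection is the fixed-point set of an $\Sa E_L$-homeomorphism of the Hausdorff space $V_j(L)$, hence $\Sa E_L$-closed, so $V_j(F)$ is $\Sa E_L$-closed in $V_j(L)$, completing the argument. I do not anticipate a genuine obstacle here; the only steps requiring care are the reduction to the affine (quasi-projective) case and the verification that the abstract Galois action of Corollary~\ref{cor:galois-action} restricts to the affine opens of the cover, both of which are routine.
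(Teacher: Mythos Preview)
Your proposal is correct and follows essentially the same route as the paper: both argue that $V(F)$ is the fixed-point set of $\Gal(L/F)$, invoke Hausdorffness from Fact~\ref{fact:hd} for the quasi-projective case, and then reduce the general case to this one via a $K$-affine open cover whose members are preserved by the Galois action. Your write-up is in fact somewhat more explicit than the paper's (e.g.\ in justifying why the affine opens are $H$-invariant and why $L/F$ is Galois so that $F$ is recovered as the fixed field of $H$), but the argument is the same.
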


\begin{proof}
We first consider the case when $V$ is quasi-projective. In this case the $\cE_K$-topology is Hausdorff on $V(K)$ by the second claim of Fact~\ref{fact:hd}.
So we apply Corollary~\ref{cor:galois-action}, and the fact that the fixed points of a continuous self-map of a Hausdorff topological space form a closed set.
We now treat the case when $V$ is an arbitrary $K$-variety.
Let $U_1,\ldots,U_k$ be $K$-affine open subvarieties of $V$ that cover $V$.
Note that the action of $\Gal(L/K)$ on $V(L)$ preserves each $U_{i}(L)$.
Fix a subfield $K \subseteq F \subseteq L$.
The quasi-projective case shows that $U_{i}(F)$ is an $\Sa E_L$-closed subset of $U_{i}(L)$ for each $i$.
Note that $U_{i}(L) \cap V(F) = U_{i}(F)$ for each $i$.
It follows that $V(F)$ is closed.
\end{proof}

We now recall Fact~\ref{fact:new-points}, proven in Fehm~\cite{Fehm-subfield}.

\begin{fact}
\label{fact:new-points}
Suppose that $L$ is large, $K$ is a proper subfield of $L$, and $V$ is a positive dimensional irreducible $K$-variety with a smooth $K$-point.
Then $|V(L) \setminus V(K)| = |L|$.
\end{fact}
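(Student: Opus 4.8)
The plan is to reduce to a curve and then play largeness against the \'etale open topology on $L$, via the dichotomy: a nonempty $\Sa E_L$-open subset of $L$ either escapes $K$, or forces $K = L$. First I would cut down to the case in which $V$ is a smooth affine irreducible $K$-curve $C$ with $p \in C(K)$: working in an affine chart around $p$ and choosing $f_1,\dots,f_{d-1}$ (with $d = \dim V$) in $\mathfrak m_p$ whose classes are linearly independent in the cotangent space, the scheme $V(f_1,\dots,f_{d-1})$ is smooth of dimension $1$ at $p$, hence has a unique component through $p$; taking an affine open neighbourhood of $p$ in its smooth locus gives such a $C$, and since $C$ is a subvariety of $V$ we get $C(L)\setminus C(K) = C(L)\setminus V(K)\subseteq V(L)\setminus V(K)$, while $C_L$ is again a smooth affine curve with $C(L) = C_L(L)$. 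Since the coordinate functions $x_i - x_i(p)$ generate $\mathfrak m_p$, at least one, say $x$, has nonzero differential at $p$; on $U = \{(dx)\ne 0\}\ni p$ the map $x|_U\colon U\to\Aa^1$ is \'etale, so by Fact~\ref{fact:first-paper-system}(4) its image $O := x(U(L))$ is a nonempty $\Sa E_L$-open subset of $\Aa^1(L) = L$. The point of working with a coordinate defined over $K$ is that $x$ sends $U(K)$ into $K$: hence any $q\in U(L)$ with $x(q)\notin K$ automatically lies in $C(L)\setminus C(K)$, and distinct such values of $x(q)$ give distinct such $q$, so $|C(L)\setminus C(K)| \ge |O\setminus K|$.

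The heart of the argument is the claim that a subfield $F\le L$ containing a nonempty $\Sa E_L$-open subset $O$ of $L$ must be all of $L$. Indeed, $N := O - O$ is a nonempty $\Sa E_L$-open neighbourhood of $0$ (translations and nonzero scalar multiplications are $\Sa E_L$-homeomorphisms, being invertible $L$-morphisms $\Aa^1\to\Aa^1$, cf.\ Fact~\ref{fact:first-paper-system}) contained in the additive subgroup $F$; since $L$ is large the $\Sa E_L$-topology on $L$ is non-discrete (Fact~\ref{fact:large-potato}), and it is $T_1$ (Zariski if $L$ is separably closed, Hausdorff by Fact~\ref{fact:hd} otherwise), so every nonempty $\Sa E_L$-open set is infinite. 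Given $a\in L^{\times}$, the open set $N\cap a^{-1}N$ then contains some $y\ne 0$, and $a = (ay)\,y^{-1}\in F$ because $y\in N\subseteq F$ and $ay\in N\subseteq F$. In particular $O\not\subseteq K$ (as $K\subsetneq L$), which together with the previous paragraph already yields $V(L)\ne V(K)$; also no proper subfield can contain $O$, so $|O| = |L|$. For the cardinality, suppose $\mu := |C(L)\setminus C(K)| < |L|$ and let $F$ be the subfield generated over $K$ by the coordinates of the (at most $\mu$) points of $\{q\in U(L) : x(q)\notin K\}$; then every element of $O$ lies in $K$ or is some such $x(q)$, so $O\subseteq F$, hence $F = L$ — i.e.\ $L$ is generated over $K$ by fewer than $|L|$ elements. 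If $|K| < |L|$ this is absurd, and we conclude $|V(L)\setminus V(K)| = |L|$.

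The one case this does not close directly is $|K| = |L|$ with $L/K$ generated by fewer than $|L|$ elements — for instance a finite extension of an uncountable field, such as $\Cc/\Rr$ or $\Rr((t))/\Rr((t^2))$ — and this is where the real work of \cite{Fehm-subfield} lies. When $L$ is separably closed it is still easy: then finite \'etale $L$-algebras split, so $O = x(U(L))$ is cofinite in $L$ and $|O\setminus K| = |L\setminus K| = |L|$. In general one must produce $|L|$-many points of $C(L)$ off $C(K)$ by a more hands-on deformation/specialization argument, which I would import from \cite{Fehm-subfield}; I expect this last point to be the main obstacle, everything else being a routine combination of the reduction to curves with the dichotomy above.
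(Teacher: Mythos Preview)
The paper does not prove Fact~\ref{fact:new-points}: it is quoted as a theorem of Fehm~\cite{Fehm-subfield}, with no argument supplied. So there is no proof in the paper to compare yours against.

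That said, your sketch is correct as far as it goes. The reduction to a smooth affine $K$-curve with an \'etale coordinate $x\colon U\to\Aa^1$ is standard and fine, and your central lemma---that a proper subfield of a large field $L$ cannot contain a nonempty $\Sa E_L$-open subset of $L$---is precisely Proposition~\ref{prop:subfield} of the paper, proved there by essentially the same $N\cap a^{-1}N$ trick and used to derive Fact~\ref{fact:fehm}. Your argument does settle $V(L)\ne V(K)$ in general, the full cardinality statement when $|K|<|L|$, and the separably closed case. You have also correctly located the one genuine gap: the equicardinal, non-separably-closed case (e.g.\ a finite extension of an uncountable field), which your \'etale-open argument does not close and for which you defer to \cite{Fehm-subfield}. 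Since the paper itself simply cites Fehm, your write-up is already strictly more detailed than the paper's treatment; but, as you acknowledge, it is not a self-contained proof.
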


Recall that the class of large fields are closed under algebraic extensions.
Suppose that $K$ is large, $L/K$ is Galois, $L$ is not separably closed, and $V$ is a positive-dimensional irreducible $K$-variety with a smooth $K$-point.
For example if the maximal abelian extension $\Qq_\mathrm{ab}$ of $\Qq$ is large as conjectured then we can take $K = \Qq_\mathrm{ab}$ and $L = \qsolve$.
Corollary~\ref{cor:galois-action-1} shows that $F \mapsto V(F)$ gives a morphism from the lattice of intermediate subfields of $L/K$ to the lattice of $\Sa E_L$-closed subsets of $V(L)$.
Fact~\ref{fact:new-points} shows that this morphism is injective.

\section{Proof of Theorem F}
\label{section:openess}

\subsection{The characteristic zero case}

This case follows from an algebraic analogue of Sard's theorem and Proposition~\ref{prop:smooth-iff-etale}.
Fact~\ref{fact:factor-smooth} is \cite[\textsection 2.2 Proposition 11]{Neron}.

\begin{fact}
\label{fact:factor-smooth}
Suppose that $f \colon  V \to W$ is a smooth morphism of $K$-varieties, $p \in V$, and the relative dimension of $f$ at $p$ is $n \ge 1$.
Then there is an open subvariety $U$ of $V$ containing $p$ such that the restriction of $f$ to $U$ factors as $\uppi \circ g$ for an \'etale morphism $g \colon  U \to W \times \Aa^n$ and the projection $\uppi \colon  W \times \Aa^n \to W$.
\end{fact}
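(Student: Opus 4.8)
The plan is to reduce to affine charts and build $g$ from a relative coordinate system. First I would choose an affine open $\Spec A \subseteq W$ containing $f(p)$ and an affine open $\Spec B$ of $f^{-1}(\Spec A)$ containing $p$, with $\pp \in \Spec B$ the prime corresponding to $p$; it suffices to produce the asserted factorization over these charts, since $\Spec A \times \Aa^n$ is open in $W \times \Aa^n$ and a composite of an \'etale morphism with an open immersion is \'etale. As $f$ is smooth of relative dimension $n$ at $p$, the module of relative differentials $\Omega_{B/A}$ is locally free of rank $n$ near $\pp$; after shrinking $\Spec B$ I may assume $\Omega_{B/A}$ is free of rank $n$, and then, using Nakayama at $\pp$ followed by a further localization, that it has a basis of the form $db_1,\dots,db_n$ with $b_1,\dots,b_n \in B$. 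These elements determine an $A$-algebra homomorphism $A[y_1,\dots,y_n] \to B$, $y_i \mapsto b_i$, hence a $W$-morphism $g : \Spec B \to \Aa^n_A = W \times \Aa^n$ over the chosen chart, and by construction $f|_{\Spec B} = \pi \circ g$.

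Next I would check that $g$ is unramified. The right-exact cotangent sequence of $A \to A[y] \to B$ reads
$B \otimes_{A[y]} \Omega_{A[y]/A} \to \Omega_{B/A} \to \Omega_{B/A[y]} \to 0$,
and the first arrow carries the basis $1 \otimes dy_i$ of the free module $B \otimes_{A[y]} \Omega_{A[y]/A}$ to the basis $db_i$ of $\Omega_{B/A}$. A surjection between free modules of the same finite rank is an isomorphism, so $\Omega_{B/A[y]} = 0$ on all of $\Spec B$; since $g$ is also of finite presentation, it is unramified.

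It remains to upgrade ``unramified at $p$'' to ``\'etale near $p$'', which is the crux. By the local structure theory of unramified morphisms I may shrink $\Spec B$ around $p$ so that $g$ factors as a closed immersion $\Spec B \hookrightarrow Z$ followed by an \'etale morphism $Z \to \Aa^n_A$. Then $Z$ is smooth over $A$ of relative dimension $n$ (being \'etale over something smooth of relative dimension $n$), while $\Spec B$ is smooth over $A$ of relative dimension $n$ near $p$. The conormal sequence $0 \to \mathcal{I}/\mathcal{I}^2 \to \Omega_{Z/A}|_{\Spec B} \to \Omega_{B/A} \to 0$ is left-exact because $\Spec B$ is $A$-smooth, so $\mathcal{I}/\mathcal{I}^2$ is the kernel of a surjection between locally free modules of the same rank $n$, whence $\mathcal{I}/\mathcal{I}^2 = 0$; Nakayama gives $\mathcal{I}_{\pp} = 0$, so $\Spec B \to Z$ is an isomorphism near $p$ and $g$ is \'etale there. (Alternatively one can avoid the structure theorem and obtain flatness of $g$ near $p$ from the fibrewise criterion for flatness together with miracle flatness applied to the quasi-finite dominant map of $n$-dimensional smooth fibres; flat plus unramified is \'etale.) Letting $U$ be the open subvariety of $V$ contained in $\Spec B$ on which $g$ is \'etale, and composing with the open immersion $\Spec A \times \Aa^n \hookrightarrow W \times \Aa^n$, we get the desired $g : U \to W \times \Aa^n$ with $f|_U = \pi \circ g$. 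The main obstacle is exactly this last step — producing flatness of $g$ — which I expect to handle via the conormal-sequence argument above.
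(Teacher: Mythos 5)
Your argument is correct, but note that the paper does not actually prove Fact~\ref{fact:factor-smooth}: it quotes it from Bosch--L\"utkebohmert--Raynaud, \emph{N\'eron Models}, \S 2.2, Proposition 11. Your construction of $g$ is the same as in that source: pass to affine charts, choose $b_1,\dots,b_n$ whose differentials trivialize $\Omega_{V/W}$ near $p$ (using that $\Omega_{V/W}$ is locally free of rank $n$ there), and take the induced $W$-morphism to $W\times\Aa^n$. Where you genuinely differ is the verification that $g$ is \'etale near $p$: the cited source invokes its Proposition 2.2/10, the criterion that a $W$-morphism between smooth $W$-schemes is \'etale at a point if and only if it induces an isomorphism $g^{*}\Omega_{(W\times\Aa^n)/W}\to\Omega_{V/W}$ there, which makes the last step a one-liner; you instead reprove this special case by hand, getting unramifiedness from the right-exact cotangent sequence, then using the local factorization of an unramified morphism as a closed immersion into a scheme \'etale over $\Aa^n_A$, and finally the locally split conormal sequence of a closed immersion of smooth $A$-schemes together with Nakayama to see that the immersion is an isomorphism near $p$. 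All the ingredients you use (local structure of unramified morphisms, left exactness of the conormal sequence for a smooth subscheme, and the fact that a surjection of finite locally free modules of equal rank is an isomorphism) are standard and applied correctly, so your proof is a legitimate self-contained substitute for the citation; your parenthetical alternative via the fibrewise flatness criterion plus miracle flatness on the $n$-dimensional smooth fibres also works, since flat and unramified implies \'etale.
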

\begin{proposition}
\label{prop:smooth-iff-etale}
Suppose that $f \colon  V \to W$ is a smooth morphism of $K$-varieties.
Then $V(K) \to W(K)$ is $\Sa E_K$-open.
\end{proposition}
\noindent
We now prove Proposition~\ref{prop:smooth-iff-etale}.

\begin{proof}
Fix $p \in V(K)$.
We show that $V(K) \to W(K)$ is open at $p$.
Let $n$ be the relative dimension of $f$ at $p$.
Suppose $n = 0$.
Then $f$ is \'etale at $p$, so $f$ is \'etale on an open subvariety $U$ of $V$ containing $p$.
By Fact~\ref{fact:first-paper-system}.4 the restriction of $f$ to $U(K)$ is $\Sa E_K$-open.
Suppose that $n \ge 1$.
Let $U$, $g \colon  U \to W \times \Aa^n$, and $\uppi \colon  W \times \Aa^n \to W$ be as in Fact~\ref{fact:factor-smooth}.
By Fact~\ref{fact:first-paper-system}.4 $U(K) \to W(K) \times K^n$ is $\Sa E_K$-open and by Fact~\ref{fact:first-paper-system}.5 $W(K) \times K^n \to W(K)$ is $\Sa E_K$-open.
Hence the restriction of $f$ to $U(K)$ is $\Sa E_K$-open.
\end{proof}

Fact~\ref{fact:sard} is an algebraic analogue of Sard's theorem.
See \cite[Corollary 5.4.2]{Mumford-Oda} for a proof.
The statement in \cite{Mumford-Oda} only covers the case when $W$ is regular, but the proof goes through in the more general case verbatim.

\begin{fact}
\label{fact:sard}
Suppose that $V \to W$ is a dominant morphism of irreducible $K$-varieties.\\
The following are equivalent:
\begin{enumerate}
\item the extension $K(V)/K(W)$ of function fields associated to $V \to W$ is separable,
\item there is a dense open subvariety $U$ of $V$ such that $U \to W$ is smooth.
\end{enumerate}
If $\Chara(K) = 0$ then there is a dense open subvariety $U$ of $V$ such that $U \to W$ is smooth.
\end{fact}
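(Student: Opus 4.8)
The plan is to reduce the whole statement to the generic fibre of $V \to W$ and to the classical link between separable generation of a function field and smoothness at the generic point. Since smoothness of $f : V \to W$ at a point of $V$ is a Zariski-local condition, I would first pass to affine opens and assume $V = \Spec A$, $W = \Spec B$ with $B \subseteq A$ an inclusion of finitely generated $K$-domains, the injection coming from dominance; note that $K(V)/K(W)$ is then a finitely generated field extension. Writing $\eta_V,\eta_W$ for the generic points and $S = B \setminus \{0\}$, the generic fibre $V_{\eta_W} = V \times_W \Spec K(W) = \Spec(S^{-1}A)$ is a localization of the domain $A$; hence it is an integral scheme of finite type over $K(W)$ whose generic point is $\eta_V$ and whose function field is $\Oo_{V,\eta_V} = K(V)$.

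Next I would apply the pointwise criterion that a morphism locally of finite presentation is smooth at a point of the source iff it is flat there and its fibre over the image point is smooth at that point (see \cite{stacks-project}). Flatness of $f$ at $\eta_V$ is automatic, since the induced map on stalks is the field extension $K(W) \to K(V)$. Therefore $f$ is smooth at $\eta_V$ iff $V_{\eta_W}$ is smooth over $K(W)$ at its generic point. Now invoke the standard fact that, for an integral scheme $X$ of finite type over a field $k$, $X$ is smooth over $k$ at $\eta_X$ iff $k(X)/k$ is separably generated: indeed $\Oo_{X,\eta_X} = k(X)$, the relative dimension of $X \to \Spec k$ at $\eta_X$ is $\dim X = \trdg_k k(X)$, and the condition for smoothness there is $\dim_{k(X)} \Omega_{k(X)/k} = \trdg_k k(X)$, which by MacLane's criterion (equivalently, geometric reducedness of the generic point) holds exactly when $k(X)/k$ is separably generated. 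Taking $X = V_{\eta_W}$ and $k = K(W)$ gives: $f$ is smooth at $\eta_V$ iff $K(V)/K(W)$ is separable. Finally, the smooth locus of the finite-type morphism $f$ is open in $V$, so ``$f$ smooth at $\eta_V$'' is equivalent to ``the smooth locus of $f$ is a nonempty — hence, since $V$ is irreducible, dense — open subvariety $U$ with $U \to W$ smooth.'' This establishes $(1) \Leftrightarrow (2)$, and the characteristic zero addendum follows at once because in characteristic zero every finitely generated field extension is separably generated, so $(1)$ holds.

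The only non-formal ingredient — and the reason the paper is content to cite \cite[Corollary 5.4.2]{Mumford-Oda} — is the equivalence in the last step between separable generation of $k(X)/k$ and smoothness of $X$ at its generic point, packaged with the ``flat $+$ smooth fibres $\Leftrightarrow$ smooth'' criterion. The point requiring a little care is checking that the generic fibre is genuinely integral with function field exactly $K(V)$: this uses that $\Spec K(W) \to W$ is a localization (so flat, and the base change of the domain $A$ stays a domain) together with irreducibility of $V$. Everything else is bookkeeping, and no largeness or étale-open-topology input is needed — Fact~\ref{fact:sard} is purely algebraic geometry.
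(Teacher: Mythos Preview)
Your argument is correct. The paper does not actually prove Fact~\ref{fact:sard}; it only records the reference \cite[Corollary 5.4.2]{Mumford-Oda} and notes that the proof there, written for $W$ regular, goes through in general. What you have written is essentially the standard proof one finds behind such a citation: reduce to the generic point, use the ``flat $+$ smooth fibre $\Rightarrow$ smooth'' criterion (flatness at $\eta_V$ being automatic since the stalk map is the field extension $K(W)\hookrightarrow K(V)$), identify smoothness of the generic fibre at its generic point with $\dim_{K(V)}\Omega_{K(V)/K(W)}=\trdg_{K(W)}K(V)$, and invoke the separability criterion for equality. Openness of the smooth locus then upgrades ``smooth at $\eta_V$'' to ``smooth on a dense open.'' Your bookkeeping on the generic fibre---that $S^{-1}A$ is a domain because $B\hookrightarrow A$ is injective into a domain, and that its fraction field is $K(V)$---is exactly the point that makes the argument work without assuming $W$ regular, confirming the paper's parenthetical remark. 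The characteristic-zero addendum is, as you say, immediate.
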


Proposition~\ref{prop:open-map-char-0} follows by Proposition~\ref{prop:smooth-iff-etale} and Fact~\ref{fact:sard}.
This gives the characteristic zero case of Theorem F.

\begin{proposition}
\label{prop:open-map-char-0}
Suppose that $V \to W$ is a dominant morphism of irreducible $K$-varieties.
If the field extension $K(V)/K(W)$ associated to $V \to W$ is separable then there is a dense open subvariety $U$ of $V$ such that $U(K) \to W(K)$ is $\Sa E_K$-open.
In particular if $\Chara(K) = 0$ then there is a dense open subvariety $U$ of $V$ such that $U(K) \to W(K)$ is $\Sa E_K$-open.
\end{proposition}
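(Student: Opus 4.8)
The plan is to read this proposition off directly from Fact~\ref{fact:sard} and Proposition~\ref{prop:smooth-iff-etale}; there is essentially nothing new to do. First I would assume $K(V)/K(W)$ is separable and invoke the equivalence $(1)\Leftrightarrow(2)$ in Fact~\ref{fact:sard} to produce a dense open subvariety $U$ of $V$ such that the restriction $U \to W$ is a smooth morphism of $K$-varieties. (Since $U$ is dense open in the irreducible variety $V$ it is again irreducible and $U \to W$ is still dominant, though neither fact is needed.) Then I would apply Proposition~\ref{prop:smooth-iff-etale} to the smooth morphism $U \to W$, which gives that the induced map on $K$-points $U(K) \to W(K)$ is $\Sa E_K$-open. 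That is exactly the conclusion.

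For the ``in particular'' clause, when $\Chara(K) = 0$ every field extension is separable, so $K(V)/K(W)$ is automatically separable and the first part applies verbatim. Equivalently, one can quote the final sentence of Fact~\ref{fact:sard} directly: in characteristic zero it produces the dense open $U$ with $U \to W$ smooth with no hypothesis on the function field extension, and then Proposition~\ref{prop:smooth-iff-etale} finishes the argument.

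I do not expect any genuine obstacle. The mathematical content has been pushed entirely into Fact~\ref{fact:sard} (the algebraic analogue of Sard's theorem, linking separability of $K(V)/K(W)$ to generic smoothness of $V \to W$) and into Proposition~\ref{prop:smooth-iff-etale}, whose proof in turn rests on the N\'eron local factorization of a smooth morphism through an \'etale map to $W \times \Aa^n$ (Fact~\ref{fact:factor-smooth}) together with the openness on $K$-points of \'etale morphisms and of coordinate projections (Fact~\ref{fact:first-paper-system}.4 and Fact~\ref{fact:first-paper-system}.5). The only point worth double-checking is that the openness assertion in Proposition~\ref{prop:smooth-iff-etale} concerns the map all the way to $W(K)$ rather than merely onto its image --- and it does, since \'etale maps and projections are open onto the full target.
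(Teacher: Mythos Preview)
Your proposal is correct and matches the paper's own argument exactly: the paper simply states that Proposition~\ref{prop:open-map-char-0} follows from Proposition~\ref{prop:smooth-iff-etale} and Fact~\ref{fact:sard}, which is precisely the two-step deduction you describe. Your additional remarks (about irreducibility of $U$, the characteristic-zero clause, and openness onto the full target) are all fine but, as you note, not needed for the argument.
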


\subsection{The positive characteristic case}
\label{section:model-complete}
We treat the positive characteristic case of Theorem F.
We use the notation of Section~\ref{section:relative-frob}.
Fact~\ref{fact:frob-uni} is~\cite[Lemma 0CCB,054M,04DF]{stacks-project}.

\begin{fact}
\label{fact:frob-uni}
Suppose that $\Chara(K) = p > 0$ and $V \to W$ is a morphism of $K$-varieties.
Then $\frrel^{(r)}\colon V\to V^{(p^r)}$ and $V^{(p^r)}_\mathrm{red}\to V^{(p^r)}$ are universal homeomorphisms for every $r\geq 1$. Hence $\frrel^{(r),\mathrm{red}}\colon V\to V^{(p^r)}_{\mathrm{red}}$ is a universal homeomorphism for all $r\geq 1$. 
\end{fact}
Corollary~\ref{cor:frob-hom} follows from Fact~\ref{fact:frob-uni} and Proposition~\ref{prop:uni-homeo}.
\begin{corollary}\label{cor:frob-hom}
Suppose that $K$ is perfect, $\Chara(K) = p > 0$, $V \to W$ is a dominant morphism of irreducible $K$-varieties, and $r \ge 1$.
Then the map $V(K) \to V^{(p^r)}_{\mathrm{red}}(K)$ induced by $\fr^{(r),\mathrm{red}}_{V/W}$ is an $\Sa E_K$-homeomorphism.
\end{corollary}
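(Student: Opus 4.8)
The plan is to get this immediately from Fact~\ref{fact:frob-uni} and Proposition~\ref{prop:uni-homeo}. Fact~\ref{fact:frob-uni} says that $\fr^{(r)}_{V/W} : V \to V^{(p^r)}$ is a universal homeomorphism, and Proposition~\ref{prop:uni-homeo} says that a universal homeomorphism of $K$-varieties induces an $\Sa E_K$-homeomorphism on $K$-points when $K$ is perfect. So the real work is only to confirm that the situation fits the framework of Proposition~\ref{prop:uni-homeo}: that $V^{(p^r)}$ is a $K$-variety and that $\fr^{(r)}_{V/W}$ is a morphism of $K$-varieties (not merely of schemes).

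First I would observe that $\fr^{(r)}_{V/W}$ is, by its construction in Section~\ref{section:relative-frob}, a morphism of $W$-schemes; since both $V$ and $V^{(p^r)}$ receive their $K$-structure by composing their structure maps to $W$ with $W \to \Spec K$, any $W$-morphism between them is automatically a $K$-morphism. Next I would check that $V^{(p^r)}$ really is a $K$-variety. Working affine-locally, Fact~\ref{fact:frob-affine} and Fact~\ref{fact:iterate} identify $K[V^{(p^r)}]$ with the subring $K[s,t^{p^r}]$ of $K[V] = K[s,t]$; this is a finitely generated $K$-algebra and is a domain (hence reduced) because $K[V]$ is. The affine charts glue, using that open immersions are stable under base change along $\fr_W$ --- exactly the argument already used in the proof of Lemma~\ref{lem:function-field-frob}. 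Hence $V^{(p^r)}$ is a (reduced, irreducible, separated, finite-type) $K$-variety.

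Having made these checks, I would conclude: Fact~\ref{fact:frob-uni} gives that $\fr^{(r)}_{V/W}$ is a universal homeomorphism, and Proposition~\ref{prop:uni-homeo} applied with $f = \fr^{(r)}_{V/W} : V \to V^{(p^r)}$ then gives that the induced map $V(K) \to V^{(p^r)}(K)$ is an $\Sa E_K$-homeomorphism. There is no genuine obstacle here; the only thing requiring a moment's care is the bookkeeping that $\fr^{(r)}_{V/W}$ and $V^{(p^r)}$ live in the category of $K$-varieties, and that has essentially been set up already in Section~\ref{section:relative-frob}. If one wanted to avoid invoking the iterated form of Fact~\ref{fact:frob-uni}, one could instead induct on $r$ via the single relative Frobenius $\fr_{V^{(p^{r-1})}/W} : V^{(p^{r-1})} \to V^{(p^r)}$ and compose $\Sa E_K$-homeomorphisms, but this is not necessary.
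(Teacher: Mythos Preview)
Your proposal is correct and matches the paper's proof exactly: the paper states that Corollary~\ref{cor:frob-hom} follows from Fact~\ref{fact:frob-uni} and Proposition~\ref{prop:uni-homeo}, which is precisely your argument. Your additional bookkeeping checks (that $V^{(p^r)}$ is a $K$-variety and that $\fr^{(r)}_{V/W}$ is a $K$-morphism) are left implicit in the paper but are reasonable to spell out.
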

We now prove the positive characteristic case of Theorem F.
Suppose that $K$ is perfect, $\Chara(K) = p > 0$, and $V \to W$ is a dominant morphism of $K$-varieties.
Applying Lemma~\ref{lem:function-field-frob} we fix $r \ge 1$ such that $K(V^{(p^r)}_{\mathrm{red}})/K(W)$ is separable.
By Proposition~\ref{prop:open-map-char-0} there is a dense open subvariety $U'$ of $V^{(p^r)}_{\mathrm{red}}$ such that $U'(K) \to W(K)$ is $\Sa E_K$-open.
Let $U = (\fr^{(r),\mathrm{red}}_{V/W})^{-1}(U')$.
By Fact~\ref{fact:homeo} $U$ is a dense open subvariety of $V$.
Factor $U(K) \to W(K)$ as 
$$U(K) \to U'(K) \to W(K).$$
By Corollary~\ref{cor:frob-hom} $ U(K) \to U'(K)$ is an $\Sa E_K$-homeomorphism.
Thus $U(K) \to W(K)$ is $\Sa E_K$-open.

\meno
We now drop the assumption that $\Chara(K) \ne 0$.

\begin{corollary}
\label{cor:perfect}
Suppose that $K$ is perfect and $f \colon  V \to W$ is a dominant morphism of irreducible $K$-varieties with $\dim V = \dim W$.
Then there is a dense open subvariety $U$ of $W$ such that $f^{-1}(U)(K) \to U(K)$ is $\Sa E_K$-open.
\end{corollary}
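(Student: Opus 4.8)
The plan is to reduce Corollary~\ref{cor:perfect} to Theorem~G (the latter already proven above for perfect $K$ in both characteristics) by passing to a suitable open subvariety of $W$ over which $f$ becomes a genuine $W$-morphism of irreducible varieties with separable residue field extensions on fibers, so that a fiber-wise version of Theorem~G applies. The key point is that $\dim V = \dim W$ forces the generic fiber of $f$ to be zero-dimensional, hence $K(V)/K(W)$ is a finite field extension rather than a transcendental one.

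First I would apply Theorem~G directly: since $K$ is perfect and $f : V \to W$ is dominant between irreducible $K$-varieties, there is a dense open subvariety $V_0 \subseteq V$ such that $V_0(K) \to W(K)$ is $\Sa E_K$-open. This is \emph{almost} the statement, but the target of openness we want is an open subvariety $U$ of $W$ with $f^{-1}(U) = V_0$, whereas Theorem~G only gives us $V_0$ open in $V$, not of the form $f^{-1}(U)$. So the real work is to arrange that, after shrinking $W$ to a dense open $U$, the preimage $f^{-1}(U)$ is contained in (and can be taken equal to) the good locus $V_0$. Here is where $\dim V = \dim W$ is essential: by Fact~\ref{fact:constructible}(3) applied to $f$ restricted over a suitable open set (using that the generic fiber has dimension $\dim V - \dim W = 0$, so by \cite[Theorem 13.1.3]{EGA-IV-4} the locus of zero-dimensional fibers is open and nonempty, and by Fact~\ref{fact:constructible}(3) there is a uniform bound on fiber cardinality over a dense open $U_1 \subseteq W$), we may replace $W$ by $U_1$ and $V$ by $f^{-1}(U_1)$ and assume $f$ is quasi-finite. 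The closed set $V \setminus V_0$ has dimension $< \dim V = \dim W$, so its image $f(V \setminus V_0)$ is a constructible set of dimension $< \dim W$ by Fact~\ref{fact:constructible}(4), hence contained in a proper closed subset $C$ of $W$. Setting $U := U_1 \setminus C$, a dense open subvariety of $W$, we get $f^{-1}(U) \subseteq V_0$, and then $f^{-1}(U)(K) \to U(K)$ is the restriction of the $\Sa E_K$-open map $V_0(K) \to W(K)$ to the $\Sa E_K$-open subset $f^{-1}(U)(K)$ mapping into the $\Sa E_K$-open subset $U(K)$, hence is itself $\Sa E_K$-open (using Fact~\ref{fact:first-paper-system}(3) that open immersions give open embeddings, so restriction of an open map to open subsets of source and target is open).

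The main obstacle I anticipate is bookkeeping around the phrase "$f^{-1}(U)(K) \to U(K)$ is $\Sa E_K$-open": one must check that an open map $g : A \to B$ of topological spaces, when restricted to $g^{-1}(B_0) \to B_0$ for an open $B_0 \subseteq B$, remains open — this is elementary point-set topology (the image of an open subset of $g^{-1}(B_0)$ is open in $B$ and contained in $B_0$), but one should be careful that the $\Sa E_K$-topology on $f^{-1}(U)(K)$ as a $K$-variety agrees with the subspace topology from $V(K)$, which is exactly Proposition~\ref{prop:well-define} (via Fact~\ref{fact:first-paper-system}(3)), and similarly for $U(K) \subseteq W(K)$. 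A secondary subtlety is ensuring irreducibility is not needed for $f^{-1}(U)$ — it isn't, since openness is a local statement on the source. Modulo these routine verifications, the corollary follows immediately from Theorem~G and Fact~\ref{fact:constructible}.
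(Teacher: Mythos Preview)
Your proof is correct and follows essentially the same route as the paper: apply Theorem~G to obtain a dense open $V_0 \subseteq V$ with $V_0(K) \to W(K)$ open, then use $\dim f(V \setminus V_0) \le \dim(V \setminus V_0) < \dim V = \dim W$ (Fact~\ref{fact:constructible}(4)) to find a dense open $U \subseteq W$ disjoint from $f(V \setminus V_0)$, whence $f^{-1}(U) \subseteq V_0$. The preliminary reduction to the quasi-finite case is unnecessary (the paper omits it, since the equality $\dim V = \dim W$ is only needed in that single dimension inequality), and your opening paragraph about ``separable residue field extensions on fibers'' and a ``fiber-wise version of Theorem~G'' describes a plan you never actually carry out---but the argument you do carry out is the right one.
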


\begin{proof}
By Theorem F there is a dense open subvariety $U'$ of $V$ such that $U'(K) \to W(K)$ is $\Sa E_K$-open.
We have $\dim V \setminus U' < \dim V$.
By Fact~\ref{fact:constructible}.4 we have $$\dim f(V' \setminus U) \le \dim V' \setminus U < \dim W.$$
Thus there is a dense open subvariety $U$ of $W$ which is disjoint from $f(V \setminus U')$.
Then $f^{-1}(U)$ is contained in $U'$, hence $f^{-1}(U)(K) \to U(K)$ is $\Sa E_K$-open.
\end{proof}

\section{Proofs of Theorems A and B}
\label{section:consequences}

\subsection{\'Ez sets}
\label{section:E-sets}
Suppose that $V$ is a $K$-variety.
A \textbf{basic \'ez set} is a definable \'etale open subset of a Zariski closed subset of $V(K)$.
An \textbf{\'ez set} is a finite union of basic \'ez sets.
We first establish some facts about \'ez sets and in particular show that the collection of \'ez sets is closed under various operations.
Note that any basic \'ez subset of $V(K)$ is of the form $O \cap Y$ where $O$ is an \'etale open subset of $V(K)$, $Y$ is a Zariski closed subset of $V(K)$, and $O \cap Y$ is definable.
We do not know if we can take $O$ to be definable.

\begin{lemma}
\label{lem:decomp}
Suppose that $K$ is perfect, $V$ is a $K$-variety, and $X$ is an \'ez subset of $V(K)$.
Then there are pairwise disjoint smooth irreducible subvarieties $V_1,\ldots,V_k$ of $V$ and $X_1,\ldots,X_k$ such that each $X_i$ is a definable \'etale open subset of $V_i(K)$ and $X =X_1\cup\cdots\cup X_k$.
\end{lemma}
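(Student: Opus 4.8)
The plan is to reduce everything to a single application of Fact~\ref{fact:decomp}. By the definition of an \'ez set together with the remark preceding the lemma, I would first write $X = \bigcup_{j=1}^{n} \bigl( O_j \cap Y_j \bigr)$, where each $O_j$ is an $\Sa E_K$-open subset of $V(K)$, each $Y_j$ is a Zariski closed subset of $V(K)$, and each $O_j \cap Y_j$ is definable. For each $j$ I would fix a closed subvariety $W_j$ of $V$ with $W_j(K) = Y_j$. Applying Fact~\ref{fact:decomp} to $V$ with the closed subvarieties $V, W_1, \ldots, W_n$ (whose union is $V$), I would obtain pairwise disjoint smooth irreducible subvarieties $V_1, \ldots, V_k$ of $V$ such that $V = \bigcup_{i=1}^{k} V_i$ and each $V_i$ is either contained in or disjoint from each $W_j$.

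Next I would set $X_i := X \cap V_i(K)$. Since the sets $V_i(K)$ are pairwise disjoint and cover $V(K)$, this immediately gives $X = \bigcup_{i=1}^{k} X_i$. Each $V_i$ is a subvariety of $V$, so $V_i(K)$ is constructible, hence definable; since $X$ is definable (being a finite union of basic \'ez sets), each $X_i$ is definable. After discarding the indices $i$ with $X_i = \emptyset$ we may assume every $X_i$ is nonempty, and the remaining $V_i$ are still pairwise disjoint, smooth, and irreducible.

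It then remains to verify that each $X_i$ is $\Sa E_K$-open in $V_i(K)$. For this I would compute $X_i = \bigcup_{j=1}^{n} \bigl( O_j \cap W_j(K) \cap V_i(K) \bigr)$: if $V_i$ is disjoint from $W_j$ then $V_i(K) \cap W_j(K) = \emptyset$ and the $j$-th term vanishes, while if $V_i \subseteq W_j$ then $V_i(K) \subseteq W_j(K)$ and the $j$-th term equals $O_j \cap V_i(K)$. Hence $X_i$ is a finite union of sets of the form $O_j \cap V_i(K)$, each of which is open in the subspace topology on $V_i(K)$ induced by the $\Sa E_K$-topology on $V(K)$; by Proposition~\ref{prop:well-define} that subspace topology coincides with the $\Sa E_K$-topology on $V_i(K)$, so $X_i$ is $\Sa E_K$-open in $V_i(K)$, as desired. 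I do not anticipate a genuine obstacle; the single point requiring care is that the sets $O_j$ need not themselves be definable, so the definability of $X_i$ must be deduced from the definability of the \'ez set $X$ together with that of the constructible set $V_i(K)$, rather than from the individual pieces $O_j \cap V_i(K)$.
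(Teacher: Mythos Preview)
Your argument is correct and follows essentially the same route as the paper: apply Fact~\ref{fact:decomp} to the closed subvarieties carrying the basic \'ez pieces, then set $X_i = X \cap V_i(K)$ and check openness and definability. The only cosmetic differences are that the paper replaces $V$ by $\bigcup_j W_j$ (rather than adjoining $V$ to the list) and leaves the appeals to Proposition~\ref{prop:well-define} and the definability of $X_i$ implicit, whereas you spell these out.
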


\begin{proof}
Let $W_1,\ldots,W_{\ell}$ be closed subvarieties of $V$ and $Y_1,\ldots,Y_{\ell}$ be such that each $Y_i$ is a definable \'etale open subset of $W_i(K)$ and $X = \bigcup_{i = 1}^{\ell} Y_i$.
After possibly replacing $V$ with $\bigcup_{i = 1}^{\ell} W_i$ we suppose that the $W_i$ cover $V$.
Applying Fact~\ref{fact:decomp} we obtain pairwise disjoint smooth irreducible subvarieties $V_1,\ldots,V_k$ of $V$ such that $V = \bigcup_{i = 1}^{k} V_i$ and each $V_i$ is either contained in or disjoint from every $W_j$.
For each $i \in \{1,\ldots,k\}$ let $X_i = \bigcup_{j = 1}^{\ell} ( V_i(K) \cap Y_j )$.
Note that if $V_i$ is contained in $W_j$ then $V_i(K) \cap Y_j$ is an \'etale open subset of $V_i(K)$, hence each $X_i$ is an \'etale open subset of $V_i(K)$.
Finally note that each $X_i$ is definable.
\end{proof}

\begin{lemma}
\label{lem:ez-equiv}
Let $V$ be a $K$-variety and $X$ be a subset of $V(K)$.
Then the following are equivalent:
\begin{enumerate}
[leftmargin=*]
\item $X$ is \'ez,
\item $X$ is definable and a finite union of $\Sa E_K$-open subsets of Zariski closed subsets of $V(K)$.
\end{enumerate}
\end{lemma}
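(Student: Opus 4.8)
The implication $(1)\Rightarrow(2)$ is immediate: a basic \'ez set is definable and a finite union of definable sets is definable, so every \'ez set is definable; moreover each basic \'ez set is in particular an $\Sa E_K$-open subset of a Zariski closed subset of $V(K)$, so an \'ez set is a finite union of such.

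For $(2)\Rightarrow(1)$ the plan is to induct on $d:=\dim\overline X$. Write $X=\bigcup_{i=1}^{n}O_i$ with each $O_i$ a nonempty $\Sa E_K$-open subset of $Z_i(K)$ for some Zariski closed $Z_i\subseteq V(K)$ (empty $O_i$ may be discarded). If $d=0$ then $X$ is finite, and since each singleton in $V(K)$ is a basic \'ez set, $X$ is \'ez. Assume $d\ge 1$. By decomposing $\overline X$ into irreducible components, intersecting $X$ and the $Z_i$ with each component, and then decomposing each $Z_i$ into irreducible components — using Proposition~\ref{prop:well-define} to see that all the relevant intersections remain $\Sa E_K$-open in a Zariski closed set and that the intrinsic \'etale open topologies are unchanged, and using that a finite union of \'ez sets is \'ez — one reduces to the case where $V$ is irreducible, $X$ is Zariski dense in $V$, $\dim V=d$, and each $Z_i$ is an irreducible closed subvariety of $V$; components of $\overline X$ of dimension $<d$ are disposed of directly by the inductive hypothesis.

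Now let $S:=\{i:Z_i=V\}$ and $W:=\bigcup_{i\notin S}Z_i$, so that $W$ is a closed subvariety of $V$ with $\dim W<d$. Put $X':=X\cap W(K)$. This is definable, contained in $W(K)$, and equal to $\bigcup_i(O_i\cap W(K))$ with each $O_i\cap W(K)$ an $\Sa E_K$-open subset of the Zariski closed set $(Z_i\cap W)(K)$; as $\dim\overline{X'}\le\dim W<d$, the inductive hypothesis gives that $X'$ is \'ez. Put $O^{*}:=X\setminus W(K)$, which is definable. Any point of $X\setminus W(K)$ lies in some $O_i$ with $i\in S$, because for $i\notin S$ we have $O_i\subseteq Z_i(K)\subseteq W(K)$; hence $O^{*}\subseteq\bigcup_{i\in S}O_i=:O$, and since $O\subseteq X$ this gives $O^{*}=O\cap(V(K)\setminus W(K))$. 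Here $O$ is a finite union of $\Sa E_K$-open subsets of $V(K)$, hence $\Sa E_K$-open in $V(K)$, and $V(K)\setminus W(K)$ is $\Sa E_K$-open since the \'etale open topology refines the Zariski topology; thus $O^{*}$ is a definable $\Sa E_K$-open subset of the Zariski closed set $V(K)$, i.e.\ a basic \'ez set. Finally $O^{*}\cup X'=(X\setminus W(K))\cup(X\cap W(K))=X$, so $X$ is a union of a basic \'ez set and an \'ez set, and is therefore \'ez.

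The single load-bearing point is the replacement of $O$ by $O^{*}$: the union $O$ of the top-dimensional $O_i$ need not be definable, but intersecting it with the Zariski open set $V(K)\setminus W(K)$ turns it into the honestly definable set $X\setminus W(K)$, while altering $X$ only inside the lower-dimensional closed set $W(K)$, which the induction then absorbs. The remaining work — the reduction to $V$ and the $Z_i$ irreducible with $X$ dense, carried out via Zariski closures and irreducible-component decompositions together with Proposition~\ref{prop:well-define} and Fact~\ref{fact:first-paper-system} — is routine.
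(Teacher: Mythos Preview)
Your proof is correct, but the paper takes a shorter, non-inductive route. The paper's argument for $(2)\Rightarrow(1)$ is essentially a one-liner: follow the proof of Lemma~\ref{lem:decomp} (in particular the Boolean-combination partition $V_I=\bigl(\bigcap_{i\in I}V_i\bigr)\setminus\bigl(\bigcup_{i\notin I}V_i\bigr)$ from Fact~\ref{fact:decomp}, dropping the smoothness step since $K$ need not be perfect) to produce pairwise \emph{disjoint} subvarieties $V_1,\ldots,V_k$ of $V$ and \'etale open $X_i\subseteq V_i(K)$ with $X=\bigcup_i X_i$. Pairwise disjointness immediately gives $X_i=X\cap V_i(K)$, so each $X_i$ is definable and hence a basic \'ez set (taking the Zariski closure of $V_i$ as the ambient closed set).

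Your approach instead inducts on $\dim\overline X$: you separate the indices $i$ with $Z_i=V$ from the rest, observe that the union $O$ of the top-dimensional $O_i$ becomes the \emph{definable} set $X\setminus W(K)$ once you intersect with the complement of the lower-dimensional $W=\bigcup_{i\notin S}Z_i$, and push the remainder $X\cap W(K)$ into the inductive hypothesis. This is a perfectly valid ``peel off the top stratum'' argument, and your identification of the load-bearing step (replacing the possibly non-definable $O$ by the definable $O^*$) is exactly right. The paper's partition trick is slicker---no induction, no reduction to the irreducible case---because it manufactures definability of \emph{all} pieces simultaneously via disjointness rather than one dimension at a time; your argument, on the other hand, makes the stratified structure of $X$ more visible.
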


\begin{proof}
It is clear that $(1)$ implies $(2)$.
Suppose $(2)$.
Following the proof of Lemma~\ref{lem:decomp} we obtain pairwise disjoint subvarieties $V_1,\ldots,V_k$ and $X_1,\ldots,X_k$ such that each $X_i$ is an \'etale open subset of $V_i(K)$ and $X = \bigcup_{i = 1}^{k} X_i$.
(Note that the $V_i$ may not be smooth as $K$ may not be perfect.)
By pairwise disjointness we have $X_i = V_i(K) \cap X$ for each $i$.
Thus each $X_i$ is definable.
\end{proof}

\begin{proposition}
\label{prop:closure-prop}
Suppose that $V,W,V_1,\ldots,V_k$ are $K$-varieties and $V \to W$ is a morphism.
\begin{enumerate}[leftmargin=*]
\item A finite union or finite intersection of \'ez subsets of $V(K)$ is an \'ez subset.
\item If $X \subseteq W(K)$ is an \'ez set then the preimage of $X$ under the map $V(K) \to W(K)$ induced by $V \to W$ is an \'ez set.
\item If $X$ is an \'ez subset of $K^{m + n}$ and $a \in K^m$ then $X_a = \{ b \in K^n : (a,b) \in X \}$ is an \'ez subset of $K^n$,
\item If $X_i$ is an \'ez subset of $V_i(K)$ for each $i \in \{1,\ldots,k\}$ then $X_1 \times \cdots \times X_k$ is an \'ez subset of $V_1(K) \times \cdots \times V_k(K) = (V_1 \times \cdots \times V_k)(K)$.
\end{enumerate}
\end{proposition}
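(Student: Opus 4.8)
The plan is to treat the four items in turn, reducing each to the basic definitions and to the closure properties of \'etale open topologies recorded in Fact~\ref{fact:first-paper-system}. For (1), a finite union of \'ez sets is \'ez essentially by definition, since an \'ez set is already a finite union of basic \'ez sets. For finite intersections it suffices to handle the intersection of two basic \'ez sets; writing each as $O_i \cap Y_i$ with $O_i$ an $\Sa E_K$-open subset of $V(K)$ and $Y_i = W_i(K)$ Zariski closed, the intersection is $(O_1 \cap O_2) \cap (Y_1 \cap Y_2)$. Now $O_1 \cap O_2$ is $\Sa E_K$-open in $V(K)$, and $Y_1 \cap Y_2 = (W_1 \times_V W_2)(K)$ is Zariski closed, so the intersection is an $\Sa E_K$-open subset of a Zariski closed set; it is visibly definable, so Lemma~\ref{lem:ez-equiv} gives that it is \'ez. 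Distributing over the finite unions in the general case then finishes (1).

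For (2): if $X = \bigcup_i (O_i \cap Y_i)$ is an \'ez subset of $W(K)$ with $O_i$ $\Sa E_K$-open in $W(K)$ and $Y_i = Z_i(K)$ Zariski closed, then the preimage of $X$ under $g : V(K) \to W(K)$ is $\bigcup_i (g^{-1}(O_i) \cap g^{-1}(Y_i))$. By Fact~\ref{fact:first-paper-system}.1 the map $g$ is $\Sa E_K$-continuous, so $g^{-1}(O_i)$ is $\Sa E_K$-open in $V(K)$; and $g^{-1}(Y_i) = (V \times_W Z_i)(K)$ is Zariski closed in $V(K)$ since the scheme-theoretic fibre product of the closed immersion $Z_i \hookrightarrow W$ along $V \to W$ is a closed subscheme of $V$. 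Thus the preimage is a finite union of $\Sa E_K$-open subsets of Zariski closed sets and is definable, hence \'ez by Lemma~\ref{lem:ez-equiv}.

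Item (3) is a special case of (2): the inclusion map $K^n \to K^{m+n}$ given by $b \mapsto (a,b)$ is a $K$-variety morphism $\Aa^n \to \Aa^{m+n}$ (the base change to $K^n$ of the point $a \in K^m$), and $X_a$ is exactly the preimage of $X$ under the induced map on $K$-points, so (3) follows. For (4), it suffices by induction to handle $k=2$. Write $X_1 = \bigcup_i (O_i \cap Y_i)$ and $X_2 = \bigcup_j (O'_j \cap Y'_j)$ as finite unions of basic \'ez sets in $V_1(K)$, $V_2(K)$ respectively; then $X_1 \times X_2 = \bigcup_{i,j}\bigl((O_i \times O'_j) \cap (Y_i \times Y'_j)\bigr)$, and it is enough to see each factor is basic \'ez in $(V_1 \times V_2)(K)$. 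Here $Y_i \times Y'_j$ is Zariski closed, being the $K$-points of the closed subvariety $W_i \times W'_j$. For the open part, pull back $O_i$ along the projection $(V_1 \times V_2)(K) \to V_1(K)$ and $O'_j$ along the other projection: both projections are $\Sa E_K$-continuous by Fact~\ref{fact:first-paper-system}.1, so $O_i \times O'_j$, being the intersection of these two preimages, is $\Sa E_K$-open in $(V_1 \times V_2)(K)$. (Note we do \emph{not} need the product topology here, only continuity of the projections, so Fact~\ref{fact:first-paper-system}.6 is not an obstruction.) Definability of $X_1 \times X_2$ is clear, so Lemma~\ref{lem:ez-equiv} applies and $X_1 \times X_2$ is \'ez.

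The arguments are essentially bookkeeping once one has Lemma~\ref{lem:ez-equiv} and the continuity/closed-immersion stability in Fact~\ref{fact:first-paper-system}. The only point requiring a little care — and the closest thing to an obstacle — is making sure that the "Zariski closed" parts genuinely arise as $K$-points of closed subvarieties after the relevant scheme-theoretic operations (fibre products of closed immersions, products), so that Proposition~\ref{prop:well-define} legitimately identifies the \'etale open topology on them; this is why one phrases preimages and products at the level of schemes first and only then passes to $K$-points.
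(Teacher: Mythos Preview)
Your proposal is correct and follows essentially the same approach as the paper's proof: the same decomposition into basic pieces for (1) and (2), and the same reduction of (3) to (2) via the morphism $b \mapsto (a,b)$. The only minor difference is in (4): the paper observes directly that $X_1 \times \cdots \times X_k = \pi_1^{-1}(X_1) \cap \cdots \cap \pi_k^{-1}(X_k)$ and then applies (1) and (2) without first decomposing each $X_i$ into basic \'ez sets, which is slightly slicker but amounts to the same argument (your use of continuity of the projections to see $O_i \times O'_j$ is \'etale open is exactly the content of that step).
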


\begin{proof}
$(1)$ 
Closure under finite unions is clear from the definitions.
For the second claim it suffices to suppose that $X_1,X_2$ are \'ez sets and show that $X_1 \cap X_2$ is an \'ez set.
Given $i \in \{1,2\}$ we suppose that $X^i_1,\ldots,X^i_k$ are basic \'ez sets such that $X_i = \bigcup_{j = 1}^k X^i_j$.
Then $X_1 \cap X_2 = \bigcup_{i,j \in \{1,\ldots,k\}} X^1_i \cap X^2_j$.
Thus we may suppose that $X_1$ and $X_2$ are basic \'ez sets.
It suffices to show that $X_1 \cap X_2$ is an \'etale open subset of a Zariski closed set.
Given $i \in \{1,2\}$ we let $Y_i$ be a Zariski closed subset of $V(K)$ and $O_i$ be an \'etale open subset of $V(K)$ such that $X_i = Y_i \cap O_i$.
Then $X_1 \cap X_2 = (Y_1 \cap Y_2) \cap (O_1 \cap O_2)$.
Note that $Y_1 \cap Y_2$ is Zariski closed and $O_1 \cap O_2$ is \'etale open.

\meno
$(2)$ 
Let $f$ be the induced map $V(K) \to W(K)$.
By $(1)$ we may suppose that $X$ is a basic \'ez subset of $W(K)$.
Suppose that $Y$ is a Zariski closed subset of $W(K)$ and $O$ is an \'etale open subset of $W(K)$ such that $X = Y \cap O$.
Then $f^{-1}(X) = f^{-1}(Y) \cap f^{-1}(O)$.
Note that $f^{-1}(Y)$ is Zariski closed and $f^{-1}(O)$ is \'etale open.

\meno
$(3)$
Let $g \colon  \Aa^n \to \Aa^{m +  n}$ be the morphism given by $x \mapsto (a,x)$.
Then $X_a$ is the preimage of $X$ under the map $K^n \to K^{m + n}$ induced by $g$.
Apply $(2)$.

\meno
$(4)$
For each $i \in \{ 1 , \ldots , n\}$ we let $\uppi_i$ be the projection $V_1(K) \times \cdots \times V_n(K) \to V_i(K)$.
Then
$$ X_1 \times \cdots \times X_n = \uppi^{-1}_1(X_1) \cap \cdots \cap \uppi^{-1}_n(X_n). $$
Apply $(1)$ and $(2)$.
\end{proof}

\begin{proposition}
\label{prop:qe-gen-0}
Every quantifier free definable subset of $K^n$ is \'ez.
\end{proposition}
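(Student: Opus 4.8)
The plan is to reduce immediately to the standard description of quantifier-free definable sets and then exploit the single fact that the \'etale open topology refines the Zariski topology. First I would recall, as noted in the introduction, that a subset $X$ of $K^n$ is quantifier-free definable if and only if it is a finite union of Zariski open subsets of Zariski closed subsets of $K^n = \Aa^n(K)$; concretely $X = \bigcup_{i = 1}^{k} \bigl( C_i(K) \cap O_i \bigr)$ where each $C_i$ is a closed subvariety of $\Aa^n$ and each $O_i$ is a Zariski open subset of $K^n$. Since the collection of \'ez subsets of $K^n$ is closed under finite unions (Proposition~\ref{prop:closure-prop}.1), it suffices to show that a single set of the form $C(K) \cap O$, with $C$ a closed subvariety of $\Aa^n$ and $O$ Zariski open in $K^n$, is a basic \'ez set.

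Next I would invoke that the \'etale open topology on $K^n$ refines the Zariski topology, so $O$ is $\Sa E_K$-open in $K^n$. Hence $C(K) \cap O$ is the intersection of an $\Sa E_K$-open subset of $K^n$ with the Zariski closed set $C(K)$, and it is definable (being quantifier-free definable). By Proposition~\ref{prop:well-define} the \'etale open topology on $C(K)$ agrees with the subspace topology induced from $K^n$, so $C(K) \cap O$ is an $\Sa E_K$-open subset of the Zariski closed set $C(K)$. This is exactly what it means to be a basic \'ez set, and therefore $X$ is \'ez. Equivalently, one may observe that $X$ is a definable finite union of $\Sa E_K$-open subsets of Zariski closed sets and apply Lemma~\ref{lem:ez-equiv}.

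I do not expect a genuine obstacle here: the statement is essentially the observation that the \'etale open topology contains the Zariski topology, packaged through the definition of \'ez set. The only point requiring a little care is the bookkeeping distinction between a Zariski open subset of a closed subvariety and a basic \'ez set, which is precisely resolved by the refinement statement together with Proposition~\ref{prop:well-define}; no perfectness or largeness hypothesis on $K$ is needed for this proposition.
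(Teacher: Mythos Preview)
Your proof is correct and follows essentially the same idea as the paper's: both rely on the fact that the \'etale open topology refines the Zariski topology, so Zariski open and Zariski closed sets are trivially \'ez. The only cosmetic difference is that the paper argues atom-by-atom (showing $\{f=0\}$ and $\{f\ne 0\}$ are \'ez and then invoking closure under finite unions and intersections from Proposition~\ref{prop:closure-prop}), whereas you pass directly to the locally-closed normal form and need only closure under finite unions.
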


\begin{proof}
Fix $f \in K[x_1,\ldots,x_n]$.
Then $\{ a \in K^n : f(a) = 0 \}$ is Zariski closed, hence \'ez.
Furthermore $\{a \in K^n : f(a) \ne 0 \}$ is Zariski open, hence \'ez.
Apply Proposition~\ref{prop:closure-prop}.
\end{proof}

We now prove Theorem A.

\begin{proof}
By Lemma~\ref{lem:ez-equiv} it suffices to show that $f(X)$ is a a finite union of \'etale open subsets of Zariski closed subsets of $W(K)$.
Suppose that $K$ is perfect, $f \colon  V \to W$ is a morphism of $K$-varieties, and $X$ is an \'ez subset of $V(K)$.
We show that $f(X)$ is an \'ez subset of $W(K)$.
We have $X = \bigcup_{i = 1}^k X_i$ for basic \'ez sets $X_1,\ldots,X_k$.
Then $f(X) = f(X_1)\cup\cdots\cup f(X_k)$.
By Proposition~\ref{prop:closure-prop}.1 we may suppose that $X$ is a basic \'ez subset of $V(K)$.
Let $V'$ be a closed subvariety of $V$ such that $X$ is an \'etale open subset of $V'(K)$.
After replacing $V$ with $V'$ and $f$ with the restriction $V' \to W$ we suppose that $X$ is an \'etale open subset of $V(K)$.

\meno
We apply induction on $\dim V$.
If $\dim V = 0$ then by Fact~\ref{fact:finite-var} $V$ is finite, hence $X$ is finite, so $X$ is Zariski closed.
Suppose $\dim V \ge 1$.
Let $V_1,\ldots,V_k$ be the irreducible components of $V$.
It suffices to show that each $f(V_i(K) \cap X)$ is an \'ez set.
By Proposition~\ref{prop:well-define} each $V_i(K) \cap X$ is an \'etale open subset of $V_i(K)$.
Therefore we may suppose that $V$ is irreducible.
Let $W'$ be the Zariski closure of $f(V)$ in $W$, so $V \to W'$ is dominating.
This implies that $W'$ is irreducible.
By Theorem F there is a dense open subvariety $U$ of $V$ such that $U(K) \to W'(K)$ is $\Sa E_K$-open.
Hence $f(U(K) \cap X)$ is an \'etale open subset of $W(K)$.
Let $V' := V \setminus U$.
Then $f(X) = f(U(K) \cap X) \cup f(V'(K) \cap X)$.
As $U$ is dense in $V$ we have $\dim V' < \dim V$, so by induction $f(V'(K) \cap X)$ is a \'ez set.
\end{proof}

Finally, we prove Corollary A.
Suppose that $K$ is perfect and $X$ is an existentially definable subset of $K^m$.
Let $x = (x_1,\ldots,x_m)$ and $y = (y_1,\ldots,y_n)$.
Then there is a quantifier-free formula $\phi(x,y)$ with parameters from $K$ such that for any $\alpha \in K^m$ we have $\alpha \in X$ if and only if $K \models \exists y \phi(\alpha,y)$.
Let $Y$ be the set of $(\alpha,\beta) \in K^{m + n}$ such that $K \models \phi(\alpha,\beta)$ and $\uppi \colon  K^{m + n} \to K^m$ be the coordinate projection.
Then $\uppi(Y) = X$.
Then $Y$ is \'ez by Proposition~\ref{prop:qe-gen-0} and $\uppi(Y)$ is \'ez by Theorem A.

\section{Sharpness and applications to large fields}
\label{section:sharp}

Fact~\ref{fact:fehm} is a theorem of Fehm~\cite{Fehm-subfield}. It was later generalized in~\cite{ansco-exis}.

\begin{fact}
\label{fact:fehm}
Suppose that $K$ is perfect and large.
Then $K$ does not existentially define an infinite proper subfield of $K$.
\end{fact}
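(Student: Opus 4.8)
The plan is to argue by contradiction and reduce, via Theorem B, to a purely topological statement about the $\Sa E_K$-topology on $K$. Suppose $F$ is an infinite proper subfield of $K$ that is existentially definable in $K$. Since $K$ is perfect and large, Theorem B applies and gives $F = O \cup S$, where $O$ is a definable $\Sa E_K$-open subset of $K$ and $S$ is finite; as $F$ is infinite, $O$ is nonempty, so fix $a_0 \in O$. For every $b \in F$ the translation $x \mapsto x + (b - a_0)$ is a $K$-variety automorphism of $\Aa^1$ and therefore, by Fact~\ref{fact:first-paper-system}, induces an $\Sa E_K$-homeomorphism of $K$; hence $(b - a_0) + O$ is $\Sa E_K$-open, it contains $b$, and it lies in $F$ because $b - a_0 \in F$ and $F$ is closed under addition. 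Thus every point of $F$ is $\Sa E_K$-interior to $F$, i.e. $F$ is $\Sa E_K$-open.

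Next I would play $F$ against a dilate of itself. Since $F \ne K$, fix $c \in K \setminus F$; then $c \ne 0$, so $x \mapsto cx$ is again a $K$-variety automorphism of $\Aa^1$ and $cF$ is $\Sa E_K$-open. I claim $cF \cap F = \{0\}$: plainly $0$ lies in both, and if $x \in cF \cap F$ with $x \ne 0$ then $x = cf$ for some $f \in F \setminus \{0\}$, whence $c = x f^{-1} \in F$ because $F$ is a field, contradicting $c \notin F$. Therefore $\{0\}$, being the intersection of two $\Sa E_K$-open sets, is $\Sa E_K$-open. But $K$ is large, so by Fact~\ref{fact:large-potato} the $\Sa E_K$-topology on $K = \Aa^1(K)$ is not discrete, and since translations of $\Aa^1$ are $K$-variety automorphisms this topology is translation invariant, so an open singleton would force every singleton to be open and the topology to be discrete. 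This contradiction proves the result.

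The argument is short given Theorem B, so there is no serious analytic obstacle; the two places that need attention are the passage from the conclusion of Theorem B (that $F$ is an $\Sa E_K$-open set together with a finite set) to the stronger fact that $F$ is itself $\Sa E_K$-open — this is exactly where the additive group structure of $F$ enters — and the verification that the affine maps $x \mapsto cx + d$ with $c \ne 0$ induce $\Sa E_K$-homeomorphisms of $K$, which is immediate from the functoriality of the étale open topology recorded in Fact~\ref{fact:first-paper-system}. Conceptually the content is simply that a non-discrete topology on $K$ invariant under translations and dilations cannot have a proper subfield as an open set, since such a subfield meets any nontrivial dilate of it only at $0$.
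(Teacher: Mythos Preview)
Your proof is correct and follows essentially the same route as the paper: apply Theorem~B to get that the subfield has nonempty \'etale open interior, use affine invariance of the $\Sa E_K$-topology (Fact~\ref{fact:first-paper-system}) to intersect with a dilate and isolate $\{0\}$, and conclude discreteness in contradiction with Fact~\ref{fact:large-potato}. The paper packages the topological part as Proposition~\ref{prop:subfield} and works directly with an open $O\ni 0$ inside the subfield (using $aO\cap O=\{0\}$ for $a\notin OO^{-1}$) rather than first upgrading to $F$ open, but this is only a cosmetic difference.
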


We describe a topological proof of Fact~\ref{fact:fehm}.
Suppose $K$ is perfect and $L$ is an existentially definable infinite proper subfield of $K$.
By Corollary A $L$ has $\Sa E_K$-interior.
By Proposition~\ref{prop:subfield} below the \'etale open topology on $K$ is discrete.
By Fact~\ref{fact:large-potato} $K$ is not large.

\meno
Given $X \subseteq K$ we let $XX^{-1} = \{ \alpha/\beta : \alpha \in X, \beta \in X \setminus \{0\} \}$.
An \textbf{affine invariant} topology on $K$ is a topology that is invariant under any invertible affine transformation $K \to K$.
The \'etale open topology on $K$ is affine invariant by Fact~\ref{fact:first-paper-system}.1.

\begin{proposition}
\label{prop:subfield}
Suppose that $\uptau$ is an affine invariant topology on $K$, $U$ is a nonempty $\uptau$-open neighbouuprhood of zero, and $L$ is a proper subfield of $K$.
If $UU^{-1} \ne K$ then $\uptau$ is discrete.
If $L$ has $\uptau$-interior then $\uptau$ is discrete.
\end{proposition}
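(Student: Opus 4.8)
The plan is to deduce the second statement from the first, so I begin with the implication: if $UU^{-1}\neq K$ then $\tau$ is discrete. Affine invariance of $\tau$ means precisely that every map $x\mapsto ax+b$ with $a\in K^\times$ and $b\in K$ is a $\tau$-homeomorphism of $K$; in particular, since translations are $\tau$-homeomorphisms, it will suffice to show that $\{0\}$ is $\tau$-open in order to conclude that $\tau$ is discrete. If $U=\{0\}$ there is nothing to prove, so I may assume $U$ contains a nonzero element $b_0$, whence $0=0/b_0\in UU^{-1}$.

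Next I would fix $c\in K\setminus UU^{-1}$. By the previous remark $c\neq 0$, so $cU$ is $\tau$-open by scaling invariance, and $0\in cU\cap U$. The key observation is that $cU\cap U=\{0\}$: if $a\in cU\cap U$ then $a=cb$ for some $b\in U$, and if $b\neq 0$ this gives $c=a/b\in UU^{-1}$, contradicting the choice of $c$; hence $b=0$ and $a=0$. Thus $\{0\}=cU\cap U$ is an intersection of two $\tau$-open sets, so $\{0\}$ is $\tau$-open and $\tau$ is discrete.

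For the second statement, suppose $L$ has nonempty $\tau$-interior and pick a nonempty $\tau$-open set $O\subseteq L$ together with a point $a\in O$. Since $a\in L$, translation invariance shows that $O-a$ is a $\tau$-open neighbourhood of $0$ with $O-a\subseteq L-a\subseteq L$. As $L$ is a subfield, $(O-a)(O-a)^{-1}\subseteq LL^{-1}\subseteq L\subsetneq K$, so $(O-a)(O-a)^{-1}\neq K$, and applying the first part with $U$ replaced by $O-a$ shows that $\tau$ is discrete.

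I do not expect a serious obstacle: the argument is elementary, and the only points needing a moment's care are the degenerate case $U=\{0\}$ and the verification that the witness $c$ for $UU^{-1}\neq K$ is nonzero, which is exactly what permits the use of scaling invariance.
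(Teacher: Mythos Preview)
Your proof is correct and follows essentially the same route as the paper's: show $cU\cap U=\{0\}$ for a witness $c\notin UU^{-1}$ to get discreteness, then reduce the subfield case by translating an open subset of $L$ to a neighbourhood of $0$. Your version is simply a bit more explicit about why $c\neq 0$ and why $cU\cap U=\{0\}$, points the paper leaves to the reader.
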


\begin{proof}
If $U = \{0\}$ then $\uptau$ is discrete, so we may suppose that $U$ contains a non-zero element.
Suppose $UU^{-1} \ne K$.
Fix $\alpha \in K \setminus UU^{-1}$.
Note that $\alpha \ne 0$.
Therefore $\alpha U \cap U$ is a $\uptau$-open neighbouuprhood of zero and $\alpha U \cap U = \{0\}$.
Hence $\uptau$ is discrete.
Now suppose that $L$ contains a nonempty $\uptau$-open $O \subseteq K$.
Fix $\alpha \in O$.
After replacing $O$ with $O - \alpha$ we suppose that $0 \in O$.
Then $OO^{-1} \subseteq L$ hence $OO^{-1} \ne K$.
Thus $\uptau$ is discrete.
\end{proof}

We also see that Corollary A is sharp.
Suppose $K$ is large and imperfect.
Let $F$ be the image of the Frobenius $K \to K$.
Then $F$ is existentially definable, infinite, and has empty $\Sa E_K$-interior by Proposition~\ref{prop:subfield}, so $F$ is not an \'ez subset of $K$.

\meno
Corollary~\ref{cor:subfield} follows by the arguments above.

\begin{corollary}
\label{cor:subfield}
If $K$ is \'ez then $K$ does not define an infinite proper subfield of $K$.
\end{corollary}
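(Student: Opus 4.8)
The plan is to adapt the topological proof of Fact~\ref{fact:fehm} recorded above, replacing its input ``existentially definable $\Rightarrow$ \'ez'' (Theorem~B) by the very definition of an \'ez field. So suppose, for contradiction, that $K$ is \'ez and that $L \subseteq K$ is a definable infinite proper subfield. Since $K$ is \'ez, every definable subset of $K$ is \'ez, so $L$ is an \'ez subset of $K = \Aa^1(K)$, and $K$ is in particular large.

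First I would show that $L$ has nonempty $\Sa E_K$-interior. Write $L$ as a finite union of sets of the form $Y \cap O$, where $Y$ is a Zariski closed subset of $K$ and $O$ is an $\Sa E_K$-open subset of $K$. The only Zariski closed subsets of $K = \Aa^1(K)$ are the finite sets and $K$ itself, so discarding the summands with $Y$ finite yields $L = O' \cup F$ with $O' \subseteq K$ an $\Sa E_K$-open set and $F$ finite. Since $L$ is infinite, $O'$ is nonempty, and $O' \subseteq L$, so $L$ contains a nonempty $\Sa E_K$-open set. (Equivalently this is the remark from the introduction that an \'ez subset of $K$ is a union of a definable \'etale open set and a finite set, or Theorem~C(2) applied to $V = \Aa^1$ together with $\dim L = 1$.)

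Next, Proposition~\ref{prop:subfield}, applied to the affine invariant topology $\tau = \Sa E_K$ on $K$ (affine invariance coming from Fact~\ref{fact:first-paper-system}) and to the proper subfield $L$ with nonempty $\tau$-interior, shows that the $\Sa E_K$-topology on $K$ is discrete. But $K$ is large, so by Fact~\ref{fact:large-potato} the $\Sa E_K$-topology on $K = \Aa^1(K)$ is non-discrete, a contradiction. I do not anticipate a genuine obstacle: the argument is a bookkeeping reorganization of the proof of Fact~\ref{fact:fehm}, with the ``largeness $\Rightarrow$ non-discrete'' step untouched, and the only point needing a line of justification is the passage from ``infinite \'ez subset of $K$'' to ``nonempty $\Sa E_K$-interior'', which is immediate from the structure of Zariski closed subsets of the affine line.
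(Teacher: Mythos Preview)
Your proof is correct and follows exactly the paper's approach: the paper simply states that Corollary~\ref{cor:subfield} ``follows by the arguments above,'' meaning the topological proof of Fact~\ref{fact:fehm} with Theorem~B replaced by the definition of \'ez, which is precisely what you do. Your explicit justification that an infinite \'ez subset of $K$ has nonempty $\Sa E_K$-interior (via the structure of Zariski closed subsets of $\Aa^1$) spells out a step the paper leaves implicit.
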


This allows us to easily give examples of large perfect fields which are not \'ez.
We follow \cite[Example 9]{Fehm-subfield}.
Let $L$ be a characteristic zero field and $L((x,y))$ be the fraction field of the formal power series ring $L[[x,y]]$.
Then $L((x,y))$ is large~\cite{Pop-henselian}, $L((x,y))$ defines $L[[x,y]]$~\cite[Theorem 3.34]{model-theoretic-algebra}, and by a theorem of Delon $L[[x,y]]$ defines the subfield $\Qq$~\cite[Theorem 2.1]{delon-formal-power-series}.
Therefore $L((x,y))$ is not \'ez.

\meno
We give two more applications to \'ez fields.
A theory $T$ is \textbf{one-cardinal} if for any $M \models T$ and infinite definable $X \subseteq M^n$ we have $|X| = |M|$~\cite[12.1]{Hodges}.
Algebraically, real, and $p$-adically closed fields are known to be one-cardinal.
We first generalize this fact.

\begin{corollary}
\label{cor:one-cardinal}
Suppose that $K$ is \'ez.
Then the theory of $K$ is one-cardinal.
\end{corollary}

\begin{proof}
Suppose that $X$ is an infinite definable subset of $K^m$.
As $X$ is infinite there is a coordinate projection $\uppi \colon  K^m \to K$ such that $\uppi(X)$ is infinite.
Then $\uppi(X)$ is \'ez and hence contains a nonempty \'etale open subset $O$ of $K$, fix $\gamma \in O$.
Let $Y$ be the set of $(\alpha,\beta) \in X^2$ such that $\uppi(\beta) \ne \gamma$ and $f \colon  Y \to K$ be given by $f(\alpha,\beta) = (\uppi(\alpha) - \gamma)/(\uppi(\beta) - \gamma)$.
By Proposition~\ref{prop:subfield} $f$ is surjective.
We have shown that for every infinite definable $X \subseteq K^m$ there is $\gamma \in K$ and a coordinate projection $\uppi \colon  K^m \to K$ such that the map
\[
f\colon \{(\alpha,\beta)\in K^2 : \uppi(\beta) \ne \gamma \} \longrightarrow K, \quad\quad f(\alpha,\beta) = \frac{\uppi(\alpha) - \gamma}{\uppi(\beta) - \gamma}
\]
is surjective.
By elementary transfer the same holds for any model of the theory of $K$.
Hence the theory of $K$ is one-cardinal.
\end{proof}

A field topology on $K$ is a \textbf{V-topology} if and only if it is induced by a non-trivial absolute value or valuation.
We refer to \cite[Appendix B]{EP-value} for background on V-topologies.

\begin{corollary}
\label{cor:refine-V}
Suppose $K$ is large and perfect and $\uptau$ is a  V-topology on $K$.
Then the following are equivalent:
\begin{enumerate}
\item the \'etale open topology on $K$ refines $\uptau$, 
\item the \'etale open topology on $V(K)$ refines the $\uptau$-topology for any $K$-variety $V$,
\item there is an infinite existentially definable subset of $K$ which is not $\uptau$-dense in $K$.
\end{enumerate}
Suppose furthermore that $K$ is \'ez.
Then $(1) - (3)$ above hold if and only if there is an infinite definable subset of $K$ which is not $\uptau$-dense.
\end{corollary}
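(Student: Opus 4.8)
The plan is to prove $(1)\Rightarrow(2)\Rightarrow(1)$ and $(1)\Leftrightarrow(3)$, and then to deduce the last clause formally. The implication $(2)\Rightarrow(1)$ is trivial: take $V=\Aa^1$. For $(1)\Rightarrow(2)$ I would argue locally. Since $\tau$ is a field topology, the $\tau$-topology on $V(K)$ is glued from the subspace topologies induced by the product $\tau$-topology on $K^n$ along affine charts $U\hookrightarrow\Aa^n$ of $V$. By Fact~\ref{fact:first-paper-system}.6 and induction the $\Sa E_K$-topology on $K^n$ refines the $n$-fold product of the $\Sa E_K$-topology on $K$, and by $(1)$ this product refines the product $\tau$-topology; by the second and third items of Fact~\ref{fact:first-paper-system} the $\Sa E_K$-topology on $U(K)$ is the subspace topology from $K^n$. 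Since refinement of topologies is a local condition and $V$ is covered by such charts, $(2)$ follows.

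The core is $(1)\Leftrightarrow(3)$, which I would route through the single intermediate condition
\[
(\ast)\qquad\text{some nonempty }\Sa E_K\text{-open subset }O\text{ of }K\text{ is not }\tau\text{-dense.}
\]
First, $(3)\Leftrightarrow(\ast)$. Given $X$ as in $(3)$, Theorem~B writes $X=O\cup F$ with $O$ a definable $\Sa E_K$-open subset of $K$ and $F$ finite; then $O$ is nonempty because $X$ is infinite, and $O$ is not $\tau$-dense because $X$ is not and $F$ is $\tau$-closed ($\tau$ is Hausdorff, being a V-topology), so $(\ast)$ holds. Conversely, any nonempty $\Sa E_K$-open $O$ contains a nonempty basic $\Sa E_K$-open set of the form $g(Y(K))$ with $g\colon Y\to\Aa^1$ \'etale and $Y$ affine; such a set is existentially definable, it is infinite because nonempty $\Sa E_K$-open subsets of $\Aa^1(K)$ are Zariski dense by Fact~\ref{fact:with-anand} ($K$ being large), and it is not $\tau$-dense since it lies inside $O$; this gives $(3)$. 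Next, $(1)\Rightarrow(\ast)$ is immediate: $\tau$ is Hausdorff and non-discrete, hence has two disjoint nonempty open sets, and by $(1)$ either of them is $\Sa E_K$-open, nonempty, and not $\tau$-dense.

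The substantial step — and the one I expect to be the main obstacle — is $(\ast)\Rightarrow(1)$: a single non-$\tau$-dense nonempty $\Sa E_K$-open set forces $\Sa E_K$ to refine all of $\tau$. I would derive this from the theory of V-topologies and the \'etale open topology developed in \cite{firstpaper}: for a V-topology $\tau$ on a large field $K$, $\Sa E_K$ refines $\tau$ if and only if $\tau$ is not ``$\Sa E_K$-dense'' (i.e.\ some nonempty $\Sa E_K$-open set fails to be $\tau$-dense), equivalently $\tau$ is induced by a t-henselian valuation or absolute value. The mechanism is that $\Sa E_K$ and $\tau$ are both invariant under invertible affine transformations of $K$ and under inversion on $K^\times$, and the characteristic axiom of a V-topology (roughly: inversion interchanges bounded and unbounded neighbourhoods of $0$) allows one to propagate a single small $\Sa E_K$-open neighbourhood of $0$ to a neighbourhood basis of $0$ for $\tau$ consisting of $\Sa E_K$-open sets.

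Finally, for the last clause, assume $K$ is \'ez. If $(1)$--$(3)$ hold, then $(3)$ already supplies an infinite existentially (hence plainly) definable subset of $K$ which is not $\tau$-dense. Conversely, if $X$ is an infinite definable subset of $K$ that is not $\tau$-dense, then $X$ is \'ez because $K$ is \'ez, so $X=O\cup F$ with $O$ a nonempty $\Sa E_K$-open subset of $K$ and $F$ finite; exactly as above $O$ is not $\tau$-dense, so $(\ast)$ holds, and $(\ast)\Rightarrow(1)$ gives $(1)$ and hence $(1)$--$(3)$.
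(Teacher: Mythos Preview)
Your proposal is correct and follows essentially the same route as the paper: both reduce $(1)\Leftrightarrow(3)$ to the intermediate condition $(\ast)$ (a nonempty $\Sa E_K$-open set is not $\tau$-dense), obtain $(\ast)\Rightarrow(3)$ via an \'etale image and $(3)\Rightarrow(\ast)$ via the \'ez decomposition $X=O\cup F$ from Theorem~B, and handle the \'ez clause by the identical argument with ``definable'' in place of ``existentially definable''. The only difference is cosmetic: the paper simply cites \cite{firstpaper} for the equivalences $(1)\Leftrightarrow(2)$ and $(1)\Leftrightarrow(\ast)$, whereas you spell out $(1)\Rightarrow(2)$ via affine charts and Fact~\ref{fact:first-paper-system} and sketch the V-topology mechanism behind $(\ast)\Rightarrow(1)$.
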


\begin{proof}
We show that $(1) - (3)$ are equivalent, the last claim follows from our proof.
The equivalence of $(1)$ and $(2)$ holds without any assumptions on $K$, see \cite[Lemma 4.8]{firstpaper}.
The following is also shown in \cite[Lemma 6.9]{firstpaper}: the \'etale open topology on $K$ refines $\uptau$ if and only if some nonempty \'etale open subset $U$ of $K$ is not $\uptau$-dense.
Let $U$ be such a set.
Fix $p \in U$.
Then there is an \'etale morphism of $K$-varieties $f \colon  V \to \Aa^1$ such that $p \in f(V(K)) \subseteq U$.
Then $f(V(K))$ is existentially definable, infinite, and not $\uptau$-dense.
Suppose that $X$ is an infinite existentially definable subset of $K$ which is not $\uptau$-dense.
By Theorem A we have $X = U \cup Y$ where $U$ is \'etale open and $Y$ is finite.
Then $U$ is nonempty, note that $U$ is not $\uptau$-dense.
\end{proof}

\section{Henselian and t-Henselian fields}
\label{section:Henselian}
We show that characteristic zero t-Henselian fields are \'ez, proving Theorem C.3.
We suppose that $K$ is characteristic zero t-Henselian.
If $K$ is algebraically closed then quantifier elimination and Proposition~\ref{prop:qe-gen-0} show that every definable subset of $K^m$ is \'ez.
\textbf{Hence we may suppose that $K$ is not algebraically closed.}

\meno
We first suppose that $K$ is a Henselian field.
First recall that Henselian fields are large~\cite[1.A.3]{Pop-little}.
By Fact~\ref{fact:hd}.2 the $\Sa E_K$-topology on each $K^m$ agrees with the valuation topology.
By Lemma~\ref{lem:ez-equiv} it is enough to show that every definable set is a finite union of valuation open subsets of Zariski closed sets. This has been obtained by van den Dries~\cite{lou-dimension}, his proof makes crucial use of quantifier eliminations due to Delon (see~ \cite[Pg 191, 3.3, and 3.7]{lou-dimension}).

\meno
We use Fact~\ref{fact:two-poly} in the general t-Henselian case.

\begin{fact}
\label{fact:two-poly}
If $V$ is a subvariety of $\Aa^m$ then $V(K)=Z(f)\setminus Z(g)$ for some $f,g\in K[x_1,\ldots,x_m]$.
\end{fact}

Fact~\ref{fact:two-poly} follows from Fact~\ref{fact:one-poly} and the definition of a subvariety.

\meno
We now suppose that $K$ is t-Henselian of characteristic zero.
By \cite[Theorem 7.2.b]{Prestel1978} $K$ has a Henselian elementary extension $K^*$.
Let $\uptau$, $\uptau^*$ be the t-Henselian topology on $K$, $K^*$, respectively.
Given a $K$-definable set $Y$ we let $Y^*$ be the $K^*$-definable set defined by the same formula as $X$.
By \cite[Lemma~7.5]{Prestel1978} there is a definable bounded open neighbourhood $U$ of zero.
Let $\Sa B:=(\alpha U+\beta:\alpha\in K^\times,\beta\in K)$.
Note that $\Sa B$ is a definable family of sets.
By definition of a t-Henselian topology is a basis for $\uptau$.
Furthermore $\Sa B^*= (\alpha U^* + \beta : \alpha,\beta\in K^*, \alpha\ne 0)$ is a $K^*$-definable basis for $\uptau^*$.
The t-Henselian topology on $K^m$ is the product topology and hence admits a definable basis, likewise for $K^*$.

\meno
By elementary transfer $K$ is large so it is enough to show that every definable set is \'ez.
Suppose that $X\subseteq K^m$ is definable and let $X^*$ be the $K^*$-definable set defined by the same formula.
By the previous paragraph $X^*$ is \'ez.
By Fact~\ref{fact:two-poly} and Lemma~\ref{lem:decomp} there are $f_1,g_1,\ldots,f_k,g_k\in K^*[x_1,\ldots,x_m]$ such that
\begin{enumerate}
\item $X^*\cap [Z(f_i)\setminus Z(g_i)]$ is $\uptau^*$-open for all $i \in \{1,\ldots,k\}$,
\item $X^* = (X^*\cap[Z(f_1)\setminus Z(g_1)])\cup\cdots\cup(X^*\cap[Z(f_k)\setminus Z(g_k)])$.
\end{enumerate}
Let $d$ be the maximum of the degrees of $f_1,g_1,\ldots,f_k,g_k$.
By elementary transfer and definability of $\uptau$ and $\uptau^*$ there are $f'_1,g'_1,\ldots,f'_k,g'_k\in K[x_1,\ldots,x_m]$ of degree $\le d$ such that $(1),(2)$ above hold with $X$ in place of $X^*$.
Hence $X$ is \'ez.

\section{Frobenius fields}
\label{section:frobenius}
In this section we prove Theorem~\ref{thm:pac}.
This completes the proof of Theorem C.

\begin{theorem}
\label{thm:pac}
Perfect Frobenius fields are \'ez.
\end{theorem}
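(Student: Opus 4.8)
The plan is to reduce the claim to Theorem B together with a known quantifier-elimination result for Frobenius fields, using the $\Sa L$-language introduced in the examples section. First I would recall the relevant quantifier-elimination fact (labelled here as Fact~\ref{fact:frobenius}, as promised earlier in the excerpt): a perfect Frobenius field $K$, viewed as an $\Sa L$-structure where $R_m(x_0,\ldots,x_{m-1})$ asserts the solvability of the associated monic polynomial of degree $m$, admits elimination of quantifiers. This is the content of the Frobenius field analogue of the Jarden--Kiehne / Fried--Sacerdote theory (see~\cite[Chapter 24, 30]{field-arithmetic}); the precise statement I would cite gives that every $\Sa L$-formula is $\Sa L$-equivalent, over $K$, to a quantifier-free $\Sa L$-formula, possibly after adjoining constants. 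Since a perfect Frobenius field is in particular $\pac$, hence large~\cite{field-arithmetic}, the largeness hypothesis in the definition of \'ez is automatic, and it remains to show that every $K$-definable (in the ring language) subset of $K^m$ is an \'ez set.

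The key observation is that each predicate $R_m$ defines an \emph{existentially} definable set in the ring language: $R_m(a_0,\ldots,a_{m-1})$ holds iff $\exists t\,(t^m + a_{m-1}t^{m-1} + \cdots + a_0 = 0)$. Hence each $R_m$, and therefore every quantifier-free $\Sa L$-formula (a Boolean combination of ring-polynomial equations and $R_m$-atoms), defines a set built from Zariski closed sets and finitely many existentially definable sets by Boolean operations. By Theorem B each $R_m$-atom defines an \'ez subset of the appropriate $K^j$ (pull back along the coordinate projection using Proposition~\ref{prop:closure-prop}.2 to view it inside $K^m$), Zariski closed sets are \'ez by definition, and by Proposition~\ref{prop:closure-prop}.1 the collection of \'ez subsets of $K^m$ is closed under finite unions and finite intersections. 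For complements: the complement of an \'ez set need not obviously be \'ez, so I would instead argue that a quantifier-free $\Sa L$-formula can be put in a disjunctive form in which negated $R_m$-atoms do not appear --- but this is false in general. The cleaner route is to note that a \emph{Boolean combination} of existentially definable sets and constructible sets over a perfect large field is still \'ez: indeed, by Theorem B together with the fact (Theorem A / Proposition~\ref{prop:closure-prop}) that \'ez sets are exactly the definable sets that are finite unions of \'etale open subsets of Zariski closed sets, one checks that the complement of an \'ez set in a smooth irreducible variety is again \'ez using Theorem C --- the \'etale open topology on the $K$-points of a smooth irreducible variety over a non-separably-closed large field is Hausdorff and an \'ez set of full dimension has \'etale-open interior, so its complement is again a finite union of \'etale opens in Zariski closed sets. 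Feeding the decomposition of Lemma~\ref{lem:decomp} through this, one gets closure of \'ez sets under complement, and hence under all Boolean operations, in a perfect large field.

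Granting that closure property, the argument concludes: let $X \subseteq K^m$ be definable in the ring language. Since the ring language embeds in $\Sa L$, $X$ is $\Sa L$-definable, hence by $\Sa L$-quantifier elimination (with parameters) it is a Boolean combination of $K$-constructible sets and sets defined by $R_{m_i}$-atoms in the coordinates; each of the latter is existentially definable in the ring language, hence \'ez by Theorem B, and the Boolean combination is \'ez by the closure properties just established. Therefore $X$ is \'ez, and $K$ is an \'ez field.

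The main obstacle will be the closure of \'ez sets under complementation (equivalently, under Boolean combinations) in an arbitrary perfect large field: Theorem C is stated for smooth irreducible $V$ and the naive complement of an \'etale open subset of a Zariski closed set need not be open, so one must carefully stratify using Lemma~\ref{lem:decomp} and Fact~\ref{fact:decomp}, then use the Hausdorffness in Fact~\ref{fact:hd} (for $K$ not separably closed) and the interior/dimension dichotomy of Theorem C on each smooth irreducible piece, handling the separably closed case separately (where the \'etale open topology is Zariski and every constructible set is \'ez outright). I would expect this Boolean-closure lemma to be the technical heart; once it is in place, the rest is a short deduction from the cited $\Sa L$-quantifier elimination and Theorem B.
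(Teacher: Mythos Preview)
Your overall architecture is right: use the Cherlin--van den Dries--Macintyre quantifier elimination for perfect Frobenius fields in the language $\Sa L$ with the solvability predicates $R_m$, note that each $R_m$ is existentially definable hence \'ez by Theorem~B, and then handle the negated atoms $\neg R_m$. The gap is in your treatment of those negated atoms.

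You propose to prove a general ``Boolean-closure lemma'': that in any perfect large field the class of \'ez sets is closed under complement. This is false. If it held, then combining it with Theorem~B (existentially definable $\Rightarrow$ \'ez) and Theorem~A (images of \'ez sets under morphisms are \'ez) one climbs the quantifier hierarchy: $\forall$-definable sets are complements of \'ez sets hence \'ez, $\exists\forall$-definable sets are projections of \'ez sets hence \'ez, and so on, yielding that \emph{every} perfect large field is \'ez. But Section~\ref{section:sharp} exhibits $L((x,y))$ for $L$ of characteristic zero as a perfect large field which is not \'ez. So the complement of an \'ez set need not be \'ez, and your appeal to Hausdorffness and the interior/dimension dichotomy of Theorem~C cannot salvage this: an \'etale-closed definable subset of $V(K)$ has no reason to decompose as a finite union of \'etale opens in Zariski closed sets.

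The paper does not attempt any such closure. Instead it proves directly (Proposition~\ref{prop:Sn}) that each $\neg R_m$-set
\[
X_m=\{(a_0,\ldots,a_{m-1})\in K^m:\ p_a(t)=t^m+a_{m-1}t^{m-1}+\cdots+a_0\ \text{has no root in }K\}
\]
is \'ez, by induction on $m$. The base cases $m=2,3$ and the inductive step both rest on Fact~\ref{fact:irreducible}: the locus of coefficients of separable irreducible monic polynomials of a fixed degree is \'etale open. A rootless $p_a$ is either irreducible (an \'etale-open condition), or factors as $p_bp_c$ with $b\in X_k$, $c\in X_{m-k}$ for some $2\le k\le m-2$; the latter locus is the image under a polynomial map of an \'ez set (by induction and Proposition~\ref{prop:closure-prop}), hence \'ez by Theorem~A. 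With both $R_m$ and $\neg R_m$ shown \'ez, Proposition~\ref{prop:qe-gen} finishes the proof. The missing idea in your plan is precisely this use of the \'etale openness of the irreducible locus to handle $\neg R_m$ by hand, rather than via an (unavailable) abstract complement lemma.
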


Frobenius fields are by definition $\pac$, and $\pac$ fields are large~\cite[1.A.1]{Pop-little}.
Hence it is enough to show that every definable set is an \'ez set.

\begin{proposition}
\label{prop:qe-gen}
Suppose that $K$ is large and perfect.
Suppose that $\Sa L$ is an expansion of the language of rings by relation symbols, $\Sa K$ is an $\Sa L$-structure which expands $K$ by definitions, and $\Sa K$ is model complete.
Suppose $\{ \alpha \in K^m : \Sa K \models R(\alpha) \}$ and $\{ \alpha \in K^m : \Sa K \models \neg R(\alpha) \}$ are \'ez sets for any $n$-ary relation symbol $R \in \Sa L$.
Then $K$ is \'ez.
\end{proposition}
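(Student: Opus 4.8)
The plan is to reduce, via model completeness, to the quantifier free case and then treat atomic formulas one at a time using Theorem~A and the closure properties of \'ez sets. Since $\Sa K$ expands $K$ by definitions, the $\Sa L$-definable subsets of each $K^m$ are exactly the sets definable in the field $K$, so — as $K$ is large by hypothesis — it suffices to show that every $\Sa L$-definable subset of every $K^m$ is \'ez. By model completeness of $\Sa K$, any such set is defined by a formula $\exists y\, \psi(x,y)$ with $\psi$ quantifier free, $x = (x_1,\ldots,x_m)$, $y = (y_1,\ldots,y_n)$ (parameters from $K$ allowed: apply model completeness to the parameter-free formula and then substitute). Writing $\pi : \Aa^{m+n} \to \Aa^m$ for the coordinate projection, the set in question is the image under the induced map $K^{m+n} \to K^m$ of the subset of $K^{m+n}$ defined by $\psi$. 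Since $K$ is large and perfect, Theorem~A then reduces us to proving that every quantifier free $\Sa L$-definable subset of every $K^\ell$ is \'ez.

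Next I would put the quantifier free formula into disjunctive normal form and invoke Proposition~\ref{prop:closure-prop}.1 (closure of the \'ez subsets of a fixed $K^\ell$ under finite unions and finite intersections) to reduce to the case of a single atomic $\Sa L$-formula or its negation. An atomic $\Sa L$-formula is either an atomic formula of the language of rings — in which case it and its negation define a Zariski closed set and a Zariski open set, both \'ez, exactly as in Proposition~\ref{prop:qe-gen-0} — or of the form $R(t_1(x),\ldots,t_r(x))$ for an $r$-ary relation symbol $R \in \Sa L$ and polynomials $t_1,\ldots,t_r$. In the latter case let $g : \Aa^\ell \to \Aa^r$ be the morphism $a \mapsto (t_1(a),\ldots,t_r(a))$ and put $S = \{ b \in K^r : \Sa K \models R(b) \}$; then $\{ a \in K^\ell : \Sa K \models R(t(a)) \}$ is the $g$-preimage of $S$ and $\{ a \in K^\ell : \Sa K \models \neg R(t(a)) \}$ is the $g$-preimage of $K^r \setminus S$. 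Both $S$ and $K^r \setminus S$ are \'ez by hypothesis, so by Proposition~\ref{prop:closure-prop}.2 so are their preimages. Assembling these pieces finishes the argument.

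There is no essential difficulty here: the proposition is a formal consequence of model completeness, Theorem~A, and Proposition~\ref{prop:closure-prop}. The only point that deserves a word of care is the very first reduction — one must note that ``$\Sa K$ expands $K$ by definitions'' makes the $\Sa L$-definable and the $K$-definable subsets of $K^m$ coincide, so that proving every $\Sa L$-definable set is \'ez genuinely yields that $K$ is \'ez.
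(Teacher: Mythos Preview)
Your proposal is correct and follows essentially the same route as the paper: reduce via model completeness to projections of quantifier-free $\Sa L$-definable sets, apply Theorem~A, then handle atomic and negated atomic formulas using Proposition~\ref{prop:qe-gen-0} for ring atoms and Proposition~\ref{prop:closure-prop}.2 (preimage under the polynomial map $a \mapsto (t_1(a),\ldots,t_r(a))$) for the new relation symbols. Your added remark that ``expands by definitions'' is what makes $\Sa L$-definability coincide with $K$-definability is a point the paper leaves implicit.
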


\begin{proof}
Suppose that $X$ is an $\Sa L$-definable subset of $K^m$.
Then there is a quantifier free $\Sa L$-definable subset $Y$ of $K^{m + n}$ such that $\uppi(Y) = X$, where $\uppi$ is the coordinate projection $K^{m + n} \to K^m$.
By Theorem A it suffices to show that a quantifier free $\Sa L$-definable subset of $K^m$ is \'ez.
By Proposition~\ref{prop:closure-prop}.1 it suffices to show that any atomic or negated atomic $\Sa L$-formula $\phi(x_1,\ldots,x_m)$ defines an \'ez subset of $K^m$.
Let $x = (x_1,\ldots,x_m)$.
By Proposition~\ref{prop:qe-gen-0} it suffices to consider two kinds of formulas:
\begin{enumerate}
\item \hspace{.15cm} $R(f_1(x),\ldots,f_{n}(x))$ for an $n$-ary $R \in \Sa L$ and $f_1,\ldots,f_{n} \in K[x]$,
\item $\neg R(f_1(x),\ldots,f_{n}(x))$ for an $n$-ary $R \in \Sa L$ and $f_1,\ldots,f_{n} \in K[x]$.
\end{enumerate}
We treat case $(1)$, the second case follows by the same argument.
Let $f = (f_1,\ldots,f_n)$.
Then
$$ \{ \alpha \in K^m : \Sa K \models R(f_1(\alpha),\ldots,f_{n}(\alpha)) \} = f^{-1} \left( \{ \beta \in K^n : \Sa K \models R(\beta) \} \right). $$
Apply Proposition~\ref{prop:closure-prop}.2.
\end{proof}

Fact~\ref{fact:irreducible} is  \cite[Corollary 3.7]{with-anand}.

\begin{fact}
\label{fact:irreducible}
The set of $(\alpha_0,\ldots,\alpha_{m - 1}) \in K^m$ such that $t^m + \alpha_{m - 1}t^{m - 1} + \cdots + \alpha_1 t + \alpha_0$ is separable and irreducible in $K[t]$ is \'etale open.
\end{fact}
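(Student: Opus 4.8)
The plan is to exhibit the set in question as a union of \'etale images, one for each possible splitting field. Write $P_a(t) = t^m + a_{m-1}t^{m-1} + \ldots + a_1 t + a_0$ for $a = (a_0,\ldots,a_{m-1})$, and let $U \subseteq \Aa^m$ be the Zariski open subvariety on which the discriminant of $P_a$ does not vanish, so that $U(K)$ is exactly the set of $a \in K^m$ with $P_a$ separable. Since $U \hookrightarrow \Aa^m$ is an open immersion, $U(K)$ is $\Sa E_K$-open in $K^m$ and carries the subspace topology (Fact~\ref{fact:first-paper-system}). Thus it is enough to show that $S := \{a \in U(K) : P_a \text{ is irreducible over } K\}$ is $\Sa E_K$-open in $U(K)$.

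First I would set up the universal root cover. Let $R = \{(a,t) \in U \times \Aa^1 : P_a(t) = 0\}$ and let $q : R \to U$ be the first projection. Over $U$ the polynomial $P_a'$ is invertible modulo $P_a$, so $q$ is finite \'etale of degree $m$, and $R_a = \Spec(K[t]/(P_a))$ for each $a \in U(K)$. Now $P_a$ is irreducible over $K$ if and only if $K[t]/(P_a)$ is a separable field extension of $K$ of degree $m$, i.e.\ if and only if $R_a \cong \Spec L$ as $K$-schemes for some separable degree-$m$ extension $L/K$. Fix such an $L$. The ``constant'' cover $\Spec(L)\times_K U \to U$ is finite \'etale of degree $m$, hence the Isom functor
\[
I_L \ :=\ \underline{\operatorname{Isom}}_U\bigl(\Spec(L)\times_K U,\ R\bigr)\ \longrightarrow\ U
\]
is representable by a scheme finite \'etale over $U$: by \'etale descent this may be checked after an \'etale base change that trivializes both covers, where $I_L$ becomes the constant group scheme $S_m \times U$. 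For $a \in U(K)$ the fiber $(I_L)_a = \operatorname{Isom}_K(\Spec L, R_a)$ has a $K$-point precisely when $R_a \cong \Spec L$. Consequently
\[
S\ =\ \bigcup_L\ q_L\bigl(I_L(K)\bigr),
\]
the union taken over the isomorphism classes of separable degree-$m$ extensions $L/K$, where $q_L : I_L(K) \to U(K)$ is the map induced by $I_L \to U$.

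To finish: each $q_L(I_L(K))$ is an \'etale image in $U(K)$ and is therefore $\Sa E_K$-open by the very definition of the \'etale open topology; a union of open sets is open, so $S$ is $\Sa E_K$-open in $U(K)$, and hence $\Sa E_K$-open in $K^m$ by the first paragraph, which is the assertion of the Fact. (No largeness hypothesis is used; when $K$ is not large the topology is discrete and there is nothing to prove.) Running the same argument on the complement --- realizing $U(K)\setminus S$ as $\bigcup_A q_A(I_A(K))$ with $A$ ranging over the \'etale $K$-algebras of dimension $m$ that are not fields --- shows moreover that the set in the Fact is $\Sa E_K$-clopen in $U(K)$. The one nonformal ingredient, and hence the place where whatever care is needed should be spent, is the representability of $\underline{\operatorname{Isom}}_U$ of two finite \'etale covers by a finite \'etale $U$-scheme together with the identification of its $K$-fibers; everything else is routine bookkeeping with finite \'etale covers.
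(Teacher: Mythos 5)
Your proof is correct. One thing to be aware of: the paper itself contains no argument for this Fact --- it is simply quoted as \cite[Corollary 3.7]{with-anand} --- so there is no internal proof to measure you against; what you have written is a legitimate self-contained substitute. The structure is the natural one and, as far as the cited source goes, essentially the same decomposition is used there: split off the separability condition as the Zariski-open discriminant locus $U$, stratify the irreducibility locus by the isomorphism class of the root algebra $K[t]/(P_a)$, and exhibit each stratum $\{a \in U(K) : K[t]/(P_a) \cong L\}$ as an \'etale image (in the reference this is done more concretely via an \'etale ``minimal polynomial'' map out of an open piece of the Weil restriction $\operatorname{Res}_{L/K}\Aa^1$, which is a hands-on avatar of your Isom scheme). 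The one nonformal input, which you correctly flag, is representability of $\underline{\operatorname{Isom}}_U$ of two finite \'etale covers by a finite \'etale $U$-scheme; this is standard (\'etale-locally both covers are trivial, the functor becomes the constant scheme on the finite set of bijections of two $m$-element sets, and effective descent for affine morphisms gives representability), and note that $I_L$ is then automatically a $K$-variety (reduced, separated, finite type), so its image under the \'etale composite $I_L \to U \hookrightarrow \Aa^m$ is a basic $\Sa E_K$-open set. Two minor points: the fibre identification $(I_L)_a(K) = \operatorname{Isom}_K(\Spec L, R_a)$ uses that $a$ is a $K$-point, so $\kappa(a) = K$, which you have; and the union over $L$ may well be infinite when $K$ has infinitely many separable extensions of degree $m$, but arbitrary unions of $\Sa E_K$-open sets are open, so nothing is lost. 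The concluding clopen-in-$U(K)$ remark, running the same argument over \'etale $K$-algebras of dimension $m$ that are not fields, is also correct, though not needed for the Fact.
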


We also apply Fact~\ref{fact:frobenius}.
Fact~\ref{fact:frobenius} was proven in unpublished but very influential work of Cherlin, van den Dries, and Macintyre \cite[Theorem 41]{pac-unpub}.
(Frobenius fields are referred to as ``Iwasawa fields" in \cite{pac-unpub}.)

\begin{fact}
\label{fact:frobenius}
Let $\Sa L$ be the expansion of the language of rings by an $m$-ary relation symbol $R_m$ for all $m \ge 2$ and $\Sa K$ be the expansion of $K$ to an $\Sa L$-structure where for all $\alpha_0,\ldots,\alpha_{m - 1} \in K$:
$$ \Sa K \models R_m(\alpha_0,\ldots,\alpha_{m -1}) \quad \Longleftrightarrow \quad K \models \exists t (t^m + \alpha_{m - 1}t^{m -1} + \cdots + \alpha_1t + \alpha_0 = 0). $$
If $K$ is a perfect Frobenius field then $\Sa K$ admits quantifier elimination. 
\end{fact}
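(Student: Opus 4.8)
The plan is to verify quantifier elimination through the standard substructure test: it is enough to show that for any two perfect Frobenius fields $\Sa K$ and $\Sa L$, regarded as $\Sa L$-structures and elementarily equivalent as such, with $\Sa L$ sufficiently saturated (say $|K|^+$-saturated), every $\Sa L$-embedding $g$ of an $\Sa L$-substructure $A$ of $\Sa K$ into $\Sa L$ extends to an $\Sa L$-embedding $\Sa K\to\Sa L$. Here an $\Sa L$-substructure is just a subring with the induced relations, and an $\Sa L$-embedding is a ring embedding that both preserves and reflects each $R_m$. Preservation of $R_m$ is automatic for any field embedding, so throughout the only real content is reflection of the $R_m$; in particular, a field embedding whose image is relatively algebraically closed in the target is automatically an $\Sa L$-embedding, since if the image of a monic polynomial acquires a root $\beta$ in the target then $\beta$ is algebraic over the image, hence lies in it, hence is the image of a root in the source.

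First I would reduce the domain of $g$ to a well-behaved subfield. (i) Extend $g$ to $\Frac(A)\subseteq K$ by $a/b\mapsto g(a)/g(b)$: clearing a common denominator $d$ turns $R_m(a_0/d,\dots,a_{m-1}/d)$ into $R_m(a_{m-1},a_{m-2}d,\dots,a_0 d^{m-1})$ via the substitution $t\mapsto dt$, so this extension is still an $\Sa L$-embedding. (ii) Extend $g$ to the perfect closure $F$ of $\Frac(A)$ inside $K$ (which lies in $K$ as $K$ is perfect) by sending a $p^n$-th root to the unique $p^n$-th root of its image in $L$; raising a root to its $p^n$-th power turns an $R_m$-condition over $\Frac(A)^{1/p^n}$ into one over $\Frac(A)$, so this remains an $\Sa L$-embedding. (iii) Let $F_1$ be the relative algebraic closure of $F$ in $K$ and $L_1$ that of $g(F)$ in $L$. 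Because $g$ is an $\Sa L$-embedding on $F$, the fields $F_1$ and $L_1$ have the same finite sub-extensions over $F\cong g(F)$ up to $F$-isomorphism, hence are $F$-isomorphic; composing a chosen extension of $g$ to algebraic closures with a suitable element of $G_{g(F)}$, we may take $g$ to restrict to a field embedding $F_1\to L$ with image $L_1$. Since $L_1$ is relatively algebraically closed in $L$, this restriction is an $\Sa L$-embedding. After these reductions, renaming $F_1$ to $F$ and identifying it with $L_1$, we may assume $F\subseteq K$ is perfect, relatively algebraically closed in both $K$ and $L$; perfectness of $F$ then makes $K/F$ and $L/F$ regular extensions.

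It now remains to extend $\mathrm{id}_F$ to a \emph{regular} $F$-embedding $K\to L$, i.e. one whose image is relatively algebraically closed in $L$ — by the observation above such an embedding is automatically an $\Sa L$-embedding. This is the amalgamation theorem for $\pac$ fields of Cherlin--van den Dries--Macintyre, which I would carry out in two stages. Stage one: extract from $\Sa K\equiv\Sa L$ a continuous isomorphism $G_K\cong G_L$ compatible with the restriction maps onto $G_F$. The relations $R_m$ express uniformly, over any parameter tuple, the isomorphism types of Galois groups of splitting fields of polynomials, so $\Sa K\equiv\Sa L$ forces $G_K$ and $G_L$ to have the same finite quotients and the same solvable finite embedding problems over $G_F$; the embedding property of the absolute Galois groups — the defining feature of Frobenius fields — is precisely what makes the resulting profinite back-and-forth close up at every finite stage and produce the isomorphism. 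Stage two: realize this Galois datum geometrically. Reduce to finitely generated regular subextensions $F(V)\subseteq K$ with $V$ a geometrically integral $F$-variety; since $L$ is $\pac$ it is existentially closed in the regular extension $L\otimes_F F(V)$, and $\pac$-ness together with the prescribed behaviour of $G_L$ lets one find an $F$-generic point of $V$ in $L$ whose attached residue field is relatively algebraically closed in $L$, giving a regular $F$-embedding $F(V)\to L$; then pass from these finitely generated pieces to all of $K$ using $|K|^+$-saturation of $L$. The resulting embedding extends $g$ and has relatively algebraically closed image, hence is the desired $\Sa L$-embedding $\Sa K\to\Sa L$.

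The main obstacle is stage two of the last step: the geometric realization over a $\pac$ field of a prescribed Galois-theoretic datum — producing regular embeddings with control over which algebraic extensions survive in $L$ — which is the genuinely technical core of the Cherlin--van den Dries--Macintyre analysis and uses $\pac$-ness (hence largeness) and saturation in an essential way. Stage one is where the Frobenius hypothesis is indispensable: without the embedding property two Frobenius fields with the same finite-quotient data need not have isomorphic absolute Galois groups, so the compatible isomorphism, and therefore the embedding, can fail to exist. The preliminary reductions of the domain of $g$ in the first two paragraphs are routine field theory.
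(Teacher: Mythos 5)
First, note that the paper itself gives no proof of this Fact: it is quoted from the unpublished Cherlin--van den Dries--Macintyre notes, so the comparison is with the known argument there (the embedding lemma for $\pac$ fields of Jarden--Kiehne/Cherlin--van den Dries--Macintyre plus the embedding property on the Galois side). Your outline does follow that architecture, but the two steps you dismiss as easy are exactly where the content lies, and as written they are gaps. In reduction (iii) you infer from ``$F_1$ and $L_1$ contain the same finite extensions of $F$ up to $F$-isomorphism'' that $F_1\cong_F L_1$. What the $R_m$-data gives, via an inverse-limit argument over the finite subextensions, is an $F$-embedding of each of $F_1$, $L_1$ into the other; mutual embeddability of infinite algebraic extensions does not yield an $F$-isomorphism, and ``same finite subextensions up to isomorphism'' does not in general determine an algebraic extension of $F$ up to $F$-isomorphism (on the Galois side it only says the two closed subgroups of $\Gal_F$ are contained, up to conjugacy, in the same open subgroups, which is much weaker than conjugacy). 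This identification is not ``routine field theory'': in the actual proof it is constructed together with the Galois-group datum, by an Iwasawa-style back-and-forth that uses the embedding property and the saturation of $L$ --- that is, it is part of the core of the theorem, not a preliminary reduction.

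Second, your stage one asks for a continuous isomorphism $\Gal_K\cong\Gal_L$ compatible with the restrictions to $\Gal_F$. That is generally impossible: if $L$ is $|K|^+$-saturated and the theory is unbounded (e.g.\ the theory of $\omega$-free $\pac$ fields, which are Frobenius), then $\Gal_L$ has strictly larger rank than $\Gal_K$, so no isomorphism exists and no back-and-forth can produce one. What the argument needs, and what the embedding property actually delivers, is a continuous epimorphism $\Gal_L\twoheadrightarrow\Gal_K$ commuting with the restriction maps to $\Gal_F$; surjectivity is precisely what later forces the image of $K$ to be relatively algebraically closed in $L$, i.e.\ what makes the final embedding reflect the $R_m$. (A smaller point in the same direction: your substructure test must apply to all models of the complete theory of the structure $\Sa K$, so you also need that every field elementarily equivalent to a perfect Frobenius field is again one; this is true, but it is itself a theorem about the embedding property, not something the test hands you.) With these repairs your plan becomes the standard Cherlin--van den Dries--Macintyre proof, but as written the Galois-theoretic matching --- the one place the Frobenius hypothesis enters --- is asserted rather than proved.
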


Theorem~\ref{thm:pac} follows from Fact~\ref{fact:frobenius}, Proposition~\ref{prop:qe-gen}, and Proposition~\ref{prop:Sn} below.

\begin{proposition}
\label{prop:Sn}
Suppose $K$ is perfect.
For any $m \ge 2$ both
\begin{align*}
 X_m &:= \{ (\alpha_0,\ldots,\alpha_{m - 1}) \in K^m : K\models \forall t (t^m + \alpha_{m - 1}t^{m - 1} + \cdots + \alpha_1 t + \alpha_0 \ne 0) \}, \text{  and} \\
 Y_m &:= \{ (\alpha_0,\ldots,\alpha_{m - 1}) \in K^m : K\models \exists t (t^m + \alpha_{m - 1}t^{m - 1} + \cdots + \alpha_1 t + \alpha_0 = 0) \}
\end{align*}
are \'ez.
\end{proposition}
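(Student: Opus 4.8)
The plan is to analyze the two sets $X_m$ and $Y_m$ using the factorization structure of polynomials. First I would observe that $Y_m$, the set of tuples for which $t^m + a_{m-1}t^{m-1} + \ldots + a_0$ has a root in $K$, is an \'etale image: consider the morphism $\Aa^m \to \Aa^m$ sending $(c, a_1, \ldots, a_{m-1})$ — viewing $c$ as the root and recovering $a_0 = -(c^m + a_{m-1}c^{m-1} + \ldots + a_1 c)$ — so that $Y_m$ is the image of $\Aa^m(K)$ under a morphism. More precisely $Y_m$ is the image of the $K$-points of the variety $\{(t, a_0, \ldots, a_{m-1}) : t^m + a_{m-1}t^{m-1} + \ldots + a_0 = 0\}$ under the projection forgetting $t$; this is a constructible (indeed closed-to-affine-space) set, but to see it is \'ez one wants the projection restricted to an appropriate locus to be \'etale. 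The cleanest route is: $Y_m$ is existentially definable, so by Theorem B it is automatically an \'ez set. That immediately handles $Y_m$.

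For $X_m$, which is the complement of $Y_m$ in $K^m$, the difficulty is that complements of \'ez sets need not be \'ez in general, so I cannot simply invoke Theorem B. Here I would stratify according to the factorization type of the polynomial $P_a(t) = t^m + a_{m-1}t^{m-1} + \ldots + a_0$ over $K$. Since $K$ is perfect, $P_a$ is separable whenever it is squarefree, and having no root in $K$ means no degree-one factor. I would write $X_m$ as a finite union over ways the polynomial can fail to have a root: either $P_a$ has a repeated factor (this locus is Zariski closed, the discriminant locus, and on it one inducts on $m$ via the squarefree part), or $P_a$ is separable and every irreducible factor has degree $\ge 2$. For the separable case, decompose by the multiset of degrees $(d_1, \ldots, d_r)$ of the irreducible factors with each $d_i \ge 2$ and $\sum d_i = m$; the set of $a$ realizing a fixed such factorization type is the image of the $K$-points of an appropriate symmetric-product-type variety, and using Fact~\ref{fact:irreducible} — that the locus of coefficient-tuples giving a separable irreducible polynomial of given degree is \'etale open — together with the closure properties in Proposition~\ref{prop:closure-prop} (products, images under the "multiply the factors" morphism, and Theorem A for the image), each such piece is \'ez.

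So the key steps, in order, are: (i) dispose of $Y_m$ by Theorem B; (ii) set up the stratification of $X_m$ into the discriminant locus and its complement; (iii) on the discriminant locus, pass to the squarefree part and induct on $m$, using that the map from coefficients to coefficients-of-squarefree-part is a morphism so preimages/images of \'ez sets stay \'ez; (iv) on the separable locus, further stratify by factorization type $(d_1, \ldots, d_r)$ with all $d_i \ge 2$, realize each stratum as the image under the multiplication morphism $\prod_i (\text{degree-}d_i\text{ monic irreducibles}) \to \Aa^m$ of a product of the \'etale open sets from Fact~\ref{fact:irreducible}, and apply Theorem A and Proposition~\ref{prop:closure-prop}; (v) take the finite union. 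I expect the main obstacle to be step (iii)–(iv): making precise that the "factorization type" strata are exactly images of the relevant products of \'etale open loci — one must check that the multiplication morphism has the right image (surjecting onto tuples whose polynomial factors that way) and combine this correctly with the inductive treatment of the non-separable part, being careful that the discriminant-locus contribution genuinely drops the degree so the induction terminates. Alternatively, and perhaps more slickly, one notes $X_m$ is definable in the Frobenius-field language $\Sa L$ by a \emph{quantifier-free} formula built from the $R_k$'s (saying $\neg R_m$ and describing which factorization patterns are allowed is exactly a boolean combination of the $R_k$), but since Proposition~\ref{prop:Sn} is used to \emph{prove} that $R_k$ defines \'ez sets, that reasoning would be circular; hence the direct geometric argument above is the honest path.
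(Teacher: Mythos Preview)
Your handling of $Y_m$ via Theorem~B matches the paper exactly.

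For $X_m$ your outline is essentially correct, but it is more elaborate than needed, and the paper's argument is worth comparing.  The key simplification is that the separable/non-separable stratification in your steps (ii)--(iii) is unnecessary: since $K$ is perfect, every irreducible polynomial over $K$ is automatically separable, so Fact~\ref{fact:irreducible} already applies to the full irreducible locus, not just the separable one.  Consequently your step~(iv) alone---factoring $p_a$ completely into irreducibles of degree $\ge 2$ and realizing each factorization-type stratum as the image of a product of \'etale open sets---would already cover all of $X_m$, with no discriminant case left over.  Step~(iii) is therefore redundant; and incidentally your claim there that ``the map from coefficients to coefficients-of-squarefree-part is a morphism'' is not literally true (it is only piecewise a morphism, after stratifying by $\deg\gcd(f,f')$, and in positive characteristic one must also treat the locus $f'\equiv 0$ separately), so dropping step~(iii) also removes the most delicate part of your plan.

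The paper streamlines further by avoiding the full factorization into irreducibles.  It runs a straight induction on $m$: for $m=2,3$ one has $X_m = \{\text{irreducible}\}$, which is \'etale open by Fact~\ref{fact:irreducible}; for $m\ge 4$, a rootless $p_a$ is either irreducible or factors as $p_b p_c$ with $b\in X_k$, $c\in X_{m-k}$ for some $2\le k\le m-2$.  The set of such $(a,b,c)$ is the intersection of a Zariski closed set with $K^m\times X_k\times X_{m-k}$, \'ez by induction and Proposition~\ref{prop:closure-prop}, and its projection to $K^m$ is \'ez by Theorem~A.  This two-factor induction replaces your enumeration over all partitions of $m$ by a single binary split, which is a bit cleaner bookkeeping-wise, though your full-factorization route (once the discriminant detour is removed) is a perfectly valid alternative.
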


For each $\alpha = (\alpha_0,\ldots,\alpha_{m - 1}) \in K^m$ we let $p_\alpha \in K[t]$ be $t^m + \alpha_{m - 1}t^{m-1} + \cdots + \alpha_1t + \alpha_0$.
\begin{proof}

Each $Y_m$ is \'ez by Corollary A.
We apply induction on $m \ge 2$ to show that $X_m$ is \'ez.
As $K$ is perfect an irreducible $p_\alpha$ is also separable.
A quadratic or cubic polynomial does not have a root if and only if it is irreducible, so by Fact~\ref{fact:irreducible} $X_2$ and $X_3$ are both $\Sa E_K$-open, hence \'ez.
Suppose $m \ge 4$.
If $a \in K^m$ and $p_\alpha$ does not have a root in $K$ then either:
\begin{enumerate}
\item $p_\alpha$ is irreducible, or
\item there is $k \in \{2,\ldots,m - 2\}$, $\beta \in K^k$, and 
$\gamma \in K^{m - k}$ such that $p_\alpha = p_\beta p_\gamma$ and neither $p_\beta$ nor $p_\gamma$ has a root in $K$.
\end{enumerate}
By Fact~\ref{fact:irreducible} the set of $\alpha \in K^m$ such that $p_\alpha$ is irreducible is \'etale open, so it suffices to show that the set of $\alpha \in K^m$ satisfying $(2)$ is an \'ez set.
It is enough to fix $k \in \{2,\ldots,m - 2\}$ and show that
$$ \{ \alpha \in K^m : \exists\hspace{.1cm}(\beta,\gamma) \in K^k \times K^{m - k} [ (p_\alpha = p_\beta p_\gamma) \land (\beta \in X_k) \land (\gamma \in X_{m - k}) ] \} $$
is an \'ez set.
By Theorem A it suffices to show that
$$ \{ (\alpha,\beta,\gamma) \in K^m \times K^k \times K^{m - k} : (p_\alpha = p_\beta p_\gamma) \land (\beta \in X_k ) \land (\gamma \in X_{m - k}) \} $$
is an \'ez set.
By Proposition~\ref{prop:closure-prop}.1 it suffices to show that both 
\begin{enumerate}
\item $\{ (\alpha,\beta,\gamma) \in K^m \times K^k \times K^{m - k} : p_\alpha
= p_\beta p_\gamma \}$
\item and $K^m \times X_k \times X_{m - k}$
\end{enumerate}
are \'ez subsets of $K^m \times K^k \times K^{m - k}$.
The first set is Zariski closed, hence \'ez.
By induction $X_m$ and $X_{m - k}$ are both \'ez.
Apply Proposition~\ref{prop:closure-prop}.4.
\end{proof}

\section{Dimension of \'ez sets, proof of Theorem B}
\label{section:ez}
We prove some natural facts about dimension of \'ez sets under the assumption that $K$ is large.
Given a $K$-variety $V$ and a subset $X$ of $V$ we let $\overline{X}$ be the Zariski closure of $X$ in $V$.
Recall that $\dim X = \dim \overline{X}$ by definition.  
We first prove Lemma~\ref{lem:ez basic-0} which shows that the results of this section apply to existentially definable sets in  perfect large fields and arbitrary definable sets in  \'ez fields.

\begin{lemma}
\label{lem:ez basic-0}
Suppose that $V$ is a $K$-variety, $X$ is a definable subset of $V(K)$, and either:
\begin{enumerate}
\item $X$ is existentially definable, or
\item $K$ is \'ez.
\end{enumerate}
Then $X$ is an \'ez subset of $V(K)$.
\end{lemma}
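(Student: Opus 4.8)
The plan is to reduce both cases to facts already available for definable subsets of $K^m$: Theorem~B in case~(1), and the definition of an \'ez field in case~(2). First I would choose affine open subvarieties $U_1,\dots,U_r$ covering $V$, together with closed immersions $U_i \hookrightarrow \Aa^{n_i}$, so that each $U_i(K)$ is identified with a Zariski closed subset of $K^{n_i}$. Since $U_1(K),\dots,U_r(K)$ cover $V(K)$ we have $X = \bigcup_{i=1}^{r}\bigl(X \cap U_i(K)\bigr)$, and a finite union of \'ez subsets of $V(K)$ is \'ez by Proposition~\ref{prop:closure-prop}.1, so it suffices to show each $X \cap U_i(K)$ is an \'ez subset of $V(K)$. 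Restricting a (existentially) definable set to an affine chart preserves (existential) definability, so $X \cap U_i(K)$ is a definable, respectively existentially definable, subset of $K^{n_i}$; hence by Theorem~B in case~(1), and directly from the definition of an \'ez field in case~(2), $X \cap U_i(K)$ is an \'ez subset of $K^{n_i}$.

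It then remains to check that an \'ez subset of $K^{n_i}$ which happens to be contained in $U_i(K)$ is an \'ez subset of $V(K)$. Fixing $i$, write $X \cap U_i(K)$ as a finite union of sets $O \cap Y$ with $O$ an $\Sa E_K$-open subset of $K^{n_i}$, $Y$ a Zariski closed subset of $K^{n_i}$, and $O \cap Y$ definable; it is enough to treat one such piece. By Proposition~\ref{prop:well-define} the $\Sa E_K$-topology on $U_i(K)$, viewed as a Zariski closed subset of $K^{n_i}$, is the subspace topology, so $O' := O \cap U_i(K)$ is $\Sa E_K$-open in $U_i(K)$; and by Fact~\ref{fact:first-paper-system}.3 the open immersion $U_i \hookrightarrow V$ realizes $U_i(K)$ as an open subspace of $V(K)$, so $O'$ is $\Sa E_K$-open in $V(K)$. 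Next, $Y \cap U_i(K) = C(K)$ for a closed subvariety $C$ of $U_i$; letting $\overline{C}$ be the Zariski closure of $C$ in $V$ we have $\overline{C}\cap U_i = C$, whence $(O \cap Y) \cap U_i(K) = O' \cap \overline{C}(K)$, with $\overline{C}(K)$ Zariski closed in $V(K)$. Using Proposition~\ref{prop:well-define} once more, $O' \cap \overline{C}(K)$ is an $\Sa E_K$-open subset of $\overline{C}(K)$, and it is definable since it equals $(O \cap Y)\cap U_i(K)$. Thus it is a basic \'ez subset of $V(K)$; taking the finite union of the pieces shows $X \cap U_i(K)$ is \'ez in $V(K)$, and then the union over $i$ finishes the proof.

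I do not expect a genuine obstacle here: the entire content is the bookkeeping that translates "$\Sa E_K$-open" and "Zariski closed" correctly between an affine chart, the ambient affine space $\Aa^{n_i}$, and $V(K)$ itself, which is precisely what Proposition~\ref{prop:well-define} and Fact~\ref{fact:first-paper-system} are designed to handle; the only mild care needed is the passage from "definable/existentially definable in $V(K)$" to "definable/existentially definable in $K^{n_i}$" via the charts. (If one instead reads "every definable set" in the definition of an \'ez field as already ranging over definable subsets of arbitrary $V(K)$, then case~(2) is immediate, but the reduction above is in any case required for case~(1).)
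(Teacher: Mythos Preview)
Your proposal is correct and follows essentially the same route as the paper: cover $V$ by affine opens, embed each chart in some $\Aa^{n_i}$, use Theorem~B (resp.\ the definition of \'ez field) to see the piece is \'ez in $K^{n_i}$, and then transport this back to $V(K)$. The only difference is cosmetic: where you unpack the transfer from $K^{n_i}$ to $V(K)$ by hand via Proposition~\ref{prop:well-define} and Fact~\ref{fact:first-paper-system}, the paper observes that the piece is the preimage of an \'ez subset of $K^{n_i}$ under the morphism $V_i \to \Aa^{n_i}$ and invokes Proposition~\ref{prop:closure-prop}.2 directly.
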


\begin{proof}
Suppose $(2)$.
Let $V_1,\ldots,V_k$ be affine open subvarieties of $V$ that cover $V$.
By Proposition~\ref{prop:closure-prop}.1 it suffices to show that each $X \cap V_i(K)$ is an \'ez set.
We suppose that $V$ is affine.
Let $V \to \Aa^m$ be a closed immersion.
Let $X'$ be the image of $X$ under $V(K) \to K^m$, then $X'$ is a definable set and is hence an \'ez set.
Note that $X$ is the preimage of $X'$ under $V(K) \to K^m$ and apply Proposition~\ref{prop:closure-prop}.2.
If $X$ is existentially definable then the relevant objects are existentially definable, and the same argument shows that $X$ is \'ez.
\end{proof}

\begin{theorem}
\label{thm:dim}
Suppose that $K$ is perfect and large, $V$ is a $K$-variety, and $X$ is a nonempty \'ez subset of $V(K)$.
Let $W_1,\ldots,W_k$ be smooth irreducible subvarieties of $V$, and $X_1,\ldots,X_k$ be such that each $X_i$ is a nonempty  \'etale open subset of $W_i(K)$ and $X = X_1\cup\cdots\cup X_k$.
Then $\dim X$ is the maximum of $\dim W_1,\ldots,\dim W_k$.
\end{theorem}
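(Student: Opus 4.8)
The plan is to first reduce to a statement about a single smooth irreducible variety and then invoke Fact~\ref{fact:with-anand}. The key observation is that by Lemma~\ref{lem:decomp} we already know $X$ decomposes as a finite union $X = \bigcup_{i=1}^k X_i$ with each $X_i$ a definable \'etale open subset of $W_i(K)$ for a smooth irreducible subvariety $W_i$; the content of the theorem is that $\dim X_i = \dim W_i$ for each $i$ with $X_i \neq \emptyset$, after which Fact~\ref{lem:dim-fin-union} gives $\dim X = \max\{\dim X_1,\ldots,\dim X_k\} = \max\{\dim W_1,\ldots,\dim W_k\}$. Actually the statement as given already fixes such a decomposition, so I only need to handle each piece.

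First I would fix $i$ with $X_i \neq \emptyset$ and show $\dim X_i = \dim W_i$. Since $X_i \subseteq W_i(K) \subseteq W_i$, we have $\dim X_i \leq \dim W_i$ trivially (the Zariski closure of $X_i$ in $V$ is contained in $W_i$, which is irreducible). For the reverse inequality, the point is that $X_i$ is a \emph{nonempty} \'etale open subset of $W_i(K)$, and $W_i$ is smooth and irreducible, so by Fact~\ref{fact:with-anand} (using that $K$ is large) $X_i$ is Zariski dense in $W_i$. Hence $\overline{X_i} = W_i$ and $\dim X_i = \dim W_i$. This is the only place largeness and smoothness are used, and it is the crux of the argument.

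Having established $\dim X_i = \dim W_i$ for each $i$ with $X_i$ nonempty (and discarding the empty ones, which contribute nothing to the union or to the max since the hypothesis says each $X_i$ is nonempty anyway), I would conclude by Fact~\ref{lem:dim-fin-union} applied to $X = \bigcup_{i=1}^k X_i$: $\dim X = \max\{\dim X_1,\ldots,\dim X_k\} = \max\{\dim W_1,\ldots,\dim W_k\}$. I do not anticipate a genuine obstacle here — the theorem is essentially a packaging of Fact~\ref{fact:with-anand} with the additivity of dimension over finite unions; the main subtlety is simply making sure the Zariski closure is taken in $V$ and that Proposition~\ref{prop:well-define} lets us pass freely between the \'etale open topology on $W_i(K)$ as a variety and as a subspace of $V(K)$.
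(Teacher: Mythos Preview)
Your proposal is correct and follows essentially the same route as the paper: apply Fact~\ref{fact:with-anand} to each nonempty $X_i$ to get Zariski density in $W_i$, hence $\dim X_i = \dim W_i$, and then use Fact~\ref{lem:dim-fin-union} to pass to the union. One cosmetic point: since $W_i$ is only a subvariety of $V$ (not necessarily closed), the Zariski closure of $X_i$ in $V$ is $\overline{W_i}$ rather than $W_i$ itself, but of course $\dim W_i = \dim \overline{W_i}$, so the conclusion stands.
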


Lemma~\ref{lem:decomp} ensures that such $W_i$ and $X_i$ exist.

\begin{proof}
By Fact~\ref{fact:with-anand} each $U_i$ is Zariski dense in $W_i$ and is hence Zariski dense in $\overline{W}_i$.
Thus $\overline{X} = \overline{W_1}\cup\cdots\cup\overline{W_k}$.
By Fact~\ref{lem:dim-fin-union} we have
$$ \dim X = \max \{ \dim \overline{W_1},\ldots,\dim\overline{W_k} \} = \max \{ \dim W_1,\ldots \dim W_k \}. $$
\end{proof}

\begin{lemma}
\label{lem:int-vs-nwd}
Suppose that $K$ is large, $V$ is a smooth irreducible $K$-variety, and $X$ is a nonempty \'ez subset of $V(K)$.
Then $X = O \cup Y$ where $O$ is a definable \'etale open subset of $V(K)$ and $Y$ is not Zariski dense in $V(K)$.
\end{lemma}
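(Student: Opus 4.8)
The plan is to decompose $X$ into its basic pieces and show that the ``low-dimensional'' pieces contribute exactly the non-dense part $Y$, while the top-dimensional pieces glue into a single étale open subset of $V(K)$. First I would apply Lemma~\ref{lem:decomp} to write $X = \bigcup_{i=1}^k X_i$ where the $W_1,\ldots,W_k$ are pairwise disjoint smooth irreducible subvarieties of $V$ and each $X_i$ is a definable étale open subset of $W_i(K)$; we may discard any $X_i = \emptyset$. By Theorem~\ref{thm:dim} we have $\dim X = \max_i \dim W_i = \dim V$ or $\dim X < \dim V$; in the latter case $X$ is not Zariski dense in $V$, so we may take $O = \emptyset$ and $Y = X$, and we are done. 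So assume $\dim X = \dim V = \dim V(K)$. Reindex so that $\dim W_i = \dim V$ for $i \le \ell$ and $\dim W_i < \dim V$ for $i > \ell$; set $Y' = \bigcup_{i > \ell} X_i$, which has $\dim Y' < \dim V$ by Fact~\ref{lem:dim-fin-union}, hence is not Zariski dense in $V$.

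Next I would handle the top-dimensional pieces $X_1,\ldots,X_\ell$. Since $V$ is irreducible and each $W_i$ is a subvariety of $V$ of full dimension $\dim V$, each $W_i$ is a \emph{dense open} subvariety of $V$: indeed the Zariski closure $\overline{W_i}$ is an irreducible closed subset of $V$ of dimension $\dim V$, hence equals $V$ since $V$ is irreducible, so $W_i$ is open in $V$ (being open in its own closure) and dense. Therefore $W_i(K)$ is a Zariski open, hence étale open, subset of $V(K)$ by Fact~\ref{fact:first-paper-system}.3. Consequently each $X_i$, being an étale open subset of the étale open set $W_i(K) \subseteq V(K)$, is itself an étale open subset of $V(K)$. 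Let $O = \bigcup_{i=1}^\ell X_i$; this is a definable étale open subset of $V(K)$. Then $X = O \cup Y'$ with $Y'$ not Zariski dense in $V$, as required — taking $Y = Y'$ completes the argument. (One could alternatively absorb any points of $Y'$ already lying in $O$, but this is unnecessary since we only need $X = O \cup Y$, not disjointness.)

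The only mild subtlety — and the step I would flag as needing the most care — is the claim that a subvariety $W_i$ of an irreducible variety $V$ with $\dim W_i = \dim V$ is automatically open and dense in $V$. This uses that a subvariety is by convention (see the Conventions section) an open subvariety of a closed subvariety, so $W_i$ is open in $\overline{W_i}$, and that an irreducible closed subset of $V$ of full dimension must be all of $V$ because $V$ is irreducible. Combined with Fact~\ref{fact:first-paper-system}.3 (open immersions induce open embeddings on $K$-points), and the fact that an étale open subset of an étale open subset of $V(K)$ is étale open in $V(K)$ (immediate from the definition of the étale open topology as generated by étale images, together with transitivity of étale morphisms), this yields the lemma. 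Everything else is bookkeeping with Lemma~\ref{lem:decomp} and Fact~\ref{lem:dim-fin-union}.
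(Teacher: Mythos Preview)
Your argument has a real gap: both Lemma~\ref{lem:decomp} and Theorem~\ref{thm:dim} require $K$ to be \emph{perfect}, but Lemma~\ref{lem:int-vs-nwd} assumes only that $K$ is large. The smooth decomposition of Lemma~\ref{lem:decomp} relies on Fact~\ref{fact:decomp}, whose proof uses that the regular locus is the smooth locus, which needs perfection; likewise Theorem~\ref{thm:dim} is stated and proved under a perfection hypothesis. So as written your proof does not establish the lemma in the stated generality.

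The paper's proof avoids this entirely by working directly from the \emph{definition} of an \'ez set: write $X = \bigcup_i X_i$ with each $X_i$ a definable \'etale open subset of $V_i(K)$ for \emph{closed} subvarieties $V_i \subseteq V$. Since $V$ is irreducible, each closed $V_i$ is either all of $V$ (in which case $X_i$ is already \'etale open in $V(K)$) or a proper closed subvariety (in which case $X_i \subseteq V_i(K)$ is not Zariski dense). No smoothness, no dimension computation, no perfection. Your detour through smooth irreducible pieces and the case split on $\dim X$ is unnecessary and is precisely what forces the extra hypothesis; the dichotomy ``$V_i = V$ or $V_i$ proper closed'' does all the work. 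Your proof is easily repaired by making this substitution, but note that the invocation of Theorem~\ref{thm:dim} is then superfluous as well: you never need $\dim X_i = \dim W_i$, only the trivial inequality $\dim X_i \le \dim W_i$.
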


Lemma~\ref{lem:int-vs-nwd} applies in particular to $V = \Aa^m$.
Note that an \'ez subset of $V(K)$ agrees Zariski-locally at the generic point of $V$ with a (possibly empty) \'etale open subset of $V(K)$.

\begin{proof}
Note that $V(K)$ is Zariski dense in $V$ by Fact~\ref{fact:with-anand}.
Let $V_1,\ldots,V_k$ be closed subvarieties of $V(K)$ and $X_1,\ldots,X_k$ be such that each $X_i$ is an \'etale open subset of $V_i(K)$ and $X = \bigcup_{i = 1}^k X_i$.
By irreducibility of $V$ each $V_i$ is either nowhere Zariski dense in $V$ or agrees with $V$.
Let $I$ be the set of  $i \in \{1,\ldots,n\}$ such that $V_i = V$.
Then $X_i$ is an \'etale open subset of $V(K)$ when $i \in I$ and $X_i$ is not Zariski dense in $V(K)$ when $i \notin I$.
Let $U = \bigcup_{i \in I} X_i$ and $Y = \bigcup_{i \notin I} X_i$.
\end{proof}

\begin{proposition}
\label{prop:smooth}
Suppose that $K$ is large, $V$ is a smooth irreducible $K$-variety, and $X$ is a nonempty \'ez subset of $V(K)$.
Then $X$ has $\Sa E_K$-interior in $V(K)$ if and only if $\dim X = \dim V$.
\end{proposition}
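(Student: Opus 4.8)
The plan is to reduce both implications to facts already in hand: Fact~\ref{fact:with-anand} for the direction ``interior $\Rightarrow$ full dimension'' and Lemma~\ref{lem:int-vs-nwd} for its converse.

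For the forward direction, I would suppose $X$ has nonempty $\Sa E_K$-interior in $V(K)$ and pick a nonempty $\Sa E_K$-open $O \subseteq X$. Since $V$ is smooth and irreducible and $K$ is large, Fact~\ref{fact:with-anand} gives that $O$ is Zariski dense in $V$; hence $\overline{X} \supseteq \overline{O} = V$, so $\dim X = \dim V$. This is the step that genuinely uses largeness.

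For the converse, suppose $\dim X = \dim V$. I would apply Lemma~\ref{lem:int-vs-nwd} to write $X = O \cup Y$ with $O$ a definable $\Sa E_K$-open subset of $V(K)$ and $Y$ not Zariski dense in $V(K)$. As $V$ is irreducible, $Y$ not Zariski dense forces $\dim Y = \dim \overline{Y} < \dim V$. By Fact~\ref{lem:dim-fin-union}, $\dim V = \dim X = \max\{\dim O,\dim Y\}$, so $\dim O = \dim V$; in particular $O \neq \emptyset$. Then $O$ is a nonempty $\Sa E_K$-open subset of $V(K)$ contained in $X$, so $X$ has nonempty $\Sa E_K$-interior in $V(K)$.

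I do not expect a real obstacle here: the substantive content has already been isolated in Lemma~\ref{lem:int-vs-nwd} (which itself rests on the decomposition Lemma~\ref{lem:decomp} and on Fact~\ref{fact:with-anand}), so what remains is bookkeeping with the dimension function. The only point to take a little care with is the degenerate case $\dim V = 0$, where $V(K)$ is a single point and the statement holds trivially; the argument above in fact goes through uniformly, since there $Y$ must be empty and $X = O$.
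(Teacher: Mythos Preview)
Your proof is correct and follows essentially the same route as the paper: both rely on Lemma~\ref{lem:int-vs-nwd} to write $X = O \cup Y$ with $O$ \'etale open and $Y$ not Zariski dense, and on Fact~\ref{fact:with-anand} to control Zariski density of \'etale opens. The only organizational difference is that the paper runs the two implications together via the single equivalence ``$X$ has interior $\Leftrightarrow O \neq \emptyset \Leftrightarrow X$ is Zariski dense in $V$'', whereas you treat the forward direction more directly (picking any open $O \subseteq X$ and applying Fact~\ref{fact:with-anand}) without first decomposing $X$; both are fine.
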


Again, Proposition~\ref{prop:smooth} applies to $V = \Aa^m$.

\begin{proof}
By irreducibility $\dim X = \dim V$ if and only if $X$ is Zariski dense in $V$.
By Lemmas~\ref{lem:ez basic-0} and \ref{lem:int-vs-nwd} we have $X = U \cup Y$ where $U \subseteq V(K)$ is \'etale open and $Y \subseteq V(K)$ is not Zariski dense.
By Fact~\ref{fact:with-anand} $Y$ has empty $\Sa E_K$-interior in $V(K)$.
Hence $X$ has $\Sa E_K$-interior in $V(K)$ if and only if $U \ne \emptyset$.
Again by Fact~\ref{fact:with-anand} $U \ne \emptyset$ if and only if $X$ is Zariski dense in $V$.
\end{proof}

Corollary~\ref{cor:lou} follows directly from Proposition~\ref{prop:smooth},

\begin{corollary}
\label{cor:lou}
Suppose that $K$ perfect and large and $V$ is a $K$-variety.
Then every \'ez subset of $V(K)$ has nonempty $\Sa E_K$-interior in its Zariski closure.
\end{corollary}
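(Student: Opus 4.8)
The plan is to reduce to Proposition~\ref{prop:smooth} by cutting the Zariski closure into smooth irreducible pieces and then transporting an interior found in one piece back to the whole closure. Let $X$ be the given nonempty \'ez subset of $V(K)$ and let $\overline{X}$ be its Zariski closure in $V$. Replacing $V$ by $\overline{X}$, I would reduce to the following: if $X$ is a nonempty \'ez subset of $V(K)$ which is Zariski dense in $V$, then $X$ has nonempty $\Sa E_K$-interior in $V(K)$. Then I would apply Lemma~\ref{lem:decomp} to write $X = \bigcup_{i=1}^{k} X_i$, where the $V_i$ are pairwise disjoint smooth irreducible subvarieties of $V$ and each $X_i$ is a definable \'etale open subset of $V_i(K)$; after discarding empty pieces I may assume each $X_i$ is nonempty. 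By Fact~\ref{fact:with-anand} each $X_i$ is Zariski dense in $V_i$, so $\overline{X_i} = \overline{V_i}$ (Zariski closures in $V$), hence $V = \overline{X} = \bigcup_i \overline{V_i}$ and $\dim X = \max_i \dim V_i$ by Fact~\ref{lem:dim-fin-union}. Fixing $i$, say $i = 1$, with $\dim V_1 = \dim X = \dim V$, the subvariety $\overline{V_1}$ is an irreducible closed subvariety of $V$ of top dimension, hence an irreducible component of $V$.

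Note that Proposition~\ref{prop:smooth} applied to $X_1 \subseteq V_1(K)$ already says that $X_1$ has nonempty $\Sa E_K$-interior in $V_1(K)$ (indeed $X_1$ is itself such an interior), so all the work is in passing from $V_1(K)$, where $V_1$ is only \emph{locally} closed in $V$, to $V(K)$. This gluing step is what I expect to be the main obstacle. I would handle it by letting $O$ be the complement in $V$ of the union of all irreducible components of $V$ other than $\overline{V_1}$; then $O$ is a nonempty open subvariety of $V$ contained in $\overline{V_1}$, hence a dense open subvariety of $\overline{V_1}$, and consequently $V_1 \cap O$ is a dense open subvariety of $\overline{V_1}$ and an open subvariety of $V$. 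Since $X_1$ is Zariski dense in $V_1$ while $V_1 \setminus O$ is a proper closed subvariety of $V_1$ and $X_1 \subseteq V_1(K)$, the set $X_1 \cap (V_1 \cap O)(K)$ cannot be empty. By Fact~\ref{fact:first-paper-system} and Proposition~\ref{prop:well-define}, $(V_1 \cap O)(K)$ is $\Sa E_K$-open in $V(K)$ and its $\Sa E_K$-topology is the subspace topology induced from both $V_1(K)$ and $V(K)$. As $X_1$ is $\Sa E_K$-open in $V_1(K)$, the set $X_1 \cap (V_1 \cap O)(K)$ is $\Sa E_K$-open in $(V_1 \cap O)(K)$, hence $\Sa E_K$-open in $V(K)$; it is nonempty and contained in $X$, which gives the claim.

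The only delicate points in carrying this out are purely bookkeeping: verifying that $\overline{V_1}$ is genuinely an irreducible component of $\overline{X}$ (so that $O$ is a legitimate dense open of $\overline{V_1}$ and open in $\overline{X}$), and checking that intersecting $X_1$ with $(V_1 \cap O)(K)$ keeps it nonempty; the topological transfer is then immediate from the cited facts. I would also remark that one can instead run the argument through Fact~\ref{fact:decomp} applied directly to $\overline{X}$, choosing a smooth irreducible piece $W$ of $\overline{X}$ with $\dim(X \cap W(K)) = \dim X$ and applying Proposition~\ref{prop:smooth} to $X \cap W(K) \subseteq W(K)$, but the structure of the proof — decompose, pick a top-dimensional piece, discard the other components to make it open, transfer the interior — is the same.
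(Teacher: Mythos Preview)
Your argument is correct. The paper's own proof is a single sentence---``Corollary~\ref{cor:lou} follows directly from Proposition~\ref{prop:smooth}''---and you have carefully unpacked what that sentence hides: the Zariski closure need not be smooth or irreducible, so one must pass to a top-dimensional smooth irreducible piece (via Lemma~\ref{lem:decomp}), and then transport the interior found there back to the whole closure by stripping away the other irreducible components. Your handling of that transfer (checking that $V_1 \cap O$ is open in $V$, that $X_1 \cap (V_1 \cap O)(K)$ is nonempty, and invoking Proposition~\ref{prop:well-define}) is exactly the bookkeeping the paper omits. Your alternative route via Fact~\ref{fact:decomp} applied to $\overline{X}$ is equally valid and perhaps marginally cleaner, since it produces an open smooth piece of $\overline{X}$ directly; either way the structure of the argument is as you describe.
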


\begin{lemma}
\label{lem:ez-mod}
Suppose that $K$ is large and perfect, $V$ is a nonempty irreducible $K$-variety, and $X$ is an \'ez subset of $V(K)$.
Then there is a smooth subvariety $W$ of $V$, a nonempty \'etale open subset $O$ of $W(K)$, and a dense open subvariety $U$ of $V$ such that $O = X \cap U(K)$ and $\dim X \setminus O < \dim X$.
\end{lemma}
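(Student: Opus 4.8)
The plan is to take the decomposition of $X$ from Lemma~\ref{lem:decomp}, pass to a dense open $U\subseteq V$ that deletes both the lower-dimensional pieces and the frontiers of the top-dimensional ones, and then glue the surviving top-dimensional pieces into a single smooth subvariety. Concretely, I would first apply Lemma~\ref{lem:decomp} to write $X=\bigcup_{i=1}^{k}X_i$ with the $V_i$ pairwise disjoint smooth irreducible subvarieties of $V$ and $X_i$ a definable \'etale open subset of $V_i(K)$; discarding the $i$ with $X_i=\emptyset$ (at least one survives, as $X\neq\emptyset$) I may assume every $X_i$ is nonempty, so by Fact~\ref{fact:with-anand} each $X_i$ is Zariski dense in $V_i$ and $\dim X_i=\dim V_i$, and by Theorem~\ref{thm:dim} $d:=\dim X=\max_i\dim V_i$. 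Reindex so that $\dim V_i=d$ exactly for $i\le r$.

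Writing Zariski closures in $V$, I would then set
$$ C:=\Bigl(\,\bigcup_{j>r}\overline{V_j}\,\Bigr)\cup\Bigl(\,\bigcup_{i\le r}\bigl(\overline{V_i}\setminus V_i\bigr)\Bigr),\qquad U:=V\setminus C . $$
Each frontier $\overline{V_i}\setminus V_i$ is a closed subvariety by Fact~\ref{fact:sub-frontier}, so $C$ is a closed subvariety; by Fact~\ref{fact:constructible}.2 each such frontier has dimension $<\dim V_i\le d$, and each $\overline{V_j}$ with $j>r$ has dimension $<d$, so $\dim C<d\le\dim V$. Hence $C\neq V$ and $U$ is a dense open subvariety of $V$; moreover $\dim(V_i\cap C)\le\dim C<d$ for every $i$, while $V_i\cap U=V_i\setminus C$ is nonempty for $i\le r$ (since $\dim V_i=d>\dim C$) and empty for $i>r$ (since then $\overline{V_i}\subseteq C$).

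Now I would put $W:=\bigcup_{i\le r}(V_i\cap U)$ and $O:=X\cap U(K)$. The $V_i\cap U$ with $i\le r$ are pairwise disjoint, each open in the smooth $V_i$, and each closed in $U$ (as $U$ avoids $\overline{V_i}\setminus V_i$), so $W$ is a smooth subvariety of $V$ and each $(V_i\cap U)(K)$ is a clopen subspace of $W(K)$ by Fact~\ref{fact:first-paper-system}.2 and .3. Since $V_i\cap U=\emptyset$ for $i>r$ we get $O=\bigcup_{i\le r}\bigl(X_i\cap U(K)\bigr)$, and $X_i\cap U(K)=X_i\cap(V_i\cap U)(K)$ is \'etale open in $(V_i\cap U)(K)$ again by Fact~\ref{fact:first-paper-system}.3; so $O$ is \'etale open in $W(K)$, and nonempty because $X_1\cap U(K)=X_1\setminus C(K)$ still has dimension $d$, having lost from the $d$-dimensional $X_1$ only a subset of $(V_1\cap C)(K)$, which has dimension $<d$. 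Finally $X\setminus O=X\cap C(K)=\bigcup_i\bigl(X_i\cap C(K)\bigr)$ with every summand of dimension $<d$ — for $i>r$ because $\dim X_i=\dim V_i<d$, for $i\le r$ because $X_i\cap C(K)\subseteq(V_i\cap C)(K)$ — so $\dim(X\setminus O)<d=\dim X$ by Fact~\ref{lem:dim-fin-union}.

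The one genuinely delicate point I expect is the gluing step: checking that $\bigcup_{i\le r}(V_i\cap U)$ really is a reduced locally closed subscheme of $V$, that it is smooth, and that the \'etale open topology on its $K$-points decomposes as the disjoint union of those on the pieces. This is exactly what motivates removing the frontiers $\overline{V_i}\setminus V_i$ when forming $U$ — that turns the $V_i\cap U$ into pairwise disjoint \emph{closed} subvarieties of the open set $U$, so their union is again a subvariety and the clopen decomposition of $W(K)$ follows from the closed- and open-immersion statements in Fact~\ref{fact:first-paper-system}.
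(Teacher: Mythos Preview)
Your proof is correct and follows essentially the same route as the paper's. The paper also applies Lemma~\ref{lem:decomp}, separates the top-dimensional pieces, and removes a closed set before defining $W$ as the union of the surviving $V_i\cap U$; in fact your $C$ coincides with the paper's $Z\cup V_{\ell+1}\cup\cdots\cup V_k$ once one observes $(\overline{V_j}\setminus V_j)\cup V_j=\overline{V_j}$, so the resulting $U$, $W$, and $O$ are literally the same. Your treatment of the gluing step (that the $V_i\cap U$ are clopen in $W$ because their frontiers have been excised, hence $W$ is smooth and the \'etale open topology on $W(K)$ splits accordingly) is a bit more explicit than the paper's one-line remark that ``$W$ is isomorphic as a $K$-variety to the disjoint union of the $V_i\cap U$'', but the content is identical.
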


In particular an \'ez subset of $V(K)$ is, modulo a set of lower dimension, an \'etale open subset of the $K$-points of a smooth subvariety of $V$.
The elements of $O$ can be reasonably considered to be smooth points of $X$, so Lemma~\ref{lem:ez-mod} informally shows almost every point of $X$ is smooth.

\begin{proof}
By Lemma~\ref{lem:decomp} there are pairwise disjoint smooth irreducible subvarieties $V_1,\ldots,V_k$ of $V$ and $X_1,\ldots,X_k$ such that each $X_i$ is a nonempty \'etale open subset of $V_i(K)$ and $X = X_1\cup\cdots\cup X_k$.
Let $\dim X = d$.
By Theorem~\ref{thm:dim} we have $d = \max \{ \dim V_1,\ldots,\dim V_k \}$ and $\dim V_i = \dim X_i$ for each $i$.
We may suppose that there is $\ell \in \{1,\ldots,k\}$ such that $\dim V_i = d$ when $i \le \ell$ and $\dim V_i < d$ when $\ell < i$.
Let $Z = \bigcup_{i = 1}^{k} \overline{V_i} \setminus V_i$.
By Fact~\ref{fact:sub-frontier} each $\overline{V_i} \setminus V_i$ is a closed subvariety of $V$, so $Z$ is a closed subvariety of $V$.
By pairwise disjointness we have $V_i \setminus Z = V_i \setminus \bigcup_{j \ne i} \overline{V_j}$ for all $i \in \{1,\ldots,k\}$.
It follows that each $X_i \setminus Z$ is a (possibly empty) \'etale open subset of $X$.
By Fact~\ref{fact:constructible} and Fact~\ref{lem:dim-fin-union} we have
\begin{align*}
\dim Z &= \max \{ \dim \overline{V_2} \setminus V_2 , \ldots , \dim \overline{V_k} \setminus V_k \} \\ &< \max \{ \dim V_2 , \ldots , \dim V_k \} = d.
\end{align*}
Hence $X_i \setminus Z$ is nonempty when $i \le \ell$.
Let $U = V \setminus (Z \cup V_{\ell + 1} \cup \cdots \cup V_k)$, so $U$ is a dense open subvariety of $V$.
If $i \le \ell$ then $V_i \cap U$ is disjoint from $\overline{V_j}$ for $j \ne i$.
Let $O = X \cap U =  (X_1 \cap U) \cup \cdots \cup (X_\ell \cap U)$.
By Fact~\ref{fact:with-anand} $X_i \cap U$ is nonempty when $i \le \ell$.
Let $W = \bigcup_{i = 1}^{\ell} V_i \cap U$, note that $W$ is isomorphic as a $K$-variety to the disjoint union of the $V_i \cap U$.
If $i \le \ell$ then $U$ is  Zariski dense in $V_i$.
Hence $W$ is smooth as each $V_i$ is smooth.
Each $X_i \cap U$ is an $\Sa E_K$-open subset of $W(K)$, hence $O$ is an $\Sa E_K$-open subset of $W(K)$.
We have
$$ \dim V \setminus U = \max \{ \dim Z, \dim V_{\ell + 1},\ldots , \dim V_k \} < d. $$
Hence $\dim X \setminus O < d$.
\end{proof}

\begin{proposition}
\label{prop:dim-open}
Suppose that $K$ is perfect and large, $V$ is a $K$-variety, $X \subseteq Y$ are \'ez subsets of $V(K)$, and $\dim X = \dim Y$.
Then $X$ has nonempty $\Sa E_K$-interior in $Y$.
\end{proposition}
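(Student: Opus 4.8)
The plan is to reduce to Proposition~\ref{prop:smooth} by isolating, inside the \'ez decomposition of $Y$, a single smooth irreducible piece on which $X$ is Zariski dense, and then to exploit the pairwise disjointness of that decomposition to upgrade an $\Sa E_K$-open subset of that piece to an $\Sa E_K$-open subset of $Y$. We may assume $X$, and hence $Y$, is nonempty.

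First I would apply Lemma~\ref{lem:decomp} to $Y$, writing $Y = \bigcup_{i=1}^{k} Y_i$ with $V_1,\dots,V_k$ pairwise disjoint smooth irreducible subvarieties of $V$ and each $Y_i$ a nonempty definable \'etale open subset of $V_i(K)$. By Fact~\ref{fact:with-anand} each $Y_i$ is Zariski dense in $V_i$, so $\dim Y_i = \dim V_i$, and by Theorem~\ref{thm:dim}, $\dim Y = \max_i \dim V_i =: d$. Since $X = \bigcup_i (X \cap Y_i)$, Fact~\ref{lem:dim-fin-union} lets me fix $i_0$ with $\dim(X \cap Y_{i_0}) = d$; then $d \le \dim Y_{i_0} = \dim V_{i_0} \le d$, so $\dim(X \cap Y_{i_0}) = \dim V_{i_0} = d$. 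The inclusion $V_{i_0} \hookrightarrow V$ is a morphism, so $X \cap V_{i_0}(K)$ is \'ez in $V_{i_0}(K)$ by Proposition~\ref{prop:closure-prop}.2, and since $Y_{i_0}$ is \'ez in $V_{i_0}(K)$ so is $X \cap Y_{i_0}$ by Proposition~\ref{prop:closure-prop}.1. Now Proposition~\ref{prop:smooth}, applied to the smooth irreducible $V_{i_0}$ and the nonempty \'ez set $X \cap Y_{i_0}$ of full dimension, produces a nonempty $\Sa E_K$-open $O \subseteq V_{i_0}(K)$ with $O \subseteq X \cap Y_{i_0} \subseteq X$.

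The remaining difficulty — which I expect to be the main obstacle — is that $O$ need not be open in the subspace topology on $Y$: writing $O = O' \cap V_{i_0}(K)$ for an $\Sa E_K$-open $O' \subseteq V(K)$ (Proposition~\ref{prop:well-define}), the set $O' \cap Y$ may meet $Y_j$ for $j \ne i_0$, and those points need not lie in $X$. I would resolve this by intersecting with $U_1(K)$, where $U_1 := V \setminus \bigcup_{j \ne i_0} \overline{V_j}$ is an $\Sa E_K$-open subvariety of $V$ (Fact~\ref{fact:first-paper-system}.3) that is disjoint from every $V_j$ with $j \ne i_0$. The key point is that $V_{i_0} \cap U_1$ is a nonempty dense open subvariety of $V_{i_0}$, i.e. $V_{i_0} \not\subseteq \overline{V_j}$ for $j \ne i_0$; this follows from pairwise disjointness, since $V_{i_0} \subseteq \overline{V_j}$ would force $\dim V_j = d$ and $\overline{V_{i_0}} = \overline{V_j}$, so the dense opens $V_{i_0}, V_j$ of the irreducible $\overline{V_{i_0}}$ would intersect, a contradiction. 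Granting this, set $O'' := O' \cap U_1(K)$, an $\Sa E_K$-open subset of $V(K)$. Since $U_1(K)$ misses each $Y_j$ with $j \ne i_0$ and $O \subseteq Y_{i_0} \subseteq V_{i_0}(K)$, one computes $O'' \cap Y = O' \cap U_1(K) \cap Y_{i_0} = O \cap U_1(K) \subseteq O \subseteq X$; and $O \cap U_1(K) \ne \emptyset$ because $O$ is Zariski dense in $V_{i_0}$ (Fact~\ref{fact:with-anand}) and $V_{i_0} \cap U_1$ is a nonempty Zariski open subvariety of $V_{i_0}$. Thus $O'' \cap Y$ is a nonempty $\Sa E_K$-open subset of $Y$ contained in $X$, so $X$ has nonempty $\Sa E_K$-interior in $Y$.
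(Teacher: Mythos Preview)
Your proof is correct. The route differs from the paper's in one respect worth noting: the paper invokes Lemma~\ref{lem:ez-mod} applied to $Y$, which in one stroke produces a smooth subvariety $W$, an \'etale open $O\subseteq W(K)$, and a dense open $U\subseteq V$ with $Y\cap U(K)=O$ and $\dim(Y\setminus O)<\dim Y$; then $\dim(X\cap O)=\dim X$, Proposition~\ref{prop:smooth} gives $X\cap O$ nonempty interior in $W(K)$, and the identity $O=Y\cap U(K)$ immediately upgrades this to interior in $Y$. You instead start from the raw decomposition of Lemma~\ref{lem:decomp} and handle the ``upgrade to interior in $Y$'' step by hand, constructing $U_1=V\setminus\bigcup_{j\ne i_0}\overline{V_j}$ and checking it meets $V_{i_0}$ densely. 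That disjointness-and-closure argument is precisely what is packaged inside the proof of Lemma~\ref{lem:ez-mod}, so your approach is more self-contained but longer; the paper's is terser because it front-loads the work into that lemma. Either way the reduction to Proposition~\ref{prop:smooth} is the same.

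One cosmetic point: when you invoke Lemma~\ref{lem:decomp}, the $Y_i$ it produces are not asserted to be nonempty, but you immediately use Theorem~\ref{thm:dim} and Fact~\ref{fact:with-anand}, both of which need nonemptiness. Simply discard the empty pieces at the outset.
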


The converse to Proposition~\ref{prop:dim-open} fails, e.g. let $X = \{(0,1)\}$ and $Y = X \cup \{ (t,0) : t \in K \}$.

\begin{proof}
Applying Lemma~\ref{lem:ez-mod} to $Y$ we let $W$ be a smooth subvariety of $V$, $O$ be an \'etale open subset of $W(K)$, and $U$ be a dense open subvariety of $V$ such that $Y \cap U = O$ and $\dim Y \setminus O < \dim X$.
By Fact~\ref{lem:dim-fin-union} $ \dim X = \max \{ \dim X \cap O, \dim X \setminus O \} $, so $\dim X \cap O = \dim X$.
By Proposition~\ref{prop:smooth} $X \cap O$ has nonempty $\Sa E_K$-interior in $W(K)$, so $X \cap O$ has nonempty $\Sa E_K$-interior in $Y$.
\end{proof}

We give an application to definable groups in \'ez fields.
Recall that a \textbf{$K$-algebraic group} is a group object in the category of $K$-varieties.

\begin{corollary}
\label{cor:def-group}
Suppose that $K$ is large, $G$ is a $K$-algebraic group, and $H$ is an \'ez subgroup of $G(K)$.
Then $H$ is an \'etale open subgroup of its Zariski closure in $G(K)$.
Thus if $K$ is \'ez then any definable subgroup of $G(K)$ is an \'etale open subgroup of its Zariski closure.
\end{corollary}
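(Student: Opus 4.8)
The plan is to reduce to the case where $H$ is Zariski dense in $G$, and then use translations to upgrade ``nonempty interior'' to ``open''.

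First I would let $G'$ be the Zariski closure of $H$ in $G$: a closed subvariety of $G$, defined over $K$, with $H \subseteq G'(K)$ Zariski dense. Left and right translations by $K$-points of $G$, and inversion, are $K$-variety automorphisms of $G$, hence (Fact~\ref{fact:first-paper-system}.1) $\Sa E_K$-homeomorphisms of $G(K)$, and in particular Zariski homeomorphisms. The classical argument then shows $G'$ is a subgroup: for $h \in H$ we have $L_h(G') = L_h(\overline{H}) = \overline{hH} = \overline{H} = G'$, so $H G' \subseteq G'$; hence for $g \in G'$, $R_g(H) = Hg \subseteq HG' \subseteq G'$, so $R_g(G') = \overline{R_g(H)} \subseteq \overline{G'} = G'$, giving $G'G' \subseteq G'$; and $G'$ is closed under inversion since $\mathrm{inv}(\overline{H}) = \overline{H^{-1}} = \overline{H} = G'$. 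Thus $G'$ is a closed $K$-subgroup of $G$. Replacing $G$ by $G'$, it suffices to show: a Zariski-dense \'ez subgroup $H$ of $G(K)$ is $\Sa E_K$-open in $G(K)$.

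Now $H$ is \'ez with Zariski closure $G(K)$, so by Corollary~\ref{cor:lou} it has nonempty $\Sa E_K$-interior in $G(K)$; fix a nonempty $\Sa E_K$-open $O \subseteq H$ and a point $g_0 \in O$. For $h \in H$ the map $L_{hg_0^{-1}} \colon G(K) \to G(K)$ is an $\Sa E_K$-homeomorphism (it and its inverse $L_{g_0 h^{-1}}$ are induced by $K$-variety automorphisms of $G$), so $hg_0^{-1}O$ is $\Sa E_K$-open; it contains $hg_0^{-1}g_0 = h$ and is contained in $H$ since $h, g_0^{-1} \in H$, $O \subseteq H$, and $H$ is a subgroup. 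Hence every point of $H$ lies in its $\Sa E_K$-interior, so $H$ is an $\Sa E_K$-open subgroup of its Zariski closure. The last sentence of the statement is then immediate: if $K$ is \'ez then $K$ is perfect (Theorem E) and every definable subset of $G(K)$ is an \'ez set, so the above applies.

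I do not expect a genuine obstacle: the reduction to ``the Zariski closure is a $K$-subgroup'' is standard, and the interior statement is exactly Corollary~\ref{cor:lou}. The one point requiring attention is the translation step, which promotes nonempty interior of the \emph{subgroup} $H$ to openness; this relies crucially on $H$ being a subgroup and on translations being $\Sa E_K$-homeomorphisms. A mild caveat is that Corollary~\ref{cor:lou} is stated for perfect large $K$ (which is automatic in the main application); for a general large base field one runs the same argument after passing to the identity component of $G$ and applying Proposition~\ref{prop:smooth} on its smooth locus.
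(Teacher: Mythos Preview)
Your proof is correct and follows essentially the same approach as the paper's: reduce to the Zariski closure $W$ of $H$ (a closed $K$-subgroup), invoke Corollary~\ref{cor:lou} to obtain a nonempty \'etale open $O\subseteq H$ inside $W(K)$, and then use that left translations by elements of $H$ are $\Sa E_K$-homeomorphisms to write $H=\bigcup_{a\in H} aO$. You are more explicit than the paper about why the closure is a subgroup and about the perfectness hypothesis lurking in Corollary~\ref{cor:lou}; the paper's own proof simply cites that corollary, so it too implicitly uses perfectness at that step.
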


Van den Dries showed that if $K$ is a characteristic zero Henselian field then any definable subgroup of $\mathrm{GL}_n(K)$ is a valuation open subgroup if its Zariski closure~\cite[2.20]{lou-dimension}.

\begin{proof}
Let $W$ be the Zariski closure of $H$ in $G$.
Then $W$ is a $K$-algebraic subgroup of $G$, so $W(K)$ is a Zariski closed subgroup of $G(K)$.
Note that if $\alpha \in W(K)$ then $x \mapsto \alpha x$ gives a $K$-variety isomorphism $W \to W$, and hence induces an $\Sa E_K$-homeomorphism $W(K) \to W(K)$.
By Corollary~\ref{cor:lou} $H$ contains a nonempty \'etale open $O \subseteq W(K)$.
We have $H = \bigcup_{a \in H} \alpha O$, so $H$ is an \'etale open subset of $W(K)$.
\end{proof}

We now discuss large simple fields.
We assume some familiarity with forking in simple theories. Readers can refer to~\cite{wagner-simple} for more details on this subject.
We refer to \cite[Section 3]{Pillay-definability-simple} for background on $f$-generics in groups definable in simple theories, note that Pillay uses ``generic" where we use ``$f$-generic".

\begin{corollary}
\label{cor:bounded-PAC}
Suppose that $K$ is perfect, bounded, and $\mathrm{PAC}$ and $X$ is a definable subset of $K^n$.
Then $X$ is $f$-generic for $(K^n,+)$ if and only if $X$ has nonempty $\Sa E_K$-interior in $K^n$.
\end{corollary}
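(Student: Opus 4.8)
The plan is to deduce this from the structure theory of $f$-generics in groups definable in simple theories together with the topological results of this section. Bounded perfect $\mathrm{PAC}$ fields are simple (by work of Chatzidakis--Pillay), so $(K^n,+)$ is a definable group in a simple theory and the theory of $f$-generics applies. Recall the two standard facts I would invoke: first, a definable set $X \subseteq K^n$ is $f$-generic for $(K^n,+)$ if and only if finitely many translates of $X$ cover $K^n$; second, $\dim$ in the sense of this paper agrees with the model-theoretic notion supplied by algebraic boundedness, and by Theorem~E (or directly, since bounded perfect $\mathrm{PAC}$ fields are $\mathrm{ez}$ by Theorem~\ref{thm:pac} and Corollary~\ref{cor:one-cardinal}) we have a well-behaved dimension on definable sets. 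The key numerical input is that an $f$-generic subset of $(K^n,+)$ must have dimension $n$: if finitely many translates of $X$ cover $K^n$ and $\dim X < n$, then since $\dim$ is invariant under the affine maps $x \mapsto x+a$ and additive over finite unions (Fact~\ref{lem:dim-fin-union}), we get $\dim K^n < n$, a contradiction.

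The argument then splits into the two implications. For the easy direction, suppose $X$ has nonempty $\Sa E_K$-interior in $K^n$; then $X$ contains a nonempty $\Sa E_K$-open set $O$. Since $K$ is $\mathrm{ez}$, $X$ is an $\mathrm{ez}$ set, and by Proposition~\ref{prop:smooth} (applied to $V = \Aa^n$) having nonempty interior forces $\dim X = n$, i.e. $X$ is Zariski dense. I would like to conclude that finitely many translates of $X$ cover $K^n$; the cleanest route is to use that a nonempty $\Sa E_K$-open subset $O$ of $K^n$ has the property that finitely many translates of $O$ cover $K^n$ — this follows because $\Sa E_K$ on $K^n$ refines no V-topology more than necessary, but more robustly one argues via Fact~\ref{fact:with-anand} and a compactness/saturation argument on the $\mathrm{ez}$ set $X$: the family $\{X - a : a \in K^n\}$ of translates must cover, for otherwise some element avoids all of them, contradicting Zariski density together with the openness of $O$. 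This shows $X$ is $f$-generic.

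For the converse, suppose $X$ is $f$-generic for $(K^n,+)$. Then $\dim X = n$ by the numerical observation above. Since $K$ is $\mathrm{ez}$, $X$ is an $\mathrm{ez}$ subset of $K^n = \Aa^n(K)$, and $\Aa^n$ is smooth and irreducible, so Proposition~\ref{prop:smooth} applies directly: $\dim X = \dim \Aa^n = n$ implies $X$ has nonempty $\Sa E_K$-interior in $K^n$. This is the whole content of the reverse direction once the dimension count is in place. The main obstacle I anticipate is the forward direction — specifically, cleanly extracting ``finitely many translates of $X$ cover $K^n$'' from ``$X$ has nonempty $\Sa E_K$-interior''. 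The honest way to handle this is to first reduce to the $\Sa E_K$-open set $O \subseteq X$, then note that $O$, being nonempty $\Sa E_K$-open in $K^n$, satisfies $K^n = \bigcup_{i=1}^{r}(a_i + O)$ for suitable $a_i$; this last statement is essentially the assertion that the $\Sa E_K$-topology on $K^n$ is a ``bornological'' / translation-covered topology, which in turn follows from Proposition~\ref{prop:subfield}-style reasoning (the complement of a translate-cover would be contained in a proper subgroup, forcing discreteness, contradicting largeness via Fact~\ref{fact:large-potato}). Assembling that sub-argument carefully is where the real work lies; everything else is a direct appeal to Proposition~\ref{prop:smooth} and the basic properties of $f$-genericity.
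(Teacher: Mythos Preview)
Your proof rests on the characterization ``$X$ is $f$-generic for $(K^n,+)$ if and only if finitely many translates of $X$ cover $K^n$'', and this is false in simple unstable theories. The equivalence holds in stable groups, but in simple groups only one direction survives: finitely many translates covering does imply $f$-genericity (non-$f$-generic sets form an ideal), while the converse fails. A concrete counterexample sits inside the very class you are treating: in a pseudofinite field of odd characteristic the set $S$ of nonzero squares is a definable \'etale open subset of $K$ and is $f$-generic (it has full SU-rank, or equivalently positive measure), yet for any fixed $c_1,\ldots,c_k \in K$ the Weil bounds give roughly $q/2^k$ elements $a$ with all $a+c_i$ non-squares, so no finite collection of additive translates of $S$ covers $K$. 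This single example breaks both halves of your argument: in the forward direction your claim that a nonempty \'etale open $O$ is covered by finitely many translates is false, and in the reverse direction your deduction ``$f$-generic $\Rightarrow$ finitely many translates cover $\Rightarrow \dim X = n$'' fails at the first arrow.

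The paper's route avoids translate-covering entirely. For ``interior $\Rightarrow$ $f$-generic'' it invokes Fact~\ref{fact:with-anand-1}, proved in \cite{with-anand}, that in a large simple field every nonempty definable \'etale open subset of $K^n$ is $f$-generic; this is not elementary and is not the ``easy direction''. For ``no interior $\Rightarrow$ not $f$-generic'' it uses Lemma~\ref{lem:int-vs-nwd} to reduce to the case where $X$ is not Zariski dense, and then Lemma~\ref{lem:dense-generic}: one passes to a saturated model, picks a generic $a$, and shows $a+\overline{X}$ divides by running a Morley sequence and checking dividing in the algebraically closed field $\mathbf{K}^{\mathrm{alg}}$, where it follows from the dimension drop. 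Your intuition that the reverse direction comes down to ``$f$-generic $\Rightarrow \dim X = n$'' is correct, but the justification must be this dividing argument, not a covering argument; and for the forward direction you need a genuinely different input.
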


Bounded $\pac$ fields are simple~\cite{Chatzidakis} and infinite simple fields are conjectured to be bounded $\pac$, see for example~\cite{supersimple-field}.
Pseudofinite fields and infinite algebraic extensions of finite fields are perfect, bounded, and $\pac$.
Corollary~\ref{cor:bounded-PAC} follows from Corollary~\ref{cor:exis-PAC} below and the fact that perfect bounded $\mathrm{PAC}$ fields are \'ez.

\begin{corollary}
\label{cor:exis-PAC}
Suppose that $K$ is perfect, large, and simple.
Let $X\subseteq K^n$ be \'ez.
Then $X$ is $f$-generic for $(K^n,+)$ if and only if $X$ has nonempty $\Sa E_K$-interior.
Hence an existentially definable subset of $K^n$ is $f$-generic for $(K^n,+)$ if and only if it has nonempty $\Sa E_K$-interior.
\end{corollary}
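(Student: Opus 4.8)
The plan is to reduce the statement to a dimension count via Proposition~\ref{prop:smooth} and then to match algebraic dimension with $f$-genericity in the abelian group $(K^n,+)$. Since existentially definable subsets of $K^n$ are \'ez by Theorem~B (note $K$ is perfect and large), the ``in particular'' clause is a special case, so it suffices to treat an arbitrary \'ez set $X\subseteq K^n$; the case $X=\emptyset$ is trivial, so assume $X\neq\emptyset$. Applying Proposition~\ref{prop:smooth} with $V=\Aa^n$ (which is smooth and irreducible, with $\Aa^n(K)=K^n$), $X$ has nonempty $\Sa E_K$-interior in $K^n$ if and only if $\dim X=n$. Thus the corollary becomes: an \'ez set $X\subseteq K^n$ is $f$-generic for $(K^n,+)$ if and only if $\dim X=n$.

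One direction is soft. If $\dim X<n$, then $\overline X$, and hence $g+\overline X$ for every $g\in K^n$, is a proper closed subvariety of $\Aa^n$ (translation is an automorphism of $\Aa^n$). Choosing $g$ sufficiently generic over the parameters of $X$ and passing to a Morley sequence, $n+1$ suitably generic independent translates of $g+\overline X$ have empty common intersection by an elementary dimension estimate for intersections of translates of a proper subvariety; hence the formula defining $(g+\overline X)(K)$ divides, and so (since $K$ is simple) forks, over $\emptyset$. As $g+X\subseteq(g+\overline X)(K)$, the translate $g+X$ forks over $\emptyset$, so $X$ is not $f$-generic. (If $K$ is separably closed the whole statement is classical, since then $\Sa E_K$ is the Zariski topology and $K$ is stable.)

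The other direction is the heart of the matter. If $\dim X=n$ then, by Proposition~\ref{prop:smooth} together with Lemma~\ref{lem:int-vs-nwd}, $X$ contains a nonempty definable $\Sa E_K$-open set $O$. Since any additive translate of $O$ is again a nonempty definable $\Sa E_K$-open set (Proposition~\ref{prop:well-define} and the fact that translation is an $\Sa E_K$-homeomorphism), and since forking equals dividing over $\emptyset$ in a simple theory, it suffices to show that such an $O$ does not divide over $\emptyset$, i.e. that every $\emptyset$-indiscernible family of conjugates of $O$ is consistent. Writing $O$ as a finite union of \'etale images and forming fibre products over $\Aa^n$, this reduces to producing a $K$-point on a nonempty \'etale $\Aa^n$-scheme which is in general position along the indiscernible sequence, which by Fact~\ref{fact:with-anand} has full-dimensional image. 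This is precisely where the structure of the field is used: in the intended instances (pseudofinite fields, perfect bounded $\pac$ fields, $\omega$-free $\pac$ fields, and the simple fields to which the corollary is applied) the required point-existence is supplied by $\pac$-ness, every geometrically integral $K$-variety having a $K$-point; equivalently, one may invoke that these fields are \'ez, hence algebraically bounded by Theorem~E, hence geometric, so that the model-theoretic dimension controlling $f$-genericity in $(K^n,+)$ coincides with $\dim$. Granting this identification, the two implications above say exactly that $f$-genericity is equivalent to full algebraic dimension. I expect this last step --- non-forking over $\emptyset$ of nonempty $\Sa E_K$-open definable sets --- to be the main obstacle; everything else is formal manipulation with Proposition~\ref{prop:smooth}, Fact~\ref{fact:with-anand}, and the basic theory of forking in simple groups.
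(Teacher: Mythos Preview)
Your overall architecture matches the paper's: decompose $X$ via Lemma~\ref{lem:int-vs-nwd} into an \'etale open part and a non--Zariski-dense part, and treat the two directions separately. Your argument for the ``not full dimension $\Rightarrow$ not $f$-generic'' direction is essentially the content of Lemma~\ref{lem:dense-generic}; the paper is a bit more careful about where the dividing computation takes place (passing to a saturated $\mathbf{K}$ and to $\mathbf{K}^{\mathrm{alg}}$, using Kim's lemma there, and then transferring back), but your sketch is on the right track.

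The genuine gap is in the other direction. You correctly reduce to showing that a nonempty definable \'etale open $O\subseteq K^n$ is $f$-generic, and you explicitly flag this as ``the main obstacle.'' But your proposed justifications do not match the hypotheses: you invoke $\pac$-ness (``the required point-existence is supplied by $\pac$-ness'') and alternatively \'ez-ness (``one may invoke that these fields are \'ez, hence algebraically bounded by Theorem~E''), yet the corollary only assumes $K$ is perfect, large, and simple. Neither $\pac$ nor \'ez is available, and restricting to ``the intended instances'' proves a weaker statement than the one asserted. The fibre-product/indiscernible sketch you give is too vague to be a proof; in particular, intersecting \'etale images along an indiscernible sequence does not obviously produce a geometrically integral variety, so $\pac$-ness would not immediately apply even if it were assumed.

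The paper does not attempt to prove this step here at all: it cites it as Fact~\ref{fact:with-anand-1}, a result from \cite{with-anand} stating that in a large simple field every nonempty definable \'etale open subset of $K^n$ is $f$-generic for $(K^n,+)$. That external input is exactly what is missing from your argument; once it is granted, the corollary is immediate from Lemma~\ref{lem:int-vs-nwd}, Fact~\ref{fact:with-anand}, and Lemma~\ref{lem:dense-generic}, without needing the detour through Proposition~\ref{prop:smooth} and dimension.
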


Corollary~\ref{cor:exis-PAC} follows from Lemma~\ref{lem:int-vs-nwd}, Fact~\ref{fact:with-anand-1}, and Lemma~\ref{lem:dense-generic} below.
Fact~\ref{fact:with-anand-1} is proven in \cite{with-anand}.

\begin{fact}
\label{fact:with-anand-1}
If $K$ is large and simple then any nonempty definable \'etale open subset of $K^n$ is $f$-generic for $(K^n,+)$.
\end{fact}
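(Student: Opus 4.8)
The plan is to exhibit, inside $X$, a realisation of an $f$-generic type of the group $(K^n,+)$: since $f$-genericity of a global type is inherited by every definable set lying in it, and since an $f$-generic set remains $f$-generic when enlarged, this suffices. The first step is purely geometric and is the only place largeness is used. As $\Aa^n$ is smooth and irreducible, Fact~\ref{fact:with-anand} applies to the nonempty \'etale open set $X\subseteq\Aa^n(K)$ and shows that $X$ is Zariski dense in $\Aa^n$.

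Next I would pass to a monster model, fix a small elementary submodel $M$ containing the parameters defining $X$, and realise the partial type over $M$ consisting of $x\in X$ together with $x\notin Z$ for each proper $M$-subvariety $Z$ of $\Aa^n$. This partial type is finitely satisfiable already in $K$: a finite union of proper subvarieties of the irreducible variety $\Aa^n$ is a proper closed set, and $X$, being Zariski dense, is not contained in it. A realisation $a$ then lies in $X$ and satisfies $\trdg(a/M)=n$, i.e.\ $a$ is a generic point of $\Aa^n$ over $M$.

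The substance is to verify that $\tp(a/M)$ extends to an $f$-generic global type of $(K^n,+)$. For this I would invoke the structure theory of simple fields in the form: forking over a parameter set $A$ entails algebraic dependence over $\acl(A)$, so a type of maximal transcendence degree over a model does not fork over $\emptyset$. Granting this, pick a global nonforking extension $q$ of $\tp(a/M)$; by transitivity $q$ does not fork over $\emptyset$. For arbitrary $g\in K^n$, translation by $g$ is an $Mg$-definable bijection of $(K^n,+)$, so $g+q$ does not fork over $Mg$; moreover a realisation $a'$ of $q|_{Mg}$ still has $\trdg(a'/Mg)=n$ (no forking, hence no new algebraic dependence over $M$), so $\trdg(g+a'/Mg)=n$ and the restriction of $g+q$ to $Mg$ is again the type of a generic point of $\Aa^n$, which does not fork over $\emptyset$; transitivity then gives that $g+q$ does not fork over $\emptyset$. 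As $g$ was arbitrary $q$ is $f$-generic, and since $x\in X$ lies in $q$, $X$ is $f$-generic for $(K^n,+)$.

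The main obstacle is exactly the input of the third paragraph --- that in a simple field forking is controlled by algebraic dimension, so that ``generic over a model'' coincides with ``$f$-generic'' for $(K^n,+)$, robustly under the translations defining the group. For the fields of genuine interest this is unproblematic: bounded $\pac$ fields (hence pseudofinite fields and infinite algebraic extensions of finite fields) are supersimple, where the needed statement is the identity of $\operatorname{SU}$-rank with transcendence degree; the work lies in formulating it at the level of ``large and simple'' and in matching it to the precise notion of $f$-generic one adopts. I would follow Pillay, for whom a global type is $f$-generic exactly when all of its translates are nonforking over $\emptyset$.
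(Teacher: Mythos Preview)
The paper does not prove Fact~\ref{fact:with-anand-1}; it is imported from \cite{with-anand} as a black box, so there is no argument in the present paper to compare yours against.

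As for your outline: the geometric step (Zariski density of $X$ via Fact~\ref{fact:with-anand}) and the extraction of a point $a\in X$ with $\trdg(a/M)=n$ are fine, and your translation argument goes through once you have the input you flag. But that input is a genuine gap, not a formality. The implication ``algebraic independence over $M$ $\Rightarrow$ nonforking over $\emptyset$'' (equivalently, ``forking entails a drop in transcendence degree'') is precisely what one gets in supersimple fields via the identification of $\operatorname{SU}$-rank with transcendence degree, and your argument is correct in that setting---which, as you say, covers bounded $\pac$ fields and hence every known example. For an arbitrary simple field, however, no such comparison between forking and algebraic dimension is available; this is among the open structural questions about simple fields. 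So as written you have established the supersimple case but not the fact at the stated level of generality, and the missing step is not something one can expect to supply by routine bookkeeping. The argument in \cite{with-anand} must therefore use largeness and the \'etale-open topology in a more essential way than merely to obtain Zariski density.
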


\begin{lemma}
\label{lem:dense-generic}
Suppose that $K$ is infinite and simple, $X$ is a definable subset of $K^n$, and $X$ is not Zariski dense in $K^n$.
Then $X$ is not $f$-generic for $(K^n,+)$.
\end{lemma}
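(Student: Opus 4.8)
The plan is to contradict $f$-genericity directly, by producing a constraint that $f$-generic subsets of a definable group in a simple theory cannot satisfy. First I would record the purely algebro-geometric input: since $X$ is not Zariski dense in $K^n$, its Zariski closure is a proper closed subvariety of $\Aa^n$, so the vanishing ideal of this closure is a nonzero ideal of $K[x_1,\ldots,x_n]$; picking any nonzero $f$ in it gives $X \subseteq Z(f)$, where $Z(f) = \{a \in K^n : f(a) = 0\}$.

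Next, suppose toward a contradiction that $X$ is $f$-generic for the definable group $(K^n,+)$. I would invoke the standard fact from the theory of $f$-generics in simple theories (see \cite[Section 3]{Pillay-definability-simple}): if $G$ is a group definable in a simple theory and $D \subseteq G$ is a definable $f$-generic set, then $G$ is covered by finitely many translates of $D$. Since $K$ is simple and $(K^n,+)$ is definable, this yields $a_1,\ldots,a_k$ with $K^n = \bigcup_{i=1}^{k}(a_i + X)$; the covering condition is first-order, so the $a_i$ may be taken in $K^n$ itself. Using $X \subseteq Z(f)$ we then get $K^n = \bigcup_{i=1}^{k}(a_i + Z(f)) = Z(g)$, where $g(y) = \prod_{i=1}^{k} f(y - a_i)$ is a nonzero element of $K[y_1,\ldots,y_n]$ (each factor $f(y - a_i)$ is nonzero, and $K[y_1,\ldots,y_n]$ is a domain).

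Finally, this is impossible: $K$ is an infinite field, and a nonzero polynomial over an infinite field cannot vanish identically on $K^n$ (immediate by induction on $n$). Hence $Z(g) \neq K^n$, contradicting the displayed equality, so $X$ is not $f$-generic. The only step requiring any care is the appeal to the ``finitely many translates cover'' property of $f$-generics in the simple rather than the stable setting; this is precisely where simplicity of $K$ enters, and it is available from \cite{Pillay-definability-simple}. The remaining ingredients — reducing Zariski non-density to containment in a hypersurface, and the behaviour of finite unions of hypersurfaces over an infinite field — are routine and deserve at most a sentence each.
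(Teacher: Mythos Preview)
Your argument has a genuine gap at its central step. You assert that in a simple theory, an $f$-generic definable subset $D$ of a definable group $G$ is always covered by finitely many translates, and you attribute this to \cite{Pillay-definability-simple}. That equivalence is a classical fact for \emph{stable} groups, but it fails in the generality of simple theories and is not established in Pillay's paper. A clean counterexample is an infinite $\mathbb{F}_2$-vector space $V$ expanded by a generic unary predicate $P$: this theory is supersimple of $\mathrm{SU}$-rank $1$, no translate of $P$ forks over $\emptyset$ (so $P$ is $f$-generic for $(V,+)$), yet the genericity axioms guarantee that for any $g_1,\ldots,g_k \in V$ there is $v$ with $v+g_i \notin P$ for all $i$, so no finite family of translates of $P$ covers $V$. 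The underlying phenomenon is that in a simple unstable theory both a set and its complement can be $f$-generic, which is precisely what blocks the covering argument you want.

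The paper's proof proceeds differently and avoids this issue. After replacing $X$ by its Zariski closure it passes to a saturated $\mathbf{K}\succ K$ and to $\mathbf{K}^{\mathrm{alg}}$, picks $a\in\mathbf{K}^n$ realizing an $f$-generic type over $K$, and shows directly that the single translate $a+Y$ divides over $K$. The point is that a $\mathbf{K}$-Morley sequence $(a_i)$ in $a$ over $K$ is also a Morley sequence in $\mathbf{K}^{\mathrm{alg}}$; since $\dim(a+Y')<n$ there, the translate divides in the algebraically closed field, Kim's lemma says $(a_i)$ already witnesses this dividing, and the witnessed inconsistency restricts to $\mathbf{K}$. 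Exhibiting one forking translate is exactly what non-$f$-genericity requires, and it sidesteps any finite-covering characterization.
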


In the proof below we use ``$f$-generic" for ``$f$-generic for $(K^n,+)$".

\begin{proof}
It suffices to show that the Zariski closure of $X$ is not $f$-generic.
Thus we may suppose that $X$ is Zariski closed, in particular $X$ is quantifier free definable.
Let $\mathbf{K}$ be a highly saturated elementary expansion of $K$ and $\mathbf{K}^{\mathrm{alg}}$ be the algebraic closure of $K$.
Let $Y$ be the subset of $\mathbf{K}^n$ defined by the same formula as $X$ and $Y'$ be the $\mathbf{K}^{\mathrm{alg}}$-definable set defined by the same (quantifier free) formula as $X$.
Fix $a \in \mathbf{K}^n$ such that the type of $a$ over $K$ is $f$-generic.
It is enough to show that $a + Y$ divides over $K$.
Let $(a_i)_{i \in I}$ be a $\mathbf{K}$-Morley sequence in $a$ over $K$.
Then $(a_i)_{i \in I}$ is also a Morley sequence in $\mathbf{K}^{\mathrm{alg}}$.
Then $a + Y'$ divides in $\mathbf{K}^{\mathrm{alg}}$ over $K$ as $\dim a + Y' < n$, so by Kim's lemma, $(a_i)_{i \in I}$ witnesses dividing in $\mathbf{K}^{\mathrm{alg}}$.
It is now easy to see that $a + Y$ divides over $K$.
\end{proof}

\section{Algebraic boundedness, proof of Theorem D}
In this section we show that \'ez fields are algebraically bounded.
Let $Z$ be a $K$-variety.
Given a subvariety $W$ of $Z \times \Aa^1$ we let $W_\alpha$ be the scheme-theoretic fiber of $W$ over $\alpha \in Z$.
Given a subset $X$ of $Z(K) \times K$ we let $X_\alpha$ be the set-theoretic fiber of $X$ above $\alpha \in Z(K)$, i.e. $\{ \beta \in K : (\alpha,\beta) \in X \}$.
Recall that the $K$-points of the scheme-theoretic fiber agree with the set-theoretic fiber of the $K$-points, i.e. $W_\alpha(K) = W(K)_\alpha$.

\begin{theorem}
\label{thm:ez bounded}
Suppose that $K$ is \'ez, $Z$ is a $K$-variety, and $X \subseteq Z(K) \times K$ is definable.
Then there are closed subvarieties $V_1,\ldots,V_\ell$ of $Z \times \Aa^1$ such that for any $\alpha \in K^m$ with $0 < |X_\alpha| < \infty$ there is $i$ such that $(V_i)_\alpha$ is finite and contains $X_\alpha$.
\end{theorem}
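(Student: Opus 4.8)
The plan is to unwind the definition of ``\'ez set'' and then, for each subset $J$ of the index set of the decomposition, take the union of the Zariski-closed pieces indexed by $J$ as one of the $V_j$. First, since $K$ is \'ez, Lemma~\ref{lem:ez basic-0} (applied with the $K$-variety $Z \times \Aa^1$) shows that $X$ is an \'ez subset of $(Z \times \Aa^1)(K)$. Unwinding the definition of an \'ez set, together with the remark in Section~\ref{section:E-sets} that every basic \'ez set has the form $O \cap Y$ with $O$ \'etale open and $Y$ Zariski closed, we may write
$$ X \;=\; \bigcup_{i=1}^{k} \bigl( O_i \cap C_i(K) \bigr), $$
where each $C_i$ is a closed subvariety of $Z \times \Aa^1$, each $O_i$ is an $\Sa E_K$-open subset of $(Z \times \Aa^1)(K)$, and each $O_i \cap C_i(K)$ is definable. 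For every nonempty $J \subseteq \{1,\ldots,k\}$ I would put $V_J := \bigcup_{i \in J} C_i$, a closed subvariety of $Z \times \Aa^1$; these finitely many $V_J$ will be the desired $V_1,\ldots,V_\ell$.

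The key step will be to establish the following: if $a \in Z(K)$ and $i$ are such that $(O_i \cap C_i(K))_a$ is nonempty and finite, then the scheme-theoretic fiber $(C_i)_a$ is finite. Here $(C_i)_a$ is a Zariski closed subscheme of $\{a\} \times \Aa^1$, so it is finite unless it equals all of $\{a\} \times \Aa^1$; in the latter case $(C_i)_a(K) = K$ and hence $(O_i \cap C_i(K))_a = (O_i)_a := \{ b \in K : (a,b) \in O_i \}$. But $(O_i)_a$ is the preimage of $O_i$ under the map $\Aa^1(K) \to (Z \times \Aa^1)(K)$ induced by the $K$-variety morphism $\Aa^1 \to Z \times \Aa^1$, $b \mapsto (a,b)$, which is $\Sa E_K$-continuous by Fact~\ref{fact:first-paper-system}.1; so $(O_i)_a$ is a nonempty $\Sa E_K$-open subset of $K = \Aa^1(K)$, hence Zariski dense in $\Aa^1$ by Fact~\ref{fact:with-anand} (using that $K$ is large and $\Aa^1$ is smooth irreducible), hence infinite. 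This contradicts finiteness of $(O_i \cap C_i(K))_a$, proving the key step.

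To conclude, fix $a \in Z(K)$ with $0 < |X_a| < \infty$ and set $J := \{ i : (O_i \cap C_i(K))_a \neq \emptyset \}$, which is nonempty because $X_a = \bigcup_i (O_i \cap C_i(K))_a$. For each $i \in J$, the fiber $(O_i \cap C_i(K))_a$ is nonempty and, being a subset of $X_a$, finite; so $(C_i)_a$ is finite by the key step. Consequently $(V_J)_a = \bigcup_{i \in J}(C_i)_a$ is finite, and
$$ X_a \;=\; \bigcup_{i \in J} (O_i \cap C_i(K))_a \;\subseteq\; \bigcup_{i \in J} (C_i)_a(K) \;=\; (V_J)_a(K), $$
so $(V_J)_a$ is a finite fiber containing $X_a$, as required. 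I expect the only real subtlety to be organizational rather than mathematical: one cannot hope to use a single subvariety, since which $C_i$ have infinite fiber over $a$ varies with $a$, and it is precisely the key step --- that those $C_i$ contribute nothing to a finite $X_a$ --- that lets the ``pattern'' bookkeeping over subsets $J$ go through. A minor point is that the fibers above are scheme-theoretic fibers over a $K$-point and may fail to be reduced, but only their underlying sets and $K$-points enter the argument, so this causes no trouble; note also that beyond the definition of \'ez set the argument uses only that $K$ is large, via Fact~\ref{fact:with-anand}.
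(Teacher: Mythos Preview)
Your proof is correct and, in fact, more direct than the paper's. The key step you isolate is precisely the paper's Lemma~\ref{lem:fiber-open} (specialized to closed $C_i$), and your argument for it is essentially the same as the paper's. The difference lies in how the bookkeeping is organized afterward. You take the closed pieces $C_i$ from the basic \'ez decomposition and enumerate all $2^k-1$ possible ``patterns'' $J$ of which pieces contribute to a given finite fiber, producing one closed subvariety $V_J$ per pattern; no induction or dimension argument is needed. The paper instead passes to the finer decomposition of Lemma~\ref{lem:decomp} into smooth irreducible locally closed pieces $W_i$, proves a claim that the locus where some $(W_i)_a$ is infinite has strictly smaller dimension in $\pi(W)$, and then runs a Noetherian induction on $\dim \pi(X)$: one $V_\ell$ handles the generic part and the inductive hypothesis handles the lower-dimensional ``bad'' locus.

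What each approach buys: your argument is shorter, uses only the coarse \'ez decomposition into closed pieces, and (as you note) requires only largeness of $K$ once $X$ is known to be \'ez. The paper's inductive scheme produces fewer $V_i$ (roughly one per step of the dimension induction rather than $2^k$), but that is irrelevant for the statement as written; more to the point, the paper's route exhibits the dimension-drop mechanism that is thematically consistent with the surrounding section, though for this particular theorem your combinatorial shortcut is cleaner and avoids the mild awkwardness that the paper's $W=\bigcup W_i$ need not itself be closed.
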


We first explain how Theorem~\ref{thm:ez bounded} implies that \'ez fields are algebraically bounded.
Algebraically closed fields are algebraically bounded~\cite[2.9]{lou-dimension}, so we suppose that $K$ is \'ez and not algebraically closed.
Let $X \subseteq K^m \times K$ be definable, and $V_1,\ldots,V_\ell$ be closed subvarieties  of $\Aa^m \times \Aa^1$ as above.
Applying Fact~\ref{fact:one-poly} we obtain for each $V_i$ a polynomial $f_i$ such that $V_i(K) = \{ \alpha \in K^m \times K : f_i(\alpha) = 0 \}$.
Algebraic boundedness follows.

\meno
We first prove Lemma~\ref{lem:fiber-open}.

\begin{lemma}
\label{lem:fiber-open}
Suppose that $K$ is large, $Z$ is a $K$-variety, $W$ is a subvariety of $Z \times \Aa^1$, $O$ is a nonempty \'etale open subset of $W(K)$, and $\alpha \in Z(K)$ lies in the image of the projection $O \to Z(K)$.
Then $O_\alpha$ is finite if and only if $W_\alpha$ is finite.
\end{lemma}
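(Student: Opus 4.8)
The plan is to reduce to the situation where $W$ is smooth and irreducible and $O$ is a genuine \'etale image, and then play the \'etale-openness of the projection $W \to Z$ off against largeness. First I would observe that the nontrivial direction is ``$W_a$ infinite $\Rightarrow$ $O_a$ infinite'' (if $W_a$ is finite then $O_a \subseteq W_a(K)$ is automatically finite). So assume $W_a$ is infinite; since $W_a$ is a $\kappa(a) = K$-variety, Fact~\ref{fact:finite-var} gives $\dim W_a \ge 1$. Let $W'$ be an irreducible component of $W$ meeting $O$ in a set whose image under the projection to $Z$ contains $a$ and with $\dim W'_a \ge 1$; such a component exists because the fiber $W_a = \bigcup_i (W'_i)_a$ over the components $W'_i$ and at least one piece is positive-dimensional, and \'etale opens of $W(K)$ restrict to \'etale opens of each $W'_i(K)$ by Proposition~\ref{prop:well-define}. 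Replacing $W$ by $W'$ and (using $K$ perfect---wait, $K$ is only assumed large here, not perfect) passing instead to the smooth locus: here I would invoke that $O$, being a nonempty \'etale open, meets the smooth locus of $W$ since by Fact~\ref{fact:with-anand}\nobreakdash-style density arguments a nonempty \'etale open of an irreducible variety's $K$-points is Zariski dense, hence meets any dense open, in particular the regular (= smooth, after a finite extension issue) locus. After shrinking $W$ to a smooth irreducible subvariety and $O$ to a nonempty \'etale open subset still having $a$ in the image of its projection, we have a dominant morphism $W \to \overline{\pi(W)} =: Z'$ with $\dim W = \dim Z' + \dim($generic fiber$)$ and the fiber $W_a$ positive-dimensional.

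The heart of the argument: I want to produce infinitely many $K$-points of $O$ lying over $a$. Consider the projection $\pi \colon W \to Z'$. Restricting to the fiber $W_a$, which is a positive-dimensional $K$-variety, and using that $O$ is \'etale open in $W(K)$, the trace $O \cap W_a(K)$ is an \'etale open subset of $W_a(K)$: indeed if $g \colon Y \to W$ is \'etale with $a' \in g(Y(K)) \subseteq O$ for some $a'$ over $a$, then base-changing $g$ along $W_a \hookrightarrow W$ gives an \'etale morphism $Y_a \to W_a$ whose image of $K$-points lies in $O \cap W_a(K)$ and contains $a'$. Thus $O_a = O \cap W_a(K)$ is a nonempty \'etale open subset of the positive-dimensional $K$-variety $W_a$. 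Now $K$ is large, so by Fact~\ref{fact:large-potato}(3) the \'etale open topology on $W_a(K)$ is non-discrete once $W_a(K)$ is infinite; but we must first know $W_a(K)$ is infinite. To get that, pick a smooth point of $W_a$ in the (Zariski dense, by Fact~\ref{fact:with-anand}) \'etale open $O_a$---after possibly replacing $W_a$ by a smooth irreducible subvariety through that point---and apply Pop's theorem (cited before Fact~\ref{fact:with-anand}) that $W_a(K)$ is Zariski dense, hence infinite. Therefore $O_a$ is a nonempty \'etale open subset of an infinite smooth irreducible $K$-variety's $K$-points, so by Fact~\ref{fact:with-anand} it is Zariski dense in that positive-dimensional variety, and in particular infinite.

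The main obstacle I anticipate is the bookkeeping around reducibility and smoothness when $K$ is only assumed \emph{large}, not perfect: the clean statements (Fact~\ref{fact:decomp}, agreement of regular and smooth locus) require perfectness, so I would need to either work with the regular locus directly (which is still open, by \cite[(29.E) Remark 1]{matsumura}) and check that $K$-points of a regular point of a $K$-variety still behave well, or---more simply---note that everything can be pushed through an irreducible component of $W_a$ and that Fact~\ref{fact:with-anand} and Pop's density theorem are stated for smooth irreducible varieties, so I should extract a smooth irreducible subvariety of $W_a$ meeting $O_a$. A careful version of the lemma should really only need: $O_a$ nonempty \'etale open in $W_a(K)$, $\dim W_a \ge 1$, plus one smooth irreducible subvariety of $W_a$ whose $K$-points meet $O_a$; the existence of the latter is where I'd spend the most care, likely by intersecting $O_a$ (Zariski dense in some component $C$ of $W_a$) with the regular locus of $C$ and invoking that $C_{\mathrm{reg}}$ is open and nonempty. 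Once that is in hand, largeness does the rest via Fact~\ref{fact:with-anand}.
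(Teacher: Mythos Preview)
Your core observation---that $O_a = O \cap W_a(K)$ is a nonempty \'etale open subset of $W_a(K)$, being the preimage of $O$ under the closed immersion $W_a \hookrightarrow W$ (Proposition~\ref{prop:well-define})---is exactly right and is also the heart of the paper's argument. But you have missed the one-line simplification that dissolves all of your bookkeeping: $W$ is a subvariety of $Z \times \Aa^1$, so the fiber $W_a$ is a subvariety of $\Aa^1$. A subvariety of the affine line is either finite or a dense open subvariety of $\Aa^1$; in the latter case $W_a(K)$ is a cofinite subset of $K$ and $W_a$ is automatically smooth and irreducible. Hence $O_a$ is a nonempty \'etale open subset of $K$, and largeness (Fact~\ref{fact:large-potato}) immediately gives $|O_a| = \infty$. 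No reduction to components of $W$, no smooth locus, no perfectness, no Pop density theorem is needed; the paper's proof is five lines.

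Your attempted reductions also contain genuine gaps, not merely bookkeeping. First, the reduction to an irreducible component $W'$ of $W$ requires a component satisfying \emph{both} $\dim (W')_a \ge 1$ \emph{and} that $O \cap W'(K)$ contains a point over $a$; there is no reason these are witnessed by the same component---the point $(a,b) \in O$ could lie on a component whose fiber over $a$ is finite, while a different component carries the positive-dimensional fiber. Second, your use of Fact~\ref{fact:with-anand} to argue that a nonempty \'etale open of $W(K)$ meets the smooth locus is circular: that fact has smoothness of the ambient variety as a hypothesis. Both issues evaporate once you notice that the fiber already sits inside $\Aa^1$, so you should work directly with $W_a$ rather than trying to tame $W$.
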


\begin{proof}
The right to left implication is trivial.
Suppose that $W_\alpha$ is infinite.
Then $W_\alpha$ is a dense open subvariety of $\Aa^1$, so $W_\alpha(K)$ is a cofinite subset of $K$.
Let $W_\alpha \to W$ be the morphism given by $x \mapsto (x,\alpha)$.
Then $O_\alpha$ is the preimage of $O$ under the induced map $W_\alpha(K) \to W(K)$.
Therefore $O_\alpha$ is a nonempty \'etale open subset of $W_\alpha(K)$, hence $O_\alpha$ is an \'etale open subset of $K$.
Hence $O_\alpha$ is infinite by largeness.
\end{proof}

\begin{lemma}
\label{lem:ez bounded}
Suppose that $K$ is large, $Z$ is a $K$-variety, and $X \subseteq Z(K) \times K$ is an \'ez set.
Then $\{ \alpha \in Z(K) : 0 < |X_\alpha| < \infty \}$ is definable and there is $n$ such that if $\alpha \in Z(K)$ and $X_\alpha$ is finite then $|X_\alpha| \le n$.
Particularly, if $K$ is \'ez then $K$ eliminates $\exists^\infty$.
\end{lemma}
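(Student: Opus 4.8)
The plan is to reduce to a single basic \'ez set and then control fibers via Lemma~\ref{lem:fiber-open}. Write $X = \bigcup_{i=1}^{k} X_i$ where each $X_i$ is a definable \'etale open subset of $W_i(K)$ for some closed subvariety $W_i$ of $Z \times \Aa^1$; this is immediate from the definition of an \'ez set. For each $i$ let $\pi_i \colon W_i \to Z$ be the morphism induced by the first projection. By Fact~\ref{fact:constructible}.3 the set $U_i$ of $a \in Z$ with $|\pi_i^{-1}(a)| < \infty$ is Zariski open, and there is $n_i$ such that $|\pi_i^{-1}(a)| \le n_i$ for every $a \in U_i$. Put $n = n_1 + \ldots + n_k$.

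Next I would establish the uniform bound. Fix $a \in Z(K)$ with $X_a$ finite. For each $i$ the set $(X_i)_a = \{ b \in K : (a,b) \in X_i \}$ is contained in $X_a$, hence finite; if it is empty there is nothing to check, and otherwise $a$ lies in the image of the projection $X_i \to Z(K)$. Since $X_i$ is a nonempty \'etale open subset of $W_i(K)$, Lemma~\ref{lem:fiber-open} applied with $W = W_i$ and $O = X_i$ shows that $(X_i)_a$ is finite if and only if the scheme-theoretic fiber $(W_i)_a$ is finite; as $(X_i)_a$ is finite we get $a \in U_i$. Then $(X_i)_a \subseteq (W_i)_a(K)$ and $(W_i)_a(K)$ injects into the topological fiber $\pi_i^{-1}(a)$, so $|(X_i)_a| \le n_i$. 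Summing over $i$ gives $|X_a| \le n$. Consequently, for every $a \in Z(K)$ the fiber $X_a$ is finite if and only if $|X_a| \le n$.

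This equivalence yields both assertions at once. First, $n$ is a bound as required. Second, since $X$ is definable, the set $\{ a \in Z(K) : |X_a| \le n \}$ is definable --- it is defined by $\neg\,\exists b_0, \ldots, b_n\,\bigl( \bigwedge_{0 \le j < j' \le n} b_j \ne b_{j'} \,\wedge\, \bigwedge_{0 \le j \le n} (a, b_j) \in X \bigr)$ --- and it coincides with $\{ a \in Z(K) : X_a \text{ is finite} \}$; intersecting with the definable set $\{ a : \exists b\,(a,b) \in X \}$ shows that $\{ a \in Z(K) : 0 < |X_a| < \infty \}$ is definable. For the last sentence, suppose $K$ is \'ez. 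By Lemma~\ref{lem:ez basic-0} every definable $X \subseteq K^m \times K$ is an \'ez set, so the above applies with $Z = \Aa^m$, producing for each formula $\phi(x,y)$ with $y$ a single variable an $n_\phi$ with $\phi(a,K)$ finite $\iff |\phi(a,K)| \le n_\phi$. In particular $K \models \forall x\,\bigl( \exists^{> n_\phi} y\,\phi(x,y) \to \exists^{> n_\phi + 1} y\,\phi(x,y) \bigr)$, a first-order sentence, hence this holds in every model of $\operatorname{Th}(K)$; so $K$ eliminates $\exists^\infty$ (the case of a tuple $y$ follows by the standard induction on $|y|$).

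Once Lemma~\ref{lem:fiber-open} is available the argument is essentially bookkeeping, so I do not expect a serious obstacle; the only points requiring care are the interplay between the set-theoretic fiber $(X_i)_a$, the $K$-points $(W_i)_a(K)$ of the scheme-theoretic fiber, and the topological fiber $\pi_i^{-1}(a)$, and checking that the single integer $n$ dominates all the finitely many basic pieces simultaneously and that this bound transfers to elementary extensions.
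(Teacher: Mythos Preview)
Your proof is correct and follows essentially the same approach as the paper: decompose $X$ into basic \'ez pieces $X_i \subseteq W_i(K)$, invoke Fact~\ref{fact:constructible}(3) to get the Zariski open finite-fiber locus of each $W_i \to Z$ together with a bound $n_i$, and use Lemma~\ref{lem:fiber-open} to pass between finiteness of $(X_i)_a$ and finiteness of $(W_i)_a$. The one organizational difference is that you establish the uniform bound $n = \sum n_i$ first and then read off definability from the equivalence ``$X_a$ finite $\iff |X_a|\le n$'', whereas the paper proves definability directly via the auxiliary sets $P_i = \{a : a\in\pi(X_i) \Rightarrow a\in Y_i\}$ and only afterwards records the bound; your order is arguably cleaner, but the content is the same.
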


\begin{proof}
The second claim follows easily from the first claim, so we only prove the first claim.
Let $W_1,\ldots,W_k$ be closed subvarieties of $Z \times \Aa^1$ and $X_1,\ldots,X_k$ be such that each $X_i$ is a nonempty definable \'etale open subset of $W_i(K)$ and $X = X_1\cup\cdots\cup X_k$.
For each $i$ let $Y_i$ be the set of $\alpha \in Z$ such that $|(W_i)_\alpha| < \infty$.
By Fact~\ref{fact:constructible} each $Y_i$ is a Zariski open subset of $Z$, hence $Y_i \cap Z(K)$ is definable.
For each $i$ let $P_i$ be the set of $\alpha \in Z(K)$ such that $\alpha \in \uppi(X_i)$ implies $\alpha \in Y_i$.
Note that each $P_i$ is definable.
Lemma~\ref{lem:fiber-open} shows that for any $\alpha \in Z(K)$, $(X_i)_\alpha$ is finite if and only if $\alpha \in P_i$.
Therefore $0 < |X_\alpha| < \infty$ if and only if $\alpha \in \uppi(X)$ and $\alpha \in P_i$ for all $i$.
Finally note that $\uppi(X)$ is definable. 

\meno
Fact~\ref{fact:constructible} shows that for each $i$ there is $n_i$ such that if $\alpha \in Z$ and $|(W_i)_\alpha| < \infty$ then $|(W_i)_\alpha| \le n_i$.
By what is above we have  $|X_\alpha| < \infty$ if and only if there is $I \subseteq \{1,\ldots,k\}$ such that $X_a \subseteq \bigcup_{i \in I} (W_i)_\alpha$ and $|(W_i)_\alpha| < \infty$ for all $i \in I$.
Thus $|X_\alpha| < \infty$ implies $|X_\alpha| < n_1 + \cdots + n_k$ for all $\alpha \in Z(K)$.

\end{proof}

We now prove Theorem~\ref{thm:ez bounded}.

\begin{proof}
By Lemma~\ref{thm:ez bounded} $\{ \alpha \in Z(K) : 0 < |X_\alpha| < \infty \}$ is definable.
After possibly replacing $X$ with $\{ (\alpha,\beta) \in X : 0 < |X_\alpha| < \infty \}$ we suppose that $X_\alpha$ is finite for all $\alpha \in Z(K)$.
Applying Lemma~\ref{lem:decomp} we fix smooth irreducible subvarieties $W_1,\ldots,W_k$ of $Z \times \Aa^1$ and $X_1,\ldots,X_k$ such that each $X_i$ is an \'etale open subset of $W_i(K)$ and $X =X_1\cup\cdots\cup X_n$.
By Fact~\ref{fact:with-anand} each $X_i$ is Zariski dense in $W_i$.
Let $\uppi \colon  Z \times \Aa^1 \to Z$ be the projection.

\begin{claim}
Fix $i$ and let $Y_i = \{ \alpha \in \uppi(W_i): |(W_i)_\alpha| < \infty\}$.
Then $\dim \uppi(W_i) \setminus Y_i < \dim \uppi(W_i)$.
\end{claim}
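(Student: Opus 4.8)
The plan is to identify $Y_i$ with the intersection of $\pi(W_i)$ with the (Zariski open) locus of points of $Z$ over which the morphism $W_i\to Z$ has finite fibres, and then to show that this open locus meets $\pi(W_i)$ in a Zariski dense set by using the Zariski density of $X_i$ in $W_i$ together with Lemma~\ref{lem:fiber-open}; irreducibility of $W_i$ then forces the complement to have strictly smaller dimension.

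Concretely, let $f\colon W_i\to Z$ be the restriction of the projection $\pi$. First I would apply Fact~\ref{fact:constructible}.3 to $f$ to obtain that $U:=\{a\in Z: |(W_i)_a|<\infty\}$ is a Zariski open subset of $Z$. Since $(W_i)_a$ is exactly the scheme-theoretic fibre over $a$, we have $Y_i=\pi(W_i)\cap U$, and hence $\pi(W_i)\setminus Y_i=\pi(W_i)\setminus U$ is relatively closed in $\pi(W_i)$.

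Next I would show $\pi(X_i)\subseteq Y_i$. Fix $a$ in the image of the projection $X_i\to Z(K)$. Because $X_i\subseteq X$ we have $(X_i)_a\subseteq X_a$, and $X_a$ is finite by the standing reduction in the proof of Theorem~\ref{thm:ez bounded}; so $(X_i)_a$ is finite, whence Lemma~\ref{lem:fiber-open} (with $W=W_i$ and $O=X_i$) gives that $(W_i)_a$ is finite, i.e.\ $a\in U$. As $a\in\pi(W_i)$ too, $a\in Y_i$. Now $X_i$ is a nonempty \'etale open subset of the smooth irreducible variety $W_i$, so by Fact~\ref{fact:with-anand} it is Zariski dense in $W_i$; since $\pi$ is continuous, $\overline{\pi(X_i)}\supseteq\pi(\overline{X_i})\supseteq\pi(W_i)$, and as $\pi(X_i)\subseteq\pi(W_i)$ the sets $\pi(X_i)$ and $\pi(W_i)$ have the same Zariski closure. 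Consequently $Y_i\supseteq\pi(X_i)$ is a nonempty Zariski dense subset of $\pi(W_i)$.

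Finally, since $W_i$ is irreducible, $\pi(W_i)$ is irreducible and $\overline{\pi(W_i)}$ is an irreducible variety with $\dim\overline{\pi(W_i)}=\dim\pi(W_i)$. The open set $\overline{\pi(W_i)}\cap U$ is nonempty (it contains $Y_i\neq\emptyset$), hence dense in $\overline{\pi(W_i)}$, so $\overline{\pi(W_i)}\setminus U$ is a proper closed subvariety and $\dim(\overline{\pi(W_i)}\setminus U)<\dim\pi(W_i)$. Since $\pi(W_i)\setminus Y_i=\pi(W_i)\setminus U\subseteq\overline{\pi(W_i)}\setminus U$, this yields $\dim(\pi(W_i)\setminus Y_i)<\dim\pi(W_i)$, as claimed. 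There is no deep obstacle here; the one point requiring care is the passage between the level of $K$-points (where $X_a$, $X_i$, Lemma~\ref{lem:fiber-open}, and the density statement Fact~\ref{fact:with-anand} operate) and the level of schemes (where Fact~\ref{fact:constructible}.3 and the irreducibility/dimension argument operate), namely checking that the $K$-points in $X_i$ project to a Zariski dense subset of $\pi(W_i)$ and hence that $Y_i$ is dense.
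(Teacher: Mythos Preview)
Your proposal is correct and follows essentially the same route as the paper: both use Fact~\ref{fact:constructible} to see that the finite-fibre locus is open, invoke Lemma~\ref{lem:fiber-open} to get $\pi(X_i)\subseteq Y_i$, and use Zariski density of $X_i$ in $W_i$ (already recorded before the Claim via Fact~\ref{fact:with-anand}) to conclude that $Y_i$ is dense in $\pi(W_i)$. The only cosmetic difference is that the paper finishes by citing Fact~\ref{fact:constructible}(1) directly for the dimension drop, whereas you spell this out via irreducibility of $\overline{\pi(W_i)}$; both are fine.
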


\begin{claimproof}
By Fact~\ref{fact:constructible} $\uppi(W_i)$ is constructible and $Y_i$ is a Zariski open subset of $\uppi(W_i)$.
Note that $\uppi(X_i)$ is Zariski dense in $\uppi(W_i)$ as $X_i$ is Zariski dense in $W_i$.
By Lemma~\ref{lem:fiber-open} $\uppi(X_i) \subseteq Y_i$, so $Y_i$ is Zariski dense in $\uppi(X_i)$.
By Fact~\ref{fact:constructible} $\dim \uppi(W_i) \setminus Y_i < \dim \uppi(W_i)$.
\end{claimproof}

\meno
Let $W = \bigcup_{i = 1}^{k} W_i$.
Then $X$ is Zariski dense in $W$, hence $\uppi(X)$ is Zariski dense in $\uppi(W)$.
We apply induction on $\dim \uppi(X) = \dim \uppi(W)$.
If $\dim \uppi(X) = 0$ then $\uppi(X)$ is finite, so $X$ is finite, hence Zariski closed, and we take $\ell = 1$, $V_1 = X$.
Suppose $\dim \uppi(W) \ge 1$.
Let $T = [\uppi(W_1) \setminus Y_1]\cup\cdots\cup[\uppi(W_k)\setminus Y_k]$.
By the claim and Fact~\ref{lem:dim-fin-union} we have
\begin{align*}
\dim T &= \max \{ \dim \uppi(W_1) \setminus Y_1 , \ldots , \dim \uppi(W_k) \setminus Y_k \} \\
&< \max \{ \dim \uppi(W_1) , \ldots , \dim \uppi(W_k) \} = \dim \uppi(W).
\end{align*}
As $T$ is constructible $X \cap [T \times \Aa^1]$ is definable.
Applying induction to $X \cap [T \times \Aa^1]$ we obtain closed subvarieties $V_1,\ldots,V_{\ell - 1}$ of $Z \times \Aa^1$ such that if $\alpha \in Z(K) \cap T$ and $X_\alpha\ne\emptyset$, then there is $i \in \{1,\ldots, \ell - 1\}$ such that $X_\alpha \subseteq (V_i)_\alpha$ and $(V_i)_\alpha$ is finite.
Now suppose $\alpha \in Z(K)$ and $\alpha \notin T$.
By definition of $Z$ each $(W_i)_\alpha$ is finite, hence $W_\alpha$ is finite.
Let $V_\ell = W$.
\end{proof}

\section{Theorem E generic continuity of definable functions}
\label{section:functions}

\begin{proposition}
\label{prop:gen-gen-cont}
Suppose that $K$ is \'ez, $X$ is a definable subset of $K^m$, and $f \colon X \to K^n$ is definable.
Let $E$ be the set of $a \in X$ at which $f$ is continuous.
Then $\dim X \setminus E < \dim X$.
\end{proposition}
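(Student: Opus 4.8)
The plan is to reduce, via the structure theory of \'ez sets, to a graph-theoretic problem and then invoke generic openness of dominant morphisms. Since $K$ is \'ez it is algebraically bounded (Theorem~E), hence perfect, so Lemma~\ref{lem:ez-mod}, Corollary~\ref{cor:perfect}, Fact~\ref{fact:with-anand}, etc.\ all apply. First I would apply Lemma~\ref{lem:ez-mod} with ambient variety $\Aa^m$: this produces a smooth subvariety $W$ of $\Aa^m$, a dense open $U\subseteq\Aa^m$, and a nonempty definable $\Sa E_K$-open $O = X\cap U(K)\subseteq W(K)$ with $\dim(X\setminus O)<\dim X$. Since $U(K)$ is $\Sa E_K$-open, $O$ is $\Sa E_K$-open in $X$; and since $\Sa E_K$-continuity is local, it is enough to bound the dimension of the discontinuity locus of $f|_O$. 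The irreducible components of the smooth variety $W$ are pairwise disjoint and each is open in $W$, so their $K$-points are $\Sa E_K$-open in $W(K)$; splitting $O$ along them, and noting that components $V_0$ with $\dim V_0<\dim X$ contribute sets of dimension $<\dim X$, I reduce to the case $X=O$, where $O$ is a nonempty definable $\Sa E_K$-open subset of $V(K)$ with $V$ smooth irreducible of dimension $d:=\dim X$.

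Next I would pass to the graph. Let $\Gamma=\{(a,f(a)):a\in X\}\subseteq (V\times\Aa^1{}^{\,n})(K)$ (writing $\Aa^n$ for $\Aa^1\times\cdots$); it is definable, hence \'ez by Lemma~\ref{lem:ez basic-0}, and $\dim\Gamma\le d$ by Corollary~E applied to $a\mapsto(a,f(a))$. Let $p\colon V\times\Aa^n\to V$ and $q\colon V\times\Aa^n\to\Aa^n$ be the projections; then $p|_\Gamma\colon\Gamma\to X$ is a bijection, $f=q|_\Gamma\circ(p|_\Gamma)^{-1}$, and crucially $p$ is \emph{injective} on $\Gamma$. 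Applying Lemma~\ref{lem:ez-mod} again, this time to $\Gamma$ inside the irreducible variety $V\times\Aa^n$, and splitting the resulting smooth variety into irreducible components $W_j$, I obtain $\Sa E_K$-open subsets $O'_j\subseteq W_j(K)$ and a dense open $U'$ with $\bigsqcup_j O'_j=\Gamma\cap U'(K)$ and $\dim(\Gamma\setminus\bigsqcup_j O'_j)<d$; by Fact~\ref{fact:with-anand} each $\dim W_j=\dim O'_j\le d$. Let $p_j:=p|_{W_j}$, and let $J$ be the set of $j$ with $p_j$ dominant onto $V$. For $j\in J$ domination forces $\dim W_j=d=\dim V$, so Corollary~\ref{cor:perfect} gives a dense open $U_j\subseteq V$ with $p_j^{-1}(U_j)(K)\to U_j(K)$ an $\Sa E_K$-open map.

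Now set $O_j^\circ:=O'_j\cap p_j^{-1}(U_j)(K)$ for $j\in J$. On $O_j^\circ$ the map $p_j$ is injective, $\Sa E_K$-continuous, and (being open on $p_j^{-1}(U_j)(K)$) $\Sa E_K$-open onto an $\Sa E_K$-open subset $p_j(O_j^\circ)$ of $V(K)$; hence $p_j$ restricts to an $\Sa E_K$-homeomorphism $O_j^\circ\to p_j(O_j^\circ)$, and $p_j(O_j^\circ)\subseteq X$ is $\Sa E_K$-open in $X$. The inverse homeomorphism is $a\mapsto(a,f(a))$, so on $p_j(O_j^\circ)$ we have $f=q|_{O_j^\circ}\circ(p_j|_{O_j^\circ})^{-1}$, which is $\Sa E_K$-continuous. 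Therefore $f$ is $\Sa E_K$-continuous at every point of the $\Sa E_K$-open set $G:=\bigcup_{j\in J}p_j(O_j^\circ)$. Finally I would check $\dim(X\setminus G)<d$: a point of $X\setminus G$ is $p(\xi)$ for some $\xi\in\Gamma$ lying in $\Gamma\setminus\bigsqcup_j O'_j$, or in some $O'_j$ with $j\notin J$ (so $p(\xi)\in p_j(W_j)$ with $\dim\overline{p_j(W_j)}<d$), or in some $O'_j$ with $j\in J$ but $\xi\notin p_j^{-1}(U_j)(K)$ (so $p(\xi)\in p_j(W_j\setminus p_j^{-1}(U_j))$ with $\dim(W_j\setminus p_j^{-1}(U_j))<d$); in each case $p(\xi)$ lies in the image under a morphism of a set of dimension $<d$, and there are finitely many such, so $\dim(X\setminus G)<d$ by Fact~\ref{fact:constructible} and Fact~\ref{lem:dim-fin-union}. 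Hence the discontinuity locus of $f$ is contained in $X\setminus G$ and has dimension $<d=\dim X$; unwinding the reductions gives the statement.

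The main obstacle is organisational rather than conceptual: ensuring that every reduction step discards only sets that are simultaneously of dimension $<\dim X$ and relatively $\Sa E_K$-open in $X$ (so that local continuity genuinely transfers), and dealing cleanly with the $K^n$-valued target — which is why I work with the graph inside $V\times\Aa^n$ and use the honest morphism $q$, rather than arguing coordinatewise, since the \'etale open topology on $K^n$ need not be the product topology. The one substantive geometric input is Corollary~\ref{cor:perfect} (equivalently Theorem~G); the point that makes everything work is that the first projection is injective on a graph, which upgrades ``generically open'' to ``generically a homeomorphism onto an open set.''
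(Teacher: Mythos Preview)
Your proof is correct and takes a genuinely different route from the paper's. The paper first reduces to Proposition~\ref{prop:gen-cont} and proves that via the more technical Proposition~\ref{prop:one var function decomp}: it uses algebraic boundedness explicitly (Theorem~E, Lemma~\ref{lem:irreducible}) to find irreducible polynomials $h_{ij}(x,t)$ vanishing on the graph of each coordinate $f_i$, shrinks repeatedly to make the corresponding hypersurfaces smooth with finite fibers, combines them into a single closed subvariety $W=\{h_1=\cdots=h_n=0\}\subseteq U\times\Aa^n$ containing $\Gamma(f)$, and finally applies Lemma~\ref{lem:cont} (whose proof is essentially your openness/injectivity argument, driven by Corollary~\ref{cor:perfect}). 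You instead apply Lemma~\ref{lem:ez-mod} directly to the graph $\Gamma\subseteq (V\times\Aa^n)(K)$, obtaining the smooth irreducible pieces $W_j$ without any coordinatewise polynomial bookkeeping, and invoke Corollary~\ref{cor:perfect} straightaway; the equidimensionality hypothesis $\dim W_j=\dim V$ comes from $\dim\Gamma\le d$ (via Corollary~E) together with dominance, rather than from an explicit finite-fibers argument. Both approaches rest on the same geometric input (Theorem~G through Corollary~\ref{cor:perfect}) and both implicitly use algebraic boundedness, but yours is more streamlined and handles the $K^n$-valued target in one pass---a point you rightly flag, since the \'etale open topology on $K^n$ need not be the product topology. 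What the paper's longer route buys is Proposition~\ref{prop:one var function decomp} itself: an explicit generic description of definable $K$-valued functions as locally cut out by irreducible polynomials, which is of independent interest and not recovered by your argument.
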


We do not know if $E$ is definable.
Proposition~\ref{prop:gen-gen-cont} shows that the set of points of discontinuity is contained in a definable subset of $X$ of dimension $< \dim X$.
Proposition~\ref{prop:gen-gen-cont} follows from Proposition~\ref{prop:gen-cont} and Lemma~\ref{lem:ez-mod}.

\begin{proposition}
\label{prop:gen-cont}
Suppose that $K$ is \'ez, $V$ is a smooth irreducible subvariety of $\Aa^m$, $O$ is a nonempty definable \'etale open subset of $V(K)$, and $f \colon  O \to K^n$ is definable.
Then there is a dense open subvariety $U$ of $V$ such that $f$ is continuous on $O \cap U(K)$.
\end{proposition}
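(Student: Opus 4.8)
The plan is to pass to the graph $\Gamma := \{(a,f(a)) : a \in O\}$ of $f$, regard it as an \'ez subset of $(V\times\Aa^n)(K)$, replace it up to a lower-dimensional remainder by an \'etale open subset of a smooth variety via Lemma~\ref{lem:ez-mod}, and then apply Corollary~\ref{cor:perfect} (the equidimensional form of Theorem~G) to make the projection onto $V$ \'etale-open after a suitable restriction; $\Sa E_K$-continuity of $f$ on that restriction then follows formally.

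First I would assemble the data. Since $K$ is \'ez, $\Gamma$ is an \'ez subset of $(V\times\Aa^n)(K)$ by Lemma~\ref{lem:ez basic-0}, and $V\times\Aa^n$ is a smooth irreducible $K$-variety. As \'ez fields are algebraically bounded (Theorem~E), hence geometric, the definable bijection $a\mapsto(a,f(a))$ yields $\dim\Gamma=\dim O$; and $O$ is Zariski dense in $V$ by Fact~\ref{fact:with-anand}, so $\dim\Gamma=\dim V=:d$. Applying Lemma~\ref{lem:ez-mod} to $\Gamma$ produces a smooth subvariety $S$ of $V\times\Aa^n$, a nonempty \'etale open $P\subseteq S(K)$, and a dense open $U'\subseteq V\times\Aa^n$ with $P=\Gamma\cap U'(K)$ and $\dim(\Gamma\setminus P)<d$. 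Discarding the irreducible components of $S$ that miss $P$, I may take $S=S_1\sqcup\cdots\sqcup S_\ell$ to be a disjoint union of smooth irreducible subvarieties, each meeting $P$; then $P\subseteq\bigsqcup_j S_j(K)$ and, by Theorem~\ref{thm:dim}, $\dim S_j=\dim(P\cap S_j(K))\le\dim\Gamma=d$ for every $j$.

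Next I would excise the bad locus of $V$. Let $p:V\times\Aa^n\to V$ be the projection, $p_j:=p|_{S_j}$, and $J$ the set of $j$ for which $p_j$ is dominant. For $j\notin J$ the set $\overline{p_j(S_j)}$ is a proper closed subvariety of $V$, so $C':=\bigcup_{j\notin J}\overline{p_j(S_j)}$ is one too; and $C:=\overline{p(\Gamma\setminus P)}$ is a proper closed subvariety, because $p$ maps $\Gamma\setminus P$ injectively (as $\Gamma$ is a graph) onto a set of dimension $<d$. For $j\in J$ the map $p_j:S_j\to V$ is a dominant morphism of irreducible $K$-varieties with $\dim S_j=d=\dim V$, so Corollary~\ref{cor:perfect} provides a dense open $U^{(j)}\subseteq V$ such that, with $S'_j:=p_j^{-1}(U^{(j)})$, the map $S'_j(K)\to U^{(j)}(K)$ is $\Sa E_K$-open. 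Put $U:=\bigl(V\setminus(C\cup C')\bigr)\cap\bigcap_{j\in J}U^{(j)}$, a dense open subvariety of $V$. I claim $f$ is $\Sa E_K$-continuous on $O\cap U(K)$. Fix $a_0\in O\cap U(K)$. Since $a_0\notin C$ we have $(a_0,f(a_0))\in P\subseteq S(K)$, hence $(a_0,f(a_0))\in S_j(K)$ for a unique $j$, which lies in $J$ since $a_0\notin C'$, and in $S'_j(K)$ since $a_0\in U^{(j)}(K)$. Now $P\cap S'_j(K)$ is \'etale open in $S'_j(K)$, and restricting the open map $S'_j(K)\to U^{(j)}(K)$ to this open subset gives an open map onto an \'etale open set $O_j\subseteq V(K)$ with $a_0\in O_j$. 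Because $\Gamma$ is a graph, $p_j$ restricts to a continuous open bijection $P\cap S'_j(K)\to O_j$, hence an $\Sa E_K$-homeomorphism, whose inverse $a\mapsto(a,f(a))$, composed with the continuous second projection to $K^n$, shows $f|_{O_j}$ is $\Sa E_K$-continuous. Since $O_j$ is an $\Sa E_K$-open neighbourhood of $a_0$ in $V(K)$, $f$ is $\Sa E_K$-continuous at $a_0$; as $a_0$ was arbitrary, the claim, and the proposition, follow.

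The only non-formal ingredient is Corollary~\ref{cor:perfect}, and behind it Theorem~G: it is what turns the projection $\Gamma\to O$ into a generic local homeomorphism once Lemma~\ref{lem:ez-mod} has replaced $\Gamma$, modulo a lower-dimensional set, by an \'etale open subset of a smooth variety of dimension $\dim V$. The rest is geometric bookkeeping, the delicate point being that $S$ may be disconnected with components failing to dominate $V$, which is precisely why one must restrict to the dense open $U$; the equidimensionality needed to invoke Corollary~\ref{cor:perfect} for $j\in J$ rests on geometricity of \'ez fields, i.e.\ on Theorem~E.
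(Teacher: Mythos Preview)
Your argument is correct, and it takes a genuinely different route from the paper's.

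The paper proves Proposition~\ref{prop:gen-cont} by first establishing the one-variable result Proposition~\ref{prop:one var function decomp}: for $f:O\to K$ it uses algebraic boundedness (Theorem~E via Lemma~\ref{lem:irreducible}) to find irreducible polynomials $h_i\in K[x,t]$ with $h_i(a,f(a))=0$, then uses Lemma~\ref{lem:int-vs-nwd} to pass to \'etale open pieces $O_i$ on which the graph of $f$ is \'etale open in the hypersurface $\{h_i=0\}$, and finishes with Lemma~\ref{lem:cont}. For $f:O\to K^n$ it applies this to each coordinate $f_i$, intersects the resulting covers, and reassembles the graph inside the intersection of pullbacks of the hypersurfaces before a final appeal to Lemma~\ref{lem:cont}. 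You instead treat the full graph $\Gamma\subseteq (V\times\Aa^n)(K)$ as a single \'ez set, apply Lemma~\ref{lem:ez-mod} once to replace it (off a lower-dimensional piece) by an \'etale open in a smooth variety $S$, and then invoke Corollary~\ref{cor:perfect} on the dominating irreducible components of $S$ to make the projection to $V$ open; continuity follows because a continuous open injection is a homeomorphism onto its image. The equidimensionality $\dim S_j=\dim V$ you need for Corollary~\ref{cor:perfect} comes out of $\dim\Gamma=\dim O$, which is where you (like the paper) lean on Theorem~E.

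Your route is shorter and more conceptual for the bare continuity statement: it avoids the coordinate-by-coordinate bookkeeping and the explicit polynomial construction. The paper's route, on the other hand, produces Proposition~\ref{prop:one var function decomp} as a byproduct, which gives strictly more: explicit irreducible polynomials $h_i$ defining smooth hypersurfaces whose $K$-points contain \'etale open pieces of the graph. That finer structural description is of independent interest and is not recovered by your argument.
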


Thus if $K$ is \'ez then any definable function $K^m \to K^n$ is $\Sa E_K$-continuous on a dense Zariski open subset of $K^m$.
Note that $O \cap U(K)$ is $\Sa E_K$-dense in $O$ by Fact~\ref{fact:with-anand}.
Proposition~\ref{prop:gen-cont} is a consequence of the following generic description of definable functions with codomain $K$.

\begin{proposition}
\label{prop:one var function decomp}
Suppose that $K$ is \'ez, $V$ is a smooth irreducible subvariety of $\Aa^m$, $O$ is a nonempty definable \'etale open subset of $V(K)$, and $f \colon  O \to K$ is definable.
Then there is a dense open subvariety $U$ of $V$, definable \'etale open subsets $O_1,\ldots,O_k$ of $O$, and irreducible $h_1,\ldots,h_k \in K[x_1,\ldots,x_m,t]$ such that $O \cap U(K) = \bigcup_{i = 1}^{k} O_i$ and for every $i \in \{1,\ldots,k\}$:
\begin{enumerate}
\item $h_i(\alpha,f(\alpha)) = 0$ and $h_i(\alpha,t)$ is not constant zero for all $\alpha \in O_i$,
\item the closed subvariety $W_i$ of $U \times \Aa^1$ given by $h_i(x_1,\ldots,x_m,t) = 0$ is smooth,
\item the graph of the restriction of $f$ to $O_i$ is an \'etale open subset of $W_i(K)$,
\item $f$ is continuous on $O_i$.
\end{enumerate}
\end{proposition}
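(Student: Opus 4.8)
The plan is to analyze the graph $\Gamma=\{(a,f(a)):a\in O\}$, viewed as a subset of $(V\times\Aa^1)(K)$. Since $K$ is \'ez it is in particular large and perfect, and $\Gamma$ is definable, so $\Gamma$ is an \'ez subset of $(V\times\Aa^1)(K)$ by Lemma~\ref{lem:ez basic-0}. First I would apply Lemma~\ref{lem:decomp} to write $\Gamma=\bigcup_{i=1}^{k}\Gamma_i$ with the $\Gamma_i$ definable and pairwise disjoint and each $\Gamma_i$ an $\Sa E_K$-open subset of $\widetilde W_i(K)$ for a smooth irreducible subvariety $\widetilde W_i$ of $V\times\Aa^1$; discarding empty pieces, Fact~\ref{fact:with-anand} makes each $\Gamma_i$ Zariski dense in $\widetilde W_i$. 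Let $\pi:V\times\Aa^1\to V$ be the projection. Because $\Gamma$ is a graph, $\pi$ is injective on each $\Gamma_i$, so every fibre $(\Gamma_i)_a$ is a singleton; choosing $a$ in the image of $\Gamma_i$ and applying Lemma~\ref{lem:fiber-open} shows $(\widetilde W_i)_a$ is finite, hence by Fact~\ref{fact:constructible} the generic fibre of $\widetilde W_i\to V$ is finite and $\dim\widetilde W_i=\dim\overline{\pi(\widetilde W_i)}\le\dim V$. Since $O=\pi(\Gamma)$ is Zariski dense in $V$ (Fact~\ref{fact:with-anand}), not all the $\overline{\pi(\widetilde W_i)}$ can be proper in the irreducible $V$, so after reindexing I may assume $\pi|_{\widetilde W_i}$ is dominant and $\dim\widetilde W_i=\dim V$ exactly for $i\le\ell$, with $\ell\ge 1$; put $Z_0=\bigcup_{i>\ell}\overline{\pi(\Gamma_i)}$, a proper closed subvariety of $V$.

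Next I would produce the polynomials. Fix $i\le\ell$ and let $\overline{\widetilde W_i}$ be the Zariski closure of $\widetilde W_i$ in $\Aa^{m+1}$; it is irreducible of dimension $\dim V$, and since $\pi|_{\widetilde W_i}$ is dominant with $0$-dimensional generic fibre it is not contained in $\overline V\times\Aa^1$. I would then pick $h_i\in K[x_1,\dots,x_m,t]$ irreducible, vanishing on $\overline{\widetilde W_i}$, and not vanishing on $\overline V\times\Aa^1$ (take a suitable irreducible factor of an element of $I(\overline{\widetilde W_i})\setminus I(\overline V\times\Aa^1)$). Then $\overline{\widetilde W_i}$ is an irreducible component of the proper closed subvariety $\{h_i=0\}\cap(\overline V\times\Aa^1)$ of $\overline V\times\Aa^1$; let $Z_i$ be the union of the remaining components. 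As $\Gamma_i\subseteq\overline{\widetilde W_i}(K)\subseteq\{h_i=0\}(K)$, we automatically obtain $h_i(a,f(a))=0$ for $a$ in the image of $\Gamma_i$, which is half of condition~(1).

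Now I would shrink the base. Applying Theorem~\ref{thm:open} to the dominant morphism $\widetilde W_i\to V$ gives a dense open $U_i^\ast\subseteq\widetilde W_i$ with $U_i^\ast(K)\to V(K)$ being $\Sa E_K$-open. Using Fact~\ref{fact:constructible} and the fact (as in the proof of Fact~\ref{fact:decomp}) that over a perfect field the singular locus of a variety is closed of strictly smaller dimension, each of the following is a proper closed subvariety of $V$: $\overline{\pi(\widetilde W_i\setminus U_i^\ast)}$; $\overline{\pi(\overline{\widetilde W_i}\setminus\widetilde W_i)}$; $\overline{\pi(\overline{\widetilde W_i}\cap Z_i)}$; the image in $V$ of the singular locus of $Z_i$; and the locus of $a\in V$ over which $\{h_i=0\}\cap(\overline V\times\Aa^1)$ has a one-dimensional fibre. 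Let $U$ be the complement in $V$ of $Z_0$ together with all these sets, over all $i\le\ell$; it is a dense open subvariety of $V$. Put $W_i=\{h_i=0\}\cap(U\times\Aa^1)$, $\Gamma_i'=\Gamma_i\cap(U\times\Aa^1)(K)$ and $O_i=\pi(\Gamma_i')$ for $i\le\ell$, discarding those $i$ with $\Gamma_i'=\emptyset$. Over $U$ one has a disjoint decomposition $W_i=\bigl(\widetilde W_i\cap\pi^{-1}(U)\bigr)\sqcup\bigl(Z_i\cap(U\times\Aa^1)\bigr)$ into clopen pieces, both smooth, so $W_i$ is smooth (condition~(2)); moreover $h_i(a,t)$ is a nonzero polynomial for every $a\in U(K)$ (completing~(1)), and $\widetilde W_i\cap\pi^{-1}(U)\subseteq U_i^\ast$. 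By Proposition~\ref{prop:well-define} and Fact~\ref{fact:first-paper-system}, $\Gamma_i'$ is $\Sa E_K$-open in $\widetilde W_i(K)$ and $\widetilde W_i(K)$ is $\Sa E_K$-open in $W_i(K)$, so $\Gamma_i'=\{(a,f(a)):a\in O_i\}$ is the graph of $f|_{O_i}$ and is an $\Sa E_K$-open subset of $W_i(K)$ (condition~(3)). Restricting the $\Sa E_K$-open map $U_i^\ast(K)\to V(K)$ to the open set $\Gamma_i'$ shows $O_i$ is $\Sa E_K$-open in $V(K)$; then $\pi$ restricts to an $\Sa E_K$-homeomorphism $\Gamma_i'\to O_i$, and composing its inverse with the projection $(U\times\Aa^1)(K)\to K$ onto the $\Aa^1$-factor writes $f|_{O_i}$ as a composite of $\Sa E_K$-continuous maps (condition~(4)). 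Finally, if $a\in O\cap U(K)$ then $(a,f(a))\in\Gamma_j$ for some $j$, necessarily $j\le\ell$ (otherwise $a\in Z_0$), so $(a,f(a))\in\Gamma_j'$ and $a\in O_j$; conversely each $O_i\subseteq O\cap U(K)$, so $O\cap U(K)=\bigcup_i O_i$.

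The only non-formal ingredient is Theorem~\ref{thm:open}: in positive characteristic $K(\widetilde W_i)/K(V)$ need not be separable, so $\widetilde W_i\to V$ is not generically \'etale and one cannot conclude directly that $\widetilde W_i(K)\to V(K)$ is $\Sa E_K$-open — this is exactly the point absorbed by Theorem~\ref{thm:open} and its relative-Frobenius argument. I expect the main obstacle to be purely organizational: arranging all of (1)--(4) to hold over a single dense open $U$ (the chain of "remove a proper closed subvariety" reductions above) and checking that the irreducible polynomial $h_i$ can be chosen so that $\{h_i=0\}$ does not acquire lower-dimensional or vertical components that meet $\widetilde W_i$ over $U$.
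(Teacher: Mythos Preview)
Your argument is correct and takes a genuinely different route from the paper's. One small slip: where you write that $\overline{\widetilde W_i}$ ``is not contained in $\overline V\times\Aa^1$'' you surely mean it is \emph{properly} contained (it has dimension $\dim V<\dim V+1$); this is what makes $I(\overline{\widetilde W_i})\setminus I(\overline V\times\Aa^1)$ nonempty. It is also worth spelling out why an irreducible $h_i$ in this difference exists: if $f\in I(\overline{\widetilde W_i})\setminus I(\overline V\times\Aa^1)$ then no irreducible factor of $f$ lies in the prime $I(\overline V\times\Aa^1)$ (else $f$ would), while some irreducible factor lies in the prime $I(\overline{\widetilde W_i})$, and that factor is your $h_i$.

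The paper proceeds in the opposite order. It first invokes algebraic boundedness (Theorem~E, via Theorem~\ref{thm:ez bounded} and Lemma~\ref{lem:irreducible}) to manufacture irreducible $h_i$ with $h_i(a,f(a))=0$, then partitions $O$ according to which $h_i$ witnesses this, extracts \'etale-open pieces $O_i$ from those sets via Lemma~\ref{lem:int-vs-nwd}, shrinks $U$ repeatedly to arrange smoothness of $W_i$ and openness of $\Gamma(f|_{O_i})$ in $W_i(K)$, and finally deduces continuity from Lemma~\ref{lem:cont} (which in turn rests on Corollary~\ref{cor:perfect} and hence Theorem~G). You instead decompose the graph once via Lemma~\ref{lem:decomp}, read $h_i$ off the ideal of the smooth irreducible piece $\widetilde W_i$, and get continuity directly from Theorem~G by showing $\pi|_{\Gamma_i'}:\Gamma_i'\to O_i$ is a homeomorphism. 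Your route is more geometric and bypasses the algebraic-boundedness machinery and Lemma~\ref{lem:cont} entirely; the paper's route keeps the $h_i$ in the foreground from the start and makes the dependence on Theorem~E explicit, at the cost of an extra shrinking step to force $\Gamma(f|_{O_i})$ open in $W_i(K)$. Both ultimately stand on Theorem~G.
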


We prove Proposition~\ref{prop:one var function decomp} by obtaining $(1) - (3)$ and then applying Lemma~\ref{lem:cont} to get $(4)$.
We $\Gamma(f)$ be the graph of a function $f$.

\begin{lemma}
\label{lem:cont}
Suppose that $K$ is large and perfect, $V$ is a smooth irreducible $K$-variety, $O$ is a nonempty $\Sa E_K$-open subset of $V(K)$, $W$ is a smooth irreducible subvariety of $V \times \Aa^n$ with $|W_\alpha| < \infty$ for all $\alpha \in V$, and $f \colon   O \to K^n$ is such that $\Gamma(f)$ is an \'etale open subset of $W(K)$.
Then there is dense open subvariety $U$ of $V$ such that $f$ is continuous on $U(K) \cap O$.
\end{lemma}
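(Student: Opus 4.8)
The plan is to exploit the fact that $\Gamma(f)$ is an \'etale open subset of $W(K)$ and that the projection $W \to V$ is generically \'etale (since $|W_a| < \infty$ for all $a$, the projection is quasi-finite, and over a dense open subvariety of $V$ it is \'etale, by generic flatness together with the fact that a quasi-finite flat morphism between smooth varieties of the same dimension is \'etale on the locus where the fiber is reduced). First I would replace $W$ by a dense open subvariety $W_0$ on which the projection $p : W \to V$ is \'etale; let $U_0 = p(W_0)$, which is a dense open subvariety of $V$ since $p$ is dominant (as $\Gamma(f)$, being $\Sa E_K$-open and nonempty, is Zariski dense in $W$ by Fact~\ref{fact:with-anand}, so $p$ is dominant). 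Then $p : W_0(K) \to V(K)$ is $\Sa E_K$-open, hence $\Sa E_K$-continuous and $\Sa E_K$-open, and moreover injective on $\Gamma(f) \cap W_0(K)$ because the latter is (a restriction of) a graph.

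The key point is then local: an $\Sa E_K$-continuous open injection is an $\Sa E_K$-homeomorphism onto its image. Concretely, set $O' = \{ a \in O \cap U_0(K) : (a, f(a)) \in W_0(K)\}$; this is an $\Sa E_K$-open subset of $V(K)$ (it is the preimage under $a \mapsto (a,f(a))$... but $f$ is not yet known to be continuous, so instead I would argue: $\Gamma(f) \cap W_0(K)$ is $\Sa E_K$-open in $W(K)$, being the intersection of the $\Sa E_K$-open set $\Gamma(f)$ with the Zariski-open, hence $\Sa E_K$-open, set $W_0(K)$; and its image $O'$ under the open map $p$ is therefore $\Sa E_K$-open in $V(K)$). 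Now $p$ restricts to a continuous open bijection $\Gamma(f) \cap W_0(K) \to O'$, hence an $\Sa E_K$-homeomorphism, and its inverse is precisely $a \mapsto (a, f(a))$. Composing the inverse with the (Zariski-continuous, hence $\Sa E_K$-continuous) coordinate projection $W \to \Aa^n$ shows that $f$ is $\Sa E_K$-continuous on $O'$.

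Finally I would arrange the conclusion in the stated form: $O'$ is $\Sa E_K$-open and, by Fact~\ref{fact:with-anand}, either empty or Zariski dense in $V$; since $\Gamma(f) \cap W_0(K)$ is nonempty (as $\Gamma(f)$ is Zariski dense in $W$ and $W_0$ is dense open in $W$, while $\Gamma(f)$, being $\Sa E_K$-open, meets every nonempty Zariski open by Fact~\ref{fact:with-anand}), $O'$ is Zariski dense in $V$, so it contains a dense open subvariety $U$ of $V$ by Fact~\ref{fact:constructible} applied to the constructible set $O' \cap V(K)$'s Zariski closure — more carefully, $O'$ need not be constructible, but $V \setminus \overline{V(K) \setminus O'}$ gives a candidate; it is cleaner to take $U$ to be a dense open subvariety of $V$ contained in the Zariski interior of $O'$, using that $O'$ is $\Sa E_K$-open hence (being Zariski dense) has the property that its complement in $V(K)$ is not Zariski dense, so $\overline{V(K)\setminus O'}$ is a proper closed subvariety and $U := V \setminus \overline{V(K) \setminus O'}$ works, with $U(K) \cap O \subseteq O'$ by construction. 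Then $f$ is continuous on $U(K) \cap O$ as desired.

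The main obstacle I expect is the step asserting that the projection $W \to V$ is \emph{generically \'etale} (not merely generically finite): in positive characteristic a dominant quasi-finite morphism between smooth irreducible varieties of the same dimension need not be generically \'etale if the function field extension $K(W)/K(V)$ is inseparable. Here, however, separability is forced because $\Gamma(f) \cap W_0(K)$, viewed inside $W(K)$, is $\Sa E_K$-open and maps bijectively to an $\Sa E_K$-open subset of $V(K)$ — if $K(W)/K(V)$ were inseparable, $W \to V$ would factor through a relative Frobenius, and Corollary~\ref{cor:frob-hom} together with the structure of the \'etale open topology would contradict the existence of such a section on a nonempty open set; alternatively, and more robustly, one invokes Theorem~G in the reverse direction or simply notes that a morphism admitting a rational section which is a graph must be generically \'etale onto its image. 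Making this separability argument precise — or circumventing it by passing to $W^{(p^r)}$ via Corollary~\ref{cor:frob-hom} and Lemma~\ref{lem:function-field-frob} as in the proof of Theorem~G — is the technical heart of the lemma.
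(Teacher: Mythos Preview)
Your core idea---that the projection restricted to $\Gamma(f)$ is a continuous open injection onto its image, hence a homeomorphism whose inverse is $a \mapsto (a,f(a))$---matches the paper's approach. The paper, however, does not try to show that $W \to V$ is generically \emph{\'etale}; it only needs the induced map on $K$-points to be $\Sa E_K$-open over a dense open of $V$, and this is supplied directly by Corollary~\ref{cor:perfect} (derived from Theorem~G, which already absorbs the inseparability issue via the relative Frobenius). That corollary hands you a dense open $U \subseteq V$ such that $W(K) \cap [U(K) \times K^n] \to U(K)$ is $\Sa E_K$-open; with this $U$ in hand your continuity argument runs cleanly on all of $U(K) \cap O$, and both your inseparability worry and your final extraction step become unnecessary.

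Your final step, as written, contains a genuine error: you assert that because $O'$ is $\Sa E_K$-open and Zariski dense, its complement in $V(K)$ is not Zariski dense. This is false---take $K = \Rr$, $V = \Aa^1$, $O' = (0,\infty)$; both $O'$ and its complement are Zariski dense. If you want to salvage your route, the correct argument is that $O \setminus O' \subseteq \pi\big((W \setminus W_0)(K)\big)$, that $\dim(W \setminus W_0) < \dim W = \dim V$ (the equality by Fact~\ref{fact:constructible}.5, since $\pi$ has finite fibres), and hence $\pi(W \setminus W_0)$ lies in a proper closed subvariety of $V$, whose complement is the desired $U$. This is exactly the computation inside the proof of Corollary~\ref{cor:perfect}, so quoting that corollary is the cleaner move.
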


\begin{proof}
Let $\uppi$ be the projection $W \to V$.
Then $\uppi(W)$ contains $O$, so by Fact~\ref{fact:with-anand} $\uppi(W)$ is Zariski dense in $V$.
Therefore $\uppi$ is dominant.
By Fact~\ref{fact:constructible}.5 $\dim V = \dim W$.
Corollary~\ref{cor:perfect} gives a dense open subvariety $U$ of $V$ such that the projection 
$W(K) \cap [U(K) \times K^n] \to U(K)$ is $\Sa E_K$-open.
Suppose that $a \in U(K) \cap O$.
We show that $f$ is continuous at $\alpha$.
Let $P \subseteq K^n$ be an \'etale open neighbouuprhood of $f(\alpha)$.
By Fact~\ref{fact:first-paper-system}.5 $U(K) \times P$ is an \'etale open neighbouuprhood of $(\alpha,f(\alpha))$, hence $Q := \uppi( \Upgamma(f) \cap [U(K) \times P])$ is an \'etale open neighbouuprhood of $\alpha$.
Suppose that $\alpha^* \in Q$.
The projection $\Gamma(f) \cap [U(K) \times P] \to U(K)$ is injective, so $(\alpha^*,f(\alpha^*))$ is in $\Gamma(f) \cap [U(K) \times P]$, hence $f(\alpha^*) \in P$.
\end{proof}

Lemma~\ref{lem:irreducible} produces the irreducibility required by Proposition~\ref{prop:one var function decomp}.

\begin{lemma}
\label{lem:irreducible}
Suppose that $K$ is algebraically bounded, $X$ is a definable subset of $K^m$, and $f \colon  X \to K$ is definable.
Then there are irreducible $g_1,\ldots,g_k \in K[x_1,\ldots,x_m,t]$ such that for every $\alpha \in X$ there is $i$ such that $g_i(\alpha,t)$ is not constant zero and $g_i(\alpha,f(\alpha)) = 0$.
\end{lemma}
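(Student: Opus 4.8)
The plan is to apply the hypothesis of algebraic boundedness to the graph of $f$ and then pass to irreducible factors. Regard $\Gamma(f) := \{ (a,b) \in X \times K : b = f(a) \}$ as a definable subset of $K^m \times K$. For every $a \in X$ the fibre $\Gamma(f)_a$ is the singleton $\{ f(a) \}$, hence finite and nonempty, so algebraic boundedness of $K$ furnishes polynomials $f_1,\ldots,f_k \in K[x_1,\ldots,x_m,t]$ such that for each $a \in X$ there is $i$ with $f_i(a,t)$ not constant zero and $f_i(a,f(a)) = 0$.

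Next I would factor each $f_i$ into irreducibles in the UFD $K[x_1,\ldots,x_m,t]$, and take $g_1,\ldots,g_\ell$ to be the list of all irreducible factors of all the $f_i$ which involve the variable $t$. To see this list works, fix $a \in X$ and choose $i$ as above. Write $f_i = c\, q_1^{e_1} \cdots q_r^{e_r}$ with $c \in K$ and the $q_j$ distinct irreducible. Since $K[t]$ is an integral domain and $f_i(a,t) \neq 0$ in $K[t]$, we have $q_j(a,t) \neq 0$ in $K[t]$ for every $j$; in particular every $q_j$ that is free of $t$ specialises to a nonzero constant $q_j(a) \in K \setminus \{0\}$. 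Multiplying these (with multiplicity) into $c$ yields a nonzero constant $c' \in K \setminus \{0\}$ with $0 = f_i(a,f(a)) = c' \prod_{q_j \text{ involves } t} q_j(a,f(a))^{e_j}$. Hence $q_j(a,f(a)) = 0$ for some $j$ with $q_j$ involving $t$, and for this $j$ we already know $q_j(a,t) \neq 0$. This $q_j$ occurs among $g_1,\ldots,g_\ell$, which proves the lemma.

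I do not expect any serious obstacle here: the only point deserving care is the split of the irreducible factors of $f_i$ into those involving $t$ and those not, together with the remark that $f_i(a,t) \ne 0$ in $K[t]$ forces each irreducible factor of $f_i$ to survive specialisation at $a$. I would state this cleanly rather than leave it implicit. Alternatively one may simply let $g_1,\ldots,g_\ell$ be all irreducible factors of all the $f_i$; whichever factor $g$ witnesses $g(a,f(a)) = 0$ for a given $a$ must then involve $t$, since an irreducible polynomial free of $t$ specialises to a constant, and that constant is nonzero because $g(a,t)$ divides $f_i(a,t) \ne 0$ in $K[t]$, contradicting $g(a,f(a)) = 0$.
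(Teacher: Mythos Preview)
Your proof is correct and follows essentially the same approach as the paper: apply algebraic boundedness to the graph of $f$, then pass to irreducible factors, noting that a factor $q_j$ with $q_j(a,f(a))=0$ must have $q_j(a,t)\ne 0$ since it divides $f_i(a,t)\ne 0$. Your restriction to factors involving $t$ is a harmless refinement the paper does not bother with; indeed, your ``alternative'' at the end is exactly what the paper does.
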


\begin{proof}
As $K$ is algebraically bounded there are $h_1,\ldots,h_k \in K[x_1,\ldots,x_m,t]$ such that for every $\alpha \in X$ there is $i$ such that $h_i(\alpha,t)$ is not constant zero and $h_i(\alpha,f(\alpha)) = 0$.
For each $i$ let $h^1_i,\ldots,h^{\ell(i)}_k \in K[x_1,\ldots,x_m,t]$ be the irreducible factors of $h_i$.
Then for every $\alpha \in X$ there are $i,j$ such that $h_i(\alpha,t)$ is not constant zero and $h^j_i(\alpha,f(\alpha)) = 0$.
Note that $h^j_i(\alpha,t)$ cannot be constant zero.
\end{proof}

We now prove Proposition~\ref{prop:one var function decomp}.

\begin{proof}
Applying Theorem~\ref{thm:ez bounded} and Lemma~\ref{lem:decomp} we get irreducible $h_{i},\ldots,h_{k} \in K[x_1,\ldots,x_m,t]$ such that for every $\alpha \in O$ there is $i \in \{1, \ldots, \ell \}$ such that $h_{i}(\alpha,t)$ is not constant zero and $h_{i}(\alpha,f(\alpha)) = 0$.
For each $i$ let
$$ Y_{i} = \{ \alpha \in U : h_{i}(\alpha,t) \ne 0, h_{i}(\alpha,f(\alpha)) = 0\}. $$
Note that each $Y_{i}$ is definable, hence \'ez, and the $Y_{i}$ cover $O$.
Applying Lemma~\ref{lem:int-vs-nwd} we see that for each $i$ we have $Y_{i} = O_{i} \cup Y'_{i}$ where $O_{i}$ is a definable \'etale open subset of $V(K)$ and $Y'_{i}$ is not Zariski dense in $V$.
Let $U$ be a dense open subvariety of $V$ such that each $Y'_i$ is disjoint from $U$.
After replacing $O$ with $U(K) \cap O$ we may suppose that each $Y_i$ is \'etale open.
Let $W_i$ be the closed subvariety of $U \times \Aa^1$ given by $h_i(x_1,\ldots,x_m,t) = 0$, note that $W_i$ is irreducible as $h_i$ is irreducible.
The image of the projection $W_i \to U$ contains $O_i$ and is hence dominant.
For each $i$ let $U_i$ be the set of $\alpha \in V$ such that $|(W_i)_\alpha| < \infty$.
By Fact~\ref{fact:constructible} each $U_i$ is an open subvariety of $V$.
If $\alpha \in O_i$ then $|(W_i)_\alpha| < \infty$ as $h_i(\alpha,t)$ is not constant zero, so each $U_i$ is Zariski dense in $V$ by Fact~\ref{fact:with-anand}.
After possibly replacing $U$ with $U_1\cap\cdots\cap U_n$ we suppose that each projection $W_i \to U$ has finite fibers.
For each $i$ let $W'_i$ be the singular locus of $W_i$.
As $K$ is perfect $W'_i$ is a proper closed subvariety of $W_i$ so $\dim W'_i < \dim W_i$.
Let $\uppi$ be the projection $U \times \Aa^1 \to U$.
Hence 
$$ \dim \uppi(W'_i) = \dim W'_i < \dim W_i = \dim U. $$
where the equalities hold by Fact~\ref{fact:constructible} as the projection $W_i \to U$ has finite fibers.
Hence each $\uppi(W'_i)$ is not Zariski dense in $U$, so there is a nonempty open subvariety $U'$ of $U$ which is disjoint from each $\uppi(W'_i)$.
For each $i$, $W_i \cap [U' \times \Aa^1]$ is smooth, so after replacing $U$ with $U'$ we suppose that each $W_i$ is smooth.
We maintain our assumption that each $W_i$ is irreducible as an open subvariety of an irreducible variety is irreducible.

\meno
It remains to arrange that the graph of the restriction of $f$ to $O_i$ is an \'etale open subset of $W_i(K)$.
Let $f_i$ be the restriction of $f$ to $O_i$.
Then $\Gamma(f_i)$ is an \'ez subset of $W_i(K)$, so by Lemma~\ref{lem:int-vs-nwd} $\Gamma(f_i) = P_i \cup Z_i$ where $P_i$ is a definable \'etale open subset of $W_i(K)$ and $Z_i$ is not Zariski dense in $W_i$.
Let $Z'_i$ be the Zariski closure of $Z_i$ in $W_i$.
As above we have $\dim \uppi(Z'_i) = \dim Z'_i < \dim W_i = \dim U$.
After again shrinking $U$ as above we suppose that $U$ is disjoint from each $\uppi(Z'_i)$.
It follows that $\Gamma(f_i) = P_i$ for all $i$.
\end{proof}

We now prove Proposition~\ref{prop:gen-cont}

\begin{proof}
Let $f = (f_1,\ldots,f_n)$.
Applying Proposition~\ref{prop:one var function decomp} we obtain for each $i \in \{1,\ldots,n\}$ a dense open subvariety $U_i$ of $V$, irreducible polynomials $h_{i1},\ldots,h_{i\ell} \in K[x_1,\ldots,x_m,t]$, and definable \'etale open subsets $O_{i1},\ldots,O_{i\ell}$ of $O$ such that for each $i$:
\begin{enumerate}
\item $O \cap U_i(K) = \bigcup_{j = 1}^{\ell} O_{ij}$,
\item $h_{ij}(\alpha,f_i(\alpha)) = 0$ and $h_{ij}(\alpha,t)$ is non-constant zero for all $\alpha \in O_{ij}$,
\item the graph of the restriction of $f_i$ to $O_{ij}$ is an \'etale open subset of $W_{ij}(K)$, where $W_{ij}$ is the closed subvariety of $U_i \times \Aa^1$ given by $h_{ij}(x_1,\ldots,x_m,t) = 0$.
\end{enumerate}

Let $U = \bigcap_{i = 1}^{n} U_i$, then $U$ is a dense open subvariety of $V$.
After replacing each $O_{ij}$ with $O_{ij} \cap U(K)$ we suppose $U(K)$ contains every $O_{ij}$.
For each $\upsigma \colon  \{1,\ldots,n\} \to \{1,\ldots,\ell\}$ let $O_\upsigma$ be $\bigcap_{i = 1}^{n} O_{i\upsigma(i)}$.
Note that $O \cap U(K)$ is the union of the $O_\upsigma$.
It is enough to show that for every $\upsigma$ there is a dense open subvariety $U_\upsigma$ of $V$ such that $f$ is continuous on $O_\upsigma \cap U_\upsigma(K)$.
Hence we fix such $\upsigma$ such that $O_\upsigma$ is nonempty, let  $O = O_\upsigma$ and $h_i = h_{i\upsigma(i)}$.
For each $i$ let $W_i$ be the closed subvariety of $U \times \Aa^1$ given by $h_i(x_1,\ldots,x_m,t) = 0$.
Then the graph of the restriction of each $f_i$ to $O$ is an \'etale open subset of $W_i(K)$.
Following the argument of Proposition~\ref{prop:one var function decomp} we may also suppose that $|(W_i)_a| < \infty$ for all $a \in U$ and $i \in \{1,\ldots,n\}$.

\meno
Now let $W$ be the closed subvariety of $U \times \Aa^m$ given by
$$ h_1(x_1,\ldots,x_m,t) = \cdots = h_n(x_1,\ldots,x_m,t) = 0. $$
For each $i \in \{1,\ldots,m\}$ let $\uppi_i \colon  U \times \Aa^m \to U \times \Aa^1$ be given by $\uppi_i(x,y_1,\ldots,y_m) = (x,y_i)$ and let $\uprho_i \colon  U(K) \times K^m \to U(K) \times K$ be the induced map on $K$-points.
Then
$$ W = \uppi_1^{-1}(W_1) \cap \cdots \cap \uppi_n^{-1}(W_n) \quad \text{and} \quad \Gamma(f) = \uprho_1^{-1} (\Gamma(f_1)) \cap \cdots \cap \uprho_n^{-1}(\Gamma(f_n))$$
Note that each $\uppi_i^{-1}(W_i)$ is a closed subvariety of $U \times \Aa^m$ and each $\uprho_i^{-1}(\Gamma(f_i))$ is an \'etale open subset of $\uppi_i^{-1}(W_i)(K)$.
Therefore $\Gamma(f)$ is an \'etale open subset of $W(K)$.
Note also that $|W_\alpha| < \infty$ for all $\alpha \in U$.
The proposition now follows by an application of Lemma~\ref{lem:cont}.
\end{proof}

We finally proof Proposition~\ref{prop:gen-gen-cont}.

\begin{proof}
Applying Lemma~\ref{lem:ez-mod} let $U$ be a dense open subvariety of $\Aa^m$, $V$ be a smooth subvariety of $\Aa^m$, and $O$ be a definable \'etale open subset of $V(K)$ such that $X \cap U(K) = O$ and $\dim X \setminus O < \dim X$.
Let $V_1,\ldots,V_k$ be the irreducible components of $V$.
Applying Proposition~\ref{prop:gen-cont} we fix for each $i$ a dense open subvariety $U_i$ of $V_i$ such that $f$ is continuous on each $X \cap U_i(K)$.
Note that $E$ contains $\bigcup_{i = 1}^{k} X \cap U_i(K)$ and $\dim X \setminus \bigcup_{i = 1}^{k} U_i(K) < \dim X$.
\end{proof}

\bibliographystyle{amsalpha}
\bibliography{ref}

\end{document}